\g@addto@macro\normalsize{%
  \setlength\abovedisplayskip{7pt}
  \setlength\belowdisplayskip{7pt}
  \setlength\abovedisplayshortskip{7pt}
  \setlength\belowdisplayshortskip{7pt}
}
\setlist{nolistsep}
\titlespacing*{\section}{0pt}{3.5ex plus 0ex minus 0ex}{1.5ex plus 0ex}
\titlespacing*{\subsection}{0pt}{3.5ex plus 0ex minus 0ex}{1.5ex plus 0ex}
\titlespacing*{\subsubsection}{0pt}{3.5ex plus 0ex minus 0ex}{1.5ex plus 0ex}
\definecolor{Color1}{rgb}{0.9, 0.9, 0.99}
\definecolor{Color2}{rgb}{0.78, 0.11, 0.0}
\definecolor{Color3}{rgb}{0.39, 0.71 ,0.0}
\newtheoremstyle{plain}{3mm}{3mm}{\slshape}{}{\bfseries}{.}{.5em}{}
\newtheoremstyle{definition}{2mm}{2mm}{}{}{\bfseries}{.}{.5em}{}
\newtheoremstyle{inproofclaim}{2mm}{2mm}{}{}{\bfseries}{.}{.5em}{}
\theoremstyle{plain}
\newtheorem{Theorem}{Theorem}
\newtheorem{Lemma}[Theorem]{Lemma}
\newtheorem{Proposition}[Theorem]{Proposition}
\newtheorem{Corollary}[Theorem]{Corollary}
\newtheorem{Conjecture}[Theorem]{Conjecture}	
\theoremstyle{definition}
\newtheorem{Definition}[Theorem]{Definition}
\newtheorem{Remark}[Theorem]{Remark}
\theoremstyle{inproofclaim}
\theoremstyle{plain} 
\newcounter{MainTheoremCounter}
\newtheorem{Maintheorem}[MainTheoremCounter]{Theorem}
\theoremstyle{plain}
\newtheorem*{namedthm}{\namedthmname}
\newcounter{namedthm}
	\newenvironment{named}[2]
	{\def\namedthmname{#1}
	\refstepcounter{namedthm}
	\namedthm[#2]\def\@currentlabel{#1}}
	{\endnamedthm}
\numberwithin{equation}{section}
\renewcommand{\eprint}[1]{\href{https://arxiv.org/abs/#1}{arXiv:#1}}
\renewcommand{\MR}[1]{}
\newcommand{\Cesaro}{Ces\`{a}ro}
\newcommand{\Szemeredi}{Szemer\'{e}di}
\newcommand{\Oh}{{\rm O}}
\newcommand{\oh}{{\rm o}}
\newcommand{\N}{\mathbb{N}}
\newcommand{\Z}{\mathbb{Z}}
\newcommand{\R}{\mathbb{R}}
\newcommand{\C}{\mathbb{C}}
\newcommand{\T}{\mathbb{T}}
\newcommand{\Cont}{\mathsf{C}}
\newcommand{\define}[1]{{\itshape #1}}
\renewcommand{\epsilon}{\varepsilon}
\renewcommand{\leq}{\leqslant}
\renewcommand{\geq}{\geqslant}
\renewcommand{\setminus}{\backslash}
\renewcommand{\phi}{\varphi}
\renewcommand{\d}{~\mathsf{d}}
\newcommand{\plh}{{\mkern-1mu\times\mkern-1.5mu}}
\newcommand{\PLH}{{\mkern-1mu\times\mkern-.5mu}}
\newcommand{\oPLH}{{\mkern1.8mu\otimes\mkern1.6mu}}
\newcommand{\Hardy}{\mathcal{H}}
\newcommand{\LE}{\mathcal{LE}}
\renewcommand{\S}{\mathcal{S}}
\newcommand{\sq}{\square}
\renewcommand{\mark}[1]{\widehat{#1}}
\newcommand{\markfour}[1]{{#1}^*}
\newcommand{\dom}{\mathsf{dom}}
\author{Florian~K.~Richter}
\date{\small \today}
\title{\bfseries Uniform distribution in nilmanifolds along functions from a Hardy field}
\begin{document}

\maketitle
\begin{abstract}
\noindent We study equidistribution properties of translations on nilmanifolds along functions of polynomial growth from a Hardy field. More precisely, if $X=G/\Gamma$ is a nilmanifold, $a_1,\ldots,a_k\in G$ are commuting nilrotations, and $f_1,\ldots,f_k$ are functions of polynomial growth from a Hardy field then we show that
\begin{itemize}
\item the distribution of the sequence $a_1^{f_1(n)}\cdot\ldots\cdot a_k^{f_k(n)}\Gamma$ is governed by its projection onto the maximal factor torus, which extends Leibman's Equidistribution Criterion from polynomials to a much wider range of functions; and
\item the orbit closure of $a_1^{f_1(n)}\cdot\ldots\cdot a_k^{f_k(n)}\Gamma$ is always a finite union of sub-nilmanifolds, which extends previous work of Leibman and Frantzikinakis on this topic.
\end{itemize} 
\end{abstract}

\small
\tableofcontents
\thispagestyle{empty}
\normalsize


\section{Introduction}
\label{sec_intro}

The study of the distribution of orbital sequences in nilsystems 
is an important part of contemporary ergodic theory.
Not only do nilsystems play a key role in the structure theory of measure preserving systems (cf.\ \cite{HK18}), but they are also tightly connected to the theory of higher order Fourier analysis, which finds applications to combinatorics and number theory (cf.\ \cite{Tao12}). 
The purpose of this article is to study the distribution of orbits in nilsystems along functions of polynomial growth from a Hardy field. This has applications to additive combinatorics and leads to new refinements of \Szemeredi{}'s theorem on arithmetic progressions (see \cref{sec_applications}). Our main results in this direction are Theorems \ref{thm_B}, \ref{thm_C}, \ref{thm_D}, and \ref{thm_E} below, which expand on the work of Leibman on the uniform distribution of polynomial sequences in nilmanifolds \cite{Leibman05a}, and the work of Frantzikinakis on the uniform distribution of nil-orbits along functions of different polynomial growth from a Hardy field \cite{Frantzikinakis09}. Our results also connect to various conjectures and problems posed by Frantzikinakis over the years, including \cite[Conjecture on p.~357]{Frantzikinakis09}, \cite[Problems 1 and 4]{Frantzikinakis10}, \cite[Problem 1]{Frantzikinakis15b}, and \cite[Problems 23 and 25]{Frantzikinakis16arXiv}.

A closed subgroup $\Gamma$ of a Lie group $G$ is called \define{uniform} if the quotient $G/\Gamma$ is compact, and it is called \define{discrete} if there exists a cover of $\Gamma$ by open subsets of the ambient group $G$ in which every open set contains exactly one element of $\Gamma$.
Given a ($s$-step) nilpotent Lie group $G$ and a uniform and discrete subgroup $\Gamma$ of $G$, the quotient space $X\coloneqq G/\Gamma$ is called a \define{($s$-step) nilmanifold}.
For any group element $a\in G$ and point $x\in X$, we define the translation of $x$ by $a$ as $ax\coloneqq (ab)\Gamma$, where $b$ is any element in $G$ such that $x=b\Gamma$. 
This way, the Lie group $G$ acts continuously and transitively on the nilmanifold $X$. There exists a unique Borel probability measure on $X$ invariant under this action by $G$, called the \define{Haar measure} on $X$ (see \cite{Raghunathan72}), which we denote by $\mu_X$. A sequence $(x_n)_{n\in\N}$ of points in $X$ is then said to be \define{uniformly distributed} in $X$ if 
\begin{equation*}
\label{eqn_def_ud}
\lim_{N\to\infty}\frac{1}{N}\sum_{n=1}^N F(x_n) = \int F\d\mu_X
\end{equation*}
holds for every continuous function $F\in\Cont(X)$.


When it comes to the study of uniform distribution in nilmanifolds, an important role is played 
by the largest toral factor of the nilmanifold, called the \define{maximal factor torus}.
Let $G^\circ$ denote the identity component of a nilpotent Lie group $G$, 
and let $[G^\circ,G^\circ]$ be the commutator subgroup generated by $G^\circ$. 

\begin{Definition}[Maximal factor torus]
Given a connected nilmanifold $X=G/\Gamma$, the \define{maximal factor torus} of $X$ is the quotient $[G^\circ,G^\circ]\backslash X$. We will use $\vartheta\colon X\to [G^\circ,G^\circ]\backslash X$ to denote the natural factor map from $X$ onto $[G^\circ,G^\circ]\backslash X$.
\end{Definition}

The maximal factor torus is diffeomorphic to a torus $\T^d\coloneqq \R^d/\Z^d$ whose dimension $d$ equals the dimension of the quotient group $G^\circ/[G^\circ,G^\circ]$.
Moreover, as the name suggests, it is the torus of highest dimension that is a factor of $X$. 



If $X=G/\Gamma$ is connected then it is well-known that the distribution of orbits along many sequences is governed by their projection onto $[G^\circ,G^\circ]\backslash X$. A classical result in this direction is Green's Theorem \cite{Green61} (see also \cite{AGH63,Parry69,Parry70,Leibman05a}) which states that a niltranslation acts ergodically on the nilmanifold if and only if it acts ergodically on the maximal factor torus. 
An important generalization of Green's Theorem is Leibman's Equidistribution Criterion for polynomial sequences. Given an element $a$ in a simply connected nilpotent Lie group $G$, we write $\dom(a)$ for the set of all $t\in\R$ for which $a^t$ is a well-defined element of the group.\footnote{For instance, a rational number ${r}/{q}$ with $\gcd(r,q)=1$ belongs to $\dom(a)$ if and only if there exists $b\in G$ such that $b^q=a^r$. Since $G$ is assumed to be simply connected, if such an element $b$ exists then it must be unique. 
Note that $\dom(a)=\R$ if and only if $a\in G^\circ$.}

\begin{Theorem}[Leibman's Equidistribution Criterion, {\cite[Theorem C]{Leibman05a}}]
\label{thm_leib_crit}
\label{thm_leib_crit_non-connected}
Let $G$ be a simply connected nilpotent Lie group and $\Gamma$ a uniform and discrete subgroup of $G$. Suppose
$$
u(n)\, =\,  a_1^{p_1(n)}\cdot\ldots\cdot a_k^{p_k(n)},\qquad\forall n\in\N,
$$
where $a_1,\ldots,a_k\in G$ are commuting and $p_1,\ldots,p_k\in\R[t]$ with $p_i(\N)\subset \dom(a_i)$. Then the
 following are equivalent:
\begin{enumerate}
[label=(\roman{enumi}),ref=(\roman{enumi}),leftmargin=*]
\item
\label{condition_i_leib}
$(u(n)\Gamma)_{n\in\N}$ is uniformly distributed in the nilmanifold $X=G/\Gamma$.
\item\label{condition_ii_leib}
$(\vartheta(u(n)\Gamma))_{n\in\N}$ is uniformly distributed in the maximal factor torus $[G^\circ,G^\circ]\backslash X$.
\end{enumerate}
\end{Theorem}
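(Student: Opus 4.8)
The implication \ref{condition_i_leib}$\Rightarrow$\ref{condition_ii_leib} is routine: $\vartheta$ is a continuous surjection intertwining the $G$-action on $X$ with the action of $G/[G^\circ,G^\circ]$ on the torus, so it pushes $\mu_X$ forward to the Haar measure of $[G^\circ,G^\circ]\backslash X$; applying the defining limit of uniform distribution to $F=g\circ\vartheta$ for an arbitrary $g\in\Cont([G^\circ,G^\circ]\backslash X)$ gives \ref{condition_ii_leib}. All the content lies in \ref{condition_ii_leib}$\Rightarrow$\ref{condition_i_leib}, which I would prove by induction on the nilpotency step $s$ of $G$. If $s=1$ then $[G^\circ,G^\circ]=\{e\}$, so $X$ is its own maximal factor torus and there is nothing to prove.

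For the inductive step I would first make the usual reductions. Since $G$ is simply connected it is connected, hence $X$ is connected and $G^\circ=G$; and, replacing $G$ by the smallest closed subgroup containing $\Gamma$ together with all the semigroups $\{a_i^t:t\in\dom(a_i)\}$ — for which $H\Gamma$ is still closed and $H/(H\cap\Gamma)$ is still a nilmanifold, by the usual Mal'cev rationality facts, and with some care to keep every power $a_i^{p_i(n)}$ well-defined — we may assume $G$ is generated by $\Gamma$ and those semigroups. Let $G_{(s)}=\gamma_s(G)$ be the last nontrivial term of the lower central series; it is central, $G_{(s)}\cap\Gamma$ is cocompact in it, and $Z\coloneqq G_{(s)}/(G_{(s)}\cap\Gamma)$ acts freely on $X$ with quotient $\bar X\coloneqq Z\backslash X=\bar G/\bar\Gamma$, where $\bar G\coloneqq G/G_{(s)}$ has step $s-1$. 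Since $s\geq2$ we have $G_{(s)}\subseteq[G,G]$, so $\bar X$ has the \emph{same} maximal factor torus as $X$ and the factor map $\vartheta$ of $X$ is the composition of the projection $\pi\colon X\to\bar X$ with the factor map $\bar\vartheta$ of $\bar X$. Consequently hypothesis \ref{condition_ii_leib} for the sequence $u$ is literally hypothesis \ref{condition_ii_leib} for its projection $\bar u(n)$ to $\bar G$, so by the inductive hypothesis $(\bar u(n)\bar\Gamma)_{n\in\N}$ is uniformly distributed in $\bar X$.

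Now decompose $\Ltwo(X)=\bigoplus_{\chi\in\widehat Z}H_\chi$ into vertical-frequency subspaces $H_\chi=\{F:F(zx)=\chi(z)F(x)\ \text{for all }z\in Z\}$. By Weyl's criterion and a standard density argument it suffices to show $\frac1N\sum_{n\leq N}F(u(n)\Gamma)\to\int F\d\mu_X$ for $F$ a smooth vector lying in a single $H_\chi$. For $\chi=0$ the function $F$ descends through $\pi$ to $\bar X$ and the limit equals $\int F\d\mu_X$ by the previous paragraph. For $\chi\neq0$ one has $\int F\d\mu_X=0$ by $Z$-invariance of $\mu_X$, and the task is to prove the averages vanish. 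Here I would apply van der Corput's inequality: the modulus squared of the average is controlled by averages over $h$ of $\frac1N\sum_n F(u(n+h)\Gamma)\overline{F(u(n)\Gamma)}$. Because the $a_i$ commute, $n\mapsto(u(n+h)\Gamma,u(n)\Gamma)$ is itself a sequence of the type allowed by the theorem — the orbit of $\prod_i(a_i,e)^{p_i(n+h)}\prod_i(e,a_i)^{p_i(n)}$ in $(G\times G)/(\Gamma\times\Gamma)$ with commuting generators — and the fact that $p_i(n+h)$ and $p_i(n)$ share their top-degree part confines its orbit closure to a proper sub-nilmanifold of $X\times X$ on which the sequence is genuinely ``less complex'' than $u$. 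The plan is to organise this into a single well-founded induction — a \Szemeredi{}/PET-type exhaustion of polynomial sequences in $G$ ordered by a complexity parameter recording, level by level in the lower central series, the top degree occurring there — in which the van der Corput difference $g(n)^{-1}g(n+h)$ is always strictly simpler, and whose base case is either covered by the $\chi=0$ analysis above or is a situation in which failure of equidistribution would produce a nontrivial character identity visible already on the maximal factor torus, contradicting \ref{condition_ii_leib}.

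The main obstacle is precisely this last piece of bookkeeping: devising a complexity measure on polynomial sequences that strictly decreases under the van der Corput differencing used for the nontrivial vertical frequencies and whose base case can be closed using only the torus hypothesis (this is where Leibman's calculus of polynomial maps into a filtered nilpotent group carries the weight). A shorter alternative would be to invoke the structure theorem for closures of polynomial orbits \cite{Leibman05a} — $Y\coloneqq\overline{\{u(n)\Gamma:n\in\N\}}$ is a finite union of sub-nilmanifolds in which $(u(n)\Gamma)_n$ is uniformly distributed — and then reduce to a group-theoretic lemma: a connected sub-nilmanifold $Y'=gH\Gamma/\Gamma\subseteq X$ with $\vartheta(Y')$ equal to the full maximal factor torus must equal $X$. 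Indeed, translating so that $e\Gamma\in Y'$, the hypothesis reads $H[G,G]\Gamma=G$, so $L\coloneqq H[G,G]$ is a closed connected subgroup with $L\Gamma=G$; then $L\backslash G$ is a connected manifold exhausted by the countable set $\Gamma$, hence a point, so $H[G,G]=G$, and the standard fact that a subgroup of a nilpotent group which surjects onto the abelianization is everything gives $H=G$, i.e.\ $Y'=X$. Applying this to whichever component of $Y$ maps onto the full torus — at least one does, since a torus is not a finite union of cosets of proper subtori — yields $Y=X$, hence uniform distribution of $(u(n)\Gamma)_n$ in $X$. This route, however, only relocates the difficulty into the proof of the orbit-closure structure theorem, which again rests on the induction sketched above.
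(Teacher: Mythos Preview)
This theorem is quoted from \cite[Theorem C]{Leibman05a}; the paper gives no self-contained proof of it. It is, however, recovered as the polynomial special case of Theorem~\ref{thm_B}, which is proved via Theorem~\ref{thm_G} in Section~\ref{sec_proof_orbit_closure_cesaro}, so that is the relevant comparison.

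Your outline is essentially the classical argument and is sound: vertical-frequency (central-character) decomposition, induction on the step for the trivial frequency, van der Corput plus a PET-type complexity drop for the nontrivial ones. The paper's route differs in two organisational respects. First, instead of the full product $G\PLH G$ it passes to the \emph{relatively independent self-product} $G\plh_L G=\{(a,b):ab^{-1}\in L\}$ over the smallest closed rational normal $L$ containing the relevant generators; this confines the van der Corput square to a smaller nilmanifold whose horizontal characters factor explicitly (\cref{cor_horizontal_characters_of_Gx_LG}, Remarks~\ref{rem_horizontal_characters_of_Gx_LG} and~\ref{rem_pseudo_horizontal_characters_of_Gx_LG}). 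Second, the induction parameter is not the nilpotency step but the \emph{degree} of the sequence relative to a filtration (\cref{def_degree_of_g}); \cref{lem_filtration-of-Gx_LG} endows $G\plh_L G$ with a filtration of the same length, and Claim~4 in Section~\ref{sec_sup-linear-case} shows that after modding out the diagonal center the differenced sequence has strictly smaller degree. This is exactly the well-founded bookkeeping you flagged as the main obstacle, packaged in the Green--Tao style of \cite{GT12a} rather than Leibman's original PET scheme. Your alternative via the orbit-closure structure theorem is also legitimate but, as you note, just relocates the same induction.

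One correction: in this paper's conventions ``simply connected'' does \emph{not} force $G$ to be connected --- the paper works throughout with $G^\circ\neq G$ and with elements $a_i$ for which $\dom(a_i)\subsetneq\R$. Your line ``$G$ simply connected $\Rightarrow$ $G=G^\circ$'' therefore fails here; the correct reduction is to $X$ connected (not $G$ connected), as carried out in Section~\ref{sec_reduction}.
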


\begin{Remark}
Leibman actually proves \cref{thm_leib_crit} without the assumption that the elements $a_1,\ldots,a_k\in G$ are commuting.
But for the scope of this paper, we stick to the commuting case because it suffices for the applications to combinatorics that we have in mind.
\end{Remark}

Closely related to \cref{thm_leib_crit} is Leibman's Equidistribution Theorem, which describes the orbit closure of polynomial sequences in a nilmanifold.

\begin{Theorem}[Leibman's Equidistribution Theorem, {\cite[Theorem B]{Leibman05a}}]
\label{thm_leib_orb_cls}
Let $G$ be a simply connected nilpotent Lie group, $\Gamma$ a uniform and discrete subgroup of $G$, and
$$
u(n)\, \coloneqq\,  a_1^{p_1(n)}\cdot\ldots\cdot a_k^{p_k(n)},\qquad\forall n\in\N,
$$
where $a_1,\ldots,a_k\in G$ are commuting and $p_1,\ldots,p_k\in\R[t]$ with $p_i(\N)\subset \dom(a_i)$. Then there exists a closed and connected subgroup $H$ of $G$ and points $x_0,\ldots,x_{q-1}\in X$ such that, for all $r=0,1,\ldots,q-1$, the set $Y_r\coloneqq H x_r$ is a closed sub-nilmanifold of $X$ and the sequence $(u(qn+r)\Gamma)_{n\in\N}$ is uniformly distributed in $Y_r$.
\end{Theorem}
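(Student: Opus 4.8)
\noindent\emph{Strategy of proof.} The plan is to argue by induction on $\dim X$, using \cref{thm_leib_crit} to reduce the behaviour of the orbit on $X$ to that of its projection onto the maximal factor torus $\T^d=[G^\circ,G^\circ]\backslash X$, where one lands in the classical setting of Weyl's equidistribution theory for polynomial sequences on tori. (It is convenient to run the induction for arbitrary polynomial sequences in $G$, the stated product form being a special case, since the restriction step below produces sequences of that more general type.) There is nothing to prove when $\dim X=0$, so suppose $\dim X\geq1$; since $G$ is simply connected, $X$ is connected, so $\T^d$ and the factor map $\vartheta$ are defined.

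The first step is to pin down the projected sequence $v(n)\coloneqq\vartheta(u(n)\Gamma)$. Let $\pi\colon G\to\bar G\coloneqq G/[G^\circ,G^\circ]$ be the quotient homomorphism ($[G^\circ,G^\circ]$ is normal in $G$); since $G$ is simply connected, $\bar G$ is a simply connected abelian Lie group, isomorphic to $\R^d$, and $\T^d=\bar G/\bar\Gamma$ with $\bar\Gamma\coloneqq\pi(\Gamma)$ a lattice. Setting $\bar a_i\coloneqq\pi(a_i)$ and combining exponents in $\bar G$, one gets $v(n)=P(n)+\Z^d$ for a genuine $\R^d$-valued polynomial $P$, namely $P(n)=\sum_{i=1}^k p_i(n)\log\bar a_i$. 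By the classical equidistribution theory of polynomials on tori there are a modulus $q\in\N$, a subtorus $\mathbb{S}\leq\T^d$, and points $w_0,\dots,w_{q-1}\in\T^d$ such that $(v(qn+r))_n$ is uniformly distributed in the coset $w_r+\mathbb{S}$ for every $r$. A point I would verify carefully here --- by tracking how the finite-difference coefficients of $P$ transform under $n\mapsto qn+r$ --- is that $\mathbb{S}$ may be chosen independently of $r$; this is what makes the dichotomy below clean.

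If $\mathbb{S}=\T^d$, then $(v(qn+r))_n$ is uniformly distributed in $\T^d$ for every $r$, hence so is $(v(n))_n$, and \cref{thm_leib_crit} forces $(u(n)\Gamma)_n$ to be uniformly distributed in $X$; since $X$ is connected, $X=G^\circ\Gamma/\Gamma$, so the conclusion holds with $q=1$, $H=G^\circ$, $x_0=\Gamma$. If instead $\mathbb{S}\subsetneq\T^d$, let $\tilde{\mathbb{S}}\leq\bar G$ be the linear subspace lying over $\mathbb{S}$ and set $\hat G\coloneqq\pi^{-1}(\tilde{\mathbb{S}})$. Then $\hat G$ is a closed connected rational subgroup of $G^\circ$ containing $[G^\circ,G^\circ]$; it is normal in $G$ because $\bar G$ is abelian; and
\[
\dim\hat G=\dim[G^\circ,G^\circ]+\dim\mathbb{S}<\dim[G^\circ,G^\circ]+d=\dim G^\circ=\dim X.
\]
Picking $g_r\in G$ with $\vartheta(g_r\Gamma)=w_r$, the inclusion $v(qn+r)\in w_r+\mathbb{S}$ says exactly that $u(qn+r)\Gamma$ lies in the proper sub-nilmanifold $X_r\coloneqq\hat G g_r\Gamma/\Gamma$; and since $\hat G$ is normal, all the $X_r$ are $G$-translates of $X_0\coloneqq\hat G\Gamma/\Gamma$. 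I would then invoke the structure theory of polynomial sequences in nilpotent Lie groups to factor $g_r^{-1}u(qn+r)=h^{(r)}_n\gamma^{(r)}_n$ with $h^{(r)}_n$ a polynomial sequence in $\hat G$ and $\gamma^{(r)}_n\in\Gamma$ (using that $\hat G$ is rational), so that $(u(qn+r)\Gamma)_n$ becomes, after translation by $g_r^{-1}$, a polynomial orbit inside $X_0$, a nilmanifold of dimension $<\dim X$. Applying the inductive hypothesis to each $r$, passing to a common multiple of the resulting moduli, and consolidating the finitely many connected subgroups produced into a single $H\leq\hat G$ finishes the induction.

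The step I expect to be the main obstacle is this passage back from the factor torus into the nilmanifold. Two things must be arranged. First, once the orbit is confined to a proper sub-nilmanifold it has to be rewritten as a polynomial orbit inside that sub-nilmanifold --- the ``factorization through $\hat G$ and $\Gamma$'' above --- which rests on $\hat G$ being a rational subgroup and on the fact that polynomial sequences in $G$ taking values in $\hat G\Gamma$ factor accordingly. Second, and more delicately, the subgroups returned by the inductive hypothesis for the different residues $r$ must be merged into one subgroup $H$; for this one uses that the $X_r$ are $G$-translates of a single sub-nilmanifold $X_0$ together with a bookkeeping analysis of how the cosets $w_r+\mathbb{S}$ (equivalently the elements $g_r$) are related to one another. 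Both points are where the machinery of polynomial maps into nilpotent Lie groups does the real work, whereas \cref{thm_leib_crit} enters only as a black box linking $X$ with its maximal factor torus.
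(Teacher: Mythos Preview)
This theorem is not proved in the present paper: it is quoted from \cite[Theorem~B]{Leibman05a} and used as a black box. Indeed the paper invokes it as an \emph{input} --- in Section~\ref{sec_reduction}, \cref{thm_leib_orb_cls} is applied to arrange that each $\overline{e_j^{\Z}\Gamma}$ becomes connected after passing to an arithmetic progression, and results from \cite{Leibman05b} of the same flavour are used inside the proof of \cref{thm_G}. The remark that Theorems~\ref{thm_B} and~\ref{thm_C} contain \cref{thm_leib_crit} and \cref{thm_leib_orb_cls} as the polynomial special case is a statement about generality, not an independent derivation: the reduction of \cref{thm_C} to \cref{thm_G} already assumes \cref{thm_leib_orb_cls}, so reading it as a proof would be circular.

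That said, your sketch is broadly in the spirit of Leibman's own argument: project to the maximal factor torus, apply Weyl's theory there, and if the closure is a proper subtorus, pull back to a proper rational normal subgroup $\hat G$ and recurse on dimension. The two obstacles you identify are genuine. The factorisation $g_r^{-1}u(qn+r)=h_n^{(r)}\gamma_n^{(r)}$ with $h_n^{(r)}$ polynomial in $\hat G$ is exactly where Leibman's formalism of polynomial mappings into nilpotent groups (as opposed to mere products of commuting one-parameter subgroups) becomes indispensable; your parenthetical about running the induction for general polynomial sequences is on point. The final ``consolidation'' step, however, is a real loose end as written: the inductive hypothesis hands you a priori different subgroups $H^{(r)}$ for each residue, and merging them into a single $H$ is not automatic. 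In Leibman's treatment this is avoided by showing from the outset that the orbit closure is governed by one intrinsically defined normal subgroup (essentially the hull of the polynomial sequence), rather than by gluing after the fact.
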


It is natural to ask whether Theorems \ref{thm_leib_crit} and \ref{thm_leib_orb_cls} remain true if one replaces the polynomials $p_1,\ldots,p_k$ with other sufficiently smooth and eventually monotone functions of polynomial growth. 
For instance, we can consider 
the class of \define{logarithmico-exponential functions} introduced by Hardy in \cite{Hardy12,Hardy10}. This class, which we denote by $\LE$, consists of all real-valued functions defined on some half-line $[t_0,\infty)$ that can be build from real polynomials, the logarithmic function $\log(t)$, and the exponential function $\exp(t)$ using the standard arithmetical operations $+$, $-$, $\cdot$, $\div$ and the operation of composition.
Examples of logartihmico-exponential functions are ${p(t)}/{q(t)}$ for $p(t),q(t)\in \R[t]$, $t^c$ for $c\in \R$, ${t}/{\log (t)}$, and $e^{\sqrt{t}}$, as well as any products or linear combinations thereof.

$\LE$ is an example of a so-called \define{Hardy field}. 
Although our main results apply to arbitrary Hardy fields, and are stated as such in this introduction, we delay giving the definition of a Hardy field until \cref{sec_prelims} (see \cref{def_hardy_fields}). Instead, we ask the reader to keep $\LE$ as a representative example in mind, since our results are already new and interesting for this class.

Given two functions $f,g\colon[1,\infty)\to\R$ we will write $f(t)\prec g(t)$ when ${g(t)}/{f(t)}\to\infty$ as $t\to\infty$, and $f(t)\ll g(t)$ when there exist $C>0 $ and $t_0\geq 1$ such that $f(t)\leq C g(t)$ for all $t\geq t_0$.  
We say $f(t)$ has \define{polynomial growth} if it satisfies $|f(t)|\ll t^d$ for some $d\in\N$; in this case the smallest such $d$ is called the \define{degree} of $f$ and denoted by $\deg(f)$. 

In the case of tori, the uniform distribution of functions from a Hardy field has been studied extensively by Boshernitzan \cite{Boshernitzan94}. 
In the case of nilmanifolds, Frantzikinakis obtained the following result.

\begin{Theorem}[{\cite[Theorem 1.3, part (i)]{Frantzikinakis09}}]
\label{thm_frantz_i}
Let $\Hardy$ be a Hardy field, $G$ a connected and simply connected nilpotent Lie group, $\Gamma$ a uniform and discrete subgroup of $G$, and consider the nilmanifold $X^k\coloneqq G^k/\Gamma^k$. Suppose
$$
v(n) \,=\,  \big(a_1^{f_1(n)}\hspace{-.2em},\ldots, \hspace{.1em}a_k^{f_k(n)}\big),\qquad\forall n\in\N,
$$
where $a_1,\ldots,a_k\in G$, $f_1,\ldots,f_k\in\Hardy$ have different growth\footnote{A finite set of functions $\{f_1,\ldots,f_k\}$ is said to have \define{different growth} if, after potentially reordering, one has $f_1(t)\prec \ldots\prec f_k(t)$.}, and for any $f\in\{f_1,\ldots,f_k\}$ there exists $\ell\in\N$ such that $t^{\ell-1}\log(t)\prec f(t) \prec t^\ell$.
Then the sequence $(v(n)\Gamma)_{n\in\N}$ is uniformly distributed in the sub-nilmanifold $\overline{a_1^\R\Gamma}\times\ldots\times\overline{a_k^\R\Gamma}$.
\end{Theorem}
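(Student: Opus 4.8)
The plan is to reduce matters to uniform distribution on a torus — where Boshernitzan's theorem on Hardy sequences \cite{Boshernitzan94} applies — by an induction that lowers the nilpotency step via the vertical-torus decomposition and lowers the degree of the driving functions via van der Corput's inequality, organized along a Hardy-field analogue of Bergelson's PET induction scheme. As a preliminary reduction, each orbit closure $\overline{a_i^\R\Gamma}$ is itself a sub-nilmanifold of $X$ (it is the orbit closure of the one-parameter subgroup $t\mapsto a_i^t$, which passes through $e\Gamma$), so after replacing each coordinate by the relevant sub-nilmanifold $\overline{a_i^\R\Gamma}$ and passing to the product we may assume that $a_i^\R\Gamma$ is dense in the $i$-th factor for every $i$, and the claim becomes simply that $(v(n)\Gamma)_{n\in\N}$ is uniformly distributed in the whole nilmanifold. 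We fix the ordering $f_1\prec\ldots\prec f_k$ supplied by the different-growth hypothesis and, for each $i$, the integer $\ell_i\geq1$ with $t^{\ell_i-1}\log t\prec f_i(t)\prec t^{\ell_i}$.

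For the base case, $G$ is abelian, $X$ is a torus, and $v(n)\Gamma$ corresponds to $n\mapsto\big(f_1(n)\alpha_1,\ldots,f_k(n)\alpha_k\big)$ for suitable vectors $\alpha_i$, each $\R\alpha_i$ dense in the $i$-th factor. By Weyl's criterion and the density assumption it suffices to show $\frac1N\sum_{n\le N}e^{2\pi i g(n)}\to0$ for every $g$ of the form $g(t)=\sum_{i}\beta_i f_i(t)$ with $\beta_i\in\R$ not all zero. Taking $j$ maximal with $\beta_j\neq0$ and using $f_i\prec f_j$ for $i<j$ gives $g(t)=\beta_j f_j(t)(1+o(1))$, so $t^{\ell_j-1}\log t\prec g(t)\prec t^{\ell_j}$ with $\ell_j\geq1$, and hence $|mg(t)-p(t)|\succ\log t$ for every nonzero integer $m$ and every real polynomial $p$. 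Boshernitzan's theorem then forces $(g(n))_n$ to be uniformly distributed modulo one, which completes the base case. This is exactly where both standing hypotheses enter: together they keep every nonzero linear combination of the $f_i$ far enough from the polynomials to meet Boshernitzan's criterion.

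For the inductive step, let $G_s=[G,G_{s-1}]$ be the last nontrivial term of the lower central series; it is central and connected, so $G_s/(G_s\cap\Gamma)$ is a subtorus and $X/G_s$ is an $(s-1)$-step nilmanifold, and $\Cont(X)$ decomposes along the characters $\chi$ of the corresponding product of subtori acting coordinatewise. It suffices to treat an $F$ transforming under a single $\chi$. If $\chi$ is trivial, $F$ factors through $X/G_s$; the projected sequence has the same form with dense coordinate orbits, so the inductive hypothesis applies. If $\chi$ is nontrivial, van der Corput's inequality reduces the claim to $\frac1N\sum_n F(v(n+h)\Gamma)\overline{F(v(n)\Gamma)}\to0$ for most $h$; writing $a_i^{f_i(n+h)}=a_i^{f_i(n)}a_i^{g_{i,h}(n)}$ with $g_{i,h}(t):=f_i(t+h)-f_i(t)$, the correlation is a nilsequence driven by the $f_i$ together with the differenced functions $g_{i,h}$, and $g_{i,h}$ has the growth of $hf_i'$ — so when $\ell_i\geq2$ it again satisfies the hypotheses of the theorem with index $\ell_i-1$, while when $\ell_i=1$ it tends to zero and the $i$-th coordinate drops out of the differenced part. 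A Hardy-field PET induction, using this drop in degree together with the standard manipulation that absorbs the nontrivial vertical frequency into a subgroup of strictly smaller complexity (as in Leibman's proof of \cref{thm_leib_orb_cls}), then brings the correlation within reach of the inductive hypothesis; the bottom of the degree recursion — every $\ell_i=1$, hence every $f_i$ sublinear — is handled directly by comparing the discrete average with $\frac1N\int_1^N F\big(a_1^{f_1(t)},\ldots,a_k^{f_k(t)}\,\Gamma\big)\d t$ and using that a curve driven by functions of genuinely different sublinear growth equidistributes by a Fubini-type argument over matched scales.

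The step I expect to be the main obstacle is closing this induction. One must control the whole family of functions generated by iterated differencing — verifying that it remains inside a Hardy field, that each differencing step strictly lowers the relevant degree, and that it never produces a function comparable to a nonzero polynomial (where the torus base case would break) — and one must equip the data (step, degrees, number of coordinates) with a well-founded complexity that every van der Corput step provably decreases, while tracking how the lowered-degree differenced functions interact with the vertical character and with the coupling of the $k$ coordinates inside $F$. An alternative route, which trades the PET bookkeeping for difficulties of comparable depth, is to avoid the step induction entirely: partition $[1,N]$ into windows on which each $f_i$ is uniformly within $o(1)$ of a Taylor polynomial of degree $\ell_i$, apply Leibman's Equidistribution Criterion (\cref{thm_leib_crit}) on each window, and then check — again via Boshernitzan's theorem, now applied to the slowly varying Taylor coefficients — that the resulting polynomial systems are equidistributed on all large windows; the price there is a uniform version of \cref{thm_leib_crit} and an elementary but fiddly genericity computation for the leading coefficients.
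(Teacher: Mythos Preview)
Your architecture is right --- Boshernitzan for the abelian base, central characters for the step reduction, van der Corput for the degree drop --- and you correctly isolate the sublinear bottom as the crux. But that is precisely where your main line and the paper's diverge.

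This paper does not prove \cref{thm_frantz_i} directly (it is quoted from \cite{Frantzikinakis09}); its own argument is for the more general \cref{thm_G}, and the sublinear case there (\cref{thm_G_sub-linear}) is \emph{not} handled by comparing the discrete average to a continuous one and running a Fubini argument over matched scales. Instead the paper introduces a tailored van der Corput lemma, \cref{prop_slow_vdC}, in which the shifts are \emph{real} numbers $\xi h$ rather than integers $h$: one studies $A(s)=\lim_N\frac1N\sum_n\Psi(f_1(n),\ldots,f_{k-1}(n),f_k(n)+s)\overline{\Psi(\cdots)}$ and shows that $\frac1H\sum_{h\le H}A(\xi h)\to0$ for a dense set of $\xi$. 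This is engineered exactly for the failure mode you noticed --- when all $\ell_i=1$, integer differencing kills every coordinate and ordinary van der Corput is vacuous --- whereas a fixed real displacement in the fastest variable $f_k$ survives. The second structural difference is that the paper replaces your ``subgroup of strictly smaller complexity'' by an explicit object, the relatively independent self-product $G\times_L G$ of \cref{sec_prelims-relative-independent-joinings}, where $L$ is the smallest connected normal rational closed subgroup containing the relevant directions; the horizontal characters of $G\times_L G$ factor cleanly (\cref{rem_horizontal_characters_of_Gx_LG}, \cref{rem_pseudo_horizontal_characters_of_Gx_LG}), and this is what makes both the sublinear step-induction and the general degree-induction close without the PET bookkeeping you worry about.

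Your ``alternative route'' --- Taylor-expand on windows and feed the resulting polynomial sequences into a quantitative form of \cref{thm_leib_crit} --- is in fact essentially Frantzikinakis's original proof in \cite{Frantzikinakis09}, so it is a legitimate and historically accurate path to \cref{thm_frantz_i}; you should not dismiss it as merely ``of comparable depth''. The present paper's contribution is precisely to replace that quantitative-polynomial step by the softer $G\times_L G$ machinery and the real-shift van der Corput, which is what allows the extension beyond functions of different growth.
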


\cref{thm_frantz_i} has significant implications to additive combinatorics, leading to analogues and refinements of \Szemeredi{}'s theorem on arithmetic progressions (see \cite{FW09,Frantzikinakis10,Frantzikinakis15b}).
It was conjectured by Frantzikinakis that the assumption in \cref{thm_frantz_i} that the functions $f_1,\ldots,f_k$ have different growth can be relaxed considerably (cf.\ \cite[Conjecture on p.~357]{Frantzikinakis09}).
This also relates to \cite[Problems 1]{Frantzikinakis10}, \cite[Problem 1]{Frantzikinakis15b}, and \cite[Problem 23]{Frantzikinakis16}.
Our first main result addresses a special case of this conjecture.
It gives a generalization of \cref{thm_leib_crit_non-connected} to all finite collections $f_1,\ldots,f_k$ of functions from a Hardy field $\Hardy$ satisfying \ref{property_P} below. Throughout this work, we use $f'(t)$, $f''(t)$, and $f^{(n)}(t)$ to denote the $1^{\text{st}}$, $2^{\text{nd}}$, and $n^{\text{th}}$ derivative of a function $f(t)$, respectively. 
\vspace{.5em}

\parbox{\dimexpr\linewidth-6.5em}{
\begin{enumerate}
[label=\textbf{Property\,(P):},ref=Property\,(P),leftmargin=*]
\item\label{property_P}
For all $c_1,\ldots ,c_k\in\R$ and $n_1,\ldots,n_k\in\N\cup\{0\}$ the function
$
f(t)\, =\, c_1 f_1^{(n_1)}(t)+\ldots+c_k f_k^{(n_k)}(t)
$
has the property that for every $p\in\R[t]$ either $|f(t)-p(t)|\ll 1$ or $|f(t)-p(t)| \succ \log(t)$.
\end{enumerate}
}
\vspace{.5em}
%

\begin{Maintheorem}
\label{thm_A}
\label{thm_B}
Let $\Hardy$ be a Hardy field, $G$ a simply connected nilpotent Lie group, $\Gamma$ a uniform and discrete subgroup of $G$, and assume $X=G/\Gamma$ is connected.
Suppose
$$
v(n)\,=\, a_1^{f_1(n)}\cdot\ldots\cdot a_k^{f_k(n)},\qquad\forall n\in\N,
$$
where $a_1,\ldots,a_k\in G$ are commuting,  $f_1,\ldots,f_k\in \Hardy$ satisfy \ref{property_P}, and $f_i(\N)\subset \dom(a_i)$.
Then the following are equivalent:
\begin{enumerate}
[label=(\roman{enumi}),ref=(\roman{enumi}),leftmargin=*]
\item
\label{condition_i_thmA}
$(v(n)\Gamma)_{n\in\N}$ is uniformly distributed in the nilmanifold $X=G/\Gamma$.
\item\label{condition_ii_thmA}
$(\vartheta(v(n)\Gamma))_{n\in\N}$ is uniformly distributed in the maximal factor torus $[G^\circ,G^\circ]\backslash X$.
\end{enumerate}
\end{Maintheorem}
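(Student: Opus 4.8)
The direction \ref{condition_i_thmA}$\Rightarrow$\ref{condition_ii_thmA} is immediate: $\vartheta$ is continuous and pushes $\mu_X$ onto the Haar measure of the torus, so $F\mapsto F\circ\vartheta$ embeds $\Cont\big([G^\circ,G^\circ]\backslash X\big)$ into $\Cont(X)$ while preserving integrals. The content is the converse, which I would prove by a two-layer induction: an outer induction on the nilpotency step $s$ of $G$, and for fixed $s$ an inner induction of PET (``polynomial exhaustion'') type on a suitable complexity of the tuple $(f_1,\ldots,f_k)$. I would first record that, since $G$ is simply connected, $\exp$ is a diffeomorphism of the Lie algebra of $G$ onto $G$; hence each $\dom(a_i)=\R$ (so the hypothesis $f_i(\N)\subseteq\dom(a_i)$ is automatic), and, because the $a_i$ commute, the one-parameter subgroups $t\mapsto a_i^t$ pairwise commute.

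For the outer step, if $s=1$ then $X$ is a torus and coincides with its maximal factor torus, so there is nothing to prove. If $s\geq 2$, let $G_s$ denote the last nontrivial term of the lower central series $G=G_1\supseteq G_2=[G,G]\supseteq\cdots$; it is central, and $G_s\cap\Gamma$ is a cocompact lattice in $G_s$, so $G_s/(G_s\cap\Gamma)$ is a torus. Decomposing according to its characters,
\[
\Ltwo(X)=\bigoplus_{\chi}\Ltwo_\chi(X),\qquad \Ltwo_\chi(X)=\big\{F:\ F(zx)=\chi(z)F(x)\ \text{for all }z\in G_s\big\},
\]
the summand $\Ltwo_0(X)$ (trivial $\chi$) consists of functions lifted from the lower-step nilmanifold $G_s\backslash X=G/(G_s\Gamma)$; since $G_s\subseteq[G,G]$, this nilmanifold has the \emph{same} maximal factor torus as $X$, so uniform distribution of the projected sequence there follows from \ref{condition_ii_thmA} and the outer induction hypothesis. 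It remains to show $\frac1N\sum_{n=1}^N F(v(n)\Gamma)\to 0$ for every nontrivial $\chi$ and every smooth $F\in\Ltwo_\chi(X)$.

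For the inner step, since the one-parameter subgroups commute we have $v(n+h)=w_h(n)\,v(n)$ with $w_h(n)=\prod_i a_i^{\,f_i(n+h)-f_i(n)}$, so van der Corput's inequality yields
\[
\limsup_{N\to\infty}\Big|\tfrac1N\sum_{n=1}^N F(v(n)\Gamma)\Big|^2\ \ll\ \frac1H\sum_{|h|\leq H}\ \limsup_{N\to\infty}\Big|\tfrac1N\sum_{n=1}^N F\big(w_h(n)v(n)\Gamma\big)\,\overline{F(v(n)\Gamma)}\Big|\ +\ \frac1H,
\]
and the inner average is the average of $F\otimes\overline F$ along the sequence $n\mapsto\big((w_h(n),1)\,(v(n),v(n))\big)(\Gamma\times\Gamma)$ on $X\times X$ --- again of our standard form, driven now by the $2k$ commuting elements $(a_i,1),(a_i,a_i)$ and by the functions $f_i(\cdot+h)-f_i(\cdot)$ and $f_i(\cdot)$. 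Each difference $f_i(\cdot+h)-f_i(\cdot)$ lies in a Hardy field containing the $f_i$, has strictly smaller growth than $f_i$ when $f_i$ is superlinear, and --- because \ref{property_P} is phrased in terms of real-linear combinations of derivatives, a class closed under forming such differences --- the enlarged tuple again satisfies the relevant instance of \ref{property_P}. Setting up the PET ordering so that this step strictly lowers the complexity and inherits the nontriviality of the vertical character (keeping the reduction of the previous paragraph available), the iteration terminates at tuples in which each function is, up to a bounded additive error, affine in $n$. Pairing the base case with torus characters reduces it to uniform distribution modulo $1$ of the real-linear combinations $c_1f_1(n)+\ldots+c_kf_k(n)$ that occur, and this is exactly where \ref{property_P} is consumed: such a combination is either within $\Oh(1)$ of a polynomial --- and then, a Hardy-field function at bounded distance from a polynomial being eventually monotone hence convergent, the orbit is asymptotic to a polynomial orbit and \cref{thm_leib_crit} reduces the claim to \ref{condition_ii_thmA} --- or it stays $\succ\log(t)$ from every polynomial, in which case Boshernitzan's equidistribution theorem for Hardy fields applies unconditionally.

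The step I expect to be the main obstacle is the PET bookkeeping itself, now over a Hardy field rather than over $\R[t]$: one must isolate a well-founded complexity (roughly the sorted multiset of growth rates of the functions and of the derivative-combinations tracked by \ref{property_P}, modulo the equivalence ``difference of strictly smaller growth''), prove that it strictly decreases under each van der Corput step $f_i\rightsquigarrow f_i(\cdot+h)-f_i(\cdot)$ and under the auxiliary subtractions the process introduces, and verify at every stage that \ref{property_P}, together with the polynomial growth and eventual monotonicity needed to invoke the theorems of Boshernitzan and Leibman, is preserved; one must also check that the torus equidistributions arising in the base case are precisely those furnished by \ref{condition_ii_thmA} (for combinations near a polynomial) or by Boshernitzan (for the rest), leaving no gap in between. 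This dichotomy is also where the hypothesis is sharp: a function such as $t+\log\log t$, lying strictly between $\Oh(1)$ and $\log t$ from the polynomial $t$, fails \ref{property_P} precisely because $\big(n+\log\log n\big)_{n\in\N}$ is not uniformly distributed modulo $1$, so a criterion of this type cannot hold without it.
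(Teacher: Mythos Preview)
Your sketch has the right architecture (vertical character decomposition, van der Corput, reduction to the torus), but there are genuine gaps relative to what the paper does.

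First, a factual error: in this paper ``simply connected'' does \emph{not} force $G$ to be connected (see the footnote defining $\dom(a)$, which states $\dom(a)=\R$ iff $a\in G^\circ$). So your claim that $\dom(a_i)=\R$ automatically is wrong; when $a_i\notin G^\circ$ the hypothesis $f_i(\N)\subset\dom(a_i)$ forces $f_i$ to be a polynomial (\cref{lem_dom_of_group_lements}), and these polynomial pieces must be carried along separately. The paper handles this in \cref{sec_reduction} by splitting $\{1,\dots,k\}$ into $\mathcal I=\{i:a_i\in G^\circ\}$ and its complement, then passing to a normal form (\cref{cor_normal_form}) so that the Hardy-field part satisfies the rigid conditions \ref{itm_thm_G_C}--\ref{itm_thm_G_E} of \cref{thm_G}.

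Second, the PET step you flag as ``the main obstacle'' really is one, and your outline as written does not close it. After van der Corput on $X\times X$ the diagonal functions $f_i$ are still present with their original growth, so a naive growth-multiset complexity does \emph{not} decrease. The paper sidesteps this entirely: instead of a PET ordering on Hardy functions, it (i) works on the \emph{relatively independent self-product} $X\plh_L X$ rather than $X\times X$, where $L$ is the smallest normal rational closed subgroup containing the ``fast'' generators, and (ii) equips $G\plh_L G$ with the filtration of \cref{lem_filtration-of-Gx_LG}, so that a filtration-based \emph{degree} (\cref{def_degree_of_g}) drops by exactly one after quotienting by $Z(G)^\triangle$ (this is \ref{claim_4_sup-linear} in the proof of \cref{thm_G}). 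The integral-zero step you need (``inherits the nontriviality of the vertical character'') then becomes \ref{claim_1_sup-linear}, which hinges on $\chi$ being nontrivial on $L\cap Z(G)^\circ$ --- something you cannot see on the full product $X\times X$.

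Third, your base case is incomplete. PET-type differencing takes degree-$\ell$ Hardy functions to degree-$(\ell-1)$ ones, so the iteration terminates not at ``approximately affine'' functions but at functions with $\log t\prec f(t)\prec t$. These are neither asymptotic to polynomials (so \cref{thm_leib_crit} does not apply) nor amenable to a further standard van der Corput (since $\Delta_h f\to 0$). The paper isolates this ``sub-linear'' case as \cref{thm_G_sub-linear} and proves it with a bespoke van der Corput lemma (\cref{prop_slow_vdC}) combined with \cref{lem_equidistribution_of_diagonal_in_Gx_LG_sub-linear}, using induction on the nilpotency step rather than on a function complexity. Without an analogue of this, your induction has no base.
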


Whilst \cref{thm_A} provides us with a convenient criteria for checking whether a sequence of the form $n\mapsto a_1^{f_1(n)}\cdot\ldots\cdot a_k^{f_k(n)}\Gamma$ is uniformly distributed in the entire nilmanifold, it would also be desirable to have an analogue of \cref{thm_leib_orb_cls} describing the orbit closure of such sequences in general. The next result provides exactly that.

\begin{Maintheorem}
\label{thm_C}
Let $G$ be a simply connected nilpotent Lie group, $\Gamma$ a uniform and discrete subgroup of $G$, and $\Hardy$ a Hardy field.
Let
$$
v(n)\,=\, a_1^{f_1(n)}\cdot\ldots\cdot a_k^{f_k(n)},\qquad\forall n\in\N,
$$
where $a_1,\ldots,a_k\in G$ are commuting, $f_1,\ldots,f_k\in \Hardy$ satisfy \ref{property_P}, and $f_i(\N)\subset \dom(a_i)$ for all $i=1,\ldots,k$. Then there exists a closed and connected subgroup $H$ of $G$ and points $x_0,x_1,\ldots,x_{q-1}\in X$ such that, for all $r=0,1,\ldots,q-1$, the set $Y_r\coloneqq H x_r$ is a closed sub-nilmanifold of $X$ and $(v(qn+r)\Gamma)_{n\in\N}$ is uniformly distributed in $Y_r$.
\end{Maintheorem}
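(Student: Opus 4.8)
The plan is to mimic Leibman's derivation of his Equidistribution Theorem (\cref{thm_leib_orb_cls}) from his Equidistribution Criterion (\cref{thm_leib_crit}), with our \cref{thm_B} playing the part of the criterion, and to run the whole argument by induction on $\dim X$. Standard reductions bring us to the case that $X$ is connected and $a_1,\ldots,a_k\in G^\circ$: one decomposes $X$ into its finitely many connected components, and one observes that \ref{property_P} together with $f_i(\N)\subset\dom(a_i)$ forces any generator outside $G^\circ$ to contribute, for all large $n$, a polynomial with rational coefficients, to which \cref{thm_leib_orb_cls} applies directly. With $X$ connected we have $G=G^\circ\Gamma$; fix $\beta_1,\ldots,\beta_k\in\R^d$ and $c\in\R^d$ with $\vartheta(v(n)\Gamma)=f_1(n)\beta_1+\ldots+f_k(n)\beta_k+c\pmod{\Z^d}$ in the maximal factor torus $[G^\circ,G^\circ]\backslash X\cong\T^d$.

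The decisive step is the analysis of this projection on the torus. For a character $0\neq\chi\in\Z^d$ the scalar sequence $\langle\chi,\vartheta(v(n)\Gamma)\rangle$ differs by a constant from $h_\chi(n)$ taken mod $1$, where $h_\chi\coloneqq\sum_i\langle\chi,\beta_i\rangle f_i\in\Hardy$; by Boshernitzan's equidistribution theory for Hardy-field sequences, combined with \ref{property_P}, the sequence $h_\chi(n)\pmod 1$ is either uniformly distributed on $\T$ or stays within a bounded distance of a polynomial with rational coefficients. The characters of the second kind form a subgroup $\Lambda\leq\Z^d$, which is finitely generated and saturated (as $h_{m\chi}=mh_\chi$), and a short computation with the least common denominator of the attendant rational polynomials yields a period $q$ for which, for every residue $r$, the sequence $\vartheta(v(qn+r)\Gamma)$ is uniformly distributed in a coset $W_r=w_r+W$ of the subtorus $W\coloneqq\Lambda^\perp$. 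This is exactly the place where \ref{property_P} is essential: it is what converts Boshernitzan's dichotomy into the clean statement above, and it plays here the role that Weyl's theorem and the ensuing rationality analysis play for polynomial orbits in Leibman's proof. If $W=\T^d$ then by \cref{thm_B} the sequence $(v(n)\Gamma)_{n\in\N}$ is already uniformly distributed in $X$, and we finish with $q=1$ and $H=G^\circ$.

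Suppose instead $W\subsetneq\T^d$, and fix $r$. For $\chi\in\Lambda$ write $h_\chi(n)=p_\chi(n)+L_\chi+\psi_\chi(n)$ with $p_\chi\in\Q[t]$, $L_\chi\in\R$ and $\psi_\chi\in\Hardy$, $\psi_\chi(n)\to 0$. Solving a linear system whose right-hand side tends to $0$ — so that it is eventually solvable with a vanishing solution — one obtains $\epsilon_1,\ldots,\epsilon_k\in\Hardy$, each $\epsilon_i(n)\to 0$, such that the corrected sequence
$$
y_n\ \coloneqq\ a_1^{\,f_1(qn+r)+\epsilon_1(n)}\cdot\ldots\cdot a_k^{\,f_k(qn+r)+\epsilon_k(n)}\,\Gamma
$$
lies in the closed connected sub-nilmanifold $X_r\coloneqq\vartheta^{-1}(W_r)$ for all large $n$, while $v(qn+r)\Gamma=\eta_n\,y_n$ with $\eta_n\to e$ in $G$ (the $a_i$ commute, which is what makes this rearrangement possible). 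Realizing $X_r$ as the nilmanifold of a smaller simply connected nilpotent Lie group, the sequence $(y_n)$ is again of the form required by the theorem in $X_r$ — its commuting generators come from the $a_i$ through a change of Mal'cev coordinates, its exponents from $f_i(qt+r)+\epsilon_i(t)$ through polynomial operations, and all of this preserves \ref{property_P} — and $\dim X_r<\dim X$. Applying the inductive hypothesis to $(y_n)$ in $X_r$ furnishes a period, a connected subgroup, and sub-nilmanifolds of $X_r$ along which $(y_n)$, and hence also $(v(qn+r)\Gamma)$ since $\eta_n\to e$, is uniformly distributed.

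Assembling this over the residues — passing to the common period $Q\coloneqq q\cdot\operatorname{lcm}_r q_r$ and reindexing — attaches to each residue modulo $Q$ a connected subgroup and a point with the desired equidistribution property. That these subgroups may be replaced by a single $H$, as in the statement, follows from the fact that the cosets $W_r$ are all translates of the one subtorus $W$, so the sub-nilmanifolds $X_r$ are translates of one another; this final piece of bookkeeping is handled exactly as in Leibman's proof of \cref{thm_leib_orb_cls}. The principal difficulties lie in the torus analysis of the second paragraph and the descent of the third: carrying Leibman's polynomial arguments over to Hardy-field functions forces one through both Boshernitzan's theorem and the full strength of \ref{property_P}, and the fact that on the torus the orbit is only \emph{asymptotic} to — not contained in — a subtorus coset means that the reduction to a lower-dimensional nilmanifold has to be arranged by means of the vanishing correction $\eta_n$, a step with no analogue in the polynomial setting.
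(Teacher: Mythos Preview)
Your approach is genuinely different from the paper's. The paper does \emph{not} derive \cref{thm_C} from \cref{thm_B} by induction on $\dim X$. Instead it first applies a purely Hardy-field-theoretic ``normal form'' lemma (\cref{cor_normal_form}) to write each $f_i$, up to an $o(1)$ error, as a real linear combination of functions $g_1\prec\cdots\prec g_m$ with $t^{\ell-1}\log t\prec g_j\prec t^\ell$ plus a real polynomial. This converts $v(n)$ (again up to $o(1)$) into a sequence of the shape $u_1^{g_1(n)}\cdots u_m^{g_m(n)}\,e_0^{\binom{n}{0}}\cdots e_M^{\binom{n}{M}}$; Leibman's polynomial theorem then supplies a period $q$ making each $\overline{e_j^{q\Z+r}\Gamma}$ connected, after which a separate result (\cref{thm_G}) directly identifies the orbit closure as $\overline{u_1^\R\cdots u_m^\R e_1^\Z\cdots e_M^\Z c_r\Gamma}$, giving both the single $H$ and the points $x_r$ in one stroke. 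No induction on $\dim X$ occurs; the real induction is the filtration-based ``degree'' induction inside the proof of \cref{thm_G}.

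Your scheme has a genuine gap at the descent step. After the correction you have $y_n\in X_r=\vartheta^{-1}(W_r)$ as a point of $X$, but to invoke the inductive hypothesis you must exhibit $y_n$ \emph{inside $X_r=G_r/\Gamma_r$} as a product $\prod_j (a_j')^{f_j'(n)}$ with $a_j'\in G_r$ commuting and $f_j'\in\Hardy$ satisfying \ref{property_P}. The correction $\epsilon_i$ only arranges that the \emph{torus projection} of $a_1^{g_1(n)}\cdots a_k^{g_k(n)}$ lies in the coset $W_r$; the group element itself still sits in $G^\circ$, and writing $y_n=\tilde h_n\cdot x_r$ with $\tilde h_n\in G_r$ forces $\tilde h_n=a_1^{g_1(n)}\cdots a_k^{g_k(n)}\,\gamma_n\,g_r^{-1}$ for a lattice element $\gamma_n\in\Gamma$ that varies with $n$. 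This $\gamma_n$ encodes the integer part of the polynomial $p_\chi(qn+r)$ hidden in $h_\chi$, and it is not a Hardy-field function of $n$ --- it is a step function. Neither ``a change of Mal'cev coordinates'' nor ``polynomial operations'' removes this obstruction; one cannot in general factor $\tilde h_n$ as commuting generators in $G_r$ raised to functions in $\Hardy$. This is exactly the difficulty the paper sidesteps by separating the polynomial part out \emph{before} touching the nilmanifold (via \cref{cor_normal_form}) and then carrying it along explicitly as the $e_j^{\binom{n}{j}}$ factors. A secondary gap is the final claim that a single $H$ works: your induction hands back, for each residue $r$, a subgroup $H_r\subset G_r$, and ``exactly as in Leibman's proof'' does not explain why these coincide --- in the paper this is automatic because $H$ is given explicitly as the stabilizing subgroup of $\overline{u_1^\R\cdots u_m^\R e_1^\Z\cdots e_M^\Z\Gamma}$, independent of $r$.
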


Note that if $f_1,\ldots,f_k$ are real polynomials then \ref{property_P} is automatically satisfied, which is why Theorems \ref{thm_A} and \ref{thm_C} imply Theorems \ref{thm_leib_crit_non-connected} and \ref{thm_leib_orb_cls}.
More generally, any finite subset of $\{c_1 t^{r_1}+\ldots+c_m t^{r_m}: m\in\N, c_1,\ldots,c_m,r_1,\ldots,r_m\in\R\}$ satisfies \ref{property_P}. This shows that, in contrast to \cref{thm_frantz_i}, Theorems \ref{thm_A} and \ref{thm_C} apply to collections of functions that don't necessarily have different growth. 
But Theorems \ref{thm_A} and \ref{thm_C} do not imply all cases of \cref{thm_frantz_i}, because not every collection of functions $f_1,\ldots,f_k$ satisfying the hypothesis of \cref{thm_frantz_i} also satisfy  \ref{property_P}. For instance, $f_i(t)=t^i\log(t)$ for $i\in\{1,\ldots,k\}$ is such an example.


We also prove analogues of Theorems \ref{thm_A} and \ref{thm_C} that apply to an arbitrary collection of functions  $f_1,\ldots,f_k\in\Hardy$ of polynomial growth, even when \ref{property_P} is not satisfied.
However, in the absence of \ref{property_P} we need to replace \Cesaro{} averages with other, weaker, methods of summation.

\begin{Definition}
\label{def_W-ud}
Let $W\colon\N\to(0,\infty)$ be a non-decreasing sequence, let $w(n)\coloneqq \Delta W(n)= W(n+1)-W(n)$ be its discrete derivative, and assume $W(n)\to\infty$ as $n\to\infty$ and $w(n)\ll 1 $.
A sequence $(x_n)_{n\in\N}$ of points in a nilmanifold $X=G/\Gamma$ is said to be \define{uniformly distributed  with respect to $W$-averages} in $X$ if 
\begin{equation*}
\lim_{N\to\infty}\frac{1}{W(N)}\sum_{n=1}^N w(n) F(x_n) = \int F\d\mu_X
\end{equation*}
holds for every continuous function $F\in\Cont(X)$.
\end{Definition}

When dealing with functions of ``slow growth'' from a Hardy field, it turns out to be necessary to switch form \Cesaro{} averages to $W$-averages to adequately capture the way in which orbits along such functions distribute. For example, if $f(t)$ satisfies $\log\log(t)\prec f(t)\ll \log(t)$ then the sequence $(f(n)\bmod 1)_{n\in\N}$ is not uniformly distributed with respect to \Cesaro{} averages in the unit interval $[0,1)$, but it is uniformly distributed with respect to \define{logarithmic averages}, i.e., $W$-averages where $W(n)=\log(n)$ for all $n\in\N$. Likewise, if $\log\log\log(t)\prec f(t)\ll \log\log(t)$ then the sequence $(f(n)\bmod 1)_{n\in\N}$ is neither uniformly distributed with respect to \Cesaro{} averages nor uniformly distributed with respect to logarithmic averages, but it is uniformly distributed with respect to \define{double-logarithmic averages}, i.e., $W$-averages where $W(n)=\log\log(n)$ for all $n\in\N$ (cf.\ \cref{thm_bosh_W-averages} below).

Theorems \ref{thm_D} and \ref{thm_E} below are generalizations of Theorems \ref{thm_B} and \ref{thm_C} that apply to all finite collections of functions $f_1,\ldots,f_k$ of polynomial growth from a Hardy field. However, the increased generality comes at the cost of having to change the method of summation from \Cesaro{} averages to $W$-averages.

\begin{Maintheorem}
\label{thm_D}
Let $\Hardy$ be a Hardy field, $G$ a simply connected nilpotent Lie group, $\Gamma$ a uniform and discrete subgroup of $G$, and assume $X=G/\Gamma$ is connected.
Suppose
$$
v(n)\,=\, a_1^{f_1(n)}\cdot\ldots\cdot a_k^{f_k(n)},\qquad\forall n\in\N,
$$
where $a_1,\ldots,a_k\in G$ are commuting and $f_1,\ldots,f_k\in \Hardy$ have polynomial growth and satisfy $f_i(\N)\subset \dom(a_i)$.
Then there exists $W\in \Hardy$ with $1\prec W(t) \ll t$ such that the following are equivalent:
\begin{enumerate}
[label=(\roman{enumi}),ref=(\roman{enumi}),leftmargin=*]
\item
\label{condition_i_thmD}
The sequence $(v(n)\Gamma)_{n\in\N}$ is uniformly distributed with respect to $W$-averages in the nilmanifold $X=G/\Gamma$.
\item\label{condition_ii_thmD}
The sequence $(\vartheta(v(n)\Gamma))_{n\in\N}$ is uniformly distributed with respect to $W$-averages in the maximal factor torus $[G^\circ,G^\circ]\backslash X$.
\end{enumerate}
\end{Maintheorem}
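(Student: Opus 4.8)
The plan is to establish the equivalence of \ref{condition_i_thmD} and \ref{condition_ii_thmD} by running the inductive equidistribution scheme behind \cref{thm_B} with $W$-averages in place of \Cesaro{} averages, the function $W$ being produced by the torus base case. The implication \ref{condition_i_thmD}$\Rightarrow$\ref{condition_ii_thmD} is immediate for any admissible $W$: given $F\in\Cont([G^\circ,G^\circ]\backslash X)$ one has $F\circ\vartheta\in\Cont(X)$, and \cref{def_W-ud} applied to $F\circ\vartheta$ yields the conclusion on the factor torus. So all the work is in the reverse implication and in the choice of $W$.

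I would induct on $\dim G$. When $X$ is a torus the statement is exactly the $W$-average refinement of Boshernitzan's equidistribution theorem on the interval (\cref{thm_bosh_W-averages}); this is also where $W$ gets chosen. The point is that the van der Corput/PET reduction carried out in the inductive step never produces anything worse than exponential sums of the shape $\frac1{W(N)}\sum_{n\le N}w(n)\,e\big(\psi(n)\big)$, where $\psi$ runs over a finite list of growth classes assembled from $\R$-linear combinations of the $f_i$, their derivatives, and polynomial corrections; because the $f_i$ have polynomial growth, only finitely many such classes are relevant, so Boshernitzan's machinery applied to this single finite family produces one $W\in\Hardy$ with $1\prec W\prec t$ (hence $w(n)\to0$) for which each of these sums tends to $0$ unless $\psi$ lies within $\Oh(1)$ of a polynomial with rational coefficients.

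For the inductive step, let $G_s$ be the last nontrivial term of the lower central series; then $X':=G/(G_s\Gamma)$ is a nilmanifold of strictly smaller dimension with the same maximal factor torus as $X$, so \ref{condition_ii_thmD} together with the inductive hypothesis makes $(v(n)(G_s\Gamma))_{n\in\N}$ $W$-uniformly distributed in $X'$. Expanding $F\in\Cont(X)$ into vertical Fourier components with respect to the central torus $G_s^\circ/(G_s^\circ\cap\Gamma)$, the zeroth component descends to $X'$ and is already controlled, so it suffices to show $\frac1{W(N)}\sum_{n\le N}w(n)F_\chi(v(n)\Gamma)\to 0$ for every nontrivial central character $\chi$. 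Here I would invoke van der Corput's inequality for $W$-averages --- legitimate because $w(n)\to 0$, $W(N)\to\infty$, and $\sum_{n\le N}|w(n+h)-w(n)|=\oh(W(N))$ for each fixed $h$ --- followed by PET induction, at each stage replacing $v(n+h)v(n)^{-1}$ by its effect on the central coordinate, which is governed by the lower-complexity quantities $f_i(n+h)-f_i(n)$. Iterating, the obstruction collapses to one of the sums above, which by the choice of $W$ vanishes unless $\psi$ is within $\Oh(1)$ of a rational polynomial; unwinding that degeneracy through the PET tower exhibits a nontrivial horizontal character of $X$ on which $v$ resonates, i.e.\ the projection of $(v(n)\Gamma)_{n\in\N}$ to the maximal factor torus fails to be $W$-uniformly distributed --- contradicting \ref{condition_ii_thmD}. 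Hence \ref{condition_i_thmD} holds.

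The main obstacle, I expect, is not the nilpotent geometry but the Hardy-field bookkeeping. First, one must check that every analytic device in the \Cesaro{} proof of \cref{thm_B} --- van der Corput, PET, passing to subprogressions, smoothing against Lipschitz functions on $X$ --- remains valid after the change of averaging method; this reduces to the uniform-in-$h$ bound $\sum_{n\le N}|w(n+h)-w(n)|=\oh(W(N))$ and to the fact that the nested averages produced along the way stay within the class of $W$-averages, both guaranteed by $W\in\Hardy$ and $W\ll t$. Second, and more delicate, one must show a single $W$ can simultaneously resolve the whole finite family of reduced functions: combinations that already sit at distance $\succ\log t$ from every polynomial must stay equidistributed under $W$-averaging, while the genuinely slow combinations --- those at distance comparable to an iterated logarithm from a polynomial --- dictate how slowly $W$ may grow, and these two requirements must be shown compatible. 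This compatibility is precisely what \ref{property_P} was buying in \cref{thm_B}, and in its absence it gets absorbed into the Boshernitzan-type selection of $W$.
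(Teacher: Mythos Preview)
Your high-level architecture matches the paper's: reduce to a ``normal form'', handle the abelian/torus case via a $W$-average Boshernitzan theorem (this is where $W$ is manufactured), and for the nilpotent step decompose into central characters and apply a van der Corput reduction. You also correctly isolate the real difficulty as the selection of a single $W$ compatible with the entire finite family of reduced functions; in the paper this is codified as \ref{property_P_W} and produced by \cref{cor_finding_W}.

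Where your proposal diverges from the paper, and where I believe it has a genuine gap, is the inductive step after van der Corput. You say you induct on $\dim G$ and that after van der Corput one ``replaces $v(n+h)v(n)^{-1}$ by its effect on the central coordinate''. But $v(n+h)v(n)^{-1}$ is not central in general, so $F_\chi\big(v(n+h)\Gamma\big)\overline{F_\chi\big(v(n)\Gamma\big)}$ does not factor as $\chi(\text{something})\cdot|F_\chi(v(n)\Gamma)|^2$; the differenced expression naturally lives on $X\PLH X$, whose dimension is larger, so a dimension induction cannot close. The paper handles this by passing not to $X\PLH X$ but to the \emph{relatively independent self-product} $X\plh_L X$ (\cref{sec_prelims-relative-independent-joinings}), where $L$ is the smallest closed rational normal subgroup containing the ``high-degree'' generators; on this smaller space $\phi\otimes\overline{\phi}$ is again a central character with zero mean, and the sequence $(v(n+h),v(n))$ has strictly smaller \emph{degree} in the sense of \cref{def_degree_of_g} (a filtration-based invariant, not $\dim G$). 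That degree, not dimension, is the inductive parameter. Your ``PET induction'' gestures at this but does not supply the mechanism that makes it terminate on a nilmanifold.

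Two further points your proposal omits. First, the paper splits off a \emph{sub-linear} case ($f_k\prec t$, no polynomial part) and treats it with a different van der Corput lemma tailored to sub-linear Hardy-field functions (\cref{prop_slow_vdC_W-averages}); the standard van der Corput trick does not apply there because $\Delta_h f_i$ tends to zero. Second, before any induction the paper first reduces to the structured situation of \cref{thm_H} via the normal-form \cref{cor_normal_form}, which is what guarantees that the family of reduced functions is finite and that the chosen $W$ governs all of them; your remark that ``only finitely many such classes are relevant'' is true, but it is this reduction that makes it true.
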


Aside from this change in the averaging scheme, \cref{thm_D} implies \cref{thm_frantz_i} as well as Frantzinakis' aforementioned conjecture \cite[Conjecture on p.~357]{Frantzikinakis09}. 
Moreover, \cref{thm_D} provides the following aesthetic corollary. 

\begin{Corollary}
\label{cor_of_thm_D}
Let $\Hardy$ be a Hardy field, $G$ a simply connected nilpotent Lie group, $\Gamma$ a uniform and discrete subgroup of $G$, and assume $X=G/\Gamma$ is connected.
Suppose
$$
v(n)\,=\, a_1^{f_1(n)}\cdot\ldots\cdot a_k^{f_k(n)},\qquad\forall n\in\N,
$$
where $a_1,\ldots,a_k\in G$ are commuting and $f_1,\ldots,f_k\in \Hardy$ have polynomial growth and satisfy $f_i(\N)\subset \dom(a_i)$.
Then the following are equivalent:
\begin{enumerate}
[label=(\roman{enumi}),ref=(\roman{enumi}),leftmargin=*]
\item
\label{condition_i_thmD_cor}
The sequence $(v(n)\Gamma)_{n\in\N}$ is dense in the nilmanifold $X=G/\Gamma$.
\item\label{condition_ii_thmD_cor}
The sequence $(\vartheta(v(n)\Gamma))_{n\in\N}$ is dense in the maximal factor torus $[G^\circ,G^\circ]\backslash X$.
\end{enumerate}
\end{Corollary}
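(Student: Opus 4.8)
The plan is to close up the cycle of implications
\[
\text{\ref{condition_i_thmD_cor}}\ \Longrightarrow\ \text{\ref{condition_ii_thmD_cor}}\ \Longrightarrow\ (\ast)\ \Longrightarrow\ (\ast\ast)\ \Longrightarrow\ \text{\ref{condition_i_thmD_cor}},
\]
where $(\ast)$ abbreviates ``$(\vartheta(v(n)\Gamma))_{n\in\N}$ is uniformly distributed with respect to $W$-averages in $[G^\circ,G^\circ]\backslash X$'' and $(\ast\ast)$ abbreviates ``$(v(n)\Gamma)_{n\in\N}$ is uniformly distributed with respect to $W$-averages in $X$'', for a single sequence $W\in\Hardy$ to be fixed below. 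Three of the four arrows come for free: \ref{condition_i_thmD_cor}$\Rightarrow$\ref{condition_ii_thmD_cor} holds because $\vartheta$ is a continuous surjection and hence maps dense sets to dense sets; $(\ast)\Rightarrow(\ast\ast)$ is precisely the implication \ref{condition_ii_thmD}$\Rightarrow$\ref{condition_i_thmD} of \cref{thm_D}, for the $W$ selected below; and $(\ast\ast)\Rightarrow$\ref{condition_i_thmD_cor} records the elementary fact that $W$-uniform distribution forces density --- if $(v(n)\Gamma)_{n\in\N}$ avoided a nonempty open set $U\subseteq X$, then picking a continuous $F\geq0$ supported in $U$ with $\int F\d\mu_X>0$ and writing $w=\Delta W$, the equality $\frac1{W(N)}\sum_{n\leq N}w(n)F(v(n)\Gamma)=0$ for all $N$ would contradict \cref{def_W-ud}.

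The one substantive arrow is \ref{condition_ii_thmD_cor}$\Rightarrow(\ast)$, which is a statement purely about the maximal factor torus $[G^\circ,G^\circ]\backslash X\cong\T^d$. After the standard reductions bringing one to the case where $G$ is connected and $a_i\in G^\circ$ (available because $X$ connected gives $G=G^\circ\Gamma$; these are the same reductions used in \cite{Leibman05a} and in the proof of \cref{thm_D}), quotienting out $[G^\circ,G^\circ]$ turns the orbit into a sequence of the explicit form
\[
\vartheta(v(n)\Gamma)\;=\;f_1(n)b_1+\ldots+f_k(n)b_k\pmod{\Z^d}
\]
for suitable $b_1,\ldots,b_k\in\R^d$. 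By Weyl's criterion for $W$-averages, $(\ast)$ is equivalent to: for every nonzero $m\in\Z^d$, the function $g_m(t)\coloneqq\sum_i\langle m,b_i\rangle f_i(t)$, which lies in a Hardy field and has polynomial growth, has $(g_m(n)\bmod1)_{n\in\N}$ uniformly distributed with respect to $W$-averages in $[0,1)$. On the other hand, if \ref{condition_ii_thmD_cor} holds then each coordinate projection $(\langle m,\vartheta(v(n)\Gamma)\rangle)_{n\in\N}=(g_m(n)\bmod 1)_{n\in\N}$ is dense in $[0,1)$; and the one-dimensional Boshernitzan-type dichotomy for $W$-averages --- \cref{thm_bosh_W-averages} --- says precisely that a polynomial-growth Hardy sequence mod $1$ which is dense is automatically uniformly distributed with respect to $W$-averages for a suitable $W$. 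Feeding this back through Weyl's criterion yields $(\ast)$, closes the cycle, and hence proves both implications of \cref{cor_of_thm_D}.

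I expect the only real difficulty to be the bookkeeping around the averaging scheme: one must produce a \emph{single} $W\in\Hardy$ that is simultaneously admissible for \cref{thm_D} and, via \cref{thm_bosh_W-averages}, for every one of the functions $g_m$. This should present no genuine obstacle, because the growth type of $g_m=\sum_i\langle m,b_i\rangle f_i$ is governed only by which of the finitely many comparability classes among $f_1,\ldots,f_k$ survives the linear combination; hence $\{g_m:m\in\Z^d\setminus\{0\}\}$ realizes only finitely many growth types, and choosing $W$ to grow at least as slowly as the threshold attached to each of them and as the $W$ furnished by \cref{thm_D} --- a common lower bound that can be found within $\Hardy$ --- does the job. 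The remaining points --- that $g_m$ lies in a Hardy field (enlarging $\Hardy$ by the finitely many real constants $\langle m,b_i\rangle$ if necessary) and has polynomial growth, that the passage to $a_i\in G^\circ$ is harmless for the density statement, and that Weyl's criterion is valid for the positive-weight averages of \cref{def_W-ud} --- are routine. Finally, I would emphasize that \cref{thm_bosh_W-averages} is invoked in its sharp ``dense $\Rightarrow$ $W$-u.d.'' form; this is exactly what upgrades \cref{cor_of_thm_D} from a one-sided implication to a genuine equivalence.
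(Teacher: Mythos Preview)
Your cycle strategy is sound, and three of the four arrows are indeed automatic. The problem is the substantive arrow \ref{condition_ii_thmD_cor}$\Rightarrow(\ast)$, where you invoke \cref{thm_bosh_W-averages} in a form it does not have. That theorem does \emph{not} say ``a polynomial-growth Hardy sequence that is dense mod $1$ is $W$-uniformly distributed for a suitable $W$''; it requires the explicit growth hypothesis $t^{\ell-1}\log(W(t))\prec f(t)\prec t^\ell$. Your functions $g_m=\sum_i\langle m,b_i\rangle f_i$ need not satisfy this: for instance $g_m$ could be asymptotic to $\alpha t$ with $\alpha$ irrational (so that $g_m(n)\bmod 1$ is dense but $g_m$ fails the condition for every $\ell$), or could equal $t+\log\log t$ (dense mod $1$, again outside the scope of \cref{thm_bosh_W-averages} as stated). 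In general $g_m$ will have a polynomial part that must be stripped off before \cref{thm_bosh_W-averages} can bite, and that polynomial part must then be handled separately by Weyl's theorem; this is exactly the content of the abelian case of \cref{thm_H} (via \cref{cor_normal_form}), not of \cref{thm_bosh_W-averages} alone. Your ``finitely many growth types'' remark does not address this, because the obstruction is proximity to polynomials, not the leading growth rate.

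The clean fix --- and the argument the paper has in mind --- bypasses the one-dimensional analysis entirely. Apply \cref{thm_E}: there exist $W$, $q$, a connected closed subgroup $H$, and points $x_0,\ldots,x_{q-1}$ such that $(v(qn+r)\Gamma)_{n\in\N}$ is $W$-uniformly distributed in $Y_r=Hx_r$. The orbit closure of $(v(n)\Gamma)$ is therefore $\bigcup_r Y_r$, and likewise the closure of $(\vartheta(v(n)\Gamma))$ is $\bigcup_r\vartheta(Y_r)$, a finite union of cosets of a fixed closed connected subgroup of the torus. Condition \ref{condition_ii_thmD_cor} forces this finite union to equal the whole (connected) torus, which by Baire category is impossible unless each $\vartheta(Y_r)$ is already the full torus; then the standard fact (a consequence of \cref{thm_leib_crit_non-connected}, used verbatim at the end of \cref{sec_reduction}) that a sub-nilmanifold equals $X$ iff its projection equals the maximal factor torus gives $Y_r=X$ for each $r$, hence \ref{condition_i_thmD_cor}. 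If you prefer to stay with your cycle through \cref{thm_D}, replace the appeal to \cref{thm_bosh_W-averages} by an appeal to \cref{thm_E} in the torus (equivalently, the abelian case of \cref{thm_H}): it yields directly that $(\vartheta(v(n)\Gamma))$ is $W$-uniformly distributed in its closure, so density gives $(\ast)$.
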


Finally, let us state our last main result.

\begin{Maintheorem}
\label{thm_E}
Let $G$ be a simply connected nilpotent Lie group, $\Gamma$ a uniform and discrete subgroup of $G$, and $\Hardy$ a Hardy field.
Suppose
$$
v(n)\,=\, a_1^{f_1(n)}\cdot\ldots\cdot a_k^{f_k(n)},\qquad\forall n\in\N,
$$
where $a_1,\ldots,a_k\in G$ are commuting and $f_1,\ldots,f_k\in \Hardy$ have polynomial growth and satisfy $f_i(\N)\subset \dom(a_i)$.
Then there exists $W\in \Hardy$ with $1\prec W(t) \ll t$, a closed and connected subgroup $H$ of $G$, and points $x_0,x_1,\ldots,x_{q-1}\in X$ such that $Y_r\coloneqq H x_r$ is a closed sub-nilmanifold of $X$ and $(v(qn+r)\Gamma)_{n\in\N}$ is uniformly distributed with respect to $W$-averages in $Y_r$ for all $r=0,1,\ldots,q-1$.
\end{Maintheorem}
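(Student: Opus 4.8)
\emph{Proof sketch.} \cref{thm_E} relates to \cref{thm_C} exactly as \cref{thm_D} relates to \cref{thm_B}, and the plan is to deduce it from \cref{thm_C} by the same device that drives the proof of \cref{thm_D}: a Hardy-field reparametrization of the summation variable that trades \Cesaro{} averages for $W$-averages and, in the process, replaces an arbitrary family of polynomial-growth functions from $\Hardy$ by one satisfying \ref{property_P}. The first step is thus to produce a single $W\in\Hardy$ with $1\prec W\ll t$ and a Hardy-field change of scale between variables $n$ and $m$ (with $m$ of the order of $W(n)$) under which $f_1,\ldots,f_k$ become polynomial-growth functions $g_1,\ldots,g_k\in\Hardy$ that \emph{do} satisfy \ref{property_P}, and such that for every $F\in\Cont(X)$ the $W$-averages of $\big(F(v(n)\Gamma)\big)_{n\in\N}$ and the \Cesaro{} averages of $\big(F(\tilde v(m)\Gamma)\big)_{m\in\N}$ have the same limiting behaviour, where $\tilde v(m)\coloneqq a_1^{g_1(m)}\cdot\ldots\cdot a_k^{g_k(m)}$. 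The existence of such a $W$ is a statement purely about growth rates in $\Hardy$ (in the spirit of Boshernitzan's work, and the same one already needed for \cref{thm_D}); the identification of the two averaging schemes comes from Abel summation, the comparison of sums with integrals, and the smoothness of Hardy-field functions, which is what lets one absorb the errors introduced by rounding $W^{-1}(m)$ to an integer (and restore the domain condition $g_i(\N)\subset\dom(a_i)$).

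Granting this, we apply \cref{thm_C} to $(\tilde v(m))_{m\in\N}$, whose driving functions $g_1,\ldots,g_k$ satisfy \ref{property_P}: this yields a closed and connected subgroup $H\le G$, an integer $q$, and points $x_0,\ldots,x_{q-1}\in X$ with each $Y_r\coloneqq H x_r$ a closed sub-nilmanifold of $X$ and $(\tilde v(qm+r)\Gamma)_{m\in\N}$ uniformly distributed in $Y_r$; these are precisely the sub-nilmanifolds we want. It remains to transport the \Cesaro{}-equidistribution of $\tilde v$ along each class $m\equiv r\bmod q$ back to a $W$-equidistribution statement for $v$ (again by the average-identification of the first step, now run along that class) and to recognise the resulting set of indices $n$ as an honest arithmetic progression. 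The cleanest way to secure the latter is a preliminary reduction, carried out \emph{before} reparametrizing, to the case $q=1$: one passes to an arithmetic progression $n\mapsto q_0 n+r_0$ that clears the ``rational part'' of the orbit --- the finitely many finite-cyclic contributions coming from the $a_i$ that fail to lie in $G^\circ$ and from rational leading terms in the relevant growth expansions --- after which \cref{thm_C} applies to the reparametrized sequence with no further splitting, and the integer $q$ in the conclusion is this $q_0$.

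The step I expect to be the main obstacle is exactly this interface between the reparametrization $m\leftrightarrow n$ and the passage to arithmetic progressions: a congruence class in $m$ does not pull back to a congruence class in $n$, so the reduction to $q=1$ must be arranged so that all finitely many sources of disconnectedness of the orbit closure are detectable already from $a_1,\ldots,a_k$ and $f_1,\ldots,f_k$, hence removable by an $n$-progression chosen before $W$ enters the picture; keeping the rounding errors in the change of variables small enough not to perturb equidistribution on $X$ is the other delicate point. Both, however, are handled in the proof of \cref{thm_D}, and with those in hand the present argument is essentially a transcription of that proof with \cref{thm_C} in the role of \cref{thm_B}. A reparametrization-free alternative is induction on $\dim X$: \cref{thm_D} settles the case where the projection of the orbit to the maximal factor torus is already $W$-uniformly distributed, and otherwise one locates the proper sub-nilmanifold $\vartheta^{-1}(C_r)$ cut out by the subtorus coset $C_r$ in which that projection concentrates, rewrites $v(qn+r)$ as a sequence of the required form inside it, and recurses --- the delicate points there being this rewriting, a Hardy-field version of a step in Leibman's proof of \cref{thm_leib_orb_cls}, and the merging of the finitely many admissible functions $W$ produced along the way into a single $W\in\Hardy$ with $1\prec W\ll t$.
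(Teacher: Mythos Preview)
Your main approach---reducing \cref{thm_E} to \cref{thm_C} by a Hardy-field change of variable $m\approx W(n)$ so that the reparametrized functions $g_i(m)\approx f_i(W^{-1}(m))$ satisfy \ref{property_P}---has a genuine gap, and it is not how the paper (or the proof of \cref{thm_D}) proceeds. The problem is that such a reparametrization cannot in general exist. Take $f_1(t)=\log t$ and $f_2(t)=t$. Any $W$ with $1\prec W\ll t$ gives $g_2(m)=W^{-1}(m)$, and for $g_2$ to have polynomial growth one needs $W(t)\gg t^{1/d}$ for some $d$; but then $g_1(m)=\log(W^{-1}(m))$ satisfies $\log m\ll g_1(m)\ll d\log m$, so $g_1$ is exactly of order $\log m$ and fails \ref{property_P} (take $p=0$: neither $|g_1|\ll 1$ nor $|g_1|\succ\log m$). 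Thus there is no change of scale turning this pair into a family with \ref{property_P} and polynomial growth simultaneously. Your assertion that ``the existence of such a $W$ \ldots\ is the same one already needed for \cref{thm_D}'' is a misconception about how \cref{thm_D} is proved.

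What the paper actually does is the opposite of reduction to the \Cesaro{} case: it \emph{reproves the entire engine with $W$-averages}. Concretely, one first uses \cref{cor_finding_W} to choose $W\in\Hardy$ with $1\prec W\ll t$ such that $f_1,\ldots,f_k$ satisfy \ref{property_P_W}, and then runs the normal-form reduction of \cref{sec_reduction} (via \cref{cor_normal_form} with $V=W$) to reduce \cref{thm_E} not to \cref{thm_G} but to its $W$-average analogue \cref{thm_H}. The latter is then proved from scratch by repeating the three-stage argument of \cref{sec_proof_orbit_closure_cesaro} with $W$-averages throughout: the abelian case via a $W$-average Boshernitzan-type statement (\cref{thm_bosh_W-averages}), the sub-linear case via the $W$-average van der Corput lemma (\cref{prop_slow_vdC_W-averages}), and the general case via the weighted van der Corput inequality (\cref{prop_vdC} with $p_n=w(n)$). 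No reparametrization back to \Cesaro{} averages occurs at any stage; the $W$ is fixed once and carried through the whole induction, which also sidesteps the ``merging of finitely many $W$'s'' issue you flag in your alternative outline.

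Your reparametrization-free alternative (induction on $\dim X$, using \cref{thm_D} to detect concentration on a proper sub-nilmanifold) is a more plausible route, and closer in spirit to Leibman's argument for \cref{thm_leib_orb_cls}; but it is not the paper's path either, and the ``rewriting $v(qn+r)$ in the required form inside the smaller nilmanifold'' step you identify as delicate is exactly what the normal-form machinery (\cref{lem_dom_of_group_lements}, \cref{cor_normal_form}) plus \cref{thm_H} handles uniformly in the paper's approach.
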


The connection between $W$ and $f_1,\ldots,f_k$ in Theorems \ref{thm_D} and \ref{thm_E} is given by a variant of \ref{property_P}:
\vspace{.5em}

\parbox{\dimexpr\linewidth-4.7em}{
\begin{enumerate}
[label=\textbf{Property\,(P$_W$):},ref=Property\,(P$_W$),leftmargin=*]
\item\label{property_P_W}
For all $c_1,\ldots ,c_k\in\R$ and $n_1,\ldots,n_k\in\N\cup\{0\}$ the function
$
f(t)\, =\, c_1 f_1^{(n_1)}(t)+\ldots+c_k f_k^{(n_k)}(t)
$
has the property that for every $p\in\R[t]$ either $|f(t)-p(t)|\ll 1$ or $|f(t)-p(t)| \succ \log(W(t))$.
\end{enumerate}
}
\vspace{.5em}

\noindent As we will show below (see \cref{cor_finding_W}), for an arbitrary finite collection of functions from a Hardy field $f_1,\ldots,f_k$ of polynomial growth there exists some $W$ with $1\prec W(t)\ll t$ such that \ref{property_P_W} holds. 
Moreover, it will be clear from the proofs of Theorems \ref{thm_D} and \ref{thm_E} that if $f_1,\ldots,f_k$ satisfy \ref{property_P_W} for some $W$ then we can take this $W$ to be the same as the one appearing in the statements of Theorems \ref{thm_D} and \ref{thm_E}.

\subsection{Applications to Combinatorics}
\label{sec_applications}

The motivation for obtaining Theorems \ref{thm_A}, \ref{thm_C}, \ref{thm_D}, and \ref{thm_E} is their connection to additive combinatorics. Indeed, these results play a crucial role in a forthcoming paper \cite{BMR20draft}, where a far-reaching generalization of \Szemeredi{}'s theorem on arithmetic progressions is explored.
To motivate our combinatorial results in this direction, let us first recall the statement of \Szemeredi{}'s Theorem.
The \define{upper density} of a set $E\subset\N$ is defined as $\overline{d}(E)=\limsup_{N\to\infty}|E\cap\{1,\ldots,N\}|/N$.


\begin{Theorem}[\Szemeredi{}'s Theorem]{}
\label{thm_szmeredi}
For any set $E\subset \N$ of positive upper density and any $k\in\N$ there exist $a,n\in\N$ such that $\{a,a+n,\ldots,a+(k-1)n\}\subset E$.
\end{Theorem}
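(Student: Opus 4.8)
The plan is to prove Szemerédi's Theorem by the ergodic-theoretic route of Furstenberg, which is the approach most in keeping with the framework of this paper. There are two ingredients: the \emph{Furstenberg correspondence principle}, converting the combinatorial statement into a recurrence statement about measure-preserving systems, and \emph{Furstenberg's multiple recurrence theorem}, which supplies that recurrence. For the correspondence principle, given $E\subset\N$ with $\overline{d}(E)>0$, fix a sequence $N_i\to\infty$ along which the $\limsup$ defining $\overline{d}(E)$ is attained. On the shift space $X=\{0,1\}^{\Z}$ with the left shift $T$, let $A=\{x\in X:x_0=1\}$ and let $\omega\in X$ be (any $\Z$-extension of) the indicator of $E$. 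A weak-$\ast$ subsequential limit of the empirical measures $\frac{1}{N_i}\sum_{n=1}^{N_i}\delta_{T^n\omega}$ yields a $T$-invariant Borel probability measure $\mu$ on $X$ with $\mu(A)=\overline{d}(E)$ and, for every finite tuple $n_1,\dots,n_{k-1}\in\N$,
\[
\overline{d}\big(E\cap(E-n_1)\cap\dots\cap(E-n_{k-1})\big)\ \geq\ \mu\big(A\cap T^{-n_1}A\cap\dots\cap T^{-n_{k-1}}A\big).
\]
Specializing to $n_j=jn$, any $n\geq 1$ with positive right-hand side forces $E\cap(E-n)\cap\dots\cap(E-(k-1)n)\neq\emptyset$, and any $a$ in this set gives $\{a,a+n,\dots,a+(k-1)n\}\subset E$; so it suffices to prove multiple recurrence.

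\textbf{Multiple recurrence.} I would establish that for every invertible measure-preserving system $\xbmt$, every $A$ with $\mu(A)>0$, and every $k\in\N$,
\[
\liminf_{N\to\infty}\frac{1}{N}\sum_{n=1}^{N}\mu\big(A\cap T^{-n}A\cap\dots\cap T^{-(k-1)n}A\big)\ >\ 0.
\]
Call a system with this property (for all such $A$, with $k$ fixed) an \emph{SZ system}; the strategy is to show every system is SZ by transfinite induction along the \emph{Furstenberg--Zimmer tower}. That tower attaches to any system a canonical increasing transfinite chain of factors $\mathcal{B}_0\subset\mathcal{B}_1\subset\dots\subset\mathcal{B}_\eta=\mathcal{B}$, with $\mathcal{B}_0$ trivial, each successor a compact (isometric) extension of its predecessor, each limit stage an inverse limit, and $\mathcal{B}$ a relatively weakly mixing extension of the maximal distal factor $\mathcal{B}_\eta$ — and since relative weak mixing makes the relevant multilinear averages over $\mathcal{B}$ agree with those over $\mathcal{B}_\eta$, it is enough to induct up the distal tower. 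The inductive lemmas are: (a) the trivial system is SZ (the average there equals $\mu(A)>0$); (b) a compact extension of an SZ system is SZ; (c) an inverse limit of SZ systems is SZ.

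\textbf{Main obstacle.} The crux is lemma (b), the compact-extension step; (a) and (c) are soft. Its proof requires disintegrating $\mu=\int\mu_y\d\mu_{\mathcal{Y}}(y)$ over the factor $\mathcal{Y}$, working inside the relative products $\mathcal{X}\times_{\mathcal{Y}}\cdots\times_{\mathcal{Y}}\mathcal{X}$, and decomposing an arbitrary $\Ltwo$ function into a \emph{relatively almost periodic} part — whose fiberwise $T$-orbit is precompact, so a fiberwise multidimensional van der Waerden/Khintchine argument gives uniform recurrence along a syndetic set of $n$, with positivity supplied by the SZ property of $\mathcal{Y}$ on a suitable positive-measure base set — plus a remainder killed by a van der Corput (double-averaging) estimate. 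Getting the relatively-almost-periodic versus relatively-mixing dichotomy correct, and propagating the fiberwise recurrence down $\pi\colon\mathcal{X}\to\mathcal{Y}$, is the technical heart; the correspondence principle and the handling of the weakly mixing top are comparatively routine. As an alternative to the transfinite induction, one may instead invoke Host--Kra/Ziegler structure theory, which identifies the characteristic factor for these averages as an inverse limit of nilsystems; on a nilmanifold $X=G/\Gamma$ the orbit $n\mapsto(T^nx,\dots,T^{(k-1)n}x)$ equidistributes in a sub-nilmanifold by Leibman's Equidistribution Theorem (\cref{thm_leib_orb_cls}, the polynomial case $p_j(n)=jn$), from which positivity of the limiting average follows by a short argument on that sub-nilmanifold.
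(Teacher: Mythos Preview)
Your outline of the Furstenberg ergodic-theoretic proof is a correct and standard route to \Szemeredi{}'s Theorem, but note that the paper does not prove this statement at all: \cref{thm_szmeredi} is merely quoted as a classical result in the motivational subsection on combinatorial applications, with no proof or sketch given. So there is nothing to compare against --- your proposal supplies a proof where the paper supplies none.

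That said, a brief remark on the proposal itself: what you have written is an accurate high-level roadmap of Furstenberg's argument (correspondence principle, then multiple recurrence via the Furstenberg--Zimmer tower), and the identification of the compact-extension step as the crux is correct. But it remains a sketch rather than a proof: the actual execution of lemma~(b) --- the relatively-almost-periodic decomposition, the colouring/van der Waerden argument on fibers, and the careful propagation of syndeticity --- is substantial and is not carried out here. The alternative route you mention via Host--Kra structure theory and \cref{thm_leib_orb_cls} is also legitimate, and is closer in spirit to the paper's toolkit, but again the deduction of positivity from equidistribution on the sub-nilmanifold is asserted rather than argued. If this were being submitted as a proof, those gaps would need to be filled; as a plan, it is sound.
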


\Szemeredi{}'s Theorem has been generalized numerous times and in many different directions. One of the most noteworhty extensions is due to Bergelson and Leibman in \cite{BL96}, where a polynomial version was obtained. The following theorem pertains to the one-dimensional case of their result.

\begin{Theorem}[Polynomial \Szemeredi{} Theorem]{}
\label{thm_poly_szmeredi}
For any set $E\subset \N$ of positive upper density and any polynomials $p_1,\ldots,p_k\in\Z[t]$ satisfying $p_1(0)=\ldots=p_k(0)=0$ there exist $a,n\in\N$ such that $\{a,\, a+p_1(n),\ldots,a+p_k(n)\}\subset E$.
\end{Theorem}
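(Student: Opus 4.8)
The plan is to deduce the statement from its ergodic counterpart via the Furstenberg correspondence principle, to reduce that counterpart to a question about nilsystems, and then to run the equidistribution machinery — in particular Leibman's Equidistribution Theorem (\cref{thm_leib_orb_cls}) — to identify the limits involved. Given $E\subseteq\N$ with $\overline{d}(E)>0$, the correspondence principle furnishes an invertible measure-preserving system $\xbmt$ and a set $A\in\mathcal B$ with $\mu(A)\geq\overline{d}(E)$ such that
\[
\overline{d}\big(E\cap(E-p_1(n))\cap\cdots\cap(E-p_k(n))\big)\ \geq\ \mu\big(A\cap T^{-p_1(n)}A\cap\cdots\cap T^{-p_k(n)}A\big)
\]
for every $n$; hence it suffices to prove that
\[
\liminf_{N\to\infty}\frac1N\sum_{n=1}^{N}\mu\big(A\cap T^{-p_1(n)}A\cap\cdots\cap T^{-p_k(n)}A\big)\ >\ 0 .
\]

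Next I would reduce this to the case of a nilsystem. By the now-standard theory of characteristic factors for polynomial multiple ergodic averages (resting on the Host--Kra structure theorem, cf.\ \cite{HK18}), the averages $\frac1N\sum_{n\leq N}\prod_{i=1}^{k}T^{p_i(n)}f_i$ converge in $\Ltwo(\mu)$ and are controlled by a factor that is an inverse limit of nilsystems. Passing to that factor, approximating by a single nilsystem, using the ergodic decomposition, and using that an ergodic nilrotation cyclically permutes the finitely many connected components, one is reduced to the situation in which $X=G/\Gamma$ is a \emph{connected} nilmanifold, $\mu=\mu_X$ is its Haar measure, and $T$ is translation by an element $a\in G$. (Restricting to one component replaces $n$ by $qn$ and each $p_i$ by $m\mapsto p_i(qm)/q$, which are again polynomials vanishing at $0$, so the shape of the problem is preserved.)

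On such a nilsystem I would apply \cref{thm_leib_orb_cls} to the sequence $g(n)=\big(a^{p_1(n)},\ldots,a^{p_k(n)}\big)$ in $G^{k}$ acting on $X^{k}=G^{k}/\Gamma^{k}$: its entries commute and its exponents are polynomial, so the theorem produces, for $\mu_X$-a.e.\ $x$, sub-nilmanifolds $Y_{x,0},\ldots,Y_{x,q-1}$ of $X^{k}$ in each of which $n\mapsto g(qn+r)\cdot(x,\ldots,x)$ is uniformly distributed. Since $p_i(0)=0$ we have $g(0)=e$, so the diagonal point $(x,\ldots,x)=g(0)\cdot(x,\ldots,x)$ lies in $Y_{x,0}$ (equidistribution being insensitive to adjoining the $n=0$ term). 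Writing the recurrence average as $\int_A\big(\frac1N\sum_{n\leq N}\prod_{i=1}^{k}\mathbf{1}_A(a^{p_i(n)}x)\big)\d\mu_X(x)$, approximating $\mathbf{1}_A$ from below by continuous functions, applying \cref{thm_leib_orb_cls} to evaluate the inner average, and using Fatou's lemma, one obtains
\[
\liminf_{N\to\infty}\frac1N\sum_{n=1}^{N}\mu\Big(A\cap\bigcap_{i=1}^{k}T^{-p_i(n)}A\Big)\ \geq\ \int_A\ \frac1q\sum_{r=0}^{q-1}\mu_{Y_{x,r}}\!\big(A\times\cdots\times A\big)\ \d\mu_X(x),
\]
where $q=q(x)$ and $Y_{x,r}=Y_{x,r}(x)$.

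The remaining task — showing the right-hand side is strictly positive — is the crux, and where the real difficulty lies: the equidistribution theorem identifies \emph{what} the limit is, an average of Haar masses of sub-nilmanifolds passing through the diagonal points $(x,\ldots,x)$, but not that those masses are positive for a positive-measure set of $x\in A$. I would establish this positivity by a PET/van der Corput induction on the complexity (the weight--degree data) of the family $\{p_1,\ldots,p_k\}$: the van der Corput inequality bounds the averages in question by averages over the family $\{p_i(n+h)-p_j(n)\}$ of strictly smaller complexity, to which the inductive hypothesis applies, the base cases being single-polynomial recurrence (Furstenberg--\Sarkozy) and \cref{thm_szmeredi} itself; on the nilsystem this induction is fed by the topological polynomial van der Waerden theorem of \cite{BL96} applied to the minimal distal system $X$, through a compactness/color-focusing argument that converts ``some good $n$'' into a set of $n$ of positive lower density. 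This positivity step is the main obstacle — it is exactly what the delicate induction of \cite{BL96} is built to handle, and it cannot be bypassed by the equidistribution input alone, since polynomial configurations need not admit Khintchine-type large-intersection lower bounds.
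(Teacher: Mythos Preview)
The paper does not prove this theorem; it is stated as background and attributed to Bergelson and Leibman \cite{BL96}, so there is no in-paper proof to compare against.

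On your proposal itself: the route through Host--Kra structure theory and \cref{thm_leib_orb_cls} is a legitimate modern strategy for \emph{identifying} the limit of the polynomial multiple averages, but it does not by itself deliver positivity, and you acknowledge as much. The difficulty is that your final paragraph --- where you propose to establish positivity by a PET/van der Corput induction on the complexity of $\{p_1,\ldots,p_k\}$ --- is not a residual detail but the entire substance of the theorem. Indeed, PET induction is exactly the engine of the original proof in \cite{BL96}, and once one has it working one does not need the nilsystem reduction at all: Bergelson and Leibman run PET directly on an arbitrary measure-preserving system to obtain the multiple recurrence statement. So your argument front-loads heavy structural machinery (characteristic factors, equidistribution on nilmanifolds) only to defer the genuine obstacle to a sketch that, if carried out, would already prove the theorem unassisted.

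There is also a dependency concern worth flagging: the statement that inverse limits of nilsystems are characteristic for \emph{polynomial} multiple averages is due to Leibman (extending Host--Kra from the linear case), and its proof already uses PET-type reductions. Invoking it as a black box to then re-derive polynomial recurrence is, if not strictly circular, at least an inefficient organization of the logical dependencies.
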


\cref{thm_poly_szmeredi} was later improved in \cite{BLL08} to include an ``if and only if'' condition.

\begin{Theorem}{}
\label{thm_poly_szmeredi_iff}
Given $p_1,\ldots,p_k\in\Z[t]$ the following are equivalent:
\begin{enumerate}
[label=(\roman{enumi}),ref=(\roman{enumi}),leftmargin=*]
\item
The polynomials $p_1,\ldots,p_k$ are \define{jointly intersective}, i.e., for any $m\in\N$ there exists $n\in\N$ such that $p_i(n)\equiv {0}\bmod{m}$ for all $i\in\{1,\ldots,k\}$.
\item
For any set $E\subset \N$ of positive upper density there exist $a,n\in\N$ such that $\{a,\, a+p_1(n),\ldots,a+p_k(n)\}\subset E$.
\end{enumerate}
\end{Theorem}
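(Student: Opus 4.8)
This equivalence is due to Bergelson, Leibman and Lesigne \cite{BLL08}, and I would reconstruct their argument as follows. For the easy direction (ii) $\Rightarrow$ (i) I would argue by contraposition: if $p_1,\ldots,p_k$ are not jointly intersective, fix $m\in\N$ for which no $n\in\N$ satisfies $p_i(n)\equiv0\bmod m$ for all $i$, and take $E=m\Z\cap\N$, which has upper density $1/m>0$. If some $a,n\in\N$ gave $\{a,a+p_1(n),\ldots,a+p_k(n)\}\subset E$, then $a\equiv0\bmod m$ together with $a+p_i(n)\equiv0\bmod m$ would force $p_i(n)\equiv0\bmod m$ for every $i$, contradicting the choice of $m$; so $E$ refutes (ii).

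For the substantial direction (i) $\Rightarrow$ (ii), the plan is the standard ergodic route. By the Furstenberg correspondence principle it suffices to prove that for every invertible measure-preserving system $(X,\mathcal B,\mu,T)$ and every $A\in\mathcal B$ with $\mu(A)>0$,
\[\liminf_{N\to\infty}\tfrac1N\sum_{n=1}^N\mu\bigl(A\cap T^{-p_1(n)}A\cap\cdots\cap T^{-p_k(n)}A\bigr)>0,\]
since positivity of even one summand produces, through the correspondence, integers $a,n$ realising the configuration inside $E$. I would then invoke the Host--Kra--Ziegler theory of characteristic factors for polynomial multiple ergodic averages, in the form established by Bergelson--Leibman \cite{BL96} and Host--Kra (cf.\ \cite{HK18}): the averages $\frac1N\sum_{n\le N}\prod_{i=1}^kT^{p_i(n)}f_i$ converge in $L^2(\mu)$ and are governed by an inverse limit of nilsystems. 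After the routine preliminaries --- replacing $A$ by an inner approximation with $\mu(\partial A)=0$ and passing to the inverse limit at the end --- the task reduces to establishing the displayed positivity when $(X,\mathcal B,\mu,T)=(G/\Gamma,\mathrm{Haar},\,x\mapsto ax)$ is a nilsystem.

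On a nilsystem the average equals $\int_{G/\Gamma}\frac1N\sum_{n\le N}\prod_{i=0}^k\mathbf 1_A\bigl(a^{p_i(n)}x\bigr)\,d\mu(x)$ (with $p_0:=0$), and for fixed $x$ the sequence $n\mapsto\bigl(x,a^{p_1(n)}x,\ldots,a^{p_k(n)}x\bigr)$ is a polynomial sequence in $(G/\Gamma)^{k+1}$ generated by the \emph{commuting} elements $(e,\ldots,e,a,e,\ldots,e)$. Leibman's Equidistribution Theorem (\cref{thm_leib_orb_cls}) then identifies the inner limit with an average of Haar integrals over finitely many sub-nilmanifolds $Y_{x,0},\ldots,Y_{x,q-1}$ of $(G/\Gamma)^{k+1}$, and one must show that $\sum_r\int_X\mathrm{Haar}_{Y_{x,r}}\!\bigl(A^{k+1}\cap Y_{x,r}\bigr)\,d\mu(x)>0$. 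This is where joint intersectivity is used. By Leibman's Equidistribution Criterion (\cref{thm_leib_crit}), the distribution of $\bigl(x,a^{p_1(n)}x,\ldots,a^{p_k(n)}x\bigr)$ is controlled by its projection to the maximal factor torus; there, the only obstruction to these orbits clustering around the diagonal $\{(y,\ldots,y)\}$ is a congruence condition on the $p_i$ --- and joint intersectivity, which (using $p_i(t+m)\equiv p_i(t)\bmod m$) makes $\{n:\,p_i(n)\equiv0\bmod m\ \forall i\}$ a nonempty union of residue classes mod $m$ for every $m$, is exactly the assertion that this obstruction is absent. Feeding this back through the equidistribution of $n\mapsto\bigl(x,a^{p_1(n)}x,\ldots,a^{p_k(n)}x\bigr)$ in each $Y_{x,r}$, together with a continuity argument in $x$, yields the required positive lower bound, hence (i) $\Rightarrow$ (ii).

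The step I expect to be the main obstacle is this last one: granting the characteristic-factor reduction as a black box, the technical heart is converting the purely arithmetic content of joint intersectivity (``no congruence obstruction'') into a quantitative lower bound on the limiting self-correlation measure near the diagonal, uniformly over $x$ and over the ergodic decomposition of $\mu$. This is precisely the delicate part of \cite{BLL08}, and it rests squarely on Leibman's two equidistribution theorems quoted above.
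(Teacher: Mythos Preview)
The paper does not prove this theorem at all: \cref{thm_poly_szmeredi_iff} is stated in the introduction purely as background and is attributed to \cite{BLL08}, with no proof given anywhere in the paper. So there is no ``paper's own proof'' to compare against.

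Your sketch is a faithful outline of the Bergelson--Leibman--Lesigne argument: the contrapositive direction with $E=m\Z\cap\N$ is correct and complete, and for the hard direction the route through Furstenberg correspondence, characteristic factors, reduction to nilsystems, and Leibman's equidistribution results is indeed the architecture of \cite{BLL08}. You are also right to flag the final step --- converting joint intersectivity into a positive lower bound on the limiting correlation near the diagonal --- as the genuine technical core; as written, that part of your sketch is a description of what needs to be done rather than an argument, so this is an outline of a proof rather than a proof. If you want to tighten it, the key mechanism in \cite{BLL08} is that joint intersectivity guarantees, for every $q\in\N$, a residue class $r\bmod q$ along which all $p_i(qn+r)$ vanish at $n=0$, so that along that arithmetic progression the diagonal point $(x,\ldots,x)$ lies in the orbit closure $Y_{x,r}$; positivity then follows from $\mu(A)>0$ and the fact that the Haar measure of $A^{k+1}$ in a sub-nilmanifold containing a diagonal point is positive whenever $A$ has positive measure near $x$.
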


Via the Host-Kra structure theory (see  \cite{HK05a, HK18}), \Szemeredi{}'s Theorem and its generalizations are intimately connected to questions about uniform distribution in nilmanifolds.
This connection played an important role in the proof of \cref{thm_poly_szmeredi_iff} in \cite{BLL08}, but was also used by Frantzikinakis in \cite{Frantzikinakis15b} (see also \cite{FW09,Frantzikinakis10}) to derive from \cref{thm_frantz_i} the following combinatorial theorem. Let $\lfloor .\rfloor\colon\R\to\Z$ denote the \define{floor function}.

\begin{Theorem}
\label{thm_frantzi}
Let $f_1,\ldots,f_k$ be functions from a {Hardy field} of different growth and with the property that for every $f\in\{f_1,\ldots,f_k\}$ there is $\ell\in\N$ such that $t^{\ell-1}\log t\prec f(t)\prec t^\ell$. Then for any set $E\subset \N$ of positive upper density there exist $a,n\in\N$ such that $\{a,\, a+\lfloor f_1(n)\rfloor ,\ldots,a+\lfloor f_k(n)\rfloor \}\subset E$.
\end{Theorem}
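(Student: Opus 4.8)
The plan is to deduce \cref{thm_frantzi} from \cref{thm_frantz_i} by combining the Furstenberg correspondence principle with the Host--Kra structure theory, so that the combinatorial statement is reduced to a multiple recurrence estimate on an ergodic nilsystem, which is then evaluated with the help of \cref{thm_frantz_i}. First I would invoke the Furstenberg correspondence principle: for $E\subset\N$ with $\overline{d}(E)=\delta>0$ there exist an invertible measure preserving system $\xbmt$ and a set $A\in\mathcal B$ with $\mu(A)=\delta$ such that $\overline{d}\big(E\cap(E-m_1)\cap\cdots\cap(E-m_k)\big)\ge\mu\big(A\cap T^{-m_1}A\cap\cdots\cap T^{-m_k}A\big)$ for all $m_1,\ldots,m_k\in\Z$. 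Since each $f_i$ has polynomial growth with $f_i(t)\succ t^{\ell-1}\log t$, we have $f_i(n)\to\infty$, so for all large $n$ the numbers $\lfloor f_i(n)\rfloor$ are positive integers; hence it suffices to produce a single $n\in\N$ with $\mu\big(A\cap T^{-\lfloor f_1(n)\rfloor}A\cap\cdots\cap T^{-\lfloor f_k(n)\rfloor}A\big)>0$, and for this it is enough to prove
\[
\lim_{N\to\infty}\frac1N\sum_{n=1}^N\mu\big(A\cap T^{-\lfloor f_1(n)\rfloor}A\cap\cdots\cap T^{-\lfloor f_k(n)\rfloor}A\big)\ =\ \mu(A)^{k+1}\,>\,0 .
\]
By the ergodic decomposition (and since $\int\mu_\omega(A)\d\mathbb{P}(\omega)=\mu(A)>0$) we may assume the system is ergodic.

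Next I would reduce to a nilsystem. By the $L^2$-convergence of the multiple ergodic averages $\frac1N\sum_{n\le N}\prod_{i=1}^kT^{\lfloor f_i(n)\rfloor}g_i$ along Hardy sequences of different growth, together with the Host--Kra--Ziegler structure theory, an inverse limit of nilsystems is characteristic for these averages; replacing each $g_i$ by its conditional expectation onto this factor and integrating against $\mathbf 1_A$ shows that the limit above is unchanged when $\xbmt$ is replaced by its maximal nilfactor. Approximating $\mathbf 1_A$ in $L^2$ by functions measurable with respect to a single nilsystem in the inverse limit — the computation below yields $\mu(A)^{k+1}$ exactly, so nothing is lost in the limit over the tower — and performing the standard reduction handling the finitely many connected components of $X$ (passing to a component on which a power of the translation acts minimally, replacing the $f_i$ by $f_i/q$ and tracking residues $\lfloor f_i(n)\rfloor\bmod q$, which equidistribute because each $f_i$ still has growth $\succ\log$, exactly as in the proof of \cref{thm_leib_orb_cls}), we may assume $X=G/\Gamma$ with $G$ connected and simply connected, $\mu=\mu_X$ the Haar measure, and $T$ translation by an element $b\in G$ acting minimally on $X$.

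On the nilsystem the only delicate point is the floor. Writing $\lfloor f_i(n)\rfloor=f_i(n)-\{f_i(n)\}$ we have $b^{\lfloor f_i(n)\rfloor}x=b^{-\{f_i(n)\}}\big(b^{f_i(n)}x\big)$, so I would pass to the nilmanifold $Z\coloneqq X\times\T=(G\times\R)/(\Gamma\times\Z)$ and the element $c\coloneqq(b,1)\in G\times\R$. Since $b$ acts minimally on $X$ (equivalently, its toral frequencies are rationally independent of $1$), the $\R$-flow $t\mapsto(b^tx,\,t\bmod 1)$ is minimal on $Z$, so applying \cref{thm_frantz_i} on $Z^k$ with $a_1=\cdots=a_k=c$ (and conjugating the statement to the common base point $(x,0)$ in each coordinate, which is legitimate because the $a_i$ are equal) shows that for every $x\in X$ the sequence $n\mapsto\big((b^{f_1(n)}x,\{f_1(n)\}),\ldots,(b^{f_k(n)}x,\{f_k(n)\})\big)$ is uniformly distributed in $Z^k$. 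Approximating the bounded Borel function $(y,s)\mapsto\mathbf 1_A(b^{-s}y)$ on $Z$ from above and below by continuous functions differing by $O(\epsilon)$ in $L^1(\mu_X\times\mathrm{Leb})$ and letting $\epsilon\to 0$, we obtain, for every $x\in X$,
\[
\lim_{N\to\infty}\frac1N\sum_{n=1}^N\prod_{i=1}^k\mathbf 1_A\big(b^{\lfloor f_i(n)\rfloor}x\big)\ =\ \prod_{i=1}^k\int_0^1\Big(\int_X\mathbf 1_A(b^{-s}y)\d\mu_X(y)\Big)\d s\ =\ \prod_{i=1}^k\int_0^1\mu_X(b^sA)\,\d s\ =\ \mu_X(A)^k ,
\]
using the $G$-invariance of $\mu_X$. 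Integrating this identity against $\mathbf 1_A(x)$ over $X$ (bounded convergence) shows that the limit in the displayed equation equals $\mu_X(A)^{k+1}$, as required; and running the same computation with $h\coloneqq\mathbb{E}[\mathbf 1_A\mid\text{nilfactor}]$ in place of $\mathbf 1_A$ shows that nothing changes when the nilfactor is only an inverse limit, since $\int_0^1\int_X h(b^{-s}y)\d\mu_X(y)\,\d s=\int h\,\d\mu_X$.

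The substantive input, and the main obstacle, is the reduction to a nilsystem: it depends on the $L^2$-convergence of the averages $\frac1N\sum_{n\le N}\prod_iT^{\lfloor f_i(n)\rfloor}g_i$ and on the identification of a nilfactor as a characteristic factor for them, which for Hardy sequences of different growth in the range $t^{\ell-1}\log t\prec f\prec t^\ell$ is precisely the machinery for which \cref{thm_frantz_i} was developed. Granting that, the remaining ingredients — the correspondence principle and the nilsystem computation above, whose only subtleties are the floor function and the bookkeeping of connected components — are essentially mechanical.
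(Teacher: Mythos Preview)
The paper does not contain a proof of \cref{thm_frantzi}. It is stated in \cref{sec_applications} as a known result of Frantzikinakis, attributed to \cite{Frantzikinakis15b} (with \cite{FW09,Frantzikinakis10}), and described there as having been ``derived from \cref{thm_frantz_i}'' via the Host--Kra structure theory. So there is no in-paper argument to compare against; your sketch is essentially a reconstruction of Frantzikinakis' original proof strategy, and it follows the right architecture: correspondence principle, reduction to an ergodic system, passage to the nilfactor via characteristic factors, handling the floor function by embedding into $X\times\T$, and finally invoking \cref{thm_frantz_i}.

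Two small points are worth tightening. First, the claimed equality $\lim_N\frac1N\sum_n\mu(A\cap T^{-\lfloor f_1(n)\rfloor}A\cap\cdots)=\mu(A)^{k+1}$ need not survive the ergodic decomposition: what you actually obtain after disintegrating is $\int\mu_\omega(A)^{k+1}\d\mathbb P(\omega)$, which by Jensen is $\geq\mu(A)^{k+1}$ but not in general equal. This is harmless since you only need positivity, but the statement as written is slightly off. Second, your pointwise claim ``for every $x\in X$'' relies on applying equidistribution to the function $(y,s)\mapsto\mathbf 1_A(b^{-s}y)$, which for a general Borel set $A$ need not be Riemann integrable on $Z$; the clean fix is to run the equidistribution argument for continuous test functions (where \cref{thm_frantz_i} applies directly), obtain $L^2$-convergence of the multiple averages to the constant $\prod_i\int g_i$, and then pass to bounded measurable functions by $L^2$-approximation. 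With those adjustments the sketch is sound.
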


Given a finite collection of functions $f_1,\ldots,f_k$ we denote by $\mathrm{span}_\R(f_1,\ldots,f_k)$ the set of all functions of the form $f(t)=c_1 f_1(t)+\ldots+c_k f_k(t)$ for $(c_1,\ldots,c_k)\in\R^k$, and by $\mathrm{span}_\R^*(f_1,\ldots,f_k)$ the set of all functions of the form $f(t)=c_1 f_1(t)+\ldots+c_k f_k(t)$ for $(c_1,\ldots,c_k)\in\R^k\setminus\{0\}$.
The following open conjecture is an extension of \cref{thm_frantzi} and was posed by Frantzikinakis on multiple occasions.

\begin{Conjecture}[see {\cite[Problems 4 and 4']{Frantzikinakis10}} and {\cite[Problem 25]{Frantzikinakis16}}]
\label{conj_frantzi}
Let $f_1,\ldots,f_k$ be functions of polynomial growth from a Hardy field such that
$$
|f(t)-p(t)|\to\infty
$$
for every $p\in\Z[t]$, and every $f\in\mathrm{span}_\R^*(f_1,\ldots,f_k)$. Then for any set $E\subset \N$ of positive upper density there exist $a,n\in\N$ such that $\{a,a+\lfloor f_1(n)\rfloor,\ldots,a+\lfloor f_k(n)\rfloor \}\subset E$.
\end{Conjecture}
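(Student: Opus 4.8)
The plan is to deduce \cref{conj_frantzi} from Theorems \ref{thm_D} and \ref{thm_E} along the familiar route connecting \Szemeredi{}-type statements to equidistribution on nilmanifolds, carried out throughout with $W$-averages in place of \Cesaro{} averages. \emph{Step 1: correspondence and reduction to a nilsystem.} First I would apply the Furstenberg correspondence principle to reduce the combinatorial claim to the following: for every invertible measure preserving system $\xbmt$ and every $A\in\B$ with $\mu(A)>0$ there is a function $W\in\Hardy$ with $1\prec W\ll t$ such that, writing $w=\Delta W$,
\[
\liminf_{N\to\infty}\frac{1}{W(N)}\sum_{n=1}^{N} w(n)\,\mu\big(A\cap T^{-\lfloor f_1(n)\rfloor}A\cap\ldots\cap T^{-\lfloor f_k(n)\rfloor}A\big)\,>\,0,
\]
with $W$ produced by \cref{cor_finding_W} for the collection $\{f_1,\ldots,f_k\}$ and taken compatible with \ref{property_P_W}; since $w(n)\to0$ and $W(n)\to\infty$, a nonzero $\liminf$ forces the integrand to be positive for infinitely many $n$, which yields the desired $a,n$. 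The technically most demanding point is then to show that the Host--Kra nilfactors are characteristic for the multilinear $W$-averages $\frac{1}{W(N)}\sum_{n\le N} w(n)\,T^{\lfloor f_1(n)\rfloor}g_1\cdots T^{\lfloor f_k(n)\rfloor}g_k$. Granting this, one replaces each $\mathbf 1_A$ by its conditional expectation onto a nilfactor, and a routine inverse-limit approximation reduces matters to the case where $X=G/\Gamma$ is a nilmanifold and $T$ a niltranslation. To dispose of the integer parts, one passes to the suspension of $(X,T)$ under the constant roof $1$, realizing $T^{\lfloor f_i(n)\rfloor}$ through the time-$f_i(n)$ map of a one-parameter subgroup $b^{\R}$ in the identity component of a (possibly larger) nilpotent Lie group, so that henceforth all exponents are real.

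\emph{Step 2: the self-joining.} I would then work in $\bar X=X^{k+1}$, writing $\Delta$ for the diagonal sub-nilmanifold, $\mu_\Delta$ for its Haar measure, $o$ for the base point, $\delta(g)=(g,\ldots,g)$, and $\hat a_i=(e,\ldots,b,\ldots,e)$ with $b$ placed in coordinate $i$ ($1\le i\le k$), so that with $v(n)=\hat a_1^{f_1(n)}\cdots \hat a_k^{f_k(n)}$ one has
\[
\mu\big(A\cap T^{-\lfloor f_1(n)\rfloor}A\cap\ldots\cap T^{-\lfloor f_k(n)\rfloor}A\big)\,=\,\int_{\Delta}\big(\mathbf 1_A^{\otimes(k+1)}\big)\big(v(n)\,y\big)\d\mu_\Delta(y).
\]
Averaging the base point over $\Delta$ is the same as enlarging the acting group by the diagonal copy of $G$, so I would apply \cref{thm_E} to $v(n)$ inside the sub-nilmanifold $X^\ast\subseteq\bar X$ generated by the diagonal and by the one-parameter subgroups $\hat a_i^{\R}$ --- at $o$, and, after conjugating by diagonal elements, at $\mu_\Delta$-a.e.\ point of $\Delta$ --- obtaining convergence of the averages along each residue class $n\equiv r\pmod q$ to the Haar measure of a sub-nilmanifold. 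Here the hypothesis on $\mathrm{span}_\R^*(f_1,\ldots,f_k)$ is decisive: it says precisely that no nonzero real combination $\sum_i c_if_i$ stays within bounded distance of a polynomial with integer coefficients, so by \ref{property_P_W} together with Boshernitzan's criterion (cf.\ \cref{thm_bosh_W-averages}) every such combination is uniformly distributed $\bmod 1$ with respect to $W$-averages; hence the projection of $v(n)$ to the maximal factor torus of $X^\ast$ is as equidistributed as the structure permits, and \cref{thm_D} upgrades this so that the limiting self-joining $\sigma$ is the largest one compatible with the product structure (the directions $\hat a_i^{\R}$ occupy disjoint coordinates and commute). Concretely, $\sigma$ should disintegrate over the first coordinate as $\sigma=\int_X \delta_x\otimes\nu_x^{\otimes k}\d\mu(x)$, where $\nu_x$ is an ergodic component of $\mu$ for a subgroup action containing $b^{\R}$, so that $\int\nu_x\,\d\mu(x)=\mu$.

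\emph{Step 3: positivity.} Granting the shape of $\sigma$, the proof would conclude by the computation
\[
\int \mathbf 1_A^{\otimes(k+1)}\,\d\sigma\,=\,\int_X \mathbf 1_A(x)\,\nu_x(A)^{k}\d\mu(x)\,=\,\int\nu(A)^{k+1}\d\rho(\nu)\,\geq\,\mu(A)^{k+1}\,>\,0,
\]
where $\mu=\int\nu\,\d\rho(\nu)$ is the corresponding ergodic decomposition and the inequality is Jensen's (for the convex function $t\mapsto t^{k+1}$). Unwinding the reductions of Step 1 would then produce infinitely many $n$ with $\mu(A\cap T^{-\lfloor f_1(n)\rfloor}A\cap\ldots\cap T^{-\lfloor f_k(n)\rfloor}A)>0$, hence the claimed $a,n$.

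\emph{Main obstacle.} The crux is the characteristic-factor step for $W$-averages in Step 1: known results of Frantzikinakis and others control the analogous \emph{\Cesaro{}} averages for Hardy sequences of polynomial growth, but the slow-growth functions admitted here --- for example $f_i(t)=\log t$ or $\log\log t$, which genuinely require logarithmic, respectively double-logarithmic, averaging --- fall outside that framework and seem to demand a $W$-averaged PET/van der Corput induction, or a change of variables converting such averages into \Cesaro{} averages of related Hardy sequences. A secondary issue is pinning down the precise shape of the limiting self-joining $\sigma$ in Step 2 and the uniformity of the equidistribution in the base point; both are expected to follow from the rationality arguments underlying Leibman's theory together with Theorems \ref{thm_A}--\ref{thm_E}. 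When the exponents satisfy \ref{property_P} --- for instance when each $f_i$ is a real linear combination of power functions $t^{r}$ ($r\in\R$) --- one may work with \Cesaro{} averages and Theorems \ref{thm_A} and \ref{thm_C} throughout, and that case should already be within reach of current characteristic-factor technology.
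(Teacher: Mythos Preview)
The paper does not prove \cref{conj_frantzi}: it is stated as an open conjecture of Frantzikinakis, and the paper merely announces that its resolution (as a special case of \cref{thm_main_combinatorial}) will appear in the forthcoming companion paper \cite{BMR20draft}, using Theorems~\ref{thm_D} and~\ref{thm_E} as inputs. So there is no ``paper's own proof'' to compare against here.

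Your sketch is broadly along the lines one expects such a proof to follow---Furstenberg correspondence, reduction to nilsystems via Host--Kra structure theory, then equidistribution on a product nilmanifold via Theorems~\ref{thm_D} and~\ref{thm_E}, and positivity by convexity---and this is consistent with how \cref{thm_frantzi} was derived from \cref{thm_frantz_i} in \cite{Frantzikinakis15b}. But you have correctly identified the genuine gap yourself: the characteristic-factor theorem for the multiple ergodic $W$-averages $\frac{1}{W(N)}\sum_{n\le N}w(n)\,T^{\lfloor f_1(n)\rfloor}g_1\cdots T^{\lfloor f_k(n)\rfloor}g_k$ is not available in the literature and is not supplied in this paper. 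Without it, Step~1 does not reduce to a nilsystem, and the rest of the argument does not get off the ground. There are also secondary issues you flag---the precise structure of the limiting joining $\sigma$ and uniformity in the base point---which are not automatic from Theorems~\ref{thm_D} and~\ref{thm_E} as stated. In short, your proposal is a reasonable outline of the expected strategy, but it is not a proof; the substantive work is precisely what is deferred to \cite{BMR20draft}.
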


Finally, another variant of \Szemeredi{}'s Theorem, which also involves functions from a Hardy field, 
was obtained in \cite{BMR17arXiv}.

\begin{Theorem}
\label{thm_thick_szemeredi}
Let $f$ be a function from a \define{Hardy field} and assume there is $\ell\in\N$ such that $t^{\ell-1}\prec f(t)\prec t^\ell$. Then for  any set $E\subset \N$ of positive upper density there exist $a,n\in\N$ such that $\{a,\, a+\lfloor f(n)\rfloor,a+\lfloor f(n+1)\rfloor,\ldots,a+\lfloor f(n+k)\rfloor \}\subset E$.
\end{Theorem}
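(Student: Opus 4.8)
The plan is to reduce the statement, via Furstenberg's correspondence principle, to a multiple recurrence assertion for a measure preserving $\Z$-system, to run a van der Corput / PET induction that lowers the ``degree'' of the iterates, and to settle the resulting question on the Host--Kra nilfactor with the equidistribution results of the present paper. Fix $k\in\N$, and enlarge $\Hardy$ to a Hardy field that also contains the translates $t\mapsto f(t+j)$, $0\le j\le k$; each of these has polynomial growth since $f(t)\prec t^\ell$. Given $E\subset\N$ with $\overline d(E)>0$, the correspondence principle produces a measure preserving system $\xbmt$ (which we may take ergodic, by passing to an ergodic component) and a set $A\in\mathcal B$ with $\mu(A)>0$ such that, for every $n\in\N$,
\[
\overline d\Bigl(E\cap\bigcap_{j=0}^{k}\bigl(E-\lfloor f(n+j)\rfloor\bigr)\Bigr)\;\geq\;\mu\Bigl(A\cap\bigcap_{j=0}^{k}T^{-\lfloor f(n+j)\rfloor}A\Bigr).
\]
It therefore suffices to exhibit one $n$ making the right-hand side positive; and for that it is enough to show that, for a suitable non-decreasing weight $W\in\Hardy$ with $1\prec W\ll t$ as in \cref{def_W-ud}, the $W$-average of the right-hand side over $n\le N$ converges to a positive limit.

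I analyze the averages $\frac1{W(N)}\sum_{n\le N}w(n)\prod_{j=0}^{k}T^{-\lfloor f(n+j)\rfloor}\mathbf 1_A$ in $\Ltwo(\mu)$ by a van der Corput / PET scheme: differencing in the shift variable replaces the exponents $\lfloor f(n+j)\rfloor$ by finite differences which, up to bounded error, look like $c\,f^{(m)}(n)$ and hence have strictly smaller growth, so after finitely many steps the iterates become bounded in $n$ and the induction closes. When the growth drops below $\log$ one must pass from \Cesaro{} to $W$-averages, which is exactly the role of the weight in \cref{def_W-ud}, and it is here that the specific $W$ gets pinned down in terms of $f$. The outcome is that the averages converge and that a Host--Kra nilfactor $Z_s$ of $\xbmt$ is characteristic for them; after the usual structural reductions (restricting to a connected component, presenting the factor with a simply connected group, and disposing of the integer parts by an approximation argument) we may assume $X=G/\Gamma$ is a connected nilmanifold, $T$ is translation by some $g\in G^\circ$ acting ergodically, and hence, by Green's theorem, ergodically on the maximal factor torus.

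On such a nilsystem the relevant average is the integral of $\mathbf 1_{A^{k+2}}$ against the push-forward of $\mu_X$ under $x\mapsto\bigl(x,g^{-\lfloor f(n)\rfloor}x,\dots,g^{-\lfloor f(n+k)\rfloor}x\bigr)$ in $\widehat X:=G^{k+2}/\Gamma^{k+2}$. Writing this point as $b(n)\cdot(x,\dots,x)$ with $b(n)=a_0^{-\lfloor f(n)\rfloor}\cdots a_k^{-\lfloor f(n+k)\rfloor}$ and $a_j\in G^{k+2}$ equal to $g$ in the $(j{+}1)$-st coordinate and to $e$ elsewhere (so the $a_j$ commute), \cref{thm_E} applied in $\widehat X$ — together with the invariance of the diagonal $\Delta X$ under translation by $(g,\dots,g)$ — identifies the limiting $W$-average with $\int\mathbf 1_{A^{k+2}}\,\mathrm d\sigma$ for a measure $\sigma$ that is a finite average of Haar measures of closed sub-nilmanifolds of $\widehat X$. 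To see $\sigma(A^{k+2})>0$ one exploits the defining feature of the configuration: the functions $f(t),f(t+1),\dots,f(t+k)$ all have the same growth, so their finite differences — equivalently $f',\dots,f^{(k)}$ — have strictly smaller growth, which confines the image of $b(n)$ in the maximal factor torus of $\widehat X$ to a neighbourhood of the diagonal and forces every sub-nilmanifold in the support of $\sigma$ to dominate $\Delta X$. If $f'\to\infty$ (i.e.\ $\ell\ge2$) Boshernitzan's equidistribution theorem for functions from a Hardy field, applied to $f$ and its leading derivatives against the torus element, shows this diagonal constraint is the only one, so after integrating out the slaved coordinates $\sigma(A^{k+2})$ is bounded below in terms of $\mu_X(A)$ alone — heuristically, for large $n$ the cluster $\{a+\lfloor f(n)\rfloor,\dots,a+\lfloor f(n+k)\rfloor\}$ is close to an arithmetic progression of common difference $\approx f'(n)$, whose positive‑density recurrence is furnished by \Szemeredi{}'s Theorem, while the long jump $\lfloor f(n)\rfloor$ is handled by single recurrence. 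If instead $f'\to0$ (i.e.\ $\ell=1$) the floors $\lfloor f(n+j)\rfloor$, $0\le j\le k$, coincide for all large $n$, the configuration collapses to $\{a,a+\lfloor f(n)\rfloor\}$, and positivity reduces to single recurrence along $\lfloor f(n)\rfloor$, which follows from \cref{thm_frantzi} with $k=1$ (or from the $W$-average equidistribution of $(\lfloor f(n)\rfloor\alpha)_n$ on the torus). Either way the limiting $W$-average is positive, the first step produces the desired $a$ and $n$, and the theorem follows.

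The genuinely hard part is the PET reduction together with the positivity on the nilfactor. Since $f(t),f(t+1),\dots,f(t+k)$ share the same growth, the ``independent coordinates'' mechanism behind \cref{thm_frantz_i} does not apply; the van der Corput / PET induction must instead be organised around the lower‑growth finite differences of $f$, while one tracks the accumulating integer‑part errors, checks that every auxiliary sequence stays of polynomial growth inside a Hardy field, and switches correctly between \Cesaro{} and $W$-averages as the growth degrades. On the nilsystem side this amounts to understanding orbits of translations along a tuple of Hardy functions of the same growth acting on the diagonal of a product nilmanifold — precisely the regime that \cref{thm_C} and \cref{thm_E} of the present paper are designed for, which is why those results, rather than \cref{thm_frantz_i}, are the right tool here.
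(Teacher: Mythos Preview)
The paper does not prove \cref{thm_thick_szemeredi}; it is quoted from \cite{BMR17arXiv} as background, so there is no in-paper proof to compare against. Your outline follows the same architecture as that reference --- Furstenberg correspondence, Host--Kra reduction to a nilsystem, and equidistribution of the relevant orbit on a product nilmanifold --- and you correctly identify that the new feature here (compared to \cref{thm_frantzi}) is that $f(t),f(t+1),\ldots,f(t+k)$ have the \emph{same} growth, so the orbit does not fill the full product and one must instead show the limiting measure dominates the diagonal.

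That said, what you have written is a strategy, not a proof, and there are two places where it is not merely incomplete but actually incorrect as stated. First, in the $\ell=1$ case you claim that ``the floors $\lfloor f(n+j)\rfloor$, $0\le j\le k$, coincide for all large $n$''. This is false: $f(n+j)-f(n)\to 0$ only guarantees that the floors agree for a set of $n$ of density $1$ (namely those $n$ for which $f(n)\bmod 1$ is not within $o(1)$ of $0$), and there are infinitely many $n$ where some of the floors differ by $1$. You need either to restrict to the good $n$ and argue that a density-$1$ set suffices for the $W$-average, or to handle the bounded discrepancies directly. Second, in the $\ell\ge 2$ case your positivity argument is essentially the sentence ``heuristically, for large $n$ the cluster is close to an arithmetic progression of common difference $\approx f'(n)$'', followed by an appeal to \Szemeredi{}. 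This is not an argument: the actual work in \cite{BMR17arXiv} is to identify the sub-nilmanifold $Y\subset X^{k+2}$ on which the orbit equidistributes, to show it contains the full diagonal $\{(x,\ldots,x):x\in X\}$, and to deduce $\mu_Y(A^{k+2})>0$ from $\mu_X(A)>0$ via a concrete algebraic description of $Y$. Invoking \cref{thm_E} gives you \emph{some} sub-nilmanifold, but you have not explained how to compute it or why it contains the diagonal --- and this is exactly the step where the proof in \cite{BMR17arXiv} spends its effort.
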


Similar to the proofs of \cref{thm_poly_szmeredi_iff} and \cref{thm_frantzi}, the proof of \cref{thm_thick_szemeredi} also hinges on uniform distribution results in nilmanifolds. 

With the help of Theorems \ref{thm_D} and \ref{thm_E}, we prove in \cite{BMR20draft} a theorem which not only unifies Theorems~\ref{thm_szmeredi},~\ref{thm_poly_szmeredi},~\ref{thm_poly_szmeredi_iff},~\ref{thm_frantzi},~\ref{thm_thick_szemeredi}, but also confirms \cref{conj_frantzi}.

\begin{Maintheorem}[\cite{BMR20draft}]
\label{thm_main_combinatorial}
Let $[.]\colon\R\to\Z$ be the rounding to the closest integer function, let $f_1,\ldots,f_k$ be functions from a Hardy field with polynomial growth, and 
assume at least one of the following two conditions holds:
\begin{enumerate}
[label=(\arabic{enumi}),ref=(\arabic{enumi}),leftmargin=*]
\item
\label{itm_cond_2}
For all $q\in \Z[t]$ and $f\in\mathrm{span}_\R^*(f_1,\ldots,f_k)$ we have $\lim_{t\to\infty}|f(t)-q(t)|= \infty$.
\item
\label{itm_cond_1}
There is jointly intersective collection of polynomials $p_1,\ldots,p_\ell\in\Z[t]$ such that any real polynomial ``appearing'' in $\mathrm{span}_{\R}(f_1,\ldots,f_k)$ also appears in $\mathrm{span}_{\R}(p_1,\ldots,p_\ell)$, where we say a polynomial $p\in\R[t]$ ``appears'' in $\mathrm{span}_{\R}(f_1,\ldots,f_k)$ if there is $f\in\mathrm{span}_{\R}(f_1,\ldots,f_k)$ such that $\lim_{t\to\infty}|f(t)-p(t)|= 0$.
\end{enumerate}
\vspace{-.2em}
Then for any set $E\subset \N$ of positive upper density there exist $a,n\in\N$ such that $\{a,\, a+[f_1(n)],\ldots,a+[f_k(n)]\}\subset E$.
\end{Maintheorem}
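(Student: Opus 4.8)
The plan is to derive \cref{thm_main_combinatorial} from Theorems \ref{thm_D} and \ref{thm_E} by combining Furstenberg's correspondence principle with the Host--Kra structure theory; this is the route followed in \cite{BMR20draft}, so I will only outline the main steps. First I would use correspondence to reduce the assertion to the following: for every measure preserving system $\xbmt$ and every $A\in\B$ with $\mu(A)>0$ there is some $n\in\N$ with $\mu(A\cap T^{-[f_1(n)]}A\cap\ldots\cap T^{-[f_k(n)]}A)>0$. To produce such an $n$ I would prove more, namely that for the sequence $W\in\Hardy$ with $1\prec W\ll t$ furnished by \cref{thm_E} the $W$-averages
$$
\frac{1}{W(N)}\sum_{n=1}^N w(n)\,\mu\big(A\cap T^{-[f_1(n)]}A\cap\ldots\cap T^{-[f_k(n)]}A\big)
$$
converge to a positive limit as $N\to\infty$; since a sequence of non-negative reals whose $W$-averages converge to something positive must be strictly positive for at least one index, this yields the required $n$, and hence, back in $\N$, a configuration $\{a,a+[f_1(n)],\ldots,a+[f_k(n)]\}\subset E$ (weakening \Cesaro{} to $W$-averages costs nothing for this qualitative conclusion). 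The first real task is to show that these multiple ergodic averages converge in $\Ltwo(\mu)$ and that a Host--Kra nilfactor is characteristic for them; I would carry this out with a van der Corput / PET-induction scheme adapted to Hardy fields, in the spirit of the work of Boshernitzan and Frantzikinakis. Combined with the standard reductions that replace the resulting inverse limit of nilsystems by a single connected, simply connected model (cf.\ the arguments of Leibman in \cite{Leibman05a}), this brings us to the case $X=G/\Gamma$ with $X$ connected, $G$ simply connected nilpotent, and $T$ translation by $a\in G=G^\circ$, so that $a^t$ is defined for every $t\in\R$ and the domain hypotheses of Theorems \ref{thm_D} and \ref{thm_E} are automatic.

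Next I would remove the rounding. Writing $\varepsilon_i(n)\coloneqq[f_i(n)]-f_i(n)$, note that $a^{[f_i(n)]}x=a^{\varepsilon_i(n)}a^{f_i(n)}x$ and that $\varepsilon_i(n)=g(f_i(n)\bmod 1)$ for one fixed function $g\colon\T\to[-\tfrac12,\tfrac12)$. I would therefore pass to a product nilmanifold $Z$ built on $G^{k+1}\times\R^k$, where the extra factor $\T^k=\R^k/\Z^k$ is used to track the fractional parts $(f_1(n)\bmod 1,\ldots,f_k(n)\bmod1)$; on $Z$ the relevant orbit is $n\mapsto b_1^{f_1(n)}\cdots b_k^{f_k(n)}$ for suitable commuting elements $b_1,\ldots,b_k$ in the ambient group of $Z$, and the quantity $\int_X\1_A(x)\prod_{i=1}^k\1_A\big(a^{[f_i(n)]}x\big)\,\d\mu_X(x)$ equals the integral over the diagonal of $X$ of a fixed bounded function $\Phi$ on $Z$ evaluated along this orbit. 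After a routine approximation replacing the discontinuous $g$ by continuous functions, the averages fall exactly within the scope of \cref{thm_E}.

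At this point I would apply \cref{thm_E} on $Z$: it provides $q\in\N$, a closed connected subgroup $H$ and base points so that, along each residue class $n\equiv r\bmod q$, the orbit equidistributes with respect to $W$-averages in a sub-nilmanifold $Y_r$; consequently the $W$-averages of the first step converge to $\frac1q\sum_{r=0}^{q-1}\int_{Y_r}\Phi\,\d\mu_{Y_r}$, with $\Phi\geq0$ restricting to $\1_A\otimes\cdots\otimes\1_A$ on the diagonal copy of $X$. It remains to show this limit is positive, and here the two alternatives of the hypothesis enter. Under condition \ref{itm_cond_2} I would use \cref{thm_D} to see that the factor torus of each $Y_r$ is as large as the real-linear relations among $f_1,\ldots,f_k$ allow; since by hypothesis no nontrivial such combination stays within bounded distance of an integer polynomial, each $Y_r$ spreads fully across the diagonal, and a Fubini estimate forces $\int_{Y_r}\Phi\,\d\mu_{Y_r}>0$ (indeed $\geq\mu(A)^{k+1}$). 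Under condition \ref{itm_cond_1} I would split each $f_i$ into its real polynomial part $p_i$ (when it is asymptotic to one) and a genuinely super-polynomial remainder: the remainders equidistribute freely by \cref{thm_D}, while the polynomial parts all lie in $\mathrm{span}_\R(p_1,\ldots,p_\ell)$ for a jointly intersective family $p_1,\ldots,p_\ell\in\Z[t]$, so positivity for the polynomial contribution reduces to the nilsystem input behind \cref{thm_poly_szmeredi_iff}. Combining the two contributions gives a positive limit and finishes the argument.

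The hard part, I expect, is the first step: establishing that a Host--Kra nilfactor is characteristic for multiple ergodic averages along an \emph{arbitrary} finite family of Hardy functions of polynomial growth --- in particular the slowly growing ones, where one is forced onto $W$-averages --- which requires a PET / van der Corput induction carefully tailored to Hardy fields; and then, once on a nilsystem, pinning down the orbit-closure subgroup $H$ precisely enough, via Theorems \ref{thm_D} and \ref{thm_E}, to read off positivity under each of conditions \ref{itm_cond_2} and \ref{itm_cond_1}. The equidistribution statements proved in the present paper are exactly what makes that final reading possible.
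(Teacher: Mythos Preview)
The paper does not contain a proof of this theorem. It is stated as \cref{thm_main_combinatorial} in \cref{sec_applications} and explicitly attributed to the forthcoming paper \cite{BMR20draft}; the only indication the present paper gives is the sentence ``With the help of Theorems \ref{thm_D} and \ref{thm_E}, we prove in \cite{BMR20draft} a theorem which\ldots''. So there is no in-paper proof to compare your proposal against.

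That said, your outline follows exactly the route the paper advertises: Furstenberg correspondence, reduction to multiple ergodic averages, Host--Kra structure theory to pass to a nilfactor, and then invocation of the equidistribution results Theorems \ref{thm_D} and \ref{thm_E} to control the limit and verify positivity under each of the two hypotheses. This is the standard architecture for results of this type (as in \cite{BLL08}, \cite{Frantzikinakis15b}, \cite{BMR17arXiv}), and it is consistent with what the author signals. Your identification of the genuinely hard step --- establishing that a nilfactor is characteristic for $W$-averaged multiple ergodic averages along arbitrary Hardy-field functions of polynomial growth --- is accurate; that step is not addressed in the present paper and is presumably the main content of \cite{BMR20draft}.
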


%
%
%

\paragraph{\textbf{Acknowledgements}.} The author thanks Vitaly Bergelson, Nikos Frantzikinakis, and Joel Moreira, and the anonymous referee for providing useful comments. 
The author is supported by the National Science Foundation under grant number DMS~1901453.

\section{Preliminaries}
\label{sec_prelims}


In the proofs of our main theorems we utilize numerous well-known facts and results regarding Hardy fields, nilpotent Lie groups, and nilmanifolds. For convenience, we collect them here in this preparatory section. 

\subsection{Preliminaries on Hardy fields}
\label{sec_prelims_hardy}

A \define{germ
at $\infty$} is any equivalence class of real-valued functions in one real variable under the equivalence relationship
$(f\sim g) \Leftrightarrow \big(\exists t_0>0
~\text{such that}~f(t)=g(t)~\text{for all}~t\in [t_0,\infty)\big)$.
Let $\mathsf{B}$ denote the set of all \define{germs} at $\infty$ of real valued functions defined
on some half-line $[s,\infty)$ for some $s\in\R$.
Note that $\mathsf{B}$ forms a ring under
pointwise addition and multiplication, which we denote by
$(\mathsf{B},+,\cdot)$.
\begin{Definition}[see {\cite[Definition 1.2]{Boshernitzan94}}]
\label{def_hardy_fields}
Any subfield of the ring $(\mathsf{B},+,\cdot)$ that is closed under
differentiation is called a
\define{Hardy field}.
\end{Definition} 
By abuse of language, we say that a function $f\colon[s,\infty)\to\R$ belongs to some Hardy field $\Hardy$ (and write $f\in\Hardy$)
if its germ at $\infty$ belongs to $\Hardy$.

Functions from a Hardy field have a number of convenient properties. For instance, it was shown in \cite[Proposition 2.1]{Boshernitzan81} that for any function $f$ belonging to a Hardy field $\Hardy$ and any $c\in \R$ the function $f(t)-c$ is either eventually non-negative or eventually non-positive. Combined with the fact that $\Hardy$ is a field and closed under differentiation, this implies that: 
\begin{itemize}
\item
the limit $\lim_{t\to\infty}f(t)$ always exists as an element in $\R\cup\{-\infty,\infty\}$;
\item
$f$ is either eventually increasing, eventually decreasing, or eventually constant;
\item
for any $f,g\in\Hardy$ either $f(t)\prec g(t)$, or $g(t)\prec f(t)$, or $\lim_{t\to\infty}f(t)/g(t)$ is a non-zero real number.
\end{itemize}
The following lemma was proved in \cite[Subsection 2.1]{Frantzikinakis09} using L'H{\^o}pital's rule.
\begin{Lemma}
\label{lem_useful_hardy}
Let $\Hardy$ be a Hardy field.
\begin{enumerate}
\item
If $f\in\Hardy$ satisfies $t^{-k}\prec f(t)\prec t^k$ for some $k\in\N$ and $f(t)$ is not asymptotically equal to a constant then
$$
\frac{f(t)}{t \log^2(t)}\prec f'(t)\ll \frac{f(t)}{t}.
$$
\item
If $f\in\Hardy$ satisfies $t^{1/k}\prec f(t)\prec t^k$ for some $k\in\N$ then $\lim_{t\to\infty}tf(t)/f'(t)$ is a non-zero constant.
\end{enumerate}
\end{Lemma}
For more information on Hardy fields we refer the reader to \cite{Boshernitzan81, Boshernitzan82, Boshernitzan84a, Boshernitzan84b, Boshernitzan94, Frantzikinakis09}.

\subsection{Preliminaries on nilpotent Lie groups}
\label{sec_prelims_nilpotent_Lie_groups}

Let $G$ be a $s$-step nilpotent Lie group with identity element $1_G$.
The \define{lower central series} of $G$, which we denote by $C_\bullet \coloneqq  \{C_1,C_2,\ldots, C_{s},C_{s+1}\}$, is a decreasing nested sequence of normal subgroups,
\[
G=C_1 \trianglerighteq C_2 \trianglerighteq \ldots \trianglerighteq C_s \trianglerighteq C_{s+1}=\{1_G\},
\]
where $C_{i+1}:=[C_i,G]$ is the subgroup of $G$ generated by all the commutators $aba^{-1}b^{-1}$ with $a\in C_i$ and $b\in G$. 
Note that $C_{s+1}=\{1_G\}$ because $G$ is $s$-step nilpotent.
Also, each $C_i$ is a closed subgroup of $G$ (cf.\ \cite[Section 2.11]{Leibman05a}).

The \define{upper central series} of $G$, denoted by $Z_\bullet \coloneqq  \{Z_0,Z_1,Z_2,\ldots, Z_{s}\}$, is an increasing nested sequence of normal subgroups,
$$
\{1_G\}= Z_0 \trianglelefteq Z_1\trianglelefteq\ldots \trianglelefteq Z_{s-1}\trianglelefteq Z_s=G,
$$
where the $Z_0,Z_1,\ldots, Z_s$ are defined inductively by $Z_0=\{1_G\}$ and $Z_{i+1}=\{a\in G:  [a,b]\in Z_i \text{ for all }b\in G\}$. Note that $Z_1$ is equal to the \define{center} $Z(G)$ of $G$ and $Z_s=G$ because $G$ is $s$-step nilpotent.

Given a uniform and discrete subgroup $\Gamma$ of a nilpotent Lie group $G$, an element $g\in G$ with the property that $g^n\in \Gamma$
for some $n\in \N$ is called \define{rational} (or \define{rational with respect to $\Gamma$)}.
A closed subgroup $H$ of $G$ is then called \define{rational}
(or \define{rational with respect to $\Gamma$}) if rational elements are dense in $H$.
For example, the subgroups $C_1,\ldots, C_{s}, C_{s+1}$ in the lower central series of $G$, as well as $Z_0,Z_1,\ldots,Z_s$ in the upper central series of $G$, are rational with respect to any uniform and discrete subgroup
$\Gamma$ of $G$ (cf.\ \cite[Corollary 1 of Theorem 2.1]{Raghunathan72} for a proof of this fact for connected $G$
and \cite[Section 2.11]{Leibman05a} for the general case).

Rational subgroups play a key role in the description of sub-nilmanifolds.
If $X=G/\Gamma$ is a nilmanifold, then a \define{sub-nilmanifold} $Y$ of $X$ is any
closed set of the form $Y=Hx$, where $x\in X$ and $H$ is a closed
subgroup of $G$.
It is not true that for every closed subgroup $H$ of $G$ and
every element $x=g\Gamma$ in $X=G/\Gamma$ the set $Hx$ is a sub-nilmanifold of $X$, because $Hx$ need not be closed. In fact, it is shown in \cite{Leibman06} that $Hx$ is closed in $X$ (and hence a sub-nilmanifold) if and only if the subgroup $g^{-1}Hg$ is rational with respect to $\Gamma$.

For more information on rational elements and rational subgroups see \cite{Leibman06}.

\subsection{Preliminaries on the center and central characters}
\label{sec_prelims-center}

Throughout the paper we use $Z(G)$ to denote the center of a group $G$. 

\begin{Lemma}
\label{lem_normal-center-intersection}
Let $G$ be a nilpotent group and $L$ a non-trivial normal subgroup of $G$. Then $L\cap Z(G)\neq \{1_G\}$.
\end{Lemma}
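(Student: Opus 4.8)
The plan is to use the lower central series to descend to a non-trivial central subgroup. Let $C_\bullet = \{C_1, \ldots, C_{s+1}\}$ be the lower central series of $G$, so that $C_{s+1} = \{1_G\}$. Since $L$ is non-trivial, there exists a largest index $i$ with $1 \le i \le s$ such that $L \cap C_i \neq \{1_G\}$; indeed $i=1$ works because $C_1 = G \supseteq L$, and $i = s+1$ fails, so such a maximal $i$ exists. Set $L_0 \coloneqq L \cap C_i$, which is a non-trivial subgroup.

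The key claim is that $L_0 \subseteq Z(G)$, which would finish the proof since then $\{1_G\} \neq L_0 \subseteq L \cap Z(G)$. To see this, take any $a \in L_0$ and any $b \in G$ and consider the commutator $[a,b] = aba^{-1}b^{-1}$. On one hand, $a \in C_i$ and $b \in G = C_1$, so $[a,b] \in [C_i, G] = C_{i+1}$. On the other hand, $L$ is normal in $G$, so $bab^{-1} \in L$, hence $[a,b] = a(bab^{-1})^{-1} \cdot$ — more carefully, $[a,b] = aba^{-1}b^{-1} = a\,(ba^{-1}b^{-1})$, and $ba^{-1}b^{-1} \in L$ by normality while $a \in L$, so $[a,b] \in L$. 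Therefore $[a,b] \in L \cap C_{i+1}$. By the maximality of $i$, we have $L \cap C_{i+1} = \{1_G\}$, so $[a,b] = 1_G$, i.e., $a$ commutes with every $b \in G$. Hence $a \in Z(G)$, and since $a \in L_0$ was arbitrary, $L_0 \subseteq Z(G)$.

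I don't anticipate a serious obstacle here; the only point requiring a little care is the bookkeeping in the commutator identity — verifying both that $[a,b] \in C_{i+1}$ (from $a \in C_i$ and the definition $C_{i+1} = [C_i, G]$) and that $[a,b] \in L$ (from normality of $L$, using that both $a$ and $ba^{-1}b^{-1}$ lie in $L$). One should also note at the outset that nilpotency of $G$ is exactly what guarantees $C_{s+1} = \{1_G\}$, which is what makes the maximal index $i$ well-defined; without nilpotency the argument breaks down, as it must since the statement is false in general (e.g. for a non-abelian simple group $G = L$, where $Z(G) = \{1_G\}$).
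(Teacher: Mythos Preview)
Your proof is correct and takes a genuinely different route from the paper's. The paper argues by induction on the nilpotency step of $G$ using the \emph{upper} central series: it considers $L\cap Z_{s-1}$ and either applies the induction hypothesis inside $Z_{s-1}$ (when the intersection is non-trivial) or shows $[L,G]\subset L\cap Z_{s-1}=\{1_G\}$ directly (when it is trivial). Your argument instead uses the \emph{lower} central series and avoids induction altogether: by picking the largest $i$ with $L\cap C_i\neq\{1_G\}$ you get $[L\cap C_i,G]\subset L\cap C_{i+1}=\{1_G\}$ in one stroke. Your approach is slightly more economical and is the textbook proof of this classical fact; the paper's approach has the minor advantage of setting up the upper-central-series viewpoint, though that is not used elsewhere. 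The brief stumble in your commutator bookkeeping (``more carefully'') is harmless---the final justification that $[a,b]=a\cdot(ba^{-1}b^{-1})\in L$ by normality is correct.
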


\begin{proof}

We will make use of the upper central series
$
\{1_G\}= Z_0 \trianglelefteq Z_1\trianglelefteq\ldots \trianglelefteq Z_s=G,
$
which was defined in the previous subsection.
Consider the intersection $L_j\coloneqq L\cap Z_{j}$ for $j=0,1,\ldots,s$. Note that $L_0=\{1_G\}$ and $L_s=L$.
Let $J=\{1\leq j\leq s: L_j\neq\{1_G\} \}$ and note that $J$ is non-empty because $L_s\neq\{1_G\}$.
Let $j_{\min{}}$ denote the minimum of $J$. By definition, we have $[Z_{j_{\min{}}},G]\subset Z_{j_{\min{}}-1}$, which implies $[L_{j_{\min{}}},G]\subset Z_{j_{\min{}}-1}$. Moreover, $L_{j_{\min{}}}$ is a normal subgroup of $G$, because it is the intersection of two normal subgroups of $G$, and hence
$[L_{j_{\min{}}},G]\subset L_{j_{\min{}}}$.
We conclude that
\[
[L_{j_{\min{}}},G]\subset L_{j_{\min{}}}\cap Z_{j_{\min{}}-1}=L_{j_{\min{}}-1}.
\]
In view of the minimality assumption on $j_{\min{}}$ we have $L_{j_{\min{}}-1}=\{1_G\}$, from which it follows that $[L_{j_{\min{}}},G]=\{1_G\}$. This proves that $L_{j_{\min{}}}$ is a subset of $Z(G)$. Thus $L\cap Z(G)= L_1=L_{j_{\min{}}}\neq \{1_G\}$ as desired.
%
%
%
\end{proof}

\begin{Corollary}
\label{cor_normal-center-intersection}
Let $G$ be a simply-connected nilpotent Lie group and $L$ a non-trivial, connected, and normal subgroup of $G$. Then $L\cap Z(G)^\circ\neq \{1_G\}$.
\end{Corollary}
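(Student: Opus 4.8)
The plan is to deduce \cref{cor_normal-center-intersection} from \cref{lem_normal-center-intersection} by combining it with two structural facts about simply connected nilpotent Lie groups: first, that the center $Z(G)$ of a simply connected nilpotent Lie group is itself connected (so that $Z(G)^\circ = Z(G)$), and second, that $L \cap Z(G)$ is nontrivial by the lemma. If the first fact holds, the corollary is immediate: $L$ is a nontrivial normal subgroup of $G$, hence $L \cap Z(G) \neq \{1_G\}$ by \cref{lem_normal-center-intersection}, and $Z(G)^\circ = Z(G)$, so $L \cap Z(G)^\circ \neq \{1_G\}$.

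So the real content is the claim that $Z(G)$ is connected when $G$ is simply connected nilpotent. First I would pass to the Lie algebra picture: for a simply connected nilpotent Lie group $G$, the exponential map $\exp\colon \mathfrak{g}\to G$ is a diffeomorphism, and the center $Z(G)$ corresponds under $\exp$ to the center $\mathfrak{z}(\mathfrak{g})$ of the Lie algebra, i.e. $Z(G) = \exp(\mathfrak{z}(\mathfrak{g}))$. Since $\mathfrak{z}(\mathfrak{g})$ is a linear subspace of $\mathfrak{g}$, it is connected, and its continuous image under $\exp$ is connected; hence $Z(G)$ is connected. To see $Z(G) = \exp(\mathfrak{z}(\mathfrak{g}))$, one checks that $\exp(X)$ commutes with all of $G$ iff $X$ commutes with $\mathfrak g$: the forward direction uses that $\exp(X)\exp(Y)\exp(-X) = \exp(\mathrm{Ad}(\exp X)Y) = \exp(e^{\mathrm{ad}X}Y)$, and $e^{\mathrm{ad}X}Y = Y$ for all $Y$ forces $\mathrm{ad}X = 0$ by taking logarithms (the exponential on the linear group being injective on a neighborhood and using homogeneity/nilpotence of $\mathrm{ad}X$); the reverse is clear. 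An alternative, more self-contained route avoiding Lie algebras is to argue inductively on the step $s$ using the lower central series, showing $C_s = C_s^\circ$ (a connected rational central subgroup) and that $Z(G)/C_s$ is the center of $G/C_s$, which is connected by induction; combined with connectedness of $C_s$ this gives connectedness of $Z(G)$. Either way the key input is standard structure theory of simply connected nilpotent Lie groups, which the paper is clearly assuming as background.

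I expect the main (minor) obstacle is simply pinning down the cleanest justification that $Z(G)$ is connected without over-developing Lie-theoretic machinery that the rest of the paper takes for granted; the logical skeleton of the proof — apply \cref{lem_normal-center-intersection}, then replace $Z(G)$ by $Z(G)^\circ = Z(G)$ — is entirely routine. One should also note that the hypothesis that $L$ be connected is not actually needed for the argument as I have set it up, since connectedness of $Z(G)$ does all the work; the authors presumably include it because in their application $L$ will be connected anyway, or to emphasize that the conclusion $L \cap Z(G)^\circ \neq \{1_G\}$ is the connected-group analogue of the lemma. If one wanted a proof that genuinely uses connectedness of $L$, one could instead intersect with $C_s(G)$ and argue that a nontrivial connected $L$ meets the connected central subgroup $C_s(G)$ nontrivially, but the cleaner path is the one above.
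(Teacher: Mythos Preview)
Your argument is correct under the standard reading of ``simply connected'' (connected with trivial $\pi_1$), but the paper works under a broader convention in which $G$ may be disconnected---note the pervasive use of $G^\circ$ throughout, and the footnote in the introduction stating that $\dom(a)=\R$ iff $a\in G^\circ$. Under that convention $Z(G)$ need not be connected: for instance $G=\R\times\Z$ is abelian, has $G^\circ=\R$ simply connected, and $Z(G)=G$ is disconnected. So your global claim $Z(G)=Z(G)^\circ$ fails in the generality the corollary is meant to cover, and your Lie-algebra argument (which needs $\exp\colon\mathfrak g\to G$ to be a diffeomorphism onto $G$) only applies to $G^\circ$.

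The paper's proof takes a different route that handles disconnected $G$: from \cref{lem_normal-center-intersection} one obtains $a\neq 1_G$ in $L\cap Z(G)$; connectedness of $L$ then forces $a\in G^\circ$, so the one-parameter subgroup $a^\R$ exists; and since $a$ is central, a lemma of Malcev (or the Baker--Campbell--Hausdorff formula) shows that all of $a^\R$ is central, whence $a^\R\subset Z(G)$ and $a\in Z(G)^\circ$. This is precisely why the hypothesis that $L$ be connected is \emph{not} superfluous, contrary to your closing remark: it is what places the element $a$ inside $G^\circ$ when $G$ itself need not be connected. Your Lie-algebra computation is essentially the same as the paper's Malcev step, just applied globally to $Z(G^\circ)$ rather than to the single one-parameter group $a^\R$.
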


\begin{proof}
According to \cref{lem_normal-center-intersection} there exists an element $a\neq 1_G$ in the intersection $L\cap Z(G)$. Since $L$ is connected and $a\in L$, the element  $a$ belongs to the identity component $G^\circ$ of $G$. Moreover, since $G$ is simply-connected, the $1$-parameter subgroup $a^\R=\{a^t: t\in\R\}$ is well defined. It follows from \cite[Lemma 3]{Malcev49} (and, alternatively, also from the Baker-Campbell-Hausdorff formula) that if $a$ commutes with an element $b\in G$ then the entire $1$-parameter subgroup $a^\R$ commutes with $b$. This implies that if $a$ belongs to the center of $G$, then so does $a^\R$. In particular, $a^\R\subset Z(G)$, which proves $a\in Z(G)^\circ$ and hence $L\cap Z(G)^\circ\neq \{1_G\}$.
\end{proof}

\begin{Definition}[Central characters]
\label{def_central_character}
Let $G$ be a nilpotent Lie group and $\Gamma$ a uniform and discrete subgroup of $G$. 
A \define{central character of $(G,\Gamma)$} is any continuous map $\phi\colon X\to\C$ with the property that 
there exists a continuous group homomorphism $\chi\colon Z(G)\to\{z\in\C: |z|=1\}$ such that
\begin{equation}
\label{eqn_central_charcter_functional_equation}
\phi(s x)=\chi(s)\phi(x), \qquad\forall s\in Z(G),~\forall x\in X.
\end{equation}
\end{Definition}

\begin{Remark}
\label{rem_central_characters_are_uniformly_dense}
Since the set of all central characters is closed under conjugation and separates points\footnote{\label{ftn_separate_points}We claim that for any two distinct points $x,y\in X=G/\Gamma$ there exists a central character $\phi$ such that $\phi(x)\neq \phi(y)$. To verify this claim, we distinguish two cases, the case $y\notin \{sx: s\in Z(G)\}$ and the case $y\in \{sx: s\in Z(G)\}$.
If we are in the first case then $Z(G)x\neq Z(G)y$. This implies there exists a continuous function $\phi'$ on the quotient space $Z(G)\backslash X$ with $\phi'(Z(G)x)\neq \phi'(Z(G)y)$, which we can lift to a continuous and $Z(G)$-invariant function $\phi$ on $X$ satisfying $\phi(x)\neq \phi(y)$.
If we are in the second case then there is $s_0\in Z(G)$ such that $y=s_0x$. Note that $s_0$ cannot be an element of $\Gamma$ because $x\neq y$. Let $\chi_0$ be any group character of $Z(G)$ with the property that $Z(G)\cap \Gamma\subset\ker\chi$ and $\chi_0(s_0)\neq 1$. Define $T\coloneqq Z(G)/(\Gamma\cap Z(G))$ and observe that $T$ is a compact abelian group. Let $U$ be a small neighborhood of $x$, and let $\rho\colon X\to [0,1]$ be a continuous function with the property that $\rho(x)=1$ and $\rho(z)=0$ for all $z\notin U$. Now define $\phi(z)\coloneqq \int \rho(sz)\chi_0(s) \d\mu_{T}(s)$, where $\mu_T$ is the normalized Haar measure on $T$. It is straightforward to check that $\phi$ is a non-zero and continuous function on $X$ satisfying $\phi(s z)=\chi_0(s)\phi(z)$ for all $s\in Z(G)$ and $z\in X$. In particular $\phi(y)=\phi(s_0x)=\chi_0(s_0)\phi(x)$ and so $\phi(y)\neq\phi(x)$.} in $X$, it follows from the Stone-Weierstrass Theorem that their linear span is uniformly dense in $\Cont(X)$. We will make use of this fact multiple times in the upcoming sections. 
\end{Remark}

\begin{Remark}
\label{rem_zero_mean_central_char}
Let $\phi$ be a central character of $(G,\Gamma)$ and let  $\chi\colon Z(G)\to\{z\in\C: |z|=1\}$ be the corresponding continuous group homomorphism such that \eqref{eqn_central_charcter_functional_equation} is satisfied.
We claim that if $\chi$ is non-trivial (meaning that there exists $s\in Z(G)$ such that $\chi(s)\neq 1$) then the integral $\int \phi \d\mu_X$ equals $0$. To verify this claim, note that the measure $\mu_X$ is invariant under left-multiplication by $s$. Therefore,
$$
\int \phi(x)\d\mu_X(x) \,  =\,
\int \phi(sx)\d\mu_X(x)
\,=\, 
\chi(s) \int \phi(x)\d\mu_X(x),
$$
which can only hold if $\int \phi \d\mu_X=0$.
\end{Remark}

\subsection{Preliminaries on relatively independent self-products}
\label{sec_prelims-relative-independent-joinings}

One of the key ideas featured in the proofs of Theorems \ref{thm_A}, \ref{thm_C}, \ref{thm_D}, and  \ref{thm_E} is the utilization of a special type of ``relative product group''. Similar product groups played an important role in the inductive procedure employed by Green and Tao in \cite{GT12a}.

\begin{Definition}[Relatively independent product]
Let $G$ be a group and $L$ a normal subgroup of $G$.
We define the \define{relatively independent self-product of $G$ over $L$} as
$$
G\plh_L G\coloneqq \{(a_1,a_2)\in G\PLH G: a_1a_2^{-1}\in L\}. 
$$ 
\end{Definition}

If $G$ is a nilpotent Lie group, $\Gamma$ a uniform and discrete subgroup of $G$, and $L$ a normal and rational subgroup of $G$ then the group
$$
\Gamma\plh_L\Gamma\coloneqq \{(\gamma_1,\gamma_2)\in \Gamma\PLH \Gamma : \gamma_1\gamma_2^{-1}\in L\}=(\Gamma\PLH\Gamma)\cap (G\plh_L G)
$$
is a uniform and discrete subgroup of $G\plh_L G$.
This gives rise to the nilmanifold
$$
X\plh_L X\coloneqq (G\plh_L G)/(\Gamma\plh_L\Gamma),
$$
which we call the \define{relatively independent self-product of $X$ over $L$}.

\begin{Remark}
In ergodic theory, the notion of a relatively independent self-joining of a system $X$ over one of its factors $Y$ is an important notion and finds many applications (see \cite[Definition 6.15]{EW11} for the definition). If $X=G/\Gamma$ is a nilmanifold and $L$ is a normal and rational subgroup of $G$ then the quotient space $Y=L\backslash X$ is a factor of $X$. It turns out that that the relatively independent self-product $X\plh_L X$ is exactly the same as the relatively independent self-joining of $X$ over the factor $Y$.
\end{Remark}

Next, let us state and prove a few results regarding relatively independent self-products that will be useful in the later sections.

\begin{Lemma}
\label{lem_commutator-of-Gx_LG}
We have $\big[ G\plh_L G, G\plh_L G\big]=[G,G]\plh_{[G,L]}[G,G]$.
\end{Lemma}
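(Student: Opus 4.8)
The plan is to prove the two inclusions separately, using the coordinatewise description of commutators in a product group together with the bilinearity/multiplicativity identities for commutators in a group. First I would record the relevant definitions in coordinates: an element of $G\plh_L G$ is a pair $(a_1,a_2)$ with $a_1 a_2^{-1}\in L$, and an element of $[G,G]\plh_{[G,L]}[G,G]$ is a pair $(c_1,c_2)$ with $c_1,c_2\in[G,G]$ and $c_1 c_2^{-1}\in[G,L]$.

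For the inclusion ``$\subseteq$'': a typical generator of the left-hand group is a commutator $\bigl[(a_1,a_2),(b_1,b_2)\bigr] = \bigl([a_1,b_1],[a_2,b_2]\bigr)$ where $(a_1,a_2),(b_1,b_2)\in G\plh_L G$. Clearly $[a_1,b_1],[a_2,b_2]\in[G,G]$, so it remains to check that $[a_1,b_1]\,[a_2,b_2]^{-1}\in[G,L]$. Here I would use that $a_1 = a_2 \ell$ and $b_1 = b_2 m$ for some $\ell,m\in L$, substitute, and expand $[a_2\ell,b_2 m]$ using the standard commutator identities $[xy,z]=\,^x[y,z]\,[x,z]$ and $[x,yz]=[x,y]\,^y[x,z]$ (where $^x y = xyx^{-1}$); every term that appears beyond $[a_2,b_2]$ is a conjugate of a commutator involving $\ell$ or $m$, hence lies in $[G,L]$, and since $[G,L]$ is normal in $G$ (as $L$ is normal) these conjugates stay in $[G,L]$. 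Thus the generator lies in the right-hand group. Since the right-hand group is a subgroup, it contains the subgroup generated by these generators, giving $\subseteq$. One subtlety to handle carefully: $[G,L]$ being a subgroup and normal in $G$ — this is standard since $L$ is normal, and I would just cite it.

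For the reverse inclusion ``$\supseteq$'': I need to produce every element of $[G,G]\plh_{[G,L]}[G,G]$ as a product of commutators from $G\plh_L G$. The diagonal $\{(c,c): c\in[G,G]\}$ is generated by $\bigl[(a,a),(b,b)\bigr]=([a,b],[a,b])$ with $(a,a),(b,b)\in G\plh_L G$ (the diagonal of $G$ sits in $G\plh_L G$ since $aa^{-1}=1_G\in L$), so the diagonal copy of $[G,G]$ is contained in the left-hand side. Separately, the subgroup $\{(1_G, n): n\in[G,L]\}$ should be reachable: $[G,L]$ is generated by commutators $[g,\ell]$ with $g\in G$, $\ell\in L$, and $\bigl[(g,g),(1_G,\ell)\bigr] = ([g,1_G],[g,\ell]) = (1_G,[g,\ell])$, where $(g,g)$ and $(1_G,\ell)$ both lie in $G\plh_L G$ (for the second: $1_G\cdot\ell^{-1}=\ell^{-1}\in L$). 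Since any element $(c_1,c_2)$ of the right-hand group can be written as $(c_2,c_2)\cdot(1_G, c_2^{-1}c_1)$ with $c_2\in[G,G]$ and $c_2^{-1}c_1\in[G,L]$, and both factors have just been shown to lie in the left-hand group, we get $\supseteq$.

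The main obstacle — really the only place demanding care — is the commutator-expansion bookkeeping in the ``$\subseteq$'' direction: making sure that after substituting $a_1=a_2\ell$, $b_1=b_2 m$ and expanding, every correction term is visibly a product of $G$-conjugates of elements of the form $[g,\ell]^{\pm1}$ with $\ell\in L$, and hence lands in the normal subgroup $[G,L]$. I would present this cleanly by invoking $\bmod\ [G,L]$ reasoning: working in $G/[G,L]$ (noting $[G,L]\trianglelefteq G$), the image of $L$ is central, so $a_1$ and $a_2$ have the same image, whence $[a_1,b_1]\equiv[a_2,b_2] \pmod{[G,L]}$, i.e.\ $[a_1,b_1][a_2,b_2]^{-1}\in[G,L]$ — this avoids writing out the identities term by term. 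That quotient argument is the crisp way to finish, and I expect it to be the cleanest route past the computational step.
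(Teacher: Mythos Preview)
Your argument is correct and rests on the same key observation as the paper's: modulo $[G,L]$ the subgroup $L$ is central, so $[a_1,b_1]\equiv[a_2,b_2]\pmod{[G,L]}$ whenever $a_1a_2^{-1},b_1b_2^{-1}\in L$; the paper packages both inclusions at once by quotienting each side by $[G,L]\times[G,L]$ (which it notes lies in both), while you treat the inclusions separately with explicit generators for ``$\supseteq$'', but the substance is identical. One cosmetic slip: $(c_2,c_2)\cdot(1_G,c_2^{-1}c_1)=(c_2,c_1)$, not $(c_1,c_2)$ --- write $(c_1,c_1)\cdot(1_G,c_1^{-1}c_2)$ instead, noting $c_1^{-1}c_2\in[G,L]$ by normality.
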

\begin{proof}
Note that $[G,L]$ is a normal subgroup of $G$ {and a subset of $L$, because $L$ is a normal subgroup of $G$}. It follows that $[G,L]\PLH[G,L]$ is a normal subgroup of $G\PLH G$. {Moreover, $[G,L]\PLH[G,L]$ is a subset of $\big[ G\plh_L G, G\plh_L G\big]$ because $[G,L]\subset L$, and } $[G,L]\PLH[G,L]$ is a subset of $[G,G]\plh_{[G,L]}[G,G]$ {because $[G,L]\subset [G,G]$}. Thus, to show
$$
\big[ G\plh_L G, G\plh_L G\big]=[G,G]\plh_{[G,L]}[G,G],
$$
it suffices to show
\begin{equation}
\label{eqn_mod_identity}
\big[ G\plh_L G, G\plh_L G\big]\bmod [G,L]\PLH[G,L] =[G,G]\plh_{[G,L]}[G,G] \bmod [G,L]\PLH[G,L].
\end{equation}
Note that $\big[ G\plh_L G, G\plh_L G\big]$ is generated by elements of the form $([g_1,g_2], [g_1 l_1, g_2,l_2])$ for $g_1,g_2\in G$ and $l_1,l_2\in L$. Since elements in $G$ commute with elements in $L$ modulo $[G,L]$, we have $([g_1,g_2], [g_1 l_1, g_2,l_2])\equiv ([g_1,g_2], [g_1 , g_2])\bmod [G,L]\PLH[G,L]$. In other words,
\[
\big[ G\plh_L G, G\plh_L G\big]\bmod [G,L]\PLH[G,L] = \{(g,g): g\in [G,G]\} \bmod [G,L]\PLH[G,L].
\]
Similarly, since $[G,G]\plh_{[G,L]}[G,G]=\{(g,gh): g\in [G,G],\, h\in [G,L]\}$, we have
\[
[G,G]\plh_{[G,L]}[G,G] = \{(g,g): g\in [G,G]\} \bmod [G,L]\PLH[G,L].
\]
This finishes the proof of  \eqref{eqn_mod_identity}.
\end{proof}

From \cref{lem_commutator-of-Gx_LG} we can derive the following corollary.

\begin{Corollary}
\label{cor_horizontal_characters_of_Gx_LG}
$G/[G,G]\PLH L/[G,L]$ and $G\plh_L G/[G\plh_L G,G\plh_L G]$ are isomorphic as nilpotent Lie groups.
\end{Corollary}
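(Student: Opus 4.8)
The plan is to realise $G/[G,G]\PLH L/[G,L]$ as an explicit quotient of $G\plh_L G$ and then read off the kernel from \cref{lem_commutator-of-Gx_LG}. Concretely, I would introduce the map
$$
\Phi\colon G\plh_L G\longrightarrow G/[G,G]\PLH L/[G,L],\qquad \Phi(a_1,a_2)\coloneqq\big(a_1[G,G],\,(a_1a_2^{-1})[G,L]\big),
$$
which is well-defined because $a_1a_2^{-1}\in L$ for every $(a_1,a_2)\in G\plh_L G$, and is continuous since it is assembled from the group operations of $G$ and the canonical projections. The first — and essentially only — substantive step is to check that $\Phi$ is a group homomorphism. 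Since both factors of the target are abelian (for $L/[G,L]$ this uses $[L,L]\subseteq[G,L]$), it is enough to compare $\Phi\big((a_1,a_2)(b_1,b_2)\big)$ with $\Phi(a_1,a_2)\,\Phi(b_1,b_2)$ coordinatewise; the first coordinate is immediate, and the second amounts to the congruence $a_1b_1b_2^{-1}a_2^{-1}\equiv a_1a_2^{-1}b_1b_2^{-1}\pmod{[G,L]}$. Cancelling $a_1$ on the left, this reduces to $\ell g\equiv g\ell\pmod{[G,L]}$ with $\ell\coloneqq b_1b_2^{-1}\in L$ and $g\coloneqq a_2^{-1}\in G$, which holds because $[L,G]=[G,L]$. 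This is precisely the ``elements of $G$ commute with elements of $L$ modulo $[G,L]$'' phenomenon already used in the proof of \cref{lem_commutator-of-Gx_LG}, so I expect only bookkeeping here rather than a genuine obstacle.

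The remaining steps are routine. For surjectivity, given a class $(g[G,G],\ell[G,L])$ the pair $(g,\ell^{-1}g)$ lies in $G\plh_L G$ and is mapped onto it by $\Phi$. For the kernel, $(a_1,a_2)\in\ker\Phi$ forces $a_1\in[G,G]$ and $a_1a_2^{-1}\in[G,L]\subseteq[G,G]$, whence also $a_2\in[G,G]$; conversely any pair satisfying these conditions lies in $\ker\Phi$. Therefore
$$
\ker\Phi=\{(a_1,a_2)\in[G,G]\PLH[G,G]: a_1a_2^{-1}\in[G,L]\}=[G,G]\plh_{[G,L]}[G,G],
$$
which by \cref{lem_commutator-of-Gx_LG} equals $\big[G\plh_L G,\,G\plh_L G\big]$.

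Finally, the first isomorphism theorem yields a continuous bijective homomorphism
$$
(G\plh_L G)\big/\big[G\plh_L G,\,G\plh_L G\big]\;\longrightarrow\;G/[G,G]\PLH L/[G,L].
$$
To promote this to an isomorphism of Lie groups one can either invoke the standard fact that a continuous bijective homomorphism between the ($\sigma$-compact) Lie groups occurring here is automatically a Lie group isomorphism, or exhibit the inverse directly as the map induced by $(g,\ell)\mapsto(g,\ell^{-1}g)$ and verify, again using $[G,L]\trianglelefteq G$ and $[L,L]\subseteq[G,L]$, that it descends to a continuous map on $G/[G,G]\PLH L/[G,L]$. Either route gives the claimed isomorphism, and the only place that requires any thought is the homomorphism check in the first paragraph.
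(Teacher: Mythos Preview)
Your proof is correct and essentially the same as the paper's: both construct the isomorphism by exhibiting a surjective homomorphism whose kernel is identified via \cref{lem_commutator-of-Gx_LG}. The only cosmetic difference is direction---the paper maps $G\PLH L\to (G\plh_L G)/[G\plh_L G,G\plh_L G]$ via $(a,b)\mapsto (a,ba)[G\plh_L G,G\plh_L G]$, whereas you go the other way---and your version spells out the homomorphism check that the paper leaves implicit.
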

\begin{proof}
Consider the map $\Phi\colon G\PLH L\to G\plh_L G/[G\plh_L G,G\plh_L G]$ defined as
\begin{equation*}
\Phi(a,b)= (a,ba)[G\plh_L G,G\plh_L G],\qquad \forall a\in G,~ \forall b\in L.
\end{equation*}
Clearly, $\Phi$ is well defined, smooth, surjective, and a homomorphism. Moreover, it follows from \cref{lem_commutator-of-Gx_LG} that the kernel of $\Phi$ equals $[G,G]\PLH [G,L]$. Indeed, $(a,b)$ belongs to the kernel of $\Phi$ if and only if $(a,ab)$ belongs to $[G\plh_L G,G\plh_L G]$. According to \cref{lem_commutator-of-Gx_LG}, this happens exactly when $(a,ab)$ belongs to $[G,G]\plh_{[G,L]}[G,G]$. Using the definition of relative independent product groups, we see that $(a,ab)\in [G,G]\plh_{[G,L]}[G,G]$ if and only if $a\in [G,G]$ and $b\in [G,L]$ as claimed.
\end{proof}

\cref{cor_horizontal_characters_of_Gx_LG} helps us better understand the maximal factor torus of the relatively independent self-product $X\plh_L X$, which will turn out to be an important aspect in the proofs of our main results. First, let us introduce the notion of a \define{horizontal character}.

\begin{Definition}[Horizontal characters, {cf.\ \cite{GT12a}}]
\label{def_horizontal_character}
Let $G$ be a nilpotent Lie group and $\Gamma$ a uniform and discrete subgroup of $G$. 
A \define{horizontal character of $(G,\Gamma)$} is any continuous map $\eta\colon X\to\C\setminus\{0\}$ satisfying
\begin{equation}
\label{eqn_horizontal_charcter_functional_equation}
\eta(ab\Gamma)\,=\, \eta(a\Gamma)\eta(b\Gamma), \qquad\forall a,b\in G.
\end{equation}
We say $\eta$ is non-trivial if $\eta$ is not constant equal to $1$.
\end{Definition}

\begin{Remark}
\label{rem_horizontal_characters_of_Gx_LG}
With the help of \cref{cor_horizontal_characters_of_Gx_LG} it is easy to describe the horizontal characters of the relatively independent product $(G\plh_L G,\Gamma\plh_L \Gamma)$ in terms of the horizontal characters of $(G,\Gamma)$ and $(L,\Gamma_L)$, where $\Gamma_L\coloneqq \Gamma\cap L$. Indeed, for any horizontal character $\eta$ of $(G\plh_L G,\Gamma\plh_L\Gamma)$ there exists a horizontal character $\eta_1$ of $(G,\Gamma)$ and a horizontal character $\eta_2$ of $(L,\Gamma_L)$ with $[G,L]\subset \ker \eta_2$ such that
$$
\eta\big((a_1,a_2)\Gamma\plh_L \Gamma\big)=\eta_1\big(a_2\Gamma\big)\eta_2\big(a_1 a_2^{-1}\Gamma_L\big), \qquad\forall (a_1,a_2)\in G\plh_L G.
$$
\end{Remark}

Observe that if $G$ is connected then horizontal characters descend to the maximal factor torus, where they generate an algebra that is uniformly dense due to the Stone-Weierstrass Theorem. Therefore, 
\cref{rem_horizontal_characters_of_Gx_LG} helps us understand the maximal factor torus of the relatively independent self-product $X\plh_L X$ in the case when $G$ is connected. However, we also need to better understand the maximal factor torus if $G$ is not connected. First, let us characterize the identity component of $G\plh_L G$.

\begin{Lemma}
\label{lem_connected-component-of-Gx_LG}
We have $( G\plh_L G)^\circ =G^\circ \plh_{L^\circ} G^\circ$.
\end{Lemma}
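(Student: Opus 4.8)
The plan is to verify the two inclusions separately, using the general fact that for a Lie group $H$ the identity component $H^\circ$ is the unique connected open subgroup of $H$, together with the observation that $G\plh_L G$ sits inside $G\PLH G$, whose identity component is $G^\circ\PLH G^\circ$. First I would check that $G^\circ\plh_{L^\circ}G^\circ$ is a connected subgroup: it is a subgroup because it is the intersection of $G^\circ\PLH G^\circ$ with $G\plh_L G$ (noting $L^\circ = L\cap G^\circ$, as $L^\circ$ is the identity component of the closed subgroup $L$ and $G^\circ$ is open in $G$), and it is connected because it is the continuous image of the connected set $G^\circ\PLH L^\circ$ under the map $(a,b)\mapsto(ba,a)$ — exactly the kind of parametrization used in the proof of \cref{cor_horizontal_characters_of_Gx_LG}. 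Hence $G^\circ\plh_{L^\circ}G^\circ$ is a connected subgroup of $G\plh_L G$ containing the identity, so it is contained in $(G\plh_L G)^\circ$.

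For the reverse inclusion, I would argue that $G^\circ\plh_{L^\circ}G^\circ$ is also open in $G\plh_L G$: since $G^\circ\PLH G^\circ$ is open in $G\PLH G$, its intersection with $G\plh_L G$ is open in $G\plh_L G$, and this intersection is precisely $\{(a_1,a_2)\in G^\circ\PLH G^\circ : a_1a_2^{-1}\in L\}$, which equals $G^\circ\plh_{L^\circ}G^\circ$ because for $a_1,a_2\in G^\circ$ the element $a_1a_2^{-1}$ lies in $G^\circ$, so $a_1a_2^{-1}\in L$ iff $a_1a_2^{-1}\in L\cap G^\circ = L^\circ$. An open subgroup is automatically closed (its complement is a union of cosets, each open), so $G^\circ\plh_{L^\circ}G^\circ$ is a clopen connected subgroup of $G\plh_L G$ containing the identity; by connectedness of $(G\plh_L G)^\circ$ and the fact that $(G\plh_L G)^\circ$ is the smallest such clopen set's... more precisely, $(G\plh_L G)^\circ$ is connected and meets the clopen set $G^\circ\plh_{L^\circ}G^\circ$ (both contain $1$), hence is entirely contained in it. Combining the two inclusions gives the claimed equality.

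I do not expect any serious obstacle here; the only point requiring a moment's care is the identification $L\cap G^\circ = L^\circ$, which uses that $G^\circ$ is open in $G$ (so $L\cap G^\circ$ is an open, hence also closed, subgroup of $L$ containing $1$, forcing it to contain $L^\circ$, while the reverse containment is clear since $L^\circ$ is connected and contains $1\in G^\circ$). Everything else is a routine manipulation of the defining condition $a_1a_2^{-1}\in L$ together with standard facts about identity components of Lie groups.
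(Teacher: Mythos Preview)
Your argument contains a genuine gap: the claimed identity $L\cap G^\circ = L^\circ$ is false in general. Take $G=\R$ (so $G^\circ=\R$) and $L=\Z$; then $L\cap G^\circ=\Z$ while $L^\circ=\{0\}$. Your justification actually proves the inclusion $L^\circ\subseteq L\cap G^\circ$ twice: first you note that $L\cap G^\circ$ is a clopen subgroup of $L$ containing $1$, hence contains $L^\circ$; then for the ``reverse containment'' you argue that $L^\circ$ is connected and meets $G^\circ$, which only shows $L^\circ\subseteq G^\circ$, i.e.\ again $L^\circ\subseteq L\cap G^\circ$. The missing inclusion $L\cap G^\circ\subseteq L^\circ$ simply does not hold without further hypotheses on $L$.

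This breaks your openness argument, since $(G^\circ\PLH G^\circ)\cap(G\plh_L G)$ equals $G^\circ\plh_{L\cap G^\circ}G^\circ$, which can be strictly larger than $G^\circ\plh_{L^\circ}G^\circ$. The fix is easy and keeps the spirit of your approach: use that $L^\circ$ is open in $L$ (the identity component of any Lie group is open), so the map $G\plh_L G\to L$, $(a_1,a_2)\mapsto a_1a_2^{-1}$, pulls $L^\circ$ back to an open set; intersecting with the open set $G^\circ\PLH G^\circ$ then gives exactly $G^\circ\plh_{L^\circ}G^\circ$, open in $G\plh_L G$. With this correction your two-inclusion argument goes through. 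The paper instead argues globally, observing that the sets $G_i\plh_{L_j}G_i$ (indexed by connected components of $G$ and $L$) are open, connected, and pairwise disjoint, hence are precisely the connected components of $G\plh_L G$; this gives the same conclusion from a slightly different angle.
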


\begin{proof}
If $(G_i)_{i\in\mathcal{I}}$ are the connected components of $G$ and $(L_j)_{j\in\mathcal{J}} $ are the connected components of $L$ then the connected components of $G\plh_L G$ are $(G_i\plh_{L_j}G_k)_{i,k\in\mathcal{I},j\in\mathcal{J}}$, where
\[
G_i\plh_{L_j}G_k\coloneqq \{(a_1,a_2)\in G_i\times G_k: a_1a_2^{-1}\in L_j\}.
\]
This is because $G_i\plh_{L_j}G_k$ are open and connected subsets of $G\plh_L G$, and if $(i,k,j)\neq (i',k',j')$ then $G_i\plh_{L_j}G_k$ and $G_{i'}\plh_{L_{j'}}G_{k'}$ are disjoint. 
It follows that the connected component of $G\plh_L G$ that contains the identity is $G^\circ \plh_{L^\circ} G^\circ$.
\end{proof}

To study the maximal factor torus for non-connected $G$, we utilize a variant of the notion of a horizontal character.

\begin{Definition}[Pseudo-horizontal characters]
\label{def_pseudo_horizontal_character}
Let $G$ be a nilpotent Lie group and $\Gamma$ a uniform and discrete subgroup of $G$. 
A \define{pseudo-horizontal character of $(G,\Gamma)$} is any continuous map $\eta\colon X\to\C\setminus\{0\}$ satisfying
\begin{equation}
\label{eqn_pseudo_horizontal_charcter_functional_equation}
\eta(ab\Gamma)\,=\, \eta(a\Gamma)\eta(b\Gamma), \qquad\forall a\in G^\circ,~\forall b\in G.
\end{equation}
\end{Definition}

Note that if $G$ is connected then pseudo-horizontal characters are the same as horizontal characters.
Also, pseudo-horizontal characters descend to the maximal factor torus, where they generated a dense algebra, provided that $G/\Gamma$ is connected. Indeed, if $G/\Gamma$ is connected then $G^\circ$ acts transitively on $G/\Gamma$ and hence $G^\circ/[G^\circ,G^\circ]$ acts transitively on the maximal factor torus. This implies that the maximal factor torus is isomorphic to the compact abelian group $G^\circ/[G^\circ,G^\circ] (\Gamma\cap G^\circ)$ and the pseudo-horizontal characters descend to the group characters of this group.

\begin{Remark}
\label{rem_pseudo_horizontal_characters_of_Gx_LG}
By combining \cref{cor_horizontal_characters_of_Gx_LG} with  \cref{lem_connected-component-of-Gx_LG} we are now able to describe the pseudo-horizontal characters of $(G\plh_L G,\Gamma\plh_L \Gamma)$ in the case when $(G\plh_L G)/(\Gamma\plh_L \Gamma)$ is connected, similar to the way we described the horizontal characters of $(G\plh_L G,\Gamma\plh_L \Gamma)$ in \cref{rem_horizontal_characters_of_Gx_LG} above.
If $(G\plh_L G)/(\Gamma\plh_L \Gamma)$ is connected then it is isomorphic to \[(G\plh_L G)^\circ/((G\plh_L G)^\circ\cap (\Gamma\plh_L \Gamma))\] and hence the pseudo-horizontal characters of $G\plh_L G$ can be identified with the horizontal characters of $(G\plh_L G)^\circ=G^\circ\plh_{L^\circ} G^\circ$.
It follows that for any pseudo-horizontal character $\eta$ of $(G\plh_L G,\Gamma\plh_L\Gamma)$ there exists a pseudo-horizontal character $\eta_1$ of $(G,\Gamma)$ and a pseudo-horizontal character $\eta_2$ of $(L,\Gamma_L)$ with $[G^\circ,L^\circ]\subset \ker \eta_2$ such that
$$
\eta\big((a_1,a_2)\Gamma\plh_L \Gamma\big)=\eta_1\big(a_2\Gamma\big)\eta_2\big(a_1 a_2^{-1}\Gamma_L\big), \qquad\forall (a_1,a_2)\in G\plh_L G.
$$
\end{Remark}

Finally, here is another lemma that we will invoke numerous times in the later sections.

\begin{Lemma}
\label{lem_connected-Gx_LG}
Let $G$ be a nilpotent Lie group, $\Gamma$ a uniform and discrete subgroup of $G$, and $L$ a normal and rational subgroup of $G$. Let $\pi\colon G\to X$ denote the natural projection of $G$ onto the nilmanifold $X\coloneqq G/\Gamma$. If both $X$ and the sub-nilmanifold $\pi(L)$ are connected then the relatively independent self-product $X\plh_L X$ is also connected.
\end{Lemma}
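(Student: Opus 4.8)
The plan is to reduce the connectedness of $X\plh_L X$ to counting connected components via a covering/fibration argument. First I would recall that $X$ is connected if and only if $G=G^\circ\Gamma$, i.e., $\Gamma$ meets every connected component of $G$; similarly $\pi(L)=L\Gamma/\Gamma\cong L/(L\cap\Gamma)$ is connected if and only if $L=L^\circ(L\cap\Gamma)$, so $L\cap\Gamma$ meets every component of $L$. By \cref{lem_connected-component-of-Gx_LG} the identity component of $G\plh_L G$ is $G^\circ\plh_{L^\circ}G^\circ$, so $X\plh_L X$ is connected exactly when $(G\plh_L G)^\circ\cdot(\Gamma\plh_L\Gamma)=G\plh_L G$, that is, when every connected component of $G\plh_L G$ contains an element of $\Gamma\plh_L\Gamma$.

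Next I would use the description of the components of $G\plh_L G$ from the proof of \cref{lem_connected-component-of-Gx_LG}: they are the sets $G_i\plh_{L_j}G_i$ where $G_i$ ranges over components of $G$ and $L_j$ over components of $L$. An arbitrary element of $G\plh_L G$ has the form $(a_1,a_2)$ with $a_1a_2^{-1}\in L$; its component is determined by the component $G_i$ of $G$ containing $a_2$ (equivalently $a_1$, since $a_1a_2^{-1}\in L\subset G^\circ$ forces $a_1,a_2$ to lie in the same component of $G$) together with the component $L_j$ of $L$ containing $a_1a_2^{-1}$. So the task is: given any $a\in G$ and any $\ell\in L$, find $(\gamma_1,\gamma_2)\in\Gamma\plh_L\Gamma$ with $\gamma_2$ in the same component of $G$ as $a$ and $\gamma_1\gamma_2^{-1}$ in the same component of $L$ as $\ell$.

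To produce such a pair, first pick $\gamma_2\in\Gamma$ lying in the component $G^\circ a$ of $a$; this is possible because $X$ connected gives $\Gamma\cap G^\circ a\neq\varnothing$. Then I want $\gamma_1\in\Gamma$ with $\gamma_1\gamma_2^{-1}\in L$ and $\gamma_1\gamma_2^{-1}$ in the prescribed component $L^\circ\ell$ of $L$. Equivalently, writing $\gamma_1=\delta\gamma_2$ with $\delta\in\Gamma$, I need $\delta\in L\cap\Gamma$ with $\delta$ in the component $L^\circ\ell$. Here I invoke that $\pi(L)$ is connected, which as noted means $L\cap\Gamma$ meets every component of $L$; so such a $\delta$ exists. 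Then $(\gamma_1,\gamma_2)=(\delta\gamma_2,\gamma_2)$ lies in $\Gamma\plh_L\Gamma$ and sits in the component $G_i\plh_{L_j}G_i$ with $G_i=G^\circ a$ and $L_j=L^\circ\ell$, which was an arbitrary component. Hence $(\Gamma\plh_L\Gamma)\cdot(G\plh_L G)^\circ=G\plh_L G$, so $X\plh_L X$ is connected.

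The only slightly delicate point — and the one I would state carefully — is the equivalence ``$Y=K/\Lambda$ connected $\iff$ $\Lambda$ meets every connected component of $K$'' for a nilmanifold-type quotient, together with the observation that in $G\plh_L G$ the second coordinate's component already pins down the first coordinate's component, so that the two ``degrees of freedom'' (which component of $G$, which component of $L$) can be chosen independently using the two hypotheses. Everything else is bookkeeping with the component description from \cref{lem_connected-component-of-Gx_LG}.
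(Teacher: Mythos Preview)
Your argument is correct and follows the same strategy as the paper's proof: both establish that $G\plh_L G=(G\plh_L G)^\circ\cdot(\Gamma\plh_L\Gamma)$, using the characterizations $G=G^\circ\Gamma$ and $L=L^\circ(L\cap\Gamma)$ coming from connectedness of $X$ and $\pi(L)$. The paper compresses this into the single line ``It follows that $G\plh_L G = (G^\circ\plh_{L^\circ}G^\circ)(\Gamma\plh_L\Gamma)$'', whereas you unpack it explicitly component by component; your construction $(\gamma_1,\gamma_2)=(\delta\gamma_2,\gamma_2)$ is exactly the verification of that identity.

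One minor slip: the parenthetical assertion ``$L\subset G^\circ$'' is false in general (for instance $G=\R\times\Z$, $\Gamma=\Z\times\Z$, $L=\{0\}\times\Z$: here $\pi(L)$ is a point, hence connected, yet $L\not\subset G^\circ=\R\times\{0\}$). Fortunately you never use it. What you actually need, and what is true, is that the component of $(a_1,a_2)$ in $G\plh_L G$ is determined by the $G$-component of $a_2$ together with the $L$-component of $a_1a_2^{-1}$; the $G$-component of $a_1$ is then forced by these two pieces of data (via multiplication in $G/G^\circ$), not necessarily equal to that of $a_2$. With that correction your argument goes through verbatim.
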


\begin{proof}
Note that $X$ is connected if and only if $G=G^\circ\Gamma$. Likewise, $\pi(L)$ is connected if and only if $L^\circ \Gamma = L\Gamma = \Gamma L$.
In particular, any $a\in G$ can be written as $b\gamma$ with $b\in G^\circ$ and $\gamma\in G$, and any $s\in L$ can be written as $\kappa t$ with $t\in L^\circ$ and $\kappa\in L\cap\Gamma$.
It follows that  
\begin{align*}
G\plh_L G &=\{(a,a s): a\in G,~ s\in L\}
\\
&=\{(b\gamma,b\gamma \kappa t): b\in G^\circ,~t\in L^\circ,~\gamma\in\Gamma,~\kappa\in \Gamma\cap L\}.
\end{align*}
Since $L^\circ$ is normal, we have $b\gamma \kappa t=   \tilde{t} b \gamma\kappa$ where $\tilde{t}= (b \gamma\kappa) t (b \gamma\kappa)^{-1}\in L^\circ$. Hence
\begin{align*}
G\plh_L G &=
\{(b\gamma, \tilde{t} b \gamma \kappa ): b\in G^\circ,~\tilde{t}\in L^\circ,~\gamma\in\Gamma,~\kappa\in \Gamma\cap L\}
\\
&=
\{(b,\tilde{t} b): b\in G^\circ,~\tilde{t}\in L^\circ\}~\{(\gamma,\gamma \kappa): \gamma\in\Gamma,~\kappa\in \Gamma\cap L\}
\\
&=(G^\circ\plh_{L^\circ}G^\circ) (\Gamma\plh_L\Gamma),
\end{align*}
which implies $X\plh_L X$ is connected.
\end{proof}

\section{Reducing Theorems \ref{thm_B} and \ref{thm_C} to \cref{thm_G}}
\label{sec_reduction}

Our first step in proving Theorems \ref{thm_B} and \ref{thm_C} is to reduce them to the following result. 
%

\begin{Maintheorem}
\label{thm_G}
Let $G$ be a simply connected nilpotent Lie group and $\Gamma$ a uniform and discrete subgroup of $G$. Assume $v\colon\N\to G$ is a mapping of the form
$$
v(n)\,=\, a_1^{f_1(n)}\cdot\ldots\cdot a_k^{f_k(n)}b_1^{p_1(n)}\cdot\ldots\cdot b_m^{p_m(n)},\qquad\forall n\in\N,
$$
where $a_1,\ldots, a_k\in G^\circ$, $b_1,\ldots,b_m\in G$, the elements $a_1,\ldots,a_k,b_1,\ldots,b_m$ are pairwise commuting, $\overline{b_1^\Z\Gamma},\ldots, \overline{b_m^\Z\Gamma}$ are connected sub-nilmanifolds of $X=G/\Gamma$, $p_1,\ldots,p_m\in\R[t]$ are polynomials satisfying
\begin{enumerate}
[label=~(\ref{thm_G}$_{\arabic{enumi}}$), ref=(\ref{thm_G}$_{\arabic{enumi}}$),leftmargin=*]
\item
\label{itm_thm_G_A}
$p_j(\Z)\subset \Z$, for all $j=1,\ldots,m$,
\item
\label{itm_thm_G_B}
$\deg(p_j)=j$ for all $j=1,\ldots,m$,
\end{enumerate}
and $f_1, \ldots, f_k$ are functions belonging to some Hardy field $\Hardy$ satisfying
\begin{enumerate}
[label=~(\ref{thm_G}$_{\arabic{enumi}}$),ref=(\ref{thm_G}$_{\arabic{enumi}}$),leftmargin=*]
\setcounter{enumi}{2}
\item
\label{itm_thm_G_C}
$f_1(t)\prec \ldots\prec f_k(t)$,
\item
\label{itm_thm_G_D}
for all $f\in\{f_1,\ldots,f_k\}$ there exists $\ell\in\N$ such that $t^{\ell-1}\log(t)\prec f(t) \prec t^\ell$,
\item
\label{itm_thm_G_E}
for all $f\in\{f_1,\ldots,f_k\}$ with $\deg(f)\geq 2$ we have $f'\in\{f_1,\ldots,f_k\}$.
\end{enumerate}
Then $(v(n)\Gamma)_{n\in\N}$ is uniformly distributed in the sub-nilmanifold $\overline{ a_1^\R\cdots a_k^\R b_1^\Z\cdots b_m^\Z \Gamma}$.
\end{Maintheorem}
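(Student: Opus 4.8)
The plan is to combine the van der Corput / PET‑type induction from Leibman's proof of \cref{thm_leib_orb_cls} with the Hardy‑field estimates underlying \cref{thm_frantz_i}. We induct on the pair $(s,\mathfrak d)$, ordered lexicographically, where $s$ is the nilpotency step of $G$ and $\mathfrak d:=\max\{\deg(p_m),\deg(f_k)\}$ (with $\mathfrak d=-\infty$ when $k=m=0$, the trivial base case; note $\deg(f_k)=\ell$ for the $\ell$ with $t^{\ell-1}\log t\prec f_k\prec t^\ell$). In the two cases below the first, resp.\ second, coordinate of $(s,\mathfrak d)$ strictly decreases.

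\emph{Reductions.} First pass to the sub‑nilmanifold $Y:=\overline{a_1^\R\cdots a_k^\R\,b_1^\Z\cdots b_m^\Z\Gamma}$. Since the $a_i$ lie in $G^\circ$, the exponents $p_j(n)$ are integers, and the elements commute, every $v(n)\Gamma$ belongs to $Y$; and by the description of sub‑nilmanifolds recalled in \cref{sec_prelims_nilpotent_Lie_groups}, $Y$ is again the quotient of a simply connected nilpotent Lie group by a uniform discrete subgroup. Hence we may assume $Y=X$, so $X$ is connected. By the Stone--Weierstrass theorem and \cref{rem_central_characters_are_uniformly_dense} it then suffices to show, for every central character $\phi$ of $(G,\Gamma)$ with associated unitary character $\chi$ of $Z(G)$, that $\frac1N\sum_{n\le N}\phi(v(n)\Gamma)\to\int\phi\d\mu_X$. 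If $\chi$ is trivial, then $\phi$ is $Z(G)$‑invariant and descends to $\bar X:=Z(G)\backslash X$; since $G$ is simply connected, $Z(G)=\exp(\mathfrak z)$ is connected, so $\bar X$ is the quotient of the simply connected nilpotent Lie group $G/Z(G)$ (of step $s-1$, as $C_s\subseteq Z(G)$) by a uniform discrete subgroup, and the projected sequence $\bar v(n)=\bar a_1^{f_1(n)}\cdots\bar b_m^{p_m(n)}$ has the same form and orbit closure $\bar X$; the induction hypothesis applied to $\bar v$ gives the claim. So from now on assume $\chi$ is non‑trivial, whence $\int\phi\d\mu_X=0$ by \cref{rem_zero_mean_central_char} and we must prove $\frac1N\sum_{n\le N}\phi(v(n)\Gamma)\to 0$.

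\emph{Van der Corput.} By the van der Corput inequality it is enough to show
$$
\lim_{H\to\infty}\ \frac1H\sum_{h=1}^{H}\ \limsup_{N\to\infty}\ \Bigl|\,\frac1N\sum_{n\le N}\phi\bigl(v(n+h)\Gamma\bigr)\,\overline{\phi\bigl(v(n)\Gamma\bigr)}\,\Bigr|\ =\ 0 .
$$
Fix $h$. Commutativity gives $v(n+h)v(n)^{-1}=\prod_i a_i^{f_i(n+h)-f_i(n)}\prod_j b_j^{p_j(n+h)-p_j(n)}$, where each $p_j(t+h)-p_j(t)\in\Z[t]$ has degree $\deg(p_j)-1$, and, by \cref{lem_useful_hardy} together with hypothesis \ref{itm_thm_G_E}, the germ $t\mapsto f_i(t+h)-f_i(t)$ is asymptotic to $h\,f_i'(t)$ with $f_i'\in\{f_1,\dots,f_k\}$ when $\deg(f_i)\ge 2$, and tends to $0$ when $\deg(f_i)=1$ (terms of the latter sort being removable up to an $o(1)$ error, since $\phi$ is uniformly continuous). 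The pair $n\mapsto(v(n+h)\Gamma,v(n)\Gamma)$ takes values in a relatively independent self‑product $X\plh_L X$ for a suitable normal rational subgroup $L\supseteq\langle a_1,\dots,a_k,b_1,\dots,b_m\rangle$, which is connected by \cref{lem_connected-Gx_LG}, and $\Phi:=\phi\otimes\bar\phi$ restricts to a central character of $(G\plh_L G,\Gamma\plh_L\Gamma)$, whose data is identified in terms of $\phi$ via \cref{cor_horizontal_characters_of_Gx_LG} and \cref{rem_pseudo_horizontal_characters_of_Gx_LG}. Using the diagonal invariance $\Phi\bigl((s,s)x\bigr)=\Phi(x)$ for $s\in Z(G)$ one may quotient out the ``diagonal'' part of the sequence, after which $n\mapsto(v(n+h)\Gamma,v(n)\Gamma)$ (appropriately interpreted) is again of the form to which Theorem G applies, now with step $\le s$ and strictly smaller $\mathfrak d$ — here all the structural hypotheses \ref{itm_thm_G_A}--\ref{itm_thm_G_E} must be re‑verified for the new data. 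The induction hypothesis then identifies the inner $\limsup$ with the integral of $\Phi$ over the relevant orbit closure in $X\plh_L X$, and we are left to show that the average over $h\le H$ of these integrals tends to $0$.

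\emph{The main obstacle.} I expect two points to require the most work. The first is making the complexity genuinely drop under van der Corput: the differenced sequence still carries $v(n)$ in its diagonal component, so the reduction really hinges on the $\Delta Z(G)$‑invariance of $\phi\otimes\bar\phi$ and on the algebra of $G\plh_L G$ developed in \cref{sec_prelims-relative-independent-joinings} (its center, its (pseudo‑)horizontal characters, and the connectedness of $X\plh_L X$); verifying that the degree and growth conditions \ref{itm_thm_G_A}--\ref{itm_thm_G_E} and the connectedness of the new sub‑nilmanifolds $\overline{b_j^\Z\,\cdot}$ survive this passage is the bulk of the technical labor. The second is the final averaging over $h$: one must rule out that a positive proportion of the limiting correlation integrals stays bounded away from $0$. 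This ultimately reduces to an equidistribution statement on the maximal factor torus, where \cref{thm_leib_crit_non-connected}, \cref{lem_useful_hardy}, and the hypotheses \ref{itm_thm_G_C}--\ref{itm_thm_G_D} (strictly increasing growth of the $f_i$) together with the connectedness of each $\overline{b_j^\Z\Gamma}$ are exactly what prevents the mean‑zero character $\phi$ from correlating with the orbit closures generated along the induction.
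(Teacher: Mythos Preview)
Your overall architecture --- central characters, van der Corput, relatively independent products, and a complexity induction --- matches the paper's, but the induction parameter $(s,\mathfrak d)$ with $\mathfrak d=\max\{\deg(p_m),\deg(f_k)\}$ does not actually decrease. After differencing you obtain $v_h^\square(n)=(w_h(n),1_G)\,v^\triangle(n)$, and while $w_h$ has lower-degree exponents, the diagonal part $v^\triangle(n)=(v(n),v(n))$ still carries the \emph{original} $f_i$ and $p_j$; quotienting by $Z(G)^\triangle$ does not kill $\sigma(a_k,a_k)$ unless $a_k\in Z(G)$, which nothing in your setup forces. The paper resolves this by introducing a \emph{filtration-based degree} (\cref{def_degree_of_g}): one fixes a $d$-step filtration $G_\bullet$ with $a_i\in G_{\deg(f_i)}$ and $b_j\in G_{\deg(p_j)+1}$, builds the product filtration $(G\plh_L G)_\bullet$ via \cref{lem_filtration-of-Gx_LG}, and observes that its top group $G_d\plh_{L_d}G_d$ equals $G_d^\triangle$ (since $L_d=\{1_G\}$), which is annihilated by $\sigma$ because $G_d\subset Z(G)$. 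It is this filtration degree, not your $\mathfrak d$, that drops by one. Relatedly, your choice $L\supseteq\langle a_1,\dots,a_k,b_1,\dots,b_m\rangle$ is too large: the paper takes $L$ generated only by $b_1,\dots,b_m$ and those $a_i$ with $\deg(f_i)\ge 2$, ensuring $L\subset G_2$, which is needed for the product filtration to exist.

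There is a second genuine gap: you give no separate treatment of the sub-linear case $f_k\prec t$, $m=0$. Here standard van der Corput is useless, since $f_i(n+h)-f_i(n)\to 0$ makes the correlation $A(h)$ degenerate to essentially $|\frac1N\sum\phi(v(n)\Gamma)|^2$ rather than something one can average to zero. The paper handles this case by a bespoke van der Corput lemma for sub-linear functions (\cref{prop_slow_vdC}) that replaces integer shifts $n\mapsto n+h$ by real shifts $s$ in the argument of $f_k$ and then averages over $s=\xi h$ for generic small $\xi$; the resulting $h$-average is controlled via \cref{lem_equidistribution_of_diagonal_in_Gx_LG_sub-linear}. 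This sub-linear case is run by induction on the nilpotency step and serves as the base for the filtration-degree induction in the general case.
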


We remark that the set $\overline{ a_1^\R\cdots a_k^\R b_1^\Z\cdots  b_m^\Z \Gamma}$ appearing in the formulation of \cref{thm_G} above is indeed a sub-nilmanifold of $X$, which follows from \cite[Lemma A.6]{BMR17arXiv}.

\begin{Remark}
\label{rem_change_of_base_point}
Using the ``change of base-point'' trick (cf.\ \cite[p.\ 368]{Frantzikinakis09}), it is straightforward to see that if the sequence $ a_1^{f_1(n)}\cdots a_k^{f_k(n)}b_1^{p_1(n)}\cdots b_m^{p_m(n)}\Gamma$ is uniformly distributed in $\overline{ a_1^\R\cdots a_k^\R b_1^\Z\cdots  b_m^\Z \Gamma}$ then for every $c\in G$ the sequence $ a_1^{f_1(n)}\cdots a_k^{f_k(n)}b_1^{p_1(n)}\cdots b_m^{p_m(n)}c\Gamma$ is uniformly distributed in $\overline{ a_1^\R\cdots a_k^\R b_1^\Z\cdots  b_m^\Z c\Gamma}$.
\end{Remark}

The proof of \cref{thm_G} is given in \cref{sec_proof_orbit_closure_cesaro}. The remainder of this section is dedicated to showing that \cref{thm_G} implies Theorems \ref{thm_B} and \ref{thm_C}. For this, we will make use of \cref{lem_dom_of_group_lements} and \cref{cor_normal_form}, which are formulated and proved in the appendix. 

\begin{proof}[Proof that \cref{thm_G} implies Theorems \ref{thm_B} and \ref{thm_C}]
Let $G$ be a simply connected nilpotent Lie group, $\Gamma$ a uniform and discrete subgroup of $G$, and $X$ the nilmanifold $G/\Gamma$.
Let $a_1,\ldots,a_k\in G$ be pairwise commuting, $f_1,\ldots,f_k\in \Hardy$ satisfy \ref{property_P}, suppose $f_i(\N)\subset \dom(a_i)$ for all $i=1,\ldots,k$, and consider the sequence
$$
v(n)\coloneqq a_1^{f_1(n)}\cdot\ldots\cdot a_k^{f_k(n)},\qquad\forall n\in\N.
$$

To begin with, we divide the set $\{1,\ldots,k\}$ into two pieces. The first piece, which we denote by $\mathcal{I}$, consists of all $i\in\{1,\ldots,k\}$ for which $a_i$ belongs to the identity component $G^\circ$. The second piece $\mathcal{I}^c$ is defined as $\{1,\ldots,k\}\setminus\mathcal{I}$. According to \cref{lem_dom_of_group_lements}, for any $i\in\mathcal{I}^c$ there exist $m_i\in \N$ and $p_i\in\R[t]$ such that $p_i(\Z)\subset \Z$, ${1}/{m_i}\in\dom(a_i)$, and
$$
f_i(n)=p_i(n)/m_i,\qquad\forall n\in\N.
$$
Then, by \cref{cor_normal_form} applied with $V(t)=t$ to the family of functions $\{f_i:i\in\mathcal{I}\}$, we can find $m\in\N$, functions $g_1,\ldots,g_m\in\Hardy$, a set of polynomials $\{p_i:i\in\mathcal{I}\}\subset\R[t]$, and coefficients $\{\lambda_{i,1},\ldots,\lambda_{i,m}: i\in\mathcal{I}\}\subset\R$ such that the following properties hold:
\begin{enumerate}
[label=(\Roman{enumi}),ref=(\Roman{enumi}),leftmargin=*]
\item
\label{itm_1_0}
$g_1(t)\prec \ldots\prec g_m(t)$;
\item
\label{itm_2_0}
for all $g\in\{g_1,\ldots,g_m\}$ either $g=0$ or there exists $\ell\in\N$ such that $t^{\ell-1}\log(t)\prec g(t) \prec t^\ell$;
\item
\label{itm_3_0}
for all $g\in\{g_1,\ldots,g_m\}$ with $\deg(g)\geq 2$ we have $g'\in\{g_1,\ldots,g_m\}$;
\item
\label{itm_4_0}
for all $i\in\mathcal{I}$,
$$
\lim_{t\to\infty}\Bigg|f_i(t)-\sum_{l=1}^m \lambda_{i,l} g_l(t)-p_i(t)\Bigg|=0.
$$
\end{enumerate}
For $i\in\mathcal{I}$ the polynomial $p_i$ can be written as
$$
p_i(n)\,=\,c_{i,0}\binom{n}{0}+c_{i,1}\binom{n}{1}+c_{i,2}\binom{n}{2}+\ldots+c_{i,M}\binom{n}{M}
$$
for some real coefficients $c_{i,0},\ldots,c_{i,M}$.
For $i\notin\mathcal{I}$ the polynomial $p_i$ can also be written as
$$
p_i(n)\,=\,c_{i,0}\binom{n}{0}+c_{i,1}\binom{n}{1}+c_{i,2}\binom{n}{2}+\ldots+c_{i,M}\binom{n}{M}
$$
but with an additional feature. It is a standard fact from algebra that any polynomial which takes integer values on the integers can we expressed as an integer linear combination of binomial coefficients. Therefore, if $i\in\mathcal{I}^c$ then the coefficients $c_{i,0},\ldots,c_{i,M}$ are actually integers.

Next, for $l=\{1,\ldots,m\}$, define
$$
u_l\,\coloneqq\, \prod_{i\in\mathcal{I}}a_i^{\lambda_{i,l}}
$$
and for $j\in\{0,1,\ldots,M\}$ define
$$
e_j\,\coloneqq\, \left(\prod_{i\in\mathcal{I}}a_i^{c_{i,j}}\right)\cdot \left(\prod_{i\in\mathcal{I}^c}a_i^{c_{i,j}/m_i}\right).
$$
Note that for all $i\in\mathcal{I}$ the elements $a_i^{\lambda_{i,j}}$ and $a_i^{c_{i,j}}$ are well defined because $a_i\in G^\circ$, and for all $i\in\mathcal{I}^c$ the elements $a_i^{c_{i,j}/m_i}$ are well defined because ${1}/{m_i}\in\dom(a_i)$ and $c_{i,j}\in\Z$.
It follows that the elements $u_1,\ldots,u_m$ belong to $G^\circ$ and $u_1,\ldots,u_m,e_0,e_1,\ldots,e_M$ are pairwise commuting.

Let $d_G$ be any right-invariant metric on the nilpotent Lie group $G$.
Using property \ref{itm_4_0}, it is straightforward to check that
$$
\lim_{n\to\infty}~d_G\left(
a_1^{f_1(n)}\cdot\ldots\cdot a_k^{f_k(n)},~u_1^{g_1(n)}\cdot\ldots\cdot u_m^{g_m(n)}e_0^{\binom{n}{0}}e_1^{\binom{n}{1}}\cdot\ldots\cdot e_M^{\binom{n}{M}}
\right)\,=\,0.
$$
Therefore, instead of showing the conclusions of Theorems \ref{thm_B} and \ref{thm_C} for $(v(n)\Gamma)_{n\in\N}$, it suffices to show the conclusions of Theorems \ref{thm_B} and \ref{thm_C} for the sequence $(w(n)\Gamma)_{n\in\N}$, where
$$
w(n)\,\coloneqq\,u_1^{g_1(n)}\cdot\ldots\cdot u_m^{g_m(n)}e_1^{\binom{n}{1}}\cdot\ldots\cdot e_M^{\binom{n}{M}},\qquad n\in\N.
$$
Note that $e_0^{\binom{n}{0}}$ was omitted because it is a constant (cf.\ \cref{rem_change_of_base_point}).

It is a consequence of \cref{thm_leib_orb_cls} that for every $j\in\{1,\ldots,M\}$ there exists $q_j\in\N$ such that
for all $r\in\{0,1,\ldots,q_j-1\}$ the set
$$
\overline{e_j^{q_j\Z+r}\Gamma}=\overline{\{e_j^{q_jn+r}\Gamma: n\in\Z\}}
$$
is a connected sub-nilmanifold of $X$. If we take $q=\mathrm{lcm}(q_1,\ldots,q_M)$ then 
$$
\overline{e_j^{q\Z+r}\Gamma}
$$
is also connected for all $r\in\{0,1,\ldots,q-1\}$ and all $j\in\{1,\ldots,M\}$. Let $p_{j,r}(n)$ be the polynomial defined as
$$
p_{j,r}(n)\coloneqq\frac{1}{q}\left(\binom{qn+r}{j}-\binom{r}{j}\right),
$$
let $s_j\coloneqq e_j^q$ and $c_r\,\coloneqq\, e_1^{\binom{r}{1}}\cdot\ldots\cdot e_M^{\binom{r}{M}}$,
let $h_{i,r}(t)\coloneqq q^{-\deg(g_i)}g_i(qt+r)$ and $z_i\coloneqq u_i^{q^{\deg(g_i)}}$, and define 
$$
w_r(n)\coloneqq z_1^{h_{1,r}(n)}\cdot\ldots\cdot z_m^{h_{m,r}(n)}s_1^{p_{1,r}(n)}\cdot\ldots\cdot s_M^{p_{M,r}(n)}.
$$
A straightforward calculation shows that
$$
w(qn+r)\,=\,w_r(n) c_r,\qquad\forall r\in\{0,1,\ldots,q-1\}.
$$
Also note that the elements $z_1,\ldots,z_m$ belong to $G^\circ$ and $z_1,\ldots,z_m,s_1,\ldots,s_M$ are pairwise commuting. Additionally, the polynomials $p_{1,r},\ldots,p_{M,r}$ satisfy conditions \ref{itm_thm_G_A} and \ref{itm_thm_G_B} from \cref{thm_G}, and the functions $h_{1,r},\ldots,h_{m,r}$ belong to $\Hardy$ and satisfy conditions \ref{itm_thm_G_C} and \ref{itm_thm_G_D}. Condition \ref{itm_thm_G_E} is also satisfied, because $$
h_{i,r}'(t)\,=\, q^{-\deg(g_i)+1}g_i'(qt+r)\,=\, q^{-\deg(g_i')}g_i'(qt+r)
$$
and $g_1,\ldots,g_m$ satisfy \ref{itm_3_0}.
In conclusion, all the conditions of \cref{thm_G} are met, which means that for every $r\in\{0,1,\ldots,q-1\}$ the sequence $(w_r(n)c_r\Gamma)_{n\in\N}=(w(qn+r)\Gamma)_{n\in\N}$ is uniformly distributed in the sub-nilmanifold $\overline{ z_1^\R\cdots z_m^\R s_1^\Z\cdots s_M^\Z c_r\Gamma}$.
Define
$$
Y_r=\overline{ z_1^\R\cdots z_m^\R s_1^\Z\cdots s_M^\Z c_r\Gamma}
$$
and note that $Y_r$ is connected because $\overline{e_j^{q\Z+r}\Gamma}$ is connected for all $j\in\{1,\ldots,M\}$. Moreover, using that the map $g\mapsto c_r g$ from $G\to G$ is homeomorphic, we have
\begin{align*}
Y_r
&=\overline{ z_1^\R\cdots z_m^\R s_1^\Z\cdots s_M^\Z c_r\Gamma} 
\\
&=\overline{ c_r z_1^\R\cdots z_m^\R s_1^\Z\cdots s_M^\Z \Gamma}
\\
&=c_r\,\overline{z_1^\R\cdots z_m^\R s_1^\Z\cdots s_M^\Z \Gamma} 
\\
&=c_rY_0.
\end{align*} 
Let $Z=\overline{ z_1^\R\cdots z_m^\R e_1^\Z\cdots e_M^\Z \Gamma}$ and observe that $Y_0, Y_1,\ldots, Y_{q-1}$ are sub-nilmanifolds of $Z$, because $s_j= e_j^q$ and $c_r= e_1^{\binom{r}{1}}\cdot\ldots\cdot e_M^{\binom{r}{M}}$. Also, finitely many translates of $Y_0$ cover $Z$, because
\[
\bigcup_{0\leq \alpha_1,\ldots,\alpha_M\leq q-1} e_1^{\alpha_1}\cdot\ldots\cdot e_M^{\alpha_M} Y_0 ~=~ Z.
\] 
Since $Y_0$ is connected and finitely many translates of it cover $Z$, we conclude that $Y_0$ is the identity component of $Z$, i.e., $Y_0$ is the connect component of $Z$ containing $\Gamma$. Let $\tilde{H}$ be the stabilizer of $Z$,
\[
\tilde{H}=\{g\in G: gZ=Z\},
\]
and define $H\coloneqq \tilde{H}^\circ$. Then $Z=\tilde{H}\Gamma$ and $Y_0=H\Gamma$. Moreover, since $c_r\in \tilde{H}$ and $H$ is a normal subgroup of $\tilde{H}$, it follows that
\begin{align*}
Y_r & = c_rY_0
\\
& = c_r H\Gamma
\\
& = Hc_r\Gamma.
\end{align*}
Thus, choosing $x_r=c_r\Gamma$, we have shown that $Y_r=Hx_r$ for a connected and closed subgroup $H$ of $G$, which finishes the proof of \cref{thm_C}.
Also, since $\overline{ z_1^\R\cdots z_m^\R s_1^\Z\cdots s_M^\Z c_r\Gamma}$ is dense in $X$ if and only if its projection onto the maximal factor torus is dense there\footnote{Note that any nilmanifold possesses a niltranslation that acts ergodically on it. Therefore, given any sub-nilmanifold $Y$ of $X=G/\Gamma$ there exists some $a\in G$ that acts ergodically on $Y$. If the projection of $Y$ onto the maximal factor torus is surjective, then $a$ also acts ergodically on the maximal factor torus, which in view of \cref{thm_leib_crit_non-connected} implies that $a$ acts ergodically on $X$. This shows that a sub-nilmanifold of $X$ coincides with $X$ if and only if its projection onto the maximal factor torus equals the the maximal factor torus.}, it follows that $(w(qn+r)\Gamma)_{n\in\N}$ is uniformly distributed in $X$ if and only if its projection $(\vartheta(w(qn+r)\Gamma))_{n\in\N}$ onto $[G^\circ,G^\circ]\backslash X$ is uniformly distributed there.
In particular, $(w(n)\Gamma)_{n\in\N}$ is uniformly distributed in $X$ if and only if $(\vartheta(w(n)\Gamma))_{n\in\N}$ is uniformly distributed in $[G^\circ,G^\circ]\backslash X$, which proves the conclusion of \cref{thm_B}.
\end{proof}

\section{Proof of \cref{thm_G}}
\label{sec_proof_orbit_closure_cesaro}

For the proof of \cref{thm_G} we distinguish three cases:

The first case that we consider is when $G$ is abelian.
Although this case essentially follows from the work of Boshernitzan \cite{Boshernitzan94}, for completeness we state and prove it in \cref{sec_abelian-case} below. It serves as the base case for the induction used in the proofs of the two subsequent cases.

The second case of \cref{thm_G} that we consider is when $f_k(t)\prec t$ and $b_1=\ldots=b_m=1_G$.
We will refer to this as the ``sub-linear'' case of \cref{thm_G}, and it is proved in \cref{sec_sub-linear-case} below using induction on the nilpotency step of the Lie group $G$.
The reason why we consider this case separately is because its proof requires the use of a special type of van der Corput Lemma that is specifically designed to handle functions of sub-linear growth from a Hardy field (see \cref{prop_slow_vdC}).

Finally, in \cref{sec_sup-linear-case}, we prove the general case of \cref{thm_G}. The proof of the general case bears many similarities to the proof of the ``sub-linear'' case, but relies on the standard van der Corput Lemma and instead of induction on the nilpotency step of $G$ uses induction on the so-called \define{degree} of the sequence $v(n)$, see \cref{def_degree_of_g}.

\subsection{The abelian case}
\label{sec_abelian-case}

The following corresponds to the special case of \cref{thm_G} where $G$ is abelian.

\begin{Theorem}[The abelian case of \cref{thm_G}]
\label{thm_G_abelian}
Let $d=d_1+d_2$ and consider a map $v\colon\N\to \R^{d_1}\times\Z^{d_2}$ of the form
$$
v(n)= f_1(n)\alpha_1+\ldots+ f_k(n)\alpha_k+p_1(n)\beta_1+\ldots+p_m(n)\beta_m,
$$
where $\alpha_1,\ldots,\alpha_k\in\R^d\times\{(0,\ldots,0)\}$, $\beta_1,\ldots,\beta_m\in \R^{d_1}\times\Z^{d_2}$, $p_1,\ldots,p_m\in\R[t]$ satisfy properties \ref{itm_thm_G_A} and \ref{itm_thm_G_B}, and $f_1,\ldots,f_k\in\Hardy$ satisfy properties \ref{itm_thm_G_C}, \ref{itm_thm_G_D}, and \ref{itm_thm_G_E}. Moreover, let $\Lambda$ be a subgroup of  $\Z^{d_2}$ of finite index, define $\Delta\coloneqq \Z^{d_2}/\Lambda$, and assume that for every $j=1,\ldots,m$ the set $\overline{\{n\beta_j\bmod(\Z^{d_1}\times\Lambda): n\in\Z\}}$ is a connected subgroup of $(\R^{d_1}\times\Z^{d_2})/(\Z^{d_1}\times\Lambda) = \T^{d_1}\times \Delta$. Then the sequence $(v(n)\bmod(\Z^{d_1}\times\Lambda))_{n\in\N}$ is uniformly distributed in the subgroup
$$
T\coloneqq \overline{ \R\alpha_1+\ldots+ \R\alpha_k+\Z\beta_1+\ldots+\Z\beta_m\bmod(\Z^{d_1}\times\Lambda)}
$$
of $\T^d\times\Delta$.
\end{Theorem}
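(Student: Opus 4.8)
The statement is the abelian case of the main equidistribution theorem, so I would reduce it to Weyl's equidistribution criterion on the torus $\T^d \times \Delta$ (identifying $\Delta$ with a finite cyclic-type group and testing against characters). By the standard trick of passing to subsequences along $qn+r$, one may assume $\Delta$ is connected, i.e. trivial, so the target is a genuine sub-torus $T \subset \T^{d_1+d_2}$. The claim then is that for every character $\psi$ of $\T^d$ that is \emph{nontrivial on $T$}, we have $\frac1N\sum_{n\le N}\psi(v(n)) \to 0$, while for $\psi$ trivial on $T$ the averages are identically $1$ (since $v(n)\in T$ for all $n$, up to the constant term which can be absorbed via \cref{rem_change_of_base_point}).

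**Key steps.** First, I would unwind what it means for $\psi$ to be nontrivial on $T = \overline{\R\alpha_1+\dots+\R\alpha_k+\Z\beta_1+\dots+\Z\beta_m}$: writing $\psi(x) = e^{2\pi i \langle \xi, x\rangle}$ for an appropriate integer-type functional $\xi$, nontriviality on $T$ forces that either $\langle\xi,\alpha_i\rangle \ne 0$ for some $i$, or $\langle\xi,\beta_j\rangle \notin \Z$ for some $j$. Then $\psi(v(n)) = e^{2\pi i\, g(n)}$ where
$$
g(t) = \sum_{i=1}^k \langle\xi,\alpha_i\rangle f_i(t) + \sum_{j=1}^m \langle\xi,\beta_j\rangle p_j(t).
$$
The polynomial part $\sum_j \langle\xi,\beta_j\rangle p_j(t)$ has a nonzero coefficient modulo $\Z$ on some binomial coefficient (using \ref{itm_thm_G_A} and \ref{itm_thm_G_B}) unless all $\langle\xi,\beta_j\rangle$ are integers. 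The Hardy part $h(t) := \sum_i \langle\xi,\alpha_i\rangle f_i(t)$ is, thanks to \ref{itm_thm_G_C}, dominated by its top nonzero term $c f_{i_0}(t)$ with $c\ne 0$, so $h$ itself lies in $\Hardy$ and by \ref{itm_thm_G_D} satisfies $t^{\ell-1}\log t \prec h(t) \prec t^\ell$ for some $\ell$ (when $h\not\equiv 0$).

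**The core estimate.** At this point I would invoke Boshernitzan's equidistribution theorem for Hardy-field functions: a function $g\in\Hardy$ of polynomial growth has $(g(n))_{n}$ equidistributed mod $1$ (Weyl sums tending to $0$) provided $g(t)$ stays ``far from'' rational polynomials — precisely, provided $|g(t) - p(t)| \succ \log t$ for every real polynomial $p$ with... (one needs the right formulation; this is exactly where \ref{itm_thm_G_D}'s lower bound $t^{\ell-1}\log t \prec f(t)$ is used, ensuring the relevant difference $h(t) - (\text{poly})$ grows faster than $\log t$). Combined with a standard van der Corput / Weyl differencing argument handling the nonzero polynomial coefficient when the $\langle\xi,\beta_j\rangle$ are not all integral, one gets $\frac1N\sum_{n\le N} e^{2\pi i g(n)} \to 0$ in all cases where $\psi$ is nontrivial on $T$. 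The one subtlety: if $h\equiv 0$ but some $\langle\xi,\beta_j\rangle\notin\Z$, then $g$ is a genuine polynomial with a nonintegral leading-type coefficient and equidistribution is classical Weyl; if $h\not\equiv 0$, the Hardy asymptotics dominate regardless of the polynomial part. Either way the Weyl sums vanish, and conversely if $\psi$ is trivial on $T$ the sum is trivially $1$; by Weyl's criterion this gives equidistribution in $T$.

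**Main obstacle.** The delicate point is the interaction between the Hardy-field part and the polynomial part in $g(t)$ — specifically checking that $g(t) - p(t)$ cannot be bounded (or $\ll \log t$) for \emph{any} real polynomial $p$ unless $\psi$ is trivial on $T$, which is where conditions \ref{itm_thm_G_C}–\ref{itm_thm_G_E} (in particular the closure under derivatives in \ref{itm_thm_G_E}, needed to run van der Corput iteratively on the Hardy function and its derivatives) earn their keep. I expect the bookkeeping of which term dominates, and the careful bootstrap through derivatives to apply the van der Corput estimate, to be the technically heaviest part, though morally it is all contained in Boshernitzan's work; the role of this subsection is just to assemble those pieces into the exact form needed as the induction base for the nonabelian cases.
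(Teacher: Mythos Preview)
Your approach is essentially the paper's: reduce to Weyl's criterion on characters of $\T^{d_1}\times\Delta$ that are nontrivial on $T$, then split into the case where the Hardy part is nonzero (Boshernitzan) and the case where only the polynomial part survives (classical Weyl). Two points, however, deserve correction.

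First, there is a real gap in your polynomial case. You write that when $h\equiv 0$ but some $\langle\xi,\beta_j\rangle\notin\Z$, the polynomial $g$ has a ``nonintegral leading-type coefficient'' and equidistribution follows from Weyl. That is not enough: Weyl's theorem needs an \emph{irrational} coefficient, and a priori $\langle\xi,\beta_j\rangle$ could be a nonzero rational (e.g.\ $p(n)=n/2$ is not equidistributed mod~$1$). The missing ingredient is precisely the connectedness hypothesis on $\overline{\{n\beta_j\bmod(\Z^{d_1}\times\Lambda)\}}$, which you never invoke: if that orbit closure is connected, its image under any character is either $\{1\}$ or the full circle, forcing each $\zeta_j=\langle\xi,\beta_j\rangle\bmod 1$ to be either $0$ or irrational. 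This is exactly how the paper closes the case (and it is the only place the connectedness assumption is used).

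Second, two of your steps are unnecessary detours. The paper does not pass to arithmetic subsequences to kill $\Delta$; characters on $\T^{d_1}\times\Delta$ are handled directly. And condition~\ref{itm_thm_G_E} (closure under derivatives) plays no role in the abelian case: once some $\langle\xi,\alpha_i\rangle\neq 0$, the sum $\sum_i\langle\xi,\alpha_i\rangle f_i$ is a single Hardy-field function satisfying $t^{\ell-1}\log t\prec\cdot\prec t^\ell$ by \ref{itm_thm_G_C} and \ref{itm_thm_G_D}, and Boshernitzan's theorem applies as a black box without any van der Corput iteration. Your ``main obstacle'' is therefore not an obstacle here; that machinery enters only in the nonabelian induction.
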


\begin{proof}
Since $\T^{d_1}\times \Delta$ is a compact abelian group, the algebra generated by continuous group characters $\eta\colon \T^{d_1}\times \Delta\to S^1=\{z\in\C: |z|=1\}$ is uniformly dense $C(\T^{d_1}\times \Delta)$. Therefore, to prove that $(v(n)\bmod(\Z^{d_1}\times\Lambda))_{n\in\N}$ is uniformly distributed in $T$ it suffices to show that for every continuous group characters $\eta$ that is non-trivial when restricted to $T$ we have
\begin{equation}
\label{eqn_thm_G_abelian_1}
\lim_{N\to\infty}\frac{1}{N}\sum_{n=1}^N \eta(v(n)\bmod(\Z^{d_1}\times\Lambda))\,=\,0.
\end{equation}
Let $e(x)$ be shorthand for $e^{2\pi i x}$ and choose $\xi_1,\ldots,\xi_k\in\R$ and $\zeta_1,\ldots,\zeta_m\in[0,1)$ such that
$\eta(t\alpha_i\bmod (\Z^{d_1}\times\Lambda))=e(t\xi_i)$ for all $t\in\R$ as well as
$\eta(n\beta_j\bmod (\Z^{d_1}\times\Lambda))=e(n\zeta_j)$ for all $n\in\N$.
This allows us to rewrite \eqref{eqn_thm_G_abelian_1} as
\begin{equation}
\label{eqn_thm_G_abelian_2}
\lim_{N\to\infty}\frac{1}{N}\sum_{n=1}^N e\big(f_1(n)\xi_1+\ldots+ f_k(n)\xi_k+p_1(n)\zeta_1+\ldots+p_m(n)\zeta_m\big)\,=\,0.
\end{equation}
Since $f_1,\ldots,f_k$ have different growth and satisfy property \ref{itm_thm_G_D}, if at least one of the numbers $\xi_1,\ldots,\xi_k$ is non-zero then it follows from Boshernitzan's Equidistribution Theorem (\cite[Theorem 1.8]{Boshernitzan94}) that \eqref{eqn_thm_G_abelian_2} holds.
If all of the $\xi_1,\ldots,\xi_k$ are zero and at least one of the numbers $\zeta_1,\ldots,\zeta_k$ is non-zero then \eqref{eqn_thm_G_abelian_2} holds too, because $p_1,\ldots,p_k$ have different degree and, since $\overline{\{n\beta_j\bmod(\Z^{d_1}\times\Lambda): n\in\Z\}}$ is connected for all $j\in\{1,\ldots,M\}$, any non-zero $\zeta_j$ must be an irrational number and polynomials with irrational coefficients are uniformly distributed mod $1$ by Weyl's Equidistribution Theorem \cite[Satz 14]{Weyl16}. To finish the proof, note that not all the numbers $\xi_1,\ldots,\xi_k,\zeta_1,\ldots,\zeta_m$ can be zero since $\eta$ was assumed to be non-trivial when restricted to $T$.
\end{proof}

\subsection{The sub-linear case}
\label{sec_sub-linear-case}

The purpose of this subsection is to prove the special case of \cref{thm_G} when $b_1,\ldots,b_m=1_G$ and  $f_k(t)\prec t$, which we dubbed the ``sub-linear case''.
For the convenience of the reader, let us state it as a separate theorem here.

\begin{Theorem}[The sub-linear case of \cref{thm_G}]
\label{thm_G_sub-linear}
Let $G$ be a simply connected nilpotent Lie group, $\Gamma$ a uniform and discrete subgroup of $G$, and $\Hardy$ a Hardy field.
For any mapping $v\colon\N\to G$ of the form
$$
v(n)= a_1^{f_1(n)}\cdot\ldots\cdot a_k^{f_k(n)},\qquad\forall n\in\N,
$$
where $a_1,\ldots, a_k\in G^\circ$ are commuting and $\log(t)\prec f_1 \prec \ldots \prec f_k\prec t\in\Hardy$, the sequence $(v(n)\Gamma)_{n\in\N}$ is uniformly distributed in the sub-nilmanifold $\overline{ a_1^\R\cdots a_k^\R \Gamma}$.
\end{Theorem}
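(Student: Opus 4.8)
The plan is to argue by induction on the nilpotency step $s$ of $G$, with the base case $s=1$ furnished by \cref{thm_G_abelian}. Before starting the induction I would make the standard reduction to the case in which the orbit closure is the whole space: since $a_1,\dots,a_k\in G^{\circ}$, the point $v(n)\Gamma$ lies in $\overline{a_1^{\R}\cdots a_k^{\R}\Gamma}$ for every $n$, and the latter is a sub-nilmanifold of $X$ by \cite[Lemma~A.6]{BMR17arXiv}; replacing $X$ by it, we may assume $X=\overline{a_1^{\R}\cdots a_k^{\R}\Gamma}$, so that $X$ is connected and $G=G^{\circ}\Gamma$, and the goal becomes to show that $(v(n)\Gamma)_{n\in\N}$ is uniformly distributed in $X$. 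Since the linear span of the central characters of $(G,\Gamma)$ is uniformly dense in $\Cont(X)$ by \cref{rem_central_characters_are_uniformly_dense}, it is enough to prove that
$$
\frac1N\sum_{n=1}^{N}\phi(v(n)\Gamma)\ \longrightarrow\ \int\phi\d\mu_X
$$
for every central character $\phi$, with associated unitary character $\chi\colon Z(G)\to\{z\in\C:|z|=1\}$ as in \cref{def_central_character}.

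If $\chi$ is trivial, then $\phi$ is $Z(G)$-invariant and descends to a continuous function $\overline{\phi}$ on $Z(G)\backslash X$. Because $G$ is simply connected, $Z(G)$ is connected and rational, so $Z(G)\backslash X$ is a nilmanifold for the simply connected nilpotent Lie group $G/Z(G)$, which has step at most $s-1$. The pushed-down sequence is $\overline{a_1}^{f_1(n)}\cdots\overline{a_k}^{f_k(n)}$, with the same functions $f_i$ and commuting generators in $(G/Z(G))^{\circ}$, and since $X$ is compact its image in $Z(G)\backslash X$ is closed and equals the orbit closure of the pushed-down sequence. The induction hypothesis therefore gives that the pushed-down sequence is uniformly distributed in $Z(G)\backslash X$, whence $\frac1N\sum_{n\leq N}\phi(v(n)\Gamma)=\frac1N\sum_{n\leq N}\overline{\phi}(\overline{v}(n)\overline{\Gamma})\to\int\overline{\phi}=\int\phi\d\mu_X$, as needed.

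The remaining case — a central character $\phi$ whose $\chi$ is \emph{non-trivial} — is the crux; here $\int\phi\d\mu_X=0$ by \cref{rem_zero_mean_central_char}, and we must show $\frac1N\sum_{n\leq N}\phi(v(n)\Gamma)\to0$. This is where the hypothesis $f_k\prec t$ is used. By \cref{lem_useful_hardy} each $f_i$ is eventually monotone with $f_i'\to0$, so consecutive values of $(v(n)\Gamma)$ drift arbitrarily slowly and the usual van der Corput inequality is unhelpful; in its place I would apply the van der Corput estimate tailored to sub-linear Hardy functions, \cref{prop_slow_vdC}, reducing the problem to showing that suitable difference averages $\frac1N\sum_{n\leq N}\phi(v(n+h)\Gamma)\,\overline{\phi(v(n)\Gamma)}$ tend to $0$ (with $h$ ranging over a set depending on $N$). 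As $v(n+h)v(n)^{-1}=\prod_i a_i^{f_i(n+h)-f_i(n)}\in G^{\circ}$, the pair sequence $n\mapsto(v(n+h)\Gamma,v(n)\Gamma)$ takes values in the relatively independent self-product $X\plh_{G^{\circ}}X$, and $\phi\oPLH\overline{\phi}$ restricts to a continuous function there whose transformation law under the centre is $(s,t)\mapsto\chi(s)\overline{\chi(t)}$, still non-trivial. I would then determine the orbit closure of the pair sequence in $X\plh_{G^{\circ}}X$: its projection to the maximal factor torus is controlled — via \cref{cor_horizontal_characters_of_Gx_LG} and \cref{rem_pseudo_horizontal_characters_of_Gx_LG} — by \cref{thm_G_abelian} applied to $f_1,\dots,f_k$ together with their differences $f_i(\cdot+h)-f_i(\cdot)$, whose non-constant members again have pairwise different growth; feeding this back, while keeping track of connectedness through \cref{lem_connected-Gx_LG} and running an auxiliary induction at the fixed step $s$ (on the complexity of the pair sequence, e.g.\ the number of non-constant difference functions), identifies the orbit closure and shows that $\phi\oPLH\overline{\phi}$ has vanishing integral over it by the mean-zero principle of \cref{rem_zero_mean_central_char}. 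Consequently the difference averages vanish, and \cref{prop_slow_vdC} then yields $\frac1N\sum_{n\leq N}\phi(v(n)\Gamma)\to0$, completing the induction and with it the proof that $(v(n)\Gamma)_{n\in\N}$ is uniformly distributed in $\overline{a_1^{\R}\cdots a_k^{\R}\Gamma}$.

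The main obstacle is exactly this last case, for two intertwined reasons. First, since $f_i'\to0$ the difference functions $f_i(\cdot+h)-f_i(\cdot)$ are small, so the shift $h$ in van der Corput must be allowed to grow with $N$ in a finely tuned manner — this is the whole point of \cref{prop_slow_vdC} — and one must check that the resulting difference sequences still lie within the scope of the induction, in particular that enough of the difference functions keep pairwise different growth and stay above $\log$ on the relevant scale. Second, one has to pin down the orbit closure of the pair sequence in $X\plh_{G^{\circ}}X$ sharply enough to ensure that $\phi\oPLH\overline{\phi}$ itself, and not merely some of its Fourier modes, integrates to zero there — which is what forces the detailed bookkeeping with the (pseudo-)horizontal characters of the relatively independent self-product set up in \cref{sec_prelims-relative-independent-joinings}.
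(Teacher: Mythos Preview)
Your overall architecture (induction on the nilpotency step, reduction to $X=\overline{a_1^{\R}\cdots a_k^{\R}\Gamma}$, testing against central characters, and descending when $\chi$ is trivial) matches the paper's. The gap is in the nontrivial-$\chi$ case, where you misread what \cref{prop_slow_vdC} actually does. You propose to study $\phi(v(n+h)\Gamma)\overline{\phi(v(n)\Gamma)}$ via integer shifts $h$ in $n$ and the difference functions $f_i(\cdot+h)-f_i(\cdot)$. But every $f_i\prec t$, so for each fixed $h$ all of these differences are $o(1)$; the pair sequence $(v(n+h),v(n))$ is asymptotically diagonal, its orbit closure is $X^{\triangle}$, and $\int_{X^{\triangle}}\phi\oPLH\overline{\phi}=\int_X|\phi|^2>0$. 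Nothing vanishes, and your ``$h$ depending on $N$'' and ``auxiliary induction on the complexity of the pair sequence'' remain a hope rather than an argument.

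The mechanism in \cref{prop_slow_vdC} is different: it does \emph{not} shift $n$, it shifts only the value of the fastest function $f_k$ by a real number $s$. With $\Psi(t_1,\dots,t_k)=\phi(a_1^{t_1}\cdots a_k^{t_k}\Gamma)$ this gives
\[
A(s)=\lim_{N\to\infty}\frac1N\sum_{n=1}^N\phi\big(v(n)a_k^{s}\Gamma\big)\overline{\phi\big(v(n)\Gamma\big)}.
\]
The point you are missing is that $g\Gamma\mapsto\phi(a_k^{s}g\Gamma)\overline{\phi(g\Gamma)}$ is $Z(G)$-invariant (the two $\chi$-factors cancel), hence descends to the $(s{-}1)$-step quotient $Z(G)\backslash X$, where the induction hypothesis applies \emph{directly} to give $A(s)=\Phi\big((a_k^{s},1_G)X^{\triangle}\big)$; no auxiliary induction is needed. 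The averages $\frac1H\sum_{h\leq H}A(\xi h)$ are then handled by \cref{lem_equidistribution_of_diagonal_in_Gx_LG_sub-linear}, for which one must take $L$ to be the \emph{smallest} connected normal rational closed subgroup containing $a_k^{\R}$ (not $G^{\circ}$): the minimality is exactly what forces $\lambda\neq1$ in that lemma. Correspondingly, to get $\int_{X\plh_L X}\Phi=0$ one needs $\chi$ nontrivial on $V=L\cap Z(G)^{\circ}$, not merely on $Z(G)$; this stronger hypothesis is obtained by a preliminary induction on $\dim X$ (descending by $V$ rather than by all of $Z(G)$).
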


Given a sub-nilmanifold $Y$ of a nilmanifold $X=G/\Gamma$ and a function $F\in\Cont(X)$ we will write $F(Y)$ to denote the quantity $\int F\d\mu_Y$. A sequence of sub-nilmanifolds $(Y_n)_{n\in\N}$ is said to be \define{uniformly distributed} in $X$ if
$$
\lim_{N\to\infty}\frac{1}{N}\sum_{n=1}^N F(Y_n)~=~\int F\d\mu_X,\qquad\forall F\in\Cont(X).
$$
The following lemma will be instrumental in our proof of \cref{thm_G_sub-linear}.
\begin{Lemma}
\label{lem_equidistribution_of_diagonal_in_Gx_LG_sub-linear}
Let $G$ be a connected simply connected nilpotent Lie group, $\Gamma$ a uniform and discrete subgroup of $G$ and consider the nilmanifold $X\coloneqq G/\Gamma$.
Let $b\in G$ be arbitrary and let $L$ denote the smallest connected, normal, rational, and closed subgroup of $G$ containing $b^\R=\{b^t:t\in\R\}$.
Let $X^\triangle$ denote the diagonal $\{(x,x): x\in X\}$.
Then for all but countably many $\xi\in\R$, the sequence  $\big((b^{\xi n},1_G)X^\triangle\big)_{n\in\N}$ is uniformly distributed in the relatively independent product $X\plh_L X$.\footnote{Since $\{(g,g):g\in G\}$ is a rational and closed subgroup of $G\plh_L G$, we can identify $(g\Gamma,g\Gamma)$ with $(g,g)\Gamma\plh_L\Gamma$. This allows us to view the diagonal $X^\triangle\coloneqq\{(x,x): x\in X\}$ as a sub-nilmanifold of the relatively independent product $X\plh_L X$.}
\end{Lemma}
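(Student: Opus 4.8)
The plan is to reduce the equidistribution of $\big((b^{\xi n},1_G)X^\triangle\big)_{n\in\N}$ in $X\plh_L X$ to a statement about ordinary niltranslations and then invoke Leibman's Equidistribution Criterion (\cref{thm_leib_crit_non-connected}), taking advantage of the explicit description of horizontal/pseudo-horizontal characters of the relatively independent product from \cref{rem_horizontal_characters_of_Gx_LG} and \cref{rem_pseudo_horizontal_characters_of_Gx_LG}. First I would observe that since $(g,g)$ commutes with $(b^\xi,1_G)$ modulo nothing problematic — the element $(b^{\xi},1_G)$ is a genuine one-parameter-type element of $G\plh_L G$ because $b^\R\subset L$, so $(b^\R,1_G)\subset G\plh_L G$ — the sequence in question is $(b^{\xi},1_G)^n\cdot X^\triangle$, i.e.\ a linear (degree-one polynomial) orbit of the point $X^\triangle$ in the nilmanifold $X\plh_L X$. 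By \cref{lem_connected-Gx_LG}, since $X$ is connected and $\pi(L)$ is connected (which holds because $L$ is connected), the nilmanifold $X\plh_L X$ is connected, so Leibman's criterion applies: the orbit is uniformly distributed in the orbit closure iff its projection to the maximal factor torus of that orbit closure is. Actually it is cleaner to argue directly: by \cref{thm_leib_crit_non-connected} applied in $X\plh_L X$, uniform distribution of $\big((b^\xi,1_G)^n X^\triangle\big)$ in all of $X\plh_L X$ is equivalent to uniform distribution of its image under $\vartheta$ in the maximal factor torus of $X\plh_L X$.

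The core of the argument is therefore to show that for all but countably many $\xi$, the projected sequence equidistributes on the maximal factor torus of $X\plh_L X$. Using \cref{rem_pseudo_horizontal_characters_of_Gx_LG}, every pseudo-horizontal character $\eta$ of $(G\plh_L G,\Gamma\plh_L\Gamma)$ factors as $\eta\big((a_1,a_2)\Gamma\plh_L\Gamma\big)=\eta_1(a_2\Gamma)\,\eta_2(a_1a_2^{-1}\Gamma_L)$ for a pseudo-horizontal character $\eta_1$ of $(G,\Gamma)$ and a pseudo-horizontal character $\eta_2$ of $(L,\Gamma_L)$ killing $[G^\circ,L^\circ]$. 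Evaluating along the sequence: writing $x=g\Gamma$ so that the running point is $(b^{\xi n}g,g)\Gamma\plh_L\Gamma$, we get $\eta\big((b^{\xi n}g,g)\Gamma\plh_L\Gamma\big)=\eta_1(g\Gamma)\cdot\eta_2(b^{\xi n}\Gamma_L)$. By Weyl's criterion, equidistribution on the maximal factor torus is equivalent to $\frac1N\sum_{n\le N}\eta\big((b^{\xi n}g,g)\Gamma\plh_L\Gamma\big)\to 0$ for every nontrivial such $\eta$. Since $\eta$ is nontrivial on the torus and $\eta_1(g\Gamma)$ is a fixed unimodular constant, this reduces to showing $\frac1N\sum_{n\le N}\eta_2(b^{\xi n}\Gamma_L)\to 0$, unless $\eta_2$ is trivial — but if $\eta_2$ is trivial then $\eta$ is constant on the whole sequence, meaning the orbit closure of the sequence is contained in the proper sub-nilmanifold $\{\eta=\text{const}\}$; I need to rule this out, i.e.\ show the orbit closure is all of $X\plh_L X$ for generic $\xi$. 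So the two things to establish are: (a) for each fixed nontrivial pseudo-horizontal character $\eta_2$ of $(L,\Gamma_L)$ killing $[G^\circ,L^\circ]$, the set of $\xi$ for which $\frac1N\sum_{n\le N}\eta_2(b^{\xi n}\Gamma_L)$ fails to tend to $0$ is countable; and (b) there are only countably many such $\eta_2$, so the union of the bad sets is still countable.

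For (b), the maximal factor torus of $L$ is a finite-dimensional torus, hence its character group $\widehat{\mathbb T^d}\cong\Z^d$ is countable — good. For (a), fix $\eta_2$; since $b^\R\subset L$ is a one-parameter subgroup, $b^{\xi n}\Gamma_L$ is a linear orbit in the nilmanifold $L/\Gamma_L$, and $\eta_2(b^{\xi n}\Gamma_L)=e(\xi n\,\theta)$ for some fixed real number $\theta=\theta(\eta_2)$ (the value of the pseudo-horizontal character differentiated along the one-parameter subgroup $b^\R$). Then $\frac1N\sum_{n\le N}e(\xi n\theta)\to 0$ unless $\xi\theta\in\Z$; if $\theta\ne 0$ this excludes only countably many $\xi$, and if $\theta=0$ then $\eta_2$ is trivial on $b^\R$, which — by the minimality of $L$ as the smallest connected normal rational closed subgroup containing $b^\R$ — forces $\eta_2$ to be trivial on all of $L$ (since $\ker\eta_2$ would be a connected rational closed subgroup containing $b^\R$, and one must check normality, which follows because $[G^\circ,L^\circ]\subset\ker\eta_2$ and $L^\circ$-conjugation is therefore trivial on $L/\ker\eta_2$), contradicting our assumption that $\eta_2$ is nontrivial. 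This last deduction — that triviality of $\eta_2$ on $b^\R$ already forces triviality on $L$, using the minimality of $L$ — is the step I expect to be the main obstacle, and it is precisely where the hypothesis that $L$ is the \emph{smallest} connected, normal, rational, closed subgroup containing $b^\R$ is used; it will require carefully checking that $\ker\eta_2$ (or rather its preimage in $L$) is again normal, rational, and closed, so that minimality applies. Once (a) and (b) are in hand, taking the union over the countably many $\eta_2$ of the countably many bad $\xi$ gives a countable exceptional set, outside of which the projected sequence equidistributes on the maximal factor torus of $X\plh_L X$, and hence by \cref{thm_leib_crit_non-connected} the original sequence equidistributes in $X\plh_L X$, completing the proof.
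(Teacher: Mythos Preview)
Your overall strategy---reduce to horizontal characters of $G\plh_L G$, factor them via \cref{rem_horizontal_characters_of_Gx_LG}, and use the minimality of $L$ to show that a nontrivial $\eta_2$ cannot vanish on $b^\R$---is essentially the paper's strategy, and your treatment of part~(a) (the $\lambda\neq 1$ / $\theta\neq 0$ step) matches the paper's argument closely.

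There is, however, a genuine gap in your reduction step. The sequence $\big((b^{\xi n},1_G)X^\triangle\big)_{n\in\N}$ is a sequence of \emph{sub-nilmanifolds}, not of points; $X^\triangle$ has positive dimension $\dim X$. You write ``a linear orbit of the point $X^\triangle$'' and then fix $x=g\Gamma$, evaluating $\eta$ at the single point $(b^{\xi n}g,g)\Gamma\plh_L\Gamma$. But \cref{thm_leib_crit_non-connected} applies to orbits of points, and the orbit of any fixed point $(g,g)\Gamma\plh_L\Gamma$ under $(b^\xi,1_G)$ lies in a sub-nilmanifold of dimension at most $\dim L<\dim(X\plh_L X)$, so it \emph{never} equidistributes in all of $X\plh_L X$. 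This is exactly the obstacle you run into when $\eta_1$ is nontrivial and $\eta_2$ is trivial: with your setup $\eta_1(g\Gamma)$ is a fixed nonzero constant and the Weyl sum does not vanish. You cannot ``rule this out''---for point orbits the obstruction is real.

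The paper instead invokes Leibman's criterion for sequences of sub-nilmanifolds \cite[Corollary 1.9]{Leibman05b}: one must show $\tfrac1N\sum_{n\le N}\eta\big((b^{\xi n},1_G)X^\triangle\big)\to 0$, where $\eta(Y)$ denotes the \emph{integral} $\int\eta\,d\mu_Y$. With this interpretation,
\[
\eta\big((b^{\xi n},1_G)X^\triangle\big)=\Big(\int_X\eta_1\,d\mu_X\Big)\cdot\eta_2(b^{\xi n}\Gamma_L),
\]
and when $\eta_1$ is nontrivial the first factor vanishes, dissolving the case that blocks your point-orbit approach. Once this is in place, the remainder of your argument (countably many characters, $\eta_2(b^t\Gamma_L)=\lambda^t$ with $\lambda\neq 1$ by minimality of $L$, hence countably many bad $\xi$ per character) goes through and coincides with the paper's proof. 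One small remark: for normality of $K=\{g\in L:\eta_2(g\Gamma_L)=1\}$ you need normality in $G$, not just in $L$; this follows from $[G,L]\subset\ker\eta_2$ (which you have, since $G=G^\circ$ and $L=L^\circ$ here), not merely from triviality of $L$-conjugation.
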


\begin{proof}
It follows from \cite[Corollary 1.9]{Leibman05b} that $\big((b^{\xi n},1_G)X^\triangle\big)_{n\in\N}$ is uniformly distributed in $X\plh_L X$ if and only if
\begin{equation}
\label{eqn_equidistribution_of_diagonal_in_Gx_LG_1}
\lim_{N\to\infty}\frac{1}{N}\sum_{n=1}^N \eta\big((b^{\xi n},1_G)X^\triangle\big) =0
\end{equation}
for every non-trivial horizontal character $\eta$ of $(G\plh_L G,\Gamma\plh_L \Gamma)$.
As was mentioned in \cref{rem_horizontal_characters_of_Gx_LG}, for any horizontal character $\eta$ of $(G\plh_L G,\Gamma\plh_L \Gamma)$ there exists a horizontal character $\eta_1$ of $(G,\Gamma)$ and a horizontal character $\eta_2$ of $(L,\Gamma_L)$ with $[G,L]\subset \ker \eta_2$ and such that
$$
\eta\big((a_1,a_2)\Gamma\plh_L \Gamma\big)=\eta_1\big(a_2\Gamma\big)\eta_2\big(a_1 a_2^{-1}\Gamma_L\big), \qquad\forall (a_1,a_2)\in G\plh_L G.
$$
Therefore we get
$$
\lim_{N\to\infty}\frac{1}{N}\sum_{n=1}^N \eta\big((b^{\xi n},1_G)X^\triangle\big)
=\left(\int\eta_1\d\mu_X\right) \left( \lim_{N\to\infty}\frac{1}{N}\sum_{n=1}^N \eta_2(b^{\xi n}\Gamma_L)\right).
$$
Since $\eta$ is non-trivial, either $\eta_1$ or $\eta_2$ is non-trivial. If $\eta_1$ is non trivial then $\int\eta_1\d\mu_X=0$ and hence \eqref{eqn_equidistribution_of_diagonal_in_Gx_LG_1} is satisfied. It remains to deal with the case when $\eta_2$ is non-trivial. 

Note that $\eta_2 (b^t\Gamma_L)=\lambda^t$ for some $\lambda\in\C$ with $|\lambda|=1$. We claim that $\lambda\neq 1$. Before we verify this claim, let us show how it allows us finish the proof of \eqref{eqn_equidistribution_of_diagonal_in_Gx_LG_1}.
Indeed, if $\lambda\neq 1$ then $\lambda^{\xi}\neq 1$ for all but countably many $\xi$. Since there are only countably many horizontal characters, by excluding countably many ``bad'' $\xi$s for each horizontal character, there exists a co-countable set of ``good'' $\xi$s independent of the choice of $\eta_2$.
Since the \Cesaro{} average of $\lambda^{\xi n}$ is $0$ whenever $\lambda^\xi\neq 1$, it follows that for any such ``good'' $\xi$ we have
$$
 \lim_{N\to\infty}\frac{1}{N}\sum_{n=1}^N \eta_2(b^{\xi n}\Gamma_L) =0,
$$
which proves that \eqref{eqn_equidistribution_of_diagonal_in_Gx_LG_1} holds.

Let us now prove the claim that $\lambda\neq 1$ whenever $\eta_2$ is non-trivial.
By way of contradiction, assume $\lambda= 1$. Therefore $\eta_2(b^t \Gamma_L)=1$ for all $t\in\R$.
Consider the set
$$
K\coloneqq \{g\in L: \eta_2(g\Gamma_L)=1\}.
$$
Clearly $b^\R\subset K$. Since $\eta_2(g_1g_2\Gamma_L)=\eta_2(g_1\Gamma_L)\eta_2(g_2\Gamma_L)$ for all $g_1,g_2\in L$, we see that $K$ is a subgroup of $L$, and hence also a subgroup of $G$. Moreover, $[L,G]\subset \ker\eta_2$ implies $[L,G]\subset K$, from which we conclude that $K$ is a normal subgroup of $G$. Since $g\mapsto g\Gamma_L$ and $\eta_2\colon L/\Gamma_L\to \C$ are continuous maps, $K$ is a closed set. Also, since $K\Gamma_L=\eta^{-1}(\{1\})$ is a closed subset of $L/\Gamma_L$, $K$ is rational. In summary, $K$ is a normal, rational, and closed subgroup of $G$.
Let $K^\circ$ be the identity component of $K$. Then $K^\circ$ is also normal, rational, and closed. On top of that, $K^\circ$ is connected and contains $b^\R$. By the minimality assumption on $L$, we must have $K=L$. However, we also have $K\neq L$ because $\eta_2$ was assumed to be non-trivial. This is a contradiction.
\end{proof}

Besides \cref{lem_equidistribution_of_diagonal_in_Gx_LG_sub-linear}, another important ingredient in our proof of \cref{thm_G_sub-linear} is the following variant of van der Corput's Lemma.

\begin{Proposition}[van der Corput's Lemma for sub-linear functions]
\label{prop_slow_vdC}
Assume $f_1,\ldots,f_k$ are functions from a Hardy field $\Hardy$ satisfying $\log(t)\prec f_1(t)\prec\ldots\prec f_k\prec t$.
Let $\Psi\colon\R^k\to \C$ be a bounded and uniformly continuous function and suppose for all $s\in\R$ the limit
$$
A(s)~\coloneqq~\lim_{N\to\infty}
\frac{1}{N}\sum_{n=1}^N \Psi(f_1(n), \ldots,f_{k-1}(n), f_k(n)+s)\overline{\Psi(f_1(n), \ldots,f_{k}(n))}
$$
exists.
If for every $\epsilon>0$ there exists $\xi\in(0,\epsilon)$ such that
$$
\lim_{H\to\infty}\frac{1}{H}\sum_{h=1}^H A(\xi h)~=~0
$$
then necessarily
\begin{equation}
\label{eqn_conclusion_of_prop_slow_vdC}
\lim_{N\to\infty}\frac{1}{N}\sum_{n=1}^N \Psi(f_1(n),\ldots,f_k(n))=0.
\end{equation}
\end{Proposition}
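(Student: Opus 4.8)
The plan is to run a van der Corput argument in which the ``shift'' acts on the values of the fastest function $f_k$ rather than on the index $n$, and to realise such a shift by coarse-graining $\N$ into blocks on which $f_k$ is essentially constant. Write $a_n\coloneqq\Psi(f_1(n),\ldots,f_k(n))$ and let $\phi_k\coloneqq f_k^{-1}$ be the compositional inverse of $f_k$ (well defined since each $f_i\succ\log t\to\infty$ is eventually increasing). First I would record the Hardy-field estimates that drive everything. Since $\log t\prec f_i\prec t$ for every $i$, \cref{lem_useful_hardy} gives $t^{-1}\prec f_i'(t)\prec 1$; in particular $f_i'\to 0$, so consecutive values of each $f_i$ come together, and $tf_k'(t)\to\infty$ (the monotone Hardy-field function $tf_k'$ has a limit, and a finite limit would force $f_k\ll\log t$, contradicting $\log t\prec f_k$). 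Applying \cref{lem_useful_hardy} once more to $f_k'$ yields $f_k''/f_k'\to 0$, so $f_k'$ is slowly varying. Finally, $f_i'/f_k'\in\Hardy$ has a limit, which must be $0$: otherwise L'Hospital's rule would give $f_i/f_k$ a non-zero limit, contradicting $f_i\prec f_k$.

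Next I would fix $\epsilon>0$, take $\xi\in(0,\epsilon)$ as in the hypothesis, and for large $j$ set $B_j\coloneqq\{n\in\N:f_k(n)\in[j\xi,(j+1)\xi)\}$. Because $f_k'\to 0$ we have $|B_j|\asymp\xi/f_k'(t_j)\to\infty$ with $t_j\coloneqq\min B_j$, and because $f_k'$ is slowly varying, $|B_{j+1}|/|B_j|\to 1$; moreover $tf_k'\to\infty$ makes the unique block partially overlapping $[1,N]$ have length $o(N)$. On $B_j$ one has $f_k(n)=j\xi+o(1)$, while for $i<k$ the oscillation of $f_i$ over $B_j$ is $\asymp|B_j|\,f_i'(t_j)\asymp\xi\,f_i'(t_j)/f_k'(t_j)\to 0$; hence by uniform continuity of $\Psi$ every $a_n$ with $n\in B_j$ equals $c_j\coloneqq\Psi(f_1(t_j),\ldots,f_{k-1}(t_j),j\xi)$ up to an error tending to $0$. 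Discarding finitely many initial blocks, this shows that $\tfrac1N\sum_{n\le N}a_n$ differs by $o(1)$ from the weighted average $\tfrac1N\sum_j|B_j|\,c_j$.

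Now I would compute the correlations. Mapping $B_j$ to $B_{j+h}$ order-preservingly (edge terms are negligible) and using $t_{j+h}-t_j\asymp h|B_j|$, so that $f_i(t_{j+h})-f_i(t_j)\asymp h\xi\,f_i'(t_j)/f_k'(t_j)\to 0$ for $i<k$, one obtains $\tfrac1N\sum_j|B_j|\,c_{j+h}\overline{c_j}=A(\xi h)+o(1)$; indeed, coarse-graining the average defining $A(\xi h)$ in the same way (again via $f_k(n)=j\xi+o(1)$ on $B_j$) produces the identical quantity. Feeding this into a van der Corput inequality for averages twisted by the slowly varying weights $(|B_j|)_j$ (equivalently, for $W$-averages in the sense of \cref{def_W-ud}) yields
\begin{equation*}
\limsup_{N\to\infty}\bigg|\frac1N\sum_{n=1}^{N}a_n\bigg|^{2}\;\ll\;\frac1H+\bigg|\frac1H\sum_{h=1}^{H}A(\xi h)\bigg|\qquad\text{for every }H\in\N,
\end{equation*}
and letting $H\to\infty$ kills the right-hand side by hypothesis, which gives \eqref{eqn_conclusion_of_prop_slow_vdC}.

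The main obstacle is the uniformity bookkeeping in the middle two steps: all the $o(1)$ errors generated by coarse-graining and by the block shift $B_j\mapsto B_{j+h}$ must be genuinely uniform as $N\to\infty$ with $h$ ranging over a fixed window $\{1,\ldots,H\}$, which is why one truncates to blocks of large index and leans on the derivative estimates above; and one needs a van der Corput inequality valid for Ces\`aro-type averages carrying a slowly varying weight rather than the textbook unweighted one. A cleaner-to-state but essentially equivalent alternative, which I would fall back on if the block partition becomes cumbersome, is to first replace $\sum_{n\le N}a_n$ by $\int_1^N$ and substitute $u=f_k(t)$: this turns $\tfrac1N\sum_{n\le N}a_n$ into a $W$-average with $W=\phi_k$ of the function $u\mapsto\Psi(f_1(\phi_k(u)),\ldots,f_{k-1}(\phi_k(u)),u)$, after which the shift appearing in $A(\xi h)$ becomes an honest continuous translation $u\mapsto u+\xi h$ and one applies the van der Corput lemma for $W$-averages directly.
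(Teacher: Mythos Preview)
Your approach is essentially the paper's: partition $\N$ into blocks $B_j$ on which $f_k$ varies by $\xi$, use the Hardy-field estimates to show the other $f_i$ are nearly constant on each block, rewrite the \Cesaro{} average as a weighted average over block-constants $c_j$, and then apply a weighted van der Corput inequality (the paper's \cref{prop_vdC}). The paper packages this slightly differently by introducing $g_i(j)\coloneqq f_i(g^{-1}(j))$ with $g=\xi^{-1}f_k$, but this is exactly your $c_j$ in disguise. One small point of bookkeeping: on $B_j$ you have $f_k(n)=j\xi+O(\xi)$, not $o(1)$ in $n$; this $O(\xi)$ error propagates through uniform continuity of $\Psi$ as an $o_{\xi\to 0}(1)$ term, which is why you must let $\xi\to 0$ at the very end (your hypothesis supplies such $\xi$ arbitrarily small). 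You flag this under ``uniformity bookkeeping,'' so you are aware of it, but it is worth tracking explicitly rather than folding into a single $o(1)$.
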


\cref{prop_slow_vdC} is a special case of \cref{prop_slow_vdC_W-averages}, which is stated and proved in \cref{sec_slow_vdC}. Let us now turn to the proof of \cref{thm_G_sub-linear}.

\begin{proof}[Proof of \cref{thm_G_sub-linear}]
We proceed by induction on the nilpotency step of $G$. The case of \cref{thm_G_sub-linear} where $G$ is abelian (i.e., where the nilpotency step of $G$ equals $1$) has already been taken care of by \cref{thm_G_abelian}.
Let us therefore assume that $G$ is a $d$-step nilpotent Lie group with $d\geq 2$ and that \cref{thm_G_sub-linear} has already been proven for all cases where the nilpotency step of the Lie group is smaller than $d$.

By replacing $X$ with the sub-nilmanifold $\overline{a_1^\R\cdots a_k^\R \Gamma}$ 
(and $G$ with a rational and closed subgroup of itself)
if necessary, we can assume without loss of generality that
\begin{equation}
\label{eqn_proof_a_0}
X\,=\,\overline{a_1^\R\cdots a_k^\R \Gamma}. 
\end{equation}
In particular, $X$ is connected and therefore $G^\circ\Gamma = G$. On top of that, $a_1,\ldots,a_k\in G^\circ$. This means we can replace $G$ by $G^\circ$ if needed, which allows us to also assume that $G$ is connected. 

To show that $(v(n)\Gamma)_{n\in\N}$ is uniformly distributed in $X$ we must verify
\begin{equation}
\label{eqn_proof_a_1}
 \lim_{N\to\infty} \frac{1}{N}\sum_{n=1}^N  F\big(v(n)\Gamma\big) = \int F\d\mu_X
\end{equation}
for all continuous functions $F\in\Cont(X)$.
However, in light \cref{rem_central_characters_are_uniformly_dense} it is actually not necessary to check \eqref{eqn_proof_a_1} for all continuous functions.  
Indeed, since the linear span of central characters is uniformly dense in $\Cont(X)$, instead of \eqref{eqn_proof_a_1} it suffices to show
\begin{equation}
\label{eqn_proof_a_2}
 \lim_{N\to\infty} \frac{1}{N}\sum_{n=1}^N  \phi\big(v(n)\Gamma\big) = \int\phi\d\mu_X
\end{equation}
for central characters $\phi$ only.
Let us therefore fix a central character $\phi$ and let $\chi$ be the character of $Z(G)$ corresponding to $\phi$, i.e., the continuous group homomorphism from $Z(G)$ to the unit circle $\{z\in\C: |z|=1\}$ such that \eqref{eqn_central_charcter_functional_equation} holds.

Write $L$ for the smallest connected, normal, rational and closed subgroup of $G$ containing the element $a_k$.
Let $V$ be the intersection of $L$ with $Z(G)$ and note that by \cref{lem_normal-center-intersection}, $V$ is a non-trivial subgroup of $Z(G)$. Moreover, since $L$ is connected and $Z(G)$ is connected (note that $Z(G)$ is connected because $G=G^\circ$), $V$ is connected too.\footnote{It follows from \cite[Lemma 3]{Malcev49} that if $a$ belongs to a connected subgroup of a simply connected nilpotent Lie group $G$ then so does $a^t$ for all $t\in\R$. This property implies that the intersection of connected subgroups of $G$ is connected.\label{ftn_malcev}}

We now claim that we can assume $\chi$ is non-trivial when restricted to $V$ (by which we mean that there exists $s\in V$ such that $\chi(s)\neq 1$).
Indeed, if $\chi$ is trivial on $V$ then $\phi$ is invariant under the action of $V$. In this case, $\phi$ descends to a continuous function on the quotient space $X/V$. We can identify $X/V$ with the nilmanifold $(G/V)/(\Gamma /V)$ and, since $V$ is connected and non-trivial, the dimension of $X/V$ is smaller than the dimension of $X$. This allows us to reduce \eqref{eqn_proof_a_2} to an analogous question on a nilmanifold of strictly smaller dimension. By induction on the dimension, we can thus assume that $\chi$ is non-trivial when restricted to $V$. 

Since $\chi$ is non-trivial when restricted to $V$, it is in particular a non-trivial central character. Non-trivial central character have zero mean (see \cref{rem_zero_mean_central_char}), and hence \eqref{eqn_proof_a_2} becomes
\begin{equation}
\label{eqn_proof_a_3}
\lim_{N\to\infty} \frac{1}{N}\sum_{n=1}^N \phi\big(v(n)\Gamma\big)=0.
\end{equation}

In order to prove \eqref{eqn_proof_a_3} we use the ``van der Corput Lemma for sub-linear functions'', i.e., \cref{prop_slow_vdC}.
Define
\begin{equation}
\label{eqn_proof_a_4}
A(s)\coloneqq \lim_{N\to\infty} \frac{1}{N}\sum_{n=1}^N \phi\big(v(n)a_k^s \Gamma\big)\overline{\phi}\big(v(n)\Gamma\big).
\end{equation}
According to \cref{prop_slow_vdC}, if we can show that $A(s)$ is well defined for all $s\in\R$ (meaning that the limit on the right hand side of \eqref{eqn_proof_a_4} exists for all $s\in\R$) and for every $\epsilon>0$ there exists $\xi \in (0,\epsilon)$ such that
\begin{equation}
\label{eqn_proof_a_5}
\lim_{H\to\infty} \frac{1}{H}\sum_{h=1}^H A(\xi h) ~=~0,
\end{equation}
then \eqref{eqn_proof_a_3} holds.
Define
$$
v^{\triangle}(n)\coloneqq \big(v(n),v(n)\big),\qquad\forall n\in\N,
$$
and let $b\coloneqq a_k$.
Clearly, $(b^s,1_G)v^\triangle(n)$ takes values in $G\plh_L G$ for all $s\in\R$ and $n\in \N$.
This will allow us to express $A(s)$ in terms of the relatively independent product $X\plh_L X$ instead of the cartesian product $X\PLH X$, which, as we will see, turns out to be a big advantage.
Define the map $\Phi\colon X\plh_L X\to\C$ as $\Phi\big((g_1,g_2)\Gamma\plh_L \Gamma\big)= \phi(g_1\Gamma)\overline{\phi}(g_2\Gamma)$ for all $(g_1,g_2)\in G\plh_L G$.
Note that $\Phi$ is well defined and continuous.  We can now rewrite \eqref{eqn_proof_a_4} as
\begin{equation*}
A(s)= \lim_{N\to\infty} \frac{1}{N}\sum_{n=1}^N \Phi\big((b^s,1_G) v^\triangle(n)(\Gamma\plh_L \Gamma)\big).
\end{equation*}
We make two claims:
\begin{named}{Claim 1}{}
\label{claim_1_sub-linear}
The integral $\int\Phi\d\mu_{X\plh_L X}$ equals $0$.
\end{named}
\begin{named}{Claim 2}{}
\label{claim_2_sub-linear}
For all $s\in\R$, 
\begin{equation}
\label{eqn_proof_a_6-0}
\lim_{N\to\infty} \frac{1}{N}\sum_{n=1}^N \Phi\big((b^s,1_G) v^\triangle(n)(\Gamma\plh_L \Gamma)\big)\, =\, \Phi\big((b^s,1_G)\, X^\triangle\big).
\end{equation}
\end{named}
Once \ref{claim_1_sub-linear} and \ref{claim_2_sub-linear} have been verified, we can finish the proof of \eqref{eqn_proof_a_5} rather quickly. Indeed, \ref{claim_2_sub-linear} implies that the limit in $A(s)$ exists for all $s\in\R$ and
$$
\lim_{H\to\infty} \frac{1}{H}\sum_{h=1}^H A(\xi h) =
\lim_{H\to\infty} \frac{1}{H } \sum_{h=1}^H 
\Phi\big((b^{\xi h},1_G)X^\triangle\big).
$$
In view of \cref{lem_equidistribution_of_diagonal_in_Gx_LG_sub-linear}, we thus have for a co-countable set of $\xi$ that
$$
\lim_{H\to\infty} \frac{1}{H } \sum_{h=1}^H 
\Phi\big((b^{\xi h},1_G)X^\triangle\big)=\int\Phi\d\mu_{X\plh_L X},
$$
which together with \ref{claim_1_sub-linear} implies \eqref{eqn_proof_a_5}.

It remains to verify Claims 1 and 2.
\begin{proof}[Proof of \ref{claim_1_sub-linear}]
\renewcommand{\qedsymbol}{$\triangle$}
Recall that there exists $s\in V$ such that $\chi(s)\neq 1$.
Using the definition of $\Phi$ it is straightforward to check that
$$
\Phi\big((s,1_G)(g_1,g_2)(\Gamma\plh_L \Gamma)\big)=\chi(s)\Phi\big((g_1,g_2)(\Gamma\plh_L \Gamma)\big),\qquad \forall (g_1,g_2)\in G\plh_L G.
$$
Note that $(s,1_G)$ is an element of $G\plh_L G$ because $s\in V$ and $V\subset L$. Therefore $\mu_{X\plh_L X}$ is invariant under left-multiplication by $(s,1_G)$, which gives 
$$
\int \Phi\d\mu_{X\plh_L X}
~=~\int (s,1_G)\cdot \Phi\d\mu_{X\plh_L X}
~=~\chi(s)
\int\Phi\d\mu_{X\plh_L X}.
$$
Since $\chi(s)\neq 1$, we obtain $\int\Phi\d\mu_{X\plh_L X}=0$ as claimed.
\end{proof}

\begin{proof}[Proof of \ref{claim_2_sub-linear}]
\renewcommand{\qedsymbol}{$\triangle$}
Define a new function $\Phi_s\colon X\to\C$ via
$$
\Phi_s\big(g\Gamma\big)\coloneqq \Phi\big((b^s,1_G) (g,g)(\Gamma\plh_L \Gamma )\big),\qquad \forall g\in G.
$$
It is straightforward to check that \eqref{eqn_proof_a_6-0} is equivalent to
\begin{equation}
\label{eqn_proof_a_7}
\lim_{N\to\infty}\frac{1}{N} \sum_{n=1}^N 
\Phi_s\big(v(n)\Gamma\big)=\int \Phi_s \d\mu_X.
\end{equation}
Write $\sigma\colon G\to G/Z(G)$ for the natural projection of $G$ onto $G/Z(G)$ and set
\begin{eqnarray*}
\mark{G}&\coloneqq & G/Z(G);
\\
\mark{\Gamma}&\coloneqq & \sigma(\Gamma);
\\
\mark{X}&\coloneqq &\mark{G}/\mark{\Gamma};
\\
\mark{v}(n)&\coloneqq &  \sigma(v(n)).
\end{eqnarray*}
It follows from \eqref{eqn_central_charcter_functional_equation} that $\Phi$ is invariant under the action of $Z(G)^\triangle=\{(g,g):g\in Z(G)\}$ and so $\Phi_s$ is invariant under the action of $Z(G)$. Therefore, $\Phi_s$ descends to a continuous function $\mark{\Phi}_s$ on $\mark{X}$, which makes \eqref{eqn_proof_a_7} equivalent to
\begin{equation}
\label{eqn_proof_a_8}
\lim_{N\to\infty}\frac{1}{N} \sum_{n=1}^N 
\mark{\Phi}_s\big(\mark{v}(n)\mark{\Gamma}\big)=\int \mark{\Phi}_s \d\mu_{\mark{X}}.
\end{equation}
Since $\mark{G}$ has nilpotency step $d-1$, we can invoke the induction hypothesis and conclude that the sequence
$(\mark{v}(n)\mark{\Gamma})_{n\in\N}$ is uniformly distributed on the sub-nilmanifold
$
\overline{ \mark{a}_1^{\R}\cdot\ldots\cdot \mark{a}_k^{\R}\mark{\Gamma} },
$
where $\mark{a}_i\coloneqq\sigma(a_i)$, $i=0,1,\ldots,k$.
However, \eqref{eqn_proof_a_0} implies
$$
\mark{X}\,=\,\overline{ \mark{a}_1^{\R}\cdot\ldots\cdot \mark{a}_k^{\R}\mark{\Gamma} },
$$
which proves \eqref{eqn_proof_a_8}.
\end{proof}
This finishes the proofs of Claims 1 and 2, which in turn completes the proof of \cref{thm_G_sub-linear}.
\end{proof}

\subsection{The general case}
\label{sec_sup-linear-case}

In this subsection we deal with the general case of \cref{thm_G}.
In the proof of the ``sub-linear'' case in the previous subsection we got away with using induction on the nilpotency step of the Lie group $G$. Unfortunately, the proof of the general case requires a more complicated inductive procedure. 
This inductive scheme bears similarities to the ones used in \cite{GT12a} and \cite[Section 5]{BMR17arXiv} and relies on the notion of the  ``degree'' associated to a mapping $v\colon\N\to G$.
For the definition of this degree, the notion of a \define{filtration} is needed.

\begin{Definition}
\label{def_filtration}
Let $G$ be a nilpotent Lie group and $\Gamma$ a uniform and discrete subgroup of $G$.
Let $d\in\N$ and let $G_1\supset \ldots \supset G_d\supset G_{d+1}$ be subgroups of $G$ that are normal, rational and closed. We call $G_\bullet\coloneqq \{G_1,\ldots,G_d,G_{d+1}\}$ a \define{$d$-step filtration} of $G$ if $G_1=G$, $G_{d+1}=\{1_G\}$, and
$$
[G_i,G_j]\subset G_{i+j},\qquad \forall i,j\in \{1,\ldots,d\}~\text{with}~i+j\leq d+1.
$$
\end{Definition}

The next lemma shows how one can turn a $d$-step filtration of $G$ into a $d$-step filtration of the relatively independent self-product $G\plh_L G$.

\begin{Lemma}[cf.\ {\cite[Proposition 7.2]{GT12a}}]
\label{lem_filtration-of-Gx_LG}
Let $G_\bullet=\{G_1,\ldots,G_d,G_{d+1}\}$ be a $d$-step filtration of $G$ and suppose $L$ is a normal subgroup of $G$ with $L\subset G_2$.
Define $L_i\coloneqq L\cap G_{i+1}$ for $i=0,1,\ldots,d$, and set $L_{d+1}\coloneqq \{1_G\}$. Then
$$
(G\plh_L G)_\bullet=\big\{G_1\plh_{L_1}G_1,\ \ldots,\ G_d\plh_{L_d}G_d,\ G_{d+1}\plh_{L_{d+1}}G_{d+1}\big\}
$$
is a $d$-step filtration of $G\plh_L G$. 
\end{Lemma}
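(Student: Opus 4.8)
The plan is to verify the three defining properties of a $d$-step filtration for the proposed family $(G\plh_L G)_\bullet$ directly. First I would check the structural conditions: each $G_i\plh_{L_i}G_i$ is a subgroup of $G\PLH G$ because $L_i = L\cap G_{i+1}$ is a subgroup of $G_i$ (here we use $L\subset G_2\subset G_1 = G$ together with $G_{i+1}\subset G_i$, so $L_i\subset G_{i+1}\subset G_i$), and it is closed, normal and rational in $G\plh_L G$ since each $G_i$ and each $L_i$ is closed, normal and rational in $G$ (intersections of closed normal rational subgroups are again closed normal rational). The endpoint conditions are immediate: $G_1\plh_{L_1}G_1 = G\plh_{L}G$ because $L_1 = L\cap G_2 = L$ (as $L\subset G_2$), and $G_{d+1}\plh_{L_{d+1}}G_{d+1} = \{(1_G,1_G)\}$ by construction. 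The nestedness $G_i\plh_{L_i}G_i \supset G_{i+1}\plh_{L_{i+1}}G_{i+1}$ follows from $G_i\supset G_{i+1}$ and $L_i\supset L_{i+1}$.

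Next I would establish the commutator inclusion $[\,G_i\plh_{L_i}G_i,\ G_j\plh_{L_j}G_j\,]\subset G_{i+j}\plh_{L_{i+j}}G_{i+j}$ for all $i,j$ with $i+j\leq d+1$. Take $(a_1,a_2)\in G_i\plh_{L_i}G_i$ and $(b_1,b_2)\in G_j\plh_{L_j}G_j$. The commutator is $([a_1,b_1],[a_2,b_2])$. Both coordinates lie in $[G_i,G_j]\subset G_{i+j}$ by the filtration property of $G_\bullet$, so the element lies in $G_{i+j}\PLH G_{i+j}$; it remains to show the "diagonal-difference" condition $[a_1,b_1][a_2,b_2]^{-1}\in L_{i+j} = L\cap G_{i+j+1}$. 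Since it already lies in $G_{i+j}$, and in fact I will argue it lies in $G_{i+j+1}$, it suffices to show $[a_1,b_1][a_2,b_2]^{-1}\in L$ — but actually membership in $L_{i+j}$ requires it to be in $G_{i+j+1}$, so the cleanest route is to work modulo $G_{i+j+1}$. Write $a_1 = a_2 x$ with $x\in L_i\subset L$ and $b_1 = b_2 y$ with $y\in L_j\subset L$. Then, expanding $[a_2 x, b_2 y]$ modulo $G_{i+j+1}$ using the standard commutator identities and the filtration relations $[G_i,L_j]\subset[G_i,G_{j+1}]\subset G_{i+j+1}$ and $[L_i,G_j]\subset G_{i+j+1}$ and $[L_i,L_j]\subset G_{i+j+1}$, one finds $[a_1,b_1]\equiv [a_2,b_2]\pmod{G_{i+j+1}}$. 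Hence $[a_1,b_1][a_2,b_2]^{-1}\in G_{i+j+1}$, and since it also clearly lies in $L$ (being a product of commutators at least one of whose entries comes from $L$, modulo the closure of $L$ under the group operations — more precisely $[a_1,b_1][a_2,b_2]^{-1}$ is manifestly in $L$ because $a_1 a_2^{-1},b_1b_2^{-1}\in L$ and $L$ is normal), it lies in $L_{i+j}$, as required.

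The main obstacle I anticipate is the commutator bookkeeping in the previous paragraph: getting the relation $[a_1,b_1]\equiv[a_2,b_2]\pmod{G_{i+j+1}}$ cleanly requires using the Hall–Witt / commutator-collection identities together with all three inclusions $[G_i,L_j], [L_i,G_j], [L_i,L_j]\subset G_{i+j+1}$, where one must remember that $L_j = L\cap G_{j+1}$ feeds $G_{j+1}$ into the filtration bracket so that $[G_i,L_j]\subset[G_i,G_{j+1}]\subset G_{i+1+j}$ — the index shift here is exactly why the definition takes $L_i = L\cap G_{i+1}$ rather than $L\cap G_i$. This is precisely the point where the analogue in \cite[Proposition 7.2]{GT12a} is proved, and the argument transfers essentially verbatim; I would cite that proposition for the detailed collection-process computation and limit the exposition here to indicating the relevant inclusions. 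Once the commutator inclusion is in hand, the lemma is complete, since all filtration axioms have been checked.
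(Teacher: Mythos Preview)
Your proposal is correct and takes essentially the same approach as the paper: both reduce the commutator inclusion to showing $[a_1,b_1][a_2,b_2]^{-1}\in L\cap G_{i+j+1}$ by separately verifying membership in $L$ and in $G_{i+j+1}$, using the parametrization $a_1=a_2x$, $b_1=b_2y$ with $x\in L_i$, $y\in L_j$ and the crucial index shift $L_i\subset G_{i+1}$. Your quotient argument for membership in $L$ (working in $G/L$) is slightly slicker than the paper's explicit element-by-element reduction, and you additionally spell out the endpoint, nestedness, and structural axioms that the paper leaves implicit, but otherwise the two proofs coincide.
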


\begin{proof}
Suppose $(a,b)\in G_i\plh_{L_i}G_i$ and $(c,d)\in G_j\plh_{L_j}G_j$, where $i$ and $j$ belong to $\{1,\ldots,d\}$ and satisfy $i+j\leq d+1$. To complete the proof we must show that $([a,c],[b,d])\in G_{i+j}\plh_{L_{i+j}}G_{i+j}$. Clearly, $[a,c]\in G_{i+j}$ and $[b,d]\in G_{i+j}$. It remains to prove that
\begin{equation}
\label{eqn_contained_in_L_iplj}
[a,c][b,d]^{-1}\in L_{i+j}.
\end{equation}
Since $L_{i+j}=L\cap G_{i+j+1}$, we will establish \eqref{eqn_contained_in_L_iplj} in two steps: First we will show that $[a,c][b,d]^{-1}\in L$, and thereafter we will show that $[a,c][b,d]^{-1}\in G_{i+j+1}$.

Write $s\coloneqq a^{-1}b$ and $t\coloneqq c^{-1}d$. Then $s\in L_i$ and $(a,b)=(a,as)$. Likewise, $t\in L_j$ and $(c,d)=(c,ct)$.
So $[a,c][b,d]^{-1}\in L$ can be written as
\begin{equation}
\label{eqn_contained_in_L_1}
[a,c][b,d]^{-1}=a^{-1}c^{-1}act^{-1}c^{-1}s^{-1}a^{-1}ctas \in L.
\end{equation}
Since $s\in L$, \eqref{eqn_contained_in_L_1} is equivalent to
\begin{equation}
\label{eqn_contained_in_L_2}
a^{-1}c^{-1}act^{-1}c^{-1}s^{-1}a^{-1}cta \in L.
\end{equation}
Next, using $a L a^{-1} =L$ and $t\in L$, we see that \eqref{eqn_contained_in_L_2} is equivalent to
\begin{equation}
\label{eqn_contained_in_L_3}
c^{-1}act^{-1}c^{-1}s^{-1}a^{-1}c \in L.
\end{equation}
Using normality of $L$ again, \eqref{eqn_contained_in_L_3} reduces to
\begin{equation*}
\label{eqn_contained_in_L_4}
ct^{-1}c^{-1}s^{-1} \in L.
\end{equation*}
Finally, since $s\in L$ and $t\in L$, it follows that $ct^{-1}c^{-1}s^{-1}\in L$, which finishes the proof of \eqref{eqn_contained_in_L_1}.

It remains to show that
\begin{equation}
\label{eqn_contained_in_G_ipljpl1_1}
[a,c][b,d]^{-1}=a^{-1}c^{-1}act^{-1}c^{-1}s^{-1}a^{-1}ctas \in G_{i+j+1}.
\end{equation}
We can assume without loss of generality that $i\geq j$. Since $s\in G_{i+1}$, we have that $[s,c]\in G_{i+j+1}$, $[s,t]\in G_{i+j+2}\subset G_{i+j+1}$ and $[s,a]\in G_{2i+1}\subset G_{i+j+1}$. In particular, modulo $G_{i+j+1}$ the element $s$ commutes with $a$, $c$, and $t$. Hence
\begin{equation*}
\label{eqn_contained_in_G_ipljpl1_2}
[a,c][b,d]^{-1}\bmod G_{i+j+1}=  
a^{-1}c^{-1}act^{-1}c^{-1}a^{-1}cta.
\end{equation*}
Next, observe that $[t,a]\in G_{i+j+1}$ and hence $t$ commutes with $a$ modulo $G_{i+j+1}$. Therefore
\begin{eqnarray*}
[a,c][b,d]^{-1}\bmod G_{i+j+1}&=&  
a^{-1}c^{-1}act^{-1}c^{-1}a^{-1}cta
 \bmod G_{i+j+1}
\\
&=& 
a^{-1}c^{-1}act^{-1}c^{-1}a^{-1}cat
 \bmod G_{i+j+1}
\\
&=& [[c,a],t] \bmod G_{i+j+1}.
\end{eqnarray*}
Finally, since  
$[c,a]\in G_{i+j}$ we have $[[c,a],t]\in G_{i+j+1}$ and conclude that $[a,c][b,d]^{-1}\in G_{i+j+1}$.
\end{proof}

Given a nilpotent Lie group $G$, let $d_G\colon G\PLH G\to [0,\infty)$ be a right-invariant metric on $G$. For any uniform and discrete subgroup $\Gamma$ the metric $d_G$ descends to a metric $d_{X}$ on the nilmanifold $X= G/\Gamma$ in the following way:
\begin{equation}
\label{eqn:def-of-metric}
d_{X}(x\Gamma,y\Gamma)\coloneqq \inf\{d_G(x\gamma,y\gamma'):\gamma,\gamma'\in\Gamma\}.
\end{equation}
Given a subset $S\subset G$ and a point $g\in G$ we denote by $d(g,S)\coloneqq\inf_{s\in S}d_G(g,s)$ the distance between $S$ and $g$.

\begin{Definition}[{cf.\ \cite[Definition 1.8]{GT12a} and \cite[Definition 5.9]{BMR17arXiv}}]
\label{def_degree_of_g}
Let $G$ be a simply connected nilpotent Lie group, $\Hardy$ a Hardy field, and $v\colon\N\to G$ a mapping of the form
$$
v(n)\, \coloneqq\, a_1^{f_1(n)}\cdot\ldots\cdot a_k^{f_k(n)}b_1^{p_1(n)}\cdot\ldots\cdot b_m^{p_m(n)},\qquad\forall n\in\N,
$$
where $a_1,\ldots,a_k\in G^\circ$, $b_1,\ldots,b_m\in G$, the elements $a_1,\ldots,a_k, b_1,\ldots,b_m$ are commuting, $f_1,\ldots,f_k\in\Hardy$ have polynomial growth, and $p_1,\ldots,p_m\in\R[t]$ with $p_j(\Z
)\subset\Z$.
We define the \define{degree} of $v$ to be the smallest number $d\in\N$ such that there exists a $d$-step filtration $G_\bullet =\{G_1,G_2,\ldots, G_{d},G_{d+1}\}$ with the property that $b_j\in G_{\deg(p_j)+1}$ for all $j=1,\ldots,m$ and $a_i\in G_{\deg(f_i)}^\circ$ for all $i=1,\ldots,k$.
If $G_\bullet$ is such a minimal filtration then we say $G_\bullet$ \define{realizes the degree of $v$}.
If there exists no such filtration, then we say that $v$ has infinite degree.
\end{Definition}

\begin{Lemma}
\label{lem_finite_degree}
Let $G$ be a simply connected nilpotent Lie group $G$ and $\Hardy$ a Hardy field. Assume $v\colon\N\to G$ is a mapping of the form
$$
v(n)\, \coloneqq\, a_1^{f_1(n)}\cdot\ldots\cdot a_k^{f_k(n)}b_1^{p_1(n)}\cdot\ldots\cdot b_m^{p_m(n)},\qquad\forall n\in\N,
$$
where $a_1,\ldots,a_k\in G^\circ$, $b_1,\ldots,b_m\in G$, the elements $a_1,\ldots,a_k, b_1,\ldots,b_m$ are commuting, $f_1,\ldots,f_k\in\Hardy$ have polynomial growth, and $p_1,\ldots,p_m\in\R[t]$ with $p_j(\Z
)\subset\Z$.
Then $v$ has finite degree.
\end{Lemma}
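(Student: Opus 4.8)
The statement is a soft existence claim: by \cref{def_degree_of_g} we only need to exhibit \emph{one} finite-step filtration $G_\bullet=\{G_1,\ldots,G_{d+1}\}$ with $a_i\in G_{\deg(f_i)}$ for all $i$ and $b_j\in G_{\deg(p_j)+1}$ for all $j$; the degree of $v$ is then automatically finite (being the least such $d$). Since $a_1,\ldots,a_k\in G^\circ\subseteq G$ and $b_1,\ldots,b_m\in G$ all lie in $G$ itself, it is enough to build a filtration whose first several terms all equal $G$ and which still terminates at $\{1_G\}$. The natural device is a ``stretched'' lower central series: take the ordinary lower central series and repeat each of its terms a fixed number of times.

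\textbf{Construction.} After discarding any trivial factors (a bounded $f_i$ still has $\deg(f_i)\in\N$, i.e.\ $\deg(f_i)\geq1$, since $0\notin\N$ in this paper's convention; and we may assume each $p_j\not\equiv 0$), set
$$
D\coloneqq \max\{\deg(f_1),\ldots,\deg(f_k),\ \deg(p_1)+1,\ldots,\deg(p_m)+1\},
$$
a finite positive integer. Let $s$ be the nilpotency step of $G$ and let $C_1=G\supseteq C_2\supseteq\cdots\supseteq C_{s+1}=\{1_G\}$ be its lower central series; recall from \cref{sec_prelims_nilpotent_Lie_groups} that each $C_\ell$ is normal, closed and rational with respect to $\Gamma$, and recall the well-known commutator inclusion $[C_a,C_b]\subseteq C_{a+b}$ (with the convention $C_\ell=\{1_G\}$ for $\ell>s+1$). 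Now put $d\coloneqq sD$ and define, for $i=1,\ldots,d+1$,
$$
G_i\coloneqq C_{\lceil i/D\rceil}.
$$
Thus $G_1=\cdots=G_D=G$, then $G_{D+1}=\cdots=G_{2D}=C_2$, and so on, with $G_{d+1}=C_{\lceil (sD+1)/D\rceil}=C_{s+1}=\{1_G\}$. Each $G_i$ is normal, closed and rational because it equals some $C_\ell$. For the filtration condition, if $i+j\leq d+1$ then, using the elementary inequality $\lceil x\rceil+\lceil y\rceil\geq\lceil x+y\rceil$,
$$
[G_i,G_j]=[C_{\lceil i/D\rceil},C_{\lceil j/D\rceil}]\subseteq C_{\lceil i/D\rceil+\lceil j/D\rceil}\subseteq C_{\lceil (i+j)/D\rceil}=G_{i+j}.
$$
Hence $G_\bullet=\{G_1,\ldots,G_{d+1}\}$ is a $d$-step filtration of $G$ in the sense of \cref{def_filtration}.

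\textbf{Conclusion.} Since $1\leq\deg(f_i)\leq D$, we have $G_{\deg(f_i)}=C_1=G\ni a_i$ for every $i$; since $1\leq\deg(p_j)+1\leq D$, we have $G_{\deg(p_j)+1}=C_1=G\ni b_j$ for every $j$. Therefore $G_\bullet$ has the properties demanded by \cref{def_degree_of_g}, and $v$ has degree at most $sD$, in particular finite. There is no substantial obstacle in this argument; the only points requiring a little care are the bookkeeping with the ceiling function, the convention $C_\ell=\{1_G\}$ for $\ell$ large, and noting that polynomial growth forces $\deg(f_i)\geq 1$ so that $G_{\deg(f_i)}$ is a term of the filtration.
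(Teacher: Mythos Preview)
Your proof is correct and follows essentially the same approach as the paper: both arguments construct a ``stretched'' lower central series by repeating each term $C_\ell$ a fixed number of times (you use $D$ repetitions via the ceiling function $G_i=C_{\lceil i/D\rceil}$, the paper uses $M+1$ repetitions via the indexing $G_{(j-1)(M+1)+i}=C_j$), so that the first block of the filtration is identically $G$ and trivially contains all the $a_i$ and $b_j$. Your version is slightly tighter and makes the verification of $[G_i,G_j]\subseteq G_{i+j}$ explicit, whereas the paper leaves it as ``straightforward to check''.
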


\begin{proof}
Let $M\in\N$ be any number such that $\deg(f_i)\leq M$ for all $i=1,\ldots,k$ and $\deg(p_j)+1\leq M$ for all $j=1,\ldots,m$.
Let $C_\bullet \coloneqq  \{C_1,C_2,\ldots, C_{s},C_{s+1}\}$ denote the lower central series of $G$. Set $r\coloneqq (s+1)(M+1)$ and define a filtration
$$
G_\bullet=\{G_1, G_2,\ldots,G_{r},G_{r+1}\}
$$
by setting $G_{(j-1)(M+1)+i}\coloneqq C_j$ for all $j\in\{1,\ldots,s+1\}$ and $i\in\{1,\ldots,M+1\}$ and $G_{r+1}=\{1_G\}$.
It is straightforward to check that $G_\bullet$ is a filtration.
Also, since $G_i=G$ for all $i=1,\ldots, M+1$, we certainly have that $b_j\in G_{\deg(p_j)+1}$ for all $j=1,\ldots,m$ and $a_i\in G_{\deg(f_i)}^\circ$ for all $i=1,\ldots,k$.
\end{proof}

\begin{Remark}
Note that the the filtration $G_\bullet=\{G_1, G_2,\ldots,G_{r},G_{r+1}\}$ constructed in the above proof is not necessarily a filtration that realizes the degree of $v$. Nonetheless, its existence proves that the degree of $v$ does not exceed $r=(s+1)(M+1)$.
\end{Remark}

\begin{proof}[Proof of \cref{thm_G}, the general case]
Let $v\colon\N\to G$ be as in the statement of \cref{thm_G}.
We use induction on the degree $d$ of $v$, which is finite due to \cref{lem_finite_degree}.
The base case of this induction, which is when $d=1$, is covered by \cref{thm_G_abelian}, because if $d=1$ then $G$ must be abelian. 
Therefore, we only have to deal with the inductive step. Assume $d>1$ and \cref{thm_G} has already been proven for all 
mappings $\mark{v}\colon\N\to\mark{G}$ satisfying the hypothesis of \cref{thm_G} 
and whose degree
is strictly smaller than $d$.

By replacing $X$ with $\overline{a_1^\R\cdots a_k^\R b_1^\Z\cdots b_m^\Z\Gamma}$ if necessary\footnote{Let $G'$ be the the smallest rational and closed subgroup of $G$ containing $a_1^\R\cdots a_k^\R b_1^\Z\cdots b_m^\Z$. Then 
$\Gamma'=G'\cap \Gamma$ is a uniform and discrete subgroup of $G'$ and the nilmanifold $X'\coloneqq G'/\Gamma'$ can be identified with the sub-nilmanifold $\overline{a_1^\R\cdots a_k^\R b_1^\Z\cdots b_m^\Z\Gamma}$ of $X$. Moreover, $(v(n)\Gamma)_{n\in\N}$ is uniformly distributed in $\overline{a_1^\R\cdots a_k^\R b_1^\Z\cdots b_m^\Z\Gamma}$ if and only if $(v(n)\Gamma')_{n\in\N}$ is uniformly distributed in $X'$. Thus, by replacing $G$ with $G'$, $\Gamma$ with $\Gamma'$, and $X$ with $X'$, we can assume without loss of generality that $X=\overline{a_1^\R\cdots a_k^\R b_1^\Z\cdots b_m^\Z\Gamma}$.}, we will assume that
\begin{equation}
\label{eqn_proof_c_0}
X\,=\,\overline{a_1^\R\cdots a_k^\R b_1^\Z\cdots b_m^\Z\Gamma}. 
\end{equation}
Since $\overline{b_1^\Z\Gamma}, \ldots, \overline{b_m^\Z\Gamma}$ are assumed to be connected, the sub-nilmanifold $\overline{b_1^\Z\cdots b_m^\Z \Gamma}$ is also connected. It follows that $a_1^\R\cdots a_k^\R (\overline{b_1^\Z\cdots b_m^\Z \Gamma})$ is connected, which in turn implies that
$$
\overline{a_1^\R\cdots a_k^\R (\overline{b_1^\Z\cdots b_m^\Z  \Gamma})}\,=\,\overline{a_1^\R\cdots a_k^\R b_1^\Z\cdots b_m^\Z  \Gamma}\,=\,X
$$
is connected.
For technical reasons, it will be convenient to assume that  for every $j=1,\ldots,m$, if the sub-nilmanifold $\overline{b_j^\Z\Gamma}$ is a point then $b_j=1_G$. This assumption can be made without loss of generality, because if $\overline{b_j^\Z\Gamma}$ is a point then $b_j$ must belong to $\Gamma$, in which case we can simply replace $b_j$ with $1_G$ and the sequence $(v(n)\Gamma)_{n\in\N}$ remains unchanged.
 
Our goal is to show that $(v(n)\Gamma)_{n\in\N}$ is uniformly distributed in $X$, or equivalently,
\begin{equation}
\label{eqn_proof_c_1}
 \lim_{N\to\infty} \frac{1}{N}\sum_{n=1}^N  F\big(v(n)\Gamma\big) = \int F\d\mu_X
\end{equation}
for all $F\in\Cont(X)$. Repeating the same argument as was already used in the proof of \cref{thm_G_sub-linear}, we see that instead of \eqref{eqn_proof_c_1} it suffices to show
\begin{equation}
\label{eqn_proof_c_2}
 \lim_{N\to\infty} \frac{1}{N}\sum_{n=1}^N  \phi\big(v(n)\Gamma\big) = \int\phi\d\mu_X
\end{equation}
for all central characters $\phi$.
Let $\mathcal{I}$ denote all the numbers $i\in\{1,\ldots,k\}$ for which $\deg(f_i)\geq 2$, and let $\mathcal{J}$ be all the numbers $j\in\{1,\ldots,m\}$ for which $b_j\neq 1_G$. 
Note that if $\mathcal{I}$ and $\mathcal{J}$ are both the empty set then $f_k(t)\prec t$ and $b_1=\ldots=b_m=1_G$, and so we find ourselves in the ``sub-linear case'' of \cref{thm_G}. Since this case has already been taken care of by \cref{thm_G_sub-linear}, we can assume that either $\mathcal{I}$ or $\mathcal{J}$ is non-empty.

Let $G_\bullet =\{G_1,G_2,\ldots, G_{d},G_{d+1}\}$ be a $d$-step filtration that realizes the degree of $v$ (cf.\ \cref{def_degree_of_g}).
Among other things, this means $b_j^\Z\subset G_2$ for all $j=1,\ldots,m$ and $a_i\subset G_2^\circ$ for all $ i\in\mathcal{I}$. Note that $a_i\subset G_2^\circ$ implies $a_i^\R\subset G_2^\circ$ (cf.\ Footnote~\ref{ftn_malcev}).
Let $L$ denote the smallest closed rational and normal subgroup of $G$ containing  $a_{i}^\R$ for all $i\in\mathcal{I}$ and $b_j^\Z$ for all $j=1,\ldots,m$.
Then, if $\pi\colon G\to X$ denotes the natural projection of $G$ onto $X$, the set $\pi(L)$ is a sub-nilmanifold of $X$ containing the sub-nilmanifold $\overline{\prod_{i\in\mathcal{I}}a_{i}^\R \prod_{1\leq j\leq m}b_{j}^\Z\Gamma}$. This sub-nilmanifold is what Leibman calls the \define{normal closure}, and it is shown in \cite[p.\ 844]{Leibman10a} that the normal closure of a connected sub-nilmanifold is connected. In particular, since $\overline{\prod_{i\in\mathcal{I}}a_{i}^\R \prod_{1\leq j\leq m}b_{j}^\Z \Gamma}$ is connected, $\pi(L)$ is connected too.

Next, we claim that the identity component $L^\circ$ of $L$ is non-trivial.
To verify this claim, we are going to distinguish two cases. The first case is when $\mathcal{I}$ is non-empty. In this case, $L^\circ$ contains a one-parameter subgroup $a_i^\R$ for $i\in\mathcal{I}$ and is therefore non-trivial (we assume without loss of generality that $a_i\neq 1_G$ for all $i=1,\ldots,k$).
The second case is when $\mathcal{J}$ is non-empty. Note that if we are not in the first case, then we must be in the second, since either $\mathcal{I}$ or $\mathcal{J}$ is non-empty.
If $\mathcal{J}$ is non-empty then $\pi(L^\circ)$ contains $\overline{b_j^\Z\Gamma}$ for some $j\in\mathcal{J}$, and since $\overline{b_j^\Z\Gamma}$ is connected and not a point for every $j\in\mathcal{J}$, it follows that $L^\circ$ is non-trivial. 

Let $V$ be the intersection of $L^\circ$ with $Z(G)^\circ$. By \cref{cor_normal-center-intersection}, $V$ is a non-trivial subgroup of $Z(G)$. Also, as an intersection of connected subgroups, $V$ is connected (cf.\ Footnote~\ref{ftn_malcev}).
We can now use the same argument as in the proof of \cref{thm_G_sub-linear}, which involved induction on the dimension of the nilmanifold $X$, to show that it suffices to prove \eqref{eqn_proof_c_2} for the case when the central character $\phi$ has a ``central frequency'' $\chi$ that is non-trivial when restricted to $V$, i.e., there exists $s\in V$ such that $\chi(s)\neq 0$.
This also implies that $\int \phi\d\mu_X=0$, and hence \eqref{eqn_proof_c_2} can be written as
\begin{equation}
\label{eqn_proof_c_3}
\lim_{N\to\infty} \frac{1}{N}\sum_{n=1}^N \phi\big(v(n)\Gamma\big)=0.
\end{equation}
To prove \eqref{eqn_proof_c_3} we use van der Corput's trick. 
Define
\begin{equation}
\label{eqn_proof_c_4}
A(h)\coloneqq \lim_{N\to\infty} \frac{1}{N}\sum_{n=1}^N \phi\big(v(n+h)\Gamma\big)\overline{\phi}\big(v(n)\Gamma\big)
\end{equation}
whenever this limit exists.
In light of \cref{prop_vdC} (applied with $p_n=1$ for all $n\in\N$), \eqref{eqn_proof_c_3} holds if we can show that the limit on the right hand side of \eqref{eqn_proof_c_4} exists for all $h\in\N$ and
\begin{equation}
\label{eqn_proof_c_5}
\lim_{H\to\infty} \frac{1}{H}\sum_{h=1}^H A(h) ~=~0.
\end{equation}
We can interpret $\phi(v(n+h)\Gamma)\overline{\phi(v(n)\Gamma)}$ as $\phi\oPLH\overline{\phi}((v(n+h),v(n))(\Gamma\PLH\Gamma))$, where $\phi\oPLH \overline{\phi}$ is a continuous function on the product nilmanifold $X\PLH X= (G\PLH G)/(\Gamma\PLH\Gamma)$ and $(v(n+h),v(n))$ is an element in $G\PLH G$. Note that the sequence $(v(n+h),v(n))$ can be rewritten as
\begin{equation}
\label{eqn_proof_c_6}
(v(n+h),v(n))~=~ (\Delta_h w(n),1_G)\, v^\triangle(n),
\end{equation}
where $v^\triangle(n)= (v(n),v(n))$ and
\begin{equation}
\label{eqn_proof_c_7}
\Delta_h w(n)~=~ a_1^{\Delta_h f_1(n)}\cdot\ldots\cdot a_k^{\Delta_h f_k(n)}b_1^{\Delta_h p_1(n)}\cdot\ldots\cdot b_m^{\Delta_h p_m(n)},\qquad\forall n,h\in\N.
\end{equation}
For every $i\notin\mathcal{I}$ the function $f_i$ has degree $1$, which means its discrete derivative $\Delta_h f_i(n)$ is negligibly small for large $n$.
This implies that for every $i\notin\mathcal{I}$ the element $a_i^{\Delta_h f_i(n)}$ converges to the identity $1_G$ and can therefore be ignored. More precisely, using the right-invariance of the metric $d_G$, we have
$$
\lim_{n\to\infty}d_G\left(\Delta_h w(n),\, w_h(n) \right)\,=\, 0,
$$
where
$$
w_h(n)\coloneqq \prod_{i\in\mathcal{I}}a_{i}^{\Delta_h f_i(n)} \cdot\prod_{1\leq j\leq m}b_{j}^{\Delta_h p_j(n)}.
$$
It follows that if we set
\begin{equation}
\label{eqn_proof_c_7.5}
v^\sq_h(n)\,\coloneqq\, (w_h(n),1_G)\,v^\triangle(n)
\end{equation}
then, in view of \eqref{eqn_proof_c_6}, the difference between $(v(n+h),v(n))$ and $v^\sq_h(n)$ goes to zero as $n\to\infty$. Hence $A(h)$ equals
\begin{equation}
\label{eqn_proof_c_8}
A(h)\,=\,\lim_{N\to\infty} \frac{1}{N}\sum_{n=1}^N  (\phi\oPLH \overline{\phi}) \big(v_h^\sq(n)(\Gamma\PLH\Gamma)\big).
\end{equation}

The advantage of using \eqref{eqn_proof_c_8} instead of \eqref{eqn_proof_c_4} is that $v^\sq_h(n)\in G\plh_L G$ for all $n\in\N$. Define the map $\Phi\colon X\plh_L X\to\C$ as $\Phi\big((g_1,g_2)\Gamma\plh_L \Gamma\big)= \phi(g_1\Gamma)\overline{\phi(g_2\Gamma)}$ for all $(g_1,g_2)\in G\plh_L G$.
Note that $\Phi$ is well defined and continuous.
This allows us to rewrite \eqref{eqn_proof_c_8} as
\begin{equation}
\label{eqn_proof_c_9}
A(h)\,=\,\lim_{N\to\infty} \frac{1}{N}\sum_{n=1}^N \Phi\big(v^\sq_h (n)(\Gamma\plh_L \Gamma )\big).
\end{equation}

Let $Z(G)^\triangle\coloneqq\{(g,g):g\in Z(G)\}$ and denote by $\sigma\colon G\plh_L G\to (G\plh_L G)/Z(G)^\triangle$ the natural projection of $G\plh_L G$ onto $(G\plh_L G)/Z(G)^\triangle$.
Define
\begin{eqnarray*}
\mark{G}&\coloneqq & \sigma(G\plh_L G),
\\
\mark{\Gamma}&\coloneqq & \sigma\big(\Gamma\plh_L \Gamma\big),
\\
\mark{X} &\coloneqq & \mark{G}/\mark{\Gamma},
\\
\mark{v}_h(n)&\coloneqq  &  \sigma\big(v^\sq_h (n)\big).
\end{eqnarray*}
It follows from \eqref{eqn_central_charcter_functional_equation} that $\Phi$ is invariant under the action of $Z(G)^\triangle$. Therefore, $\Phi$ descends to a continuous function on $\mark{X}$, meaning there exists $\mark{\Phi}\in\Cont(\mark{X})$ such that
$$
\Phi\big((g_1,g_2)(\Gamma\plh_L \Gamma)\big)= \mark{\Phi}\big(\sigma(g_1,g_2)\mark{\Gamma}\big),\qquad\forall (g_1,g_2)\in G\plh_L G.
$$
It thus follows form \eqref{eqn_proof_c_9} that
\begin{equation}
\label{eqn_proof_c_10}
A(h)\,=\,\lim_{N\to\infty} \frac{1}{N}\sum_{n=1}^N \mark{\Phi}\big(\mark{v}_h(n) \mark{\Gamma}\big).
\end{equation}
We now make four claims.

\begin{named}{Claim 1}{}
\label{claim_1_sup-linear}
The integral $\int\mark{\Phi}\d\mu_{\mark{X}}$ equals zero.\end{named}

\begin{named}{Claim 2}{}
\label{claim_2_sup-linear}
For all non-trivial pseudo-horizontal characters $\mark{\eta}$ of $(\mark{G},\mark{\Gamma})$ (see \cref{def_pseudo_horizontal_character}) we have
\begin{equation}
\label{eqn_proof_c_11}
\lim_{H\to\infty}\lim_{N\to\infty}\frac{1}{H}\sum_{h=1}^H \frac{1}{N}\sum_{n=1}^N \mark{\eta}\big(\mark{v}_h(n)\, \mark{\Gamma}\big)=0.
\end{equation}
\end{named}

In the following, we call any mapping $u\colon\N\to H$ of the from $u(n)=g_1^{p_1(n)}\cdot\ldots\cdot g_\ell^{p_\ell(n)}$, where $g_1,\ldots,g_n$ are elements in a nilpotent Lie group $H$ and $p_1,\ldots,p_\ell$ are polynomials with $p_i(\Z)\subset\Z$, a \define{polynomial mapping} (cf.\ \cite[Subsection 1.3]{Leibman05a}).

\begin{named}{Claim 3}{}
\label{claim_3_sup-linear}
There exist polynomials $q_1,\ldots, q_m\in\R[t]$ with $q_j(\Z)\subset\Z$ and $1\leq \deg(q_1)< \ldots < \deg(q_m)\leq m$, polynomial mappings $c, e_1,\ldots,e_m\colon \N\to \mark{G}$, and polynomial mappings $u_1,\ldots, u_k\colon\N\to \mark{G}^\circ$ such that the elements $c(h), e_1(h), \ldots, e_m(h), u_1(h), \ldots, u_k(h)$ are pairwise commuting for every $h\in\N$, and
$$
d_{\mark{G}}\Big(\mark{v}_h(n),\  u_1(h)^{f_1(n)}\cdot\ldots\cdot u_k(h)^{f_k(n)}e_1(h)^{q_1(n)}\cdot\ldots\cdot e_m(h)^{q_m(n)}c(h)\Big)=\oh_{n\to\infty}(1)
$$
for every $h\in\N$.
\end{named}

\begin{named}{Claim 4}{}
\label{claim_4_sup-linear}
For every $h$ the degree of $\mark{v}_h$ is smaller than $d$.
\end{named}

Before we provide the proofs of Claims 1, 2, 3, and 4, let us see how they can be used to prove that the limit in $A(h)$ exists for all $h\in\N$ and \eqref{eqn_proof_c_5} holds.
Claims 3 and 4 allow us to invoke the induction hypothesis and deduce that for every $h\in\N$ the sequence $(\mark{v}_h(n)\mark{\Gamma})_{n\in\N}$ is uniformly distributed in the sub-nilmanifold
$$
\mark{Y}_h\,\coloneqq\, \overline{u_1(h)^\R\cdot\ldots\cdot u_k(h)^\R e_1(h)^\Z\cdot\ldots\cdot e_m(h)^\Z c(h) \mark{\Gamma}}.
$$
(As was explained in \cref{rem_change_of_base_point}, it is not a problem that the ``base point'' of the sequence $(\mark{v}_h(n)\mark{\Gamma})_{n\in\N}$ is $c(h)\mark{\Gamma}$ instead of $\mark{\Gamma}$.)
As a consequence we have $A(h)= \mark{\Phi}(\mark{Y}_h)$, which in particular proves that the limit in $A(h)$ exists for all $h\in\N$. Moreover, \eqref{eqn_proof_c_5} will follow if we can show that
\begin{equation}
\label{eqn_proof_c_12}
\lim_{H\to\infty}\frac{1}{H}\sum_{h=1}^H \mark{\Phi}(\mark{Y}_h)\,=\, 0.
\end{equation}
Given a vector $\xi=(\xi_1,\ldots,\xi_k)\in\R^k$, consider the multi-parameter polynomial sequence $w_{\xi}\colon \Z^{k+m+1}\to \mark{G}$ defined as
\[w_\xi(h,n_1,\ldots,n_{k}, \ell_1,\ldots,\ell_m)= u_1(h)^{\xi_1 n_1}\cdot\ldots\cdot u_k(h)^{\xi_kn_k} e_1(h)^{\ell_1}\cdot\ldots\cdot e_m(h)^{\ell_m} c(h).\]
Arguing as in the proof of \cite[Lemma A.7]{BMR17arXiv} we can find for every $h\in\N$ a co-null set $\Xi_h\subset \R^k$ such that for all $\xi=(\xi_1,\ldots,\xi_k)\in \Xi_h$ we have
\begin{equation}
\label{eqn_cht_1}
\begin{split}
&\overline{u_1(h)^\R\cdot\ldots\cdot u_k(h)^\R e_1(h)^\Z\cdot\ldots\cdot e_m(h)^\Z c(h) \mark{\Gamma}}
\\
&\qquad\qquad~=~
\overline{u_1(h)^{\xi_1\Z}\cdot\ldots\cdot u_k(h)^{\xi_k\Z} e_1(h)^\Z\cdot\ldots\cdot e_m(h)^\Z c(h) \mark{\Gamma}}.
\end{split}
\end{equation}
It follows that for every $h\in\N$ and every $\xi\in\Xi_h$ the sequence 
\[
(w_\xi(h,n_1,\ldots,n_k,\ell_1,\ldots,\ell_m,h)\mark{\Gamma})_{(n_1,\ldots,n_k,\ell_1,\ldots,\ell_m)\in\N^{k+m}}
\]
is dense in $\mark{Y}_h$.
By invoking \cite[Theorem A, p.\ 216]{Leibman05b}, we have that since this sequence is dense in $\mark{Y}_h$, it is also uniformly distributed in $\mark{Y}_h$. This means that
\[
\lim_{N\to\infty}\frac{1}{N^{m+k}}\sum_{n_1,\ldots,\ell_m=1}^N \mark{F}\big(w_\xi(h,n_1,\ldots,n_k,\ell_1,\ldots,\ell_m,h)\mark{\Gamma}\big) = \int \mark{F}\d\mu_{\mark{Y}_h}
\]
for all continuous functions $\mark{F}\colon\mark{X}\to\C$. Henceforth, let $\xi$ be any number in $\bigcap_{h\in\N}\Xi_h$.
Note that Claim 2 implies
\begin{equation*}
\lim_{H\to\infty}\frac{1}{H}\sum_{h=1}^H \mark{\eta}(\mark{Y}_h)\,=\, 0
\end{equation*}
for all non-trivial pseudo-horizontal characters $\mark{\eta}$ of $(\mark{G},\mark{\Gamma})$.
It follows that
\begin{equation*}
\lim_{H\to\infty}\lim_{N\to\infty}\frac{1}{H}\sum_{h=1}^H 
\frac{1}{N^{m+k}}\sum_{n_1,\ldots,\ell_m=1}^N \mark{\eta}\big(w_\xi(n_1,\ldots,n_k,\ell_1,\ldots,\ell_m,h)\mark{\Gamma}\big) 
\,=\, 0.
\end{equation*}
Note also that $X\plh_L X$ is connected, due to \cref{lem_connected-Gx_LG} and the fact that both $X$ and $\pi(L)$ are connected. Thus, it follows from the work of Leibman (see \cite[Theorems A and B]{Leibman05b}) that
the sequence 
\[
(w_\xi(h,n_1,\ldots,n_k,\ell_1,\ldots,\ell_m,h)\mark{\Gamma})_{(h,n_1,\ldots,n_k,\ell_1,\ldots,\ell_m)\in\N^{k+m+1}}
\]
is \define{well distributed}\footnote{A sequence $(x_{n_1,\ldots,n_k})_{(n_1,\ldots,n_k)\in \N^k}$ of points in a nilmanifold $X$ is said to be \define{well distributed} in $X$ if for all $F\in\Cont(X)$ and all $\epsilon>0$ there exists $K\in \N$ such that for all $M_1,N_1,M_2,N_2,\ldots,M_k,N_k\in\N$ with $N_i-M_i\geq K$ for all $i=1,\ldots,k$ we have $$\Big|\frac{1}{(N_1-M_1)\cdot\ldots\cdot (N_k-M_k)}\sum_{(n_1,\ldots,n_k)\in [M_1,N_1)\PLH\ldots\PLH[M_k,N_k)} F(x_{n_1,\ldots,n_k}) - \int F\d\mu_X\Big|~\leq~ \epsilon.$$} in $\mark{X}$.
We conclude that
\begin{align*}
\lim_{H\to\infty}\frac{1}{H} & \sum_{h=1}^H \mark{F}\big(\mark{Y}_h\big) 
\\
&=
\lim_{H\to\infty}\lim_{N\to\infty}\frac{1}{H} \sum_{h=1}^H 
\frac{1}{N^{m+k}}\sum_{n_1,\ldots,\ell_m=1}^N \mark{F}\big(w_\xi(n_1,\ldots,n_k,\ell_1,\ldots,\ell_m,h)\mark{\Gamma}\big) 
\\
&=
\int\mark{F}\d\mu_{\mark{X}}
\end{align*}
for all continuous functions $\mark{F}\colon\mark{X}\to\C$.
In particular,
\begin{equation*}
\lim_{H\to\infty}\frac{1}{H}\sum_{h=1}^H  \mark{\Phi}(\mark{Y}_h) \,=\, \int\mark{\Phi}\d\mu_{\mark{X}}.
\end{equation*}
Now we can simply invoke Claim 1 to conclude that \eqref{eqn_proof_c_12} holds.

Let us now turn to the proofs of Claims 1, 2, 3, and 4.

\begin{proof}[Proof of \ref{claim_1_sup-linear}]
\renewcommand{\qedsymbol}{$\triangle$}
The following argument is very similar to the proof of Claim 1 which appeared in the proof of \cref{thm_G_sub-linear} in \cref{sec_sub-linear-case} above.
Recall that $\chi$ is non-trivial when restricted to $V$, meaning that there exists $s\in V$ such that $\chi(s)\neq 1$.
Let $\mark{s}\coloneqq \sigma(s,1_G)$, where $1_G$ denotes the identity element of $G$. 
Using the definition of $\mark{\Phi}$ it is straightforward to check that
$$
\mark{\Phi}(\mark{s}\mark{x})\,=\,\chi(s)\mark{\Phi}(\mark{x}),\qquad \forall \mark{x}\in \mark{X}.
$$
Since $\mu_{\mark{X}}$ is invariant under $\mark{s}$, we have that
$$
\int\mark{\Phi}(\mark{x})\d\mu_{\mark{X}}(\mark{x})\,=\,
\int\mark{\Phi}(\mark{s}\mark{x})\d\mu_{\mark{X}}(\mark{x})\,=\,
\chi(s)
\int\mark{\Phi}(\mark{x})\d\mu_{\mark{X}}(\mark{x}),
$$
and hence $\int\mark{\Phi}\d\mu_{\mark{X}}=0$ as claimed.
\end{proof}

\begin{proof}[Proof of \ref{claim_2_sup-linear}]
\renewcommand{\qedsymbol}{$\triangle$}
For any pseudo-horizontal character $\mark{\eta}$ of $(\mark{G},\mark{\Gamma})$ there exists a pseudo-horizontal character $\eta$ of $(G\plh_L G, \Gamma \plh_L\Gamma)$ such that $\mark{\eta}\circ \sigma=\eta $. Thus, instead of \eqref{eqn_proof_c_11}, it suffices to show that for all non-trivial pseudo-horizontal characters $\eta$ of $(G\plh_L G,\Gamma \plh_L\Gamma)$ we have
\begin{equation}
\label{eqn_proof_c_13}
\lim_{H\to\infty}\lim_{N\to\infty}\frac{1}{H}\sum_{h=1}^H \frac{1}{N}\sum_{n=1}^N \eta\big(v_h^\sq(n) \Gamma\plh_L\Gamma \big)=0.
\end{equation}

According to \cref{rem_pseudo_horizontal_characters_of_Gx_LG}, there exist a pseudo-horizontal character $\eta_1$ of $(G,\Gamma)$ and a pseudo-horizontal character $\eta_2$ of $(L,(L\cap\Gamma))$ with $[G^\circ,L^\circ]\subset \ker \eta_2$ such that
$$
\eta\big((a,b)\Gamma\plh_L\Gamma\big)=\eta_1(b\Gamma)\eta_2(a b^{-1}\Gamma_L),\qquad \forall(a,b)\in G\plh_L G,
$$
where $\Gamma_L\coloneqq \Gamma\cap L$.
Thus, by \eqref{eqn_proof_c_7.5},
$$
\eta\big(v_h^\sq(n)\Gamma\plh_L\Gamma\big)
=
\eta_1\big(v(n)\Gamma\big)\eta_2\big(w_h(n)\Gamma_L\big).
$$

Although we have $b_1,\ldots,b_m\in L$, we don't necessarily have $b_1,\ldots,b_m\in L^\circ$. This makes it more difficult to study the expressions
$\eta_1(v(n)\Gamma)$ and $\eta_2(w_h(n)\Gamma_L)$, because $\eta_1$ and $\eta_2$ are only pseudo-horizontal characters and not horizontal characters.
However, we can circumvent these difficulties in the following way. Since $\pi(L)$ is connected, we have
\begin{equation}
\label{eqn_claim_2_-1}
L^\circ \Gamma_L=L.
\end{equation}
As is explained in \cite[Subsections 2.6 and 2.7 on p.~204]{Leibman05a}, under these conditions there exist a polynomial sequence $g_1\colon\N\to L^\circ$ such that
\begin{equation}
\label{eqn_claim_2_0}
g_1(n)\Gamma_L \,=\, b_1^{p_1(n)}\cdot\ldots\cdot b_m^{p_m(n)}\Gamma_L,\qquad\forall n\in\N.
\end{equation}
The advantage of using $g_1(n)$ instead of $b_1^{p_1(n)}\cdot\ldots\cdot b_m^{p_m(n)}$ is that $g_1(n)$ takes values in $L^\circ\subset G^\circ$ and hence the image of $g_1(n)$ under $\eta_1$ and $\eta_2$ is easier to understand.
A downside of making this trade-off is that, unlike $b_1,\ldots,b_m$, the values of $g_1(n)$ do not necessarily commute with $a_1,\ldots,a_k$. But for the current proof (meaning the proof of \ref{claim_2_sup-linear}) this commutativity is not needed.
Similarly, we can find a polynomial sequence in two variables $g_2\colon\N^2\to L^\circ$ such that
$$
g_2(n,h)\Gamma_L \,=\, b_1^{\Delta_h p_1(n)}\cdot\ldots\cdot b_m^{\Delta_h  p_m(n)}\Gamma_L,\qquad\forall n,h\in\N.
$$
Note that even though
$$
b_1^{\Delta_h p_1(n)}\cdot\ldots\cdot b_m^{\Delta_h  p_m(n)}~=~\left(b_1^{p_1(n+h)}\cdot\ldots\cdot b_m^{p_m(n+h)}\right)\left(b_1^{p_1(n)}\cdot\ldots\cdot b_m^{p_m(n)}\right)^{-1},
$$
we do not necessarily have $g_2(n,h)=g_1(n+h)g_1(n)^{-1}$. But we do have
\begin{equation}
\label{eqn_claim_2_1}
g_2(n,h)[L^\circ,L^\circ]\Gamma_L~=~g_1(n+h)g_1(n)^{-1}[L^\circ,L^\circ]\Gamma_L,
\end{equation}
which we will make use of later.

It will be convenient to pick $\alpha_1,\ldots,\alpha_k$ and $(\zeta_i)_{i\in\mathcal{I}}$ such that
\begin{eqnarray*}
\eta_1(a_i^t\Gamma) &=& e(t\alpha_i),\quad i=1,\ldots,k,~\forall t\in\R,
\\
\eta_2(a_i^t \Gamma_L) & =& e(t \zeta_i),\quad i\in\mathcal{I},~\forall t\in\R,
\end{eqnarray*}
where $e(x)$ is shorthand for $e^{2\pi i x}$.
From \eqref{eqn_pseudo_horizontal_charcter_functional_equation} and the fact that $a_1,\ldots,a_k\in G^\circ$ as well as $a_i\in L^\circ$ for all $i\in\mathcal{I}$, it follows that
\begin{equation}
\label{eqn_claim_2_1.5}
\begin{split}
\eta_1\big(v(n) & \Gamma\big) \eta_2\big(w_h(n)\Gamma_L\big)=
\\
=&~
\eta_1\left(\prod_{1\leq i\leq k}a_{i}^{ f_i(n)} \cdot\prod_{1\leq j\leq m}b_{j}^{ p_j(n)}\Gamma\right)\eta_2\left(\prod_{i\in\mathcal{I}}a_{i}^{\Delta_h f_i(n)} \cdot\prod_{1\leq j\leq m}b_{j}^{\Delta_h p_j(n)} \Gamma_L\right)
\\
=&~
e\left(\sum_{i=1}^k  f_i(n)\alpha_i \, + \sum_{i\in\mathcal{I}}\Delta_h f_i(n) \zeta_i \right)
\eta_1\left(\prod_{1\leq j\leq m}b_{j}^{ p_j(n)}\Gamma\right)\eta_2\left(\prod_{1\leq j\leq m}b_{j}^{\Delta_h p_j(n)} \Gamma_L\right)
\\
=&~
e\left(\sum_{i=1}^k  f_i(n)\alpha_i \, + \sum_{i\in\mathcal{I}}\Delta_h f_i(n) \zeta_i \right)
\eta_1\left(g_1(n)\Gamma\right)\eta_2\left(g_2(n,h)\Gamma_L\right).
\end{split}
\end{equation}
Note that $[L^\circ,L^\circ]\Gamma_L$ belongs to the kernel of $\eta_2$ and so it follows from \eqref{eqn_claim_2_1} that
$$
\eta_2\left(g_2(n,h)\Gamma_L\right)~=~\eta_2\left(g_1(n+h)g_1(n)^{-1}\Gamma_L\right).
$$
Let $r_1,r_2\in\R[t]$ be polynomials such that 
$$
\eta_1\left(g_1(n)\right)~=~e\big(r_1(n)\big)
$$
as well as
$$
\eta_2\left(g_1(n)\Gamma_L\right)~=~e\big(r_2(n)\big).
$$
Then \eqref{eqn_claim_2_1.5} implies
$$
\eta_1\big(v(n) \Gamma\big) \eta_2\big(w_h(n)\Gamma_L\big)=
e\left(\sum_{i=1}^k  f_i(n)\alpha_i \, + \sum_{i\in\mathcal{I}}\Delta_h f_i(n) \zeta_i +r_1(n)+r_2(n+h)-r_2(n)\right)
$$
and so \eqref{eqn_proof_c_13} becomes
\begin{equation}
\label{eqn_claim_2_2}
\lim_{H\to\infty}\lim_{N\to\infty}\frac{1}{H}\sum_{h=1}^H \frac{1}{N}\sum_{n=1}^N e\left(\sum_{i=1}^k  f_i(n)\alpha_i  + \sum_{i\in\mathcal{I}}\Delta_h f_i(n) \zeta_i +r_1(n)+r_2(n+h)-r_2(n)\right)=0.
\end{equation}
Since $f_1,\ldots,f_k$ have different growth (see property \ref{itm_thm_G_C}) and behave independently from polynomials (due to property \ref{itm_thm_G_D}), it follows that if at least one of the $\alpha_i$ is non-zero or at least one of the $\zeta_i$ is non-zero, then \eqref{eqn_claim_2_2} is satisfied and we are done.
Let us therefore assume $\alpha_i=0$ for all $i=1,\ldots,k$ and $\zeta_i=0$ for all $i\in\mathcal{I}$. In this case, \eqref{eqn_claim_2_2} is equivalent to
\begin{equation}
\label{eqn_claim_2_3}
\lim_{H\to\infty}\lim_{N\to\infty}\frac{1}{H}\sum_{h=1}^H \frac{1}{N}\sum_{n=1}^N e\left(r_1(n)+r_2(n+h)-r_2(n)\right)=0.
\end{equation}
Averages of polynomial sequences are known to behave very regularly. In particular, the order of limits in \eqref{eqn_claim_2_3} can be interchanged freely, which means that \eqref{eqn_claim_2_3} is equivalent to 
\begin{equation*}
\lim_{N\to\infty}\lim_{H\to\infty}\frac{1}{N}\sum_{n=1}^N \frac{1}{H}\sum_{h=1}^H  e\left(r_1(n)+r_2(n+h)-r_2(n)\right)=0,
\end{equation*}
which is the same as
\begin{equation}
\label{eqn_claim_2_4}
\left(\lim_{N\to\infty}\frac{1}{N}\sum_{n=1}^N e\left(r_1(n)-r_2(n)\right)\right) \left(\lim_{H\to\infty}\frac{1}{H}\sum_{h=1}^H  e\left(r_2(h)\right)\right)=0.
\end{equation}
Recall that $\overline{b_i^\Z\Gamma}$ is connected for all $i=1,\ldots,m$. This implies that $\overline{b_i^\Z\Gamma_L}$ is also connected for all $i=1,\ldots,m$ and therefore
$$
\overline{\big\{b_1^{p_1(n)}\cdot\ldots\cdot b_m^{p_m(n)}: n\in\Z\big\}} ~=~ \overline{b_1^\Z\cdot\ldots\cdot b_m^\Z\Gamma}
$$
is connected. It now follows from \eqref{eqn_claim_2_0} that $\overline{g_1(\Z)\Gamma}$ and $\overline{g_1(\Z)\Gamma_L}$ are connected. But if $\overline{g_1(\Z)\Gamma_L}$ is connected then, because $e(r_2(n))=\eta_2(g_1(n)\Gamma_L)$, as soon as the function $n\mapsto e(r_2(n))$ is non-constant, the average
$$
\lim_{H\to\infty}\frac{1}{H}\sum_{h=1}^H  e\left(r_2(h)\right)
$$
must equal $0$. If this average equals $0$ then \eqref{eqn_claim_2_4} holds, which implies that \eqref{eqn_claim_2_3} holds, and once again we are done. Let us therefore assume that $n\mapsto e(r_2(n))$ is constant. Since $e(r_2(0))=1$, if $n\mapsto e(r_2(n))$ is constant then we must have $e(r_2(n))=1$ for all $n\in\N$. Therefore $g_1(n)$ belongs to the kernel of $\eta_2$ and \eqref{eqn_claim_2_3} becomes
\begin{equation}
\label{eqn_claim_2_5}
\lim_{N\to\infty}\frac{1}{N}\sum_{n=1}^N e\left(r_1(n)\right)=0.
\end{equation}
Since $\overline{g_1(\Z)\Gamma}$ is connected, we once again only have two possibilities: either $ e(r_1(n))$ is constant equal to $1$ or \eqref{eqn_claim_2_5} is satisfied. Since we are done if \eqref{eqn_claim_2_5} is satisfied, the proof of \ref{claim_2_sup-linear} is completed if we can show that $ e(r_1(n))$ cannot be constant equal to $1$ under the current assumptions. 

By way of contradiction, assume $e(r_1(n))=1$ for all $n\in\N$. This implies that $g_1(n)$ belongs to the kernel of $\eta_1$. But we also have that $a_1,\ldots,a_k$ belong to the kernel of $\eta_1$ and $a_i$ for $i\in\mathcal{I}$ as well as $g_1(n)$ for all $n\in\N$ belong to the kernel of $\eta_2$. We claim that having all those elements belong to the kernels of $\eta_1$ and $\eta_2$ contradicts the hypothesis that either $\eta_1$ or $\eta_2$ are non-trivial.

To verify this claim, we first need to make a simplifying assumption. Note that the group generated by $G^\circ$ and $b_1,\ldots,b_m$ is closed and rational\footnote{
Since $X$ is connected, be have $G^\circ \Gamma=G$. Therefore, for every $i=1,\ldots,m$ there exists $\gamma_i\in\Gamma$ such that $b_i\gamma_i\in G^\circ$.
This means that the group generated by $G^\circ$ and $b_1,\ldots,b_m$ equals $G^\circ \Gamma'$, where $\Gamma'$ is the subgroup of $\Gamma$ generated by $\gamma_1,\ldots,\gamma_m$. This proves that the group generated by $G^\circ$ and $b_1,\ldots,b_m$ is both closed and rational.}, and it contains $a_1,\ldots,a_k$ as well as $b_1,\ldots,b_m$. Therefore we can replace $G$ by $\langle G^\circ, b_1,\ldots,b_m\rangle$ if necessary, and will henceforth assume that $G=\langle G^\circ, b_1,\ldots,b_m\rangle$.

Next, define
$$
N\,\coloneqq \,\prod_{i\in\mathcal{I}}a_i^\R \cdot b_1^\Z\cdot\ldots\cdot b_m^\Z [G^\circ,L^\circ]\Gamma_{L^\circ},
$$
where $\Gamma_{L^\circ}=\Gamma\cap L^\circ$.
We claim that $N$ is a normal subgroup of $G$ with a dense subset of rational elements. Once verified, this claim will imply that the closure of $N$, which we denote by $\overline{N}$, is a closed, rational, and normal subgroup of $G$.

To show that $N$ is a group, define
$$
N_0\,\coloneqq \,\prod_{i\in\mathcal{I}}a_i^\R [G^\circ,L^\circ]\Gamma_{L^\circ}\qquad\text{and}\qquad H\,\coloneqq\, b_1^\Z\cdot\ldots\cdot b_m^\Z
$$
and note that $N=HN_0$. Certainly, $H$ is a subgroup of $G$. If we can show that $N_0$ is a normal subgroup of $G$ then it will follow that $N$ is a group.

Since $G=\langle G^\circ, b_1,\ldots,b_m\rangle$, to prove that $N_0$ is normal it suffices to show that $g^{-1}N_0 g=N_0$ for all $g\in G^\circ$, and $b_j^{-1}N_0 b_j=N_0$ for all $j=1,\ldots,m$. It is easy to see that that $g^{-1}N_0 g=N_0$ holds for all $g\in G^\circ$, because $N_0$ contains $[G^\circ, L^\circ]$. To show that $b_j^{-1}N_0 b_j=N_0$ for all $j=1,\ldots,m$ fix some $j$ between $1$ and $m$. Since $b_j$ commutes with $a_i$, we have
$$
b_j^{-1}N_0 b_j\,=\, b_j^{-1} \left(\prod_{i\in\mathcal{I}}a_i^\R [G^\circ,L^\circ]\Gamma_{L^\circ}\right) b_j
\,=\,
\prod_{i\in\mathcal{I}}a_i^\R b_j^{-1}[G^\circ,L^\circ]\Gamma_{L^\circ}b_j.
$$
Since both $G^\circ$ and $L^\circ$ are normal subgroups of $G$, the commutator $[G^\circ,L^\circ]$ is normal and hence
$$
\prod_{i\in\mathcal{I}}a_i^\R b_j^{-1}[G^\circ,L^\circ]\Gamma_{L^\circ}b_j\,=\,\prod_{i\in\mathcal{I}}a_i^\R [G^\circ,L^\circ]b_j^{-1}\Gamma_{L^\circ}b_j.
$$
Since $L^\circ \Gamma_L=\Gamma_L L^\circ=L$ (cf.~\eqref{eqn_claim_2_-1}), there exists $\gamma\in \Gamma_L$ and $c\in L^\circ$ such that $\gamma c=b_j$. Hence
$$
\prod_{i\in\mathcal{I}}a_i^\R [G^\circ,L^\circ]b_j^{-1}\Gamma_{L^\circ}b_j\,=\,\prod_{i\in\mathcal{I}}a_i^\R [G^\circ,L^\circ]c^{-1}\gamma^{-1}\Gamma_{L^\circ}\gamma c.
$$
Since $\Gamma_{L^\circ}$ is a normal subgroup of $\Gamma$, we have $\gamma^{-1}\Gamma_{L^\circ}\gamma=\Gamma_{L^\circ}$. So 
$$
\prod_{i\in\mathcal{I}}a_i^\R [G^\circ,L^\circ]c^{-1}\gamma^{-1}\Gamma_{L^\circ}\gamma c \,=\,\prod_{i\in\mathcal{I}}a_i^\R [G^\circ,L^\circ]c^{-1}\Gamma_{L^\circ}c.
$$
Finally, observe that $c^{-1}\Gamma_{L^\circ}c= \Gamma_{L^\circ}\bmod [G^\circ,L^\circ]$, which gives
$$
\prod_{i\in\mathcal{I}}a_i^\R [G^\circ,L^\circ]c^{-1}\Gamma_{L^\circ}c \,=\,\prod_{i\in\mathcal{I}}a_i^\R [G^\circ,L^\circ]\Gamma_{L^\circ}\,=\,N_0.
$$
This proves that $N_0$ is a normal subgroup of $G$ and hence $N$ is a subgroup of $G$.

Next, let us show that $N$ is normal too. When proving that $N_0$ is normal, we used that $G$ is generated by $G^\circ$ and $b_1,\ldots,b_m$. For the proof that $N$ is normal, this does not seem to be particularly helpful. Instead, we shall use that the set $a_1^\R\cdots a_k^\R b_1^\Z\cdots b_m^\Z\Gamma$ is dense in $G$ (which follows from \eqref{eqn_proof_c_0}). Therefore, to show that $N$ is normal, it suffices to prove that $a_i^{-1}Na_i=N$ for all $i=1,\ldots,k$, $b_j^{-1}N b_j=N$ for all $j=1,\ldots,m$, and $\gamma^{-1}N\gamma=N$ for all $\gamma\in\Gamma$. To verify the first assertion, namely that $a_i^{-1}Na_i=N$, simply note that $N=HN_0$, where $N_0$ is normal and $H$ is a group every element of which commutes with $a_i$. A similar argument shows that $b_j^{-1}N b_j=N$. To see why $\gamma^{-1}N\gamma=N$ holds for all $\gamma\in\Gamma$, simply note that $N=HN_0=\Gamma_L N_0$ and $\Gamma_L$ is a normal subgroup of $\Gamma$.

Finally let us show that rational elements are dense in $N$, or equivalently, that $\overline{N}$ is rational. It is well known (see \cite[Subsection 2.2, p.~203--204]{Leibman05a}) that a closed subgroup of $G$ is rational if and only if its intersection with $\Gamma$ is a uniform subgroup of that group. Hence, to prove that $\overline{N}$ is rational, it suffices to show that $\Gamma\cap\overline{N}$ is a uniform subgroup of $\overline{N}$. However, since $\Gamma_L\subset N$ and $\overline{N}\subset L$, it follows that $\Gamma\cap\overline{N}=\Gamma_L$. Since $\Gamma_L$ is a uniform subgroup of $L$ and $\overline{N}$ is a subgroup of $L$, it follows that $\Gamma_L$ is a uniform subgroup of $\overline{N}$ and we are done.

In conclusion, $\overline{N}$ is a closed and rational subgroup of $L$ that contains $a_i$ for all $i\in\mathcal{I}$ and $b_j$ for all $j=1,\ldots,m$. Moreover, $\overline{N}$ is a normal subgroup of $G$. Since, by definition, $L$ is the smallest subgroup of $G$ with all these properties, we must have
$$
L=\overline{N}.
$$ 

Recall that $a_1,\ldots,a_k$ and $g_1(n)$ for all $n\in\N$ belong to the kernel of $\eta_1$ and $a_i$ for $i\in\mathcal{I}$ and $g_1(n)$ for all $n\in\N$ belong to the kernel of $\eta_2$. From this it follows that $b_1,\ldots,b_m$ also belong to the kernel of both $\eta_1$ and $\eta_2$. Recall also that $[G^\circ,L^\circ]\subset\ker\eta_2$.
In other words $a_1^\R\cdots a_k^\R b_1^\Z\cdots b_m^\Z\Gamma$ is a subset of $\ker(\eta_1)$ and $N$ is a subset of $\ker(\eta_2)$. Since $a_1^\R\cdots a_k^\R b_1^\Z\cdots b_m^\Z\Gamma$ is dense in $G$ it follows that $\eta_1$ is trivial, and since $N$ is dense in $L$, $\eta_2$ is also trivial. This contradicts the fact that either $\eta_1$ or $\eta_2$ is non-trivial and finishes the proof of \ref{claim_2_sup-linear}.
\end{proof}

\begin{proof}[Proof of \ref{claim_3_sup-linear}]
\renewcommand{\qedsymbol}{$\triangle$}
Recall that $\mark{v}_h(n)=\sigma(v^\sq_h(n))$, where  $v^\sq_h(n)= (w_h(n),1_G)\,v^\triangle(n)$,
$$
v(n)= a_1^{f_1(n)}\cdot\ldots\cdot a_k^{f_k(n)}b_1^{p_1(n)}\cdot\ldots\cdot b_m^{p_m(n)},\qquad\forall n\in\N,
$$
and
$$
w_h(n)\,=\,\prod_{i\in\mathcal{I}}a_{i}^{\Delta_h f_i(n)} \cdot\prod_{1\leq j\leq m}b_{j}^{\Delta_h p_j(n)}.
$$
Let $\nu_i$ denote the degree of $f_i$.
Using Taylor's Theorem, we can approximate $\Delta_h f_i(n)$ by
$$
\Delta_h f_i(n)\,=\,hf_i'(n)+\ldots+\frac{h^{(\nu_i-1)}}{(\nu_i-1)!}f_i^{(\nu_i-1)}(n)+\Oh\Big(f^{(\nu_i)}(n)\Big).
$$
In view of \cref{lem_useful_hardy}, we have $\Oh(f^{(\nu_i)}(n))=\oh_{n\to\infty}(1)$. Thus, 
$$
d_G\Big(w_h(n),\,\prod_{i\in\mathcal{I}}a_{i}^{hf_i'(n)+\ldots+\frac{h^{(\nu_i-1)}}{(\nu_i-1)!}f_i^{(\nu_i-1)}(n)} \cdot\prod_{1\leq j\leq m}b_{j}^{\Delta_h p_j(n)}\Big)\,=\,\oh_{n\to\infty}(1).
$$
If $i\in \mathcal{I}$ then $\nu_i\geq 2$. Also, according to the hypothesis of \cref{thm_G}, for every $j\in \{1,\ldots,\nu_i-1\}$ there exists $z(i,j)\in\{1,\ldots,k\}$ such that $f_i^{(j)}=f_{z(i,j)}$. For every $l\in\{1,\ldots,k\}$ define $Q_l\coloneqq\{(i,j): z(i,j)=l\}$ and set
$$
\tilde{u}_l(h)\,\coloneqq\, \prod_{(i,j)\in Q_l} a_i^{\frac{h^j}{j!}}.
$$
Now define, for every $i\in\{1,\ldots,k\}$, the polynomial mapping $u_i\colon\N\to \mark{G}^\circ$ as
\begin{equation*}
u_i(h)\,\coloneqq\,
\begin{cases}
\sigma(a_i,a_i),&\text{if}~i\notin\mathcal{I},\\
\sigma(a_i \tilde{u}_i(h),a_i),&\text{if}~i\in\mathcal{I}.
\end{cases}
\end{equation*}
In a similar way, one can find $c, e_1,\ldots,e_m\colon \N\to \mark{G}$. 
\end{proof}

\begin{proof}[Proof of \ref{claim_4_sup-linear}]
\renewcommand{\qedsymbol}{$\triangle$}
Fix $h\in\N$. Since $G_\bullet =\{G_1,G_2,\ldots, G_{d},G_{d+1}\}$ is filtration that realizes the degree of $v$, we have $b_j\in G_{\deg(p_j)+1}$ for all $j=1,\ldots,m$ and $a_i\in G_{\deg(f_i)}^\circ$ for all $i=1,\ldots,k$.
Define $L_i\coloneqq L\cap G_{i+1}$, $i=0,1,\ldots,d$, $L_{d+1}\coloneqq \{1_G\}$, and
$$
(G\plh_L G)_\bullet=\big\{G_1\plh_{L_1}G_1,\ \ldots,\ G_d\plh_{L_d}G_d,\ G_{d+1}\plh_{L_{d+1}}G_{d+1}\big\}.
$$
According to \cref{lem_filtration-of-Gx_LG}, $(G\plh_L G)_\bullet$ is a $d$-step filtration of $G\plh_L G$.
We claim that $(G\plh_L G)_\bullet$ is a filtration that realizes the degree of $v_h^\sq(n)$. 
Recall that
$$
v_h^\sq(n)\, =\, (w_h(n),1_G)\, v^\triangle(n),
$$
where
$$
v^\triangle(n)\,=\, \prod_{1\leq i\leq k} (a_i,a_i)^{f_i(n)} \cdot \prod_{1\leq j\leq m}(b_j,b_j)^{p_j(n)}
$$
and
$$
(w_h(n),1_G)\,=\, \prod_{i\in\mathcal{I}}(a_{i},1_G)^{\Delta_h f_i(n)} \cdot\prod_{1\leq j\leq m}(b_{j},1_G)^{\Delta_h p_j(n)}.
$$
To show that $(G\plh_L G)_\bullet$ is a filtration realizing the degree of $v_h^\sq(n)$, we must prove that
\begin{enumerate}
[label=(\roman{enumi}),ref=(\roman{enumi}),leftmargin=*]
\item
\label{itm_claim4_i}
$(b_j,b_j)\in G_{\deg(p_j)+1}\plh_{L_{\deg(p_j)+1}} G_{\deg(p_j)+1}$ for all $j=1,\ldots,m$;
\item
\label{itm_claim4_ii}
$(a_i,a_i)\in (G_{\deg(f_i)}\plh_{L_{\deg(f_i)}} G_{\deg(f_i)})^\circ$ for all $i=1,\ldots,k$;
\item
\label{itm_claim4_iii}
$(b_j,1_G)\in G_{\deg(\Delta_hp_j)+1}\plh_{L_{\deg(\Delta_h p_j)+1}} G_{\deg(\Delta_hp_j)+1}$ for all $j=1,\ldots,m$;
\item
\label{itm_claim4_iv}
$(a_i,1_G)\in (G_{\deg(\Delta_hf_i)}\plh_{L_{\deg(\Delta_hf_i)}} G_{\deg(\Delta_hf_i)})^{\circ}$ for all $i\in\mathcal{I}$;
\end{enumerate}
Parts \ref{itm_claim4_i} and \ref{itm_claim4_ii} follow from the fact that $G_i^\triangle \subset G_{i}\plh_{L_{i}} G_{i}$ for all $i=1,\ldots,d$ and that $b_j\in G_{\deg(p_j)+1}$ for all $j=1,\ldots,m$ and $a_i\in G_{\deg(f_i)}^\circ$ for all $i=1,\ldots,k$.
Parts \ref{itm_claim4_iii} and \ref{itm_claim4_iv} follow from the fact that $\deg(\Delta_h f_i)=\deg(f_i)-1$ and $\deg(\Delta_h p_j)=\deg(p_j)-1$ and that $(L\cap G_{i+1})\times\{1_G\}\subset G_{i}\plh_{L_{i}} G_{i}$.

To complete the proof of \ref{claim_4_sup-linear}, note that if $(G\plh_L G)_\bullet$ is a filtration realizing the degree of $v_h^\sq(n)$, then the filtration $\mark{G}_\bullet =\{\mark{G}_1,\mark{G}_2,\ldots, \mark{G}_{d}\}$, defined as
$$
\mark{G}_i\coloneqq \sigma\big(G_i\plh_{L_i}G_i\big),\qquad i=1,\ldots,d,
$$
is a filtration realizing the degree of $\mark{v}_h(n)$. Moreover, $G_d\plh_{L_d}G_d$ is equal to $G_d^\triangle$ because $L_d=\{1_G\}$, and hence $G_d\plh_{L_d}G_d$ belongs to the kernel of $\sigma$. This shows that $\mark{G}_\bullet =\{\mark{G}_1,\mark{G}_2,\ldots, \mark{G}_{d}\}$ is a $(d-1)$-step filtration and hence $\mark{v}_h(n)$ has degree $d-1$.
\end{proof}
This finishes the proofs of Claims 1, 2, 3, and 4, which in turn completes the proof of \cref{thm_G}.
\end{proof}

\section{Theorems \ref{thm_D} and \ref{thm_E}}
\label{sec_proof_thms_D_and_E}

For proving Theorems \ref{thm_D} and \ref{thm_E} we use essentially the same ideas as were used in the proofs of Theorems \ref{thm_B} and \ref{thm_C}, only that all \Cesaro{} averages get replaced with $W$-averages.
Similar to what we did in \cref{sec_reduction}, the first step is to reduce Theorems \ref{thm_D} and \ref{thm_E} to the following analogue of \cref{thm_G}.

\begin{Maintheorem}
\label{thm_H}
Let $G$ be a simply connected nilpotent Lie group, $\Gamma$ a uniform and discrete subgroup of $G$, and $\Hardy$ a Hardy field. Assume $v\colon\N\to G$ is a mapping of the form
$$
v(n)= a_1^{f_1(n)}\cdot\ldots\cdot a_k^{f_k(n)}b_1^{p_1(n)}\cdot\ldots\cdot b_m^{p_m(n)},\qquad\forall n\in\N,
$$
where $a_1,\ldots, a_k\in G^\circ$, $b_1,\ldots,b_m\in G$, the elements $a_1,\ldots,a_k,b_1,\ldots,b_m$ are pairwise commuting, $\overline{b_1^\Z\Gamma},\ldots, \overline{b_m^\Z\Gamma}$ are connected sub-nilmanifolds of $X=G/\Gamma$, $p_1,\ldots,p_m\in\R[t]$ are polynomials satisfying
\begin{enumerate}
[label=~(\ref{thm_H}$_\arabic{enumi}$),ref=(\ref{thm_H}$_\arabic{enumi}$),leftmargin=*]
\item
\label{itm_thm_H_A}
$p_j(\Z)\subset \Z$, for all $j=1,\ldots,m$,
\item
\label{itm_thm_H_B}
$\deg(p_j)=j$, for all $j=1,\ldots,m$,
\end{enumerate}
and $f_1, \ldots, f_k\in\Hardy$ satisfy
\begin{enumerate}
[label=~(\ref{thm_H}$_\arabic{enumi}$),ref=(\ref{thm_H}$_\arabic{enumi}$),leftmargin=*]
\setcounter{enumi}{2}
\item
\label{itm_thm_H_C}
$f_1(t)\prec \ldots\prec f_k(t)$,
\item
\label{itm_thm_H_D}
for all $f\in\{f_1,\ldots,f_k\}$ there exists $\ell\in\N$ such that $t^{\ell-1}\log(W(t))\prec f(t) \prec t^\ell$,
\item
\label{itm_thm_H_E}
for all $f\in\{f_1,\ldots,f_k\}$ with $\deg(f)\geq 2$ we have $f'\in\{f_1,\ldots,f_k\}$,
\end{enumerate}
where $W\in\Hardy$ has degree $1$. 
Then $(v(n)\Gamma)_{n\in\N}$ is uniformly distributed with respect to $W$-averages in the sub-nilmanifold $\overline{ a_1^\R\cdots a_k^\R b_1^\Z\cdots b_m^\Z \Gamma}$.
\end{Maintheorem}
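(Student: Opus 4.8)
The plan is to mirror, essentially verbatim, the three-case proof of \cref{thm_G} given in \cref{sec_proof_orbit_closure_cesaro}, replacing \Cesaro{} averages by $W$-averages throughout and substituting each classical equidistribution input by its $W$-average counterpart. As there, the cases are: (a) $G$ abelian, serving as the base of all inductions; (b) the ``sub-linear'' case $b_1=\ldots=b_m=1_G$ and $f_k(t)\prec t$, handled by induction on the nilpotency step of $G$; and (c) the general case, handled by induction on the degree of $v$. A tool used in all three cases is the elementary Abel-summation comparison: for $W$ as in \cref{def_W-ud} and any bounded sequence $(c_n)$ with $\frac1N\sum_{n\leq N}c_n\to0$ one has $\frac{1}{W(N)}\sum_{n\leq N}w(n)\,c_n\to0$. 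This lets us transfer every ``regularity of polynomial averages'' statement (and in particular Weyl's and Leibman's equidistribution results for polynomial sequences) from \Cesaro{} to $W$-averages in the variable $n$.

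\textbf{Abelian case.} I would run the argument of \cref{thm_G_abelian} up to the reduction to exponential sums $\frac{1}{W(N)}\sum_{n\leq N}w(n)\,e\big(\sum_i f_i(n)\xi_i+\sum_j p_j(n)\zeta_j\big)$. If some $\xi_i\neq 0$, then \ref{itm_thm_H_D} together with \ref{itm_thm_H_C} guarantees that the $f_i$ stay far from polynomials at the scale $\log W$, so the $W$-average version of Boshernitzan's Equidistribution Theorem (cf.\ \cref{thm_bosh_W-averages}) forces the average to $0$. If all $\xi_i=0$ but some $\zeta_j\neq0$, then $\zeta_j$ is irrational by connectedness of $\overline{b_j^\Z\Gamma}$, and a polynomial with an irrational coefficient is uniformly distributed mod $1$ with respect to $W$-averages by Weyl's theorem plus the Abel-summation fact above.

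\textbf{Sub-linear case.} I would follow the proof of \cref{thm_G_sub-linear} word for word: reduce to connected $G$ and to a single non-trivial central character $\phi$ whose central frequency $\chi$ is non-trivial on $V=L\cap Z(G)$; apply the van der Corput lemma for sub-linear functions — now in its $W$-average form \cref{prop_slow_vdC_W-averages} — to $A(s)=\lim_N\frac1{W(N)}\sum_{n\leq N}w(n)\,\Phi\big((b^s,1_G)v^\triangle(n)(\Gamma\plh_L\Gamma)\big)$; verify $\int\Phi\,\d\mu_{X\plh_L X}=0$ exactly as in Claim~1 there; and prove the analogue of Claim~2 by passing to $\mark G=G/Z(G)$ and invoking the induction hypothesis on nilpotency step (noting that $\mark X$ coincides with the relevant orbit closure). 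The only genuinely new ingredient is the $W$-average analogue of \cref{lem_equidistribution_of_diagonal_in_Gx_LG_sub-linear}: for all but countably many $\xi$, the sequence $((b^{\xi n},1_G)X^\triangle)_n$ is uniformly distributed with respect to $W$-averages in $X\plh_L X$. Via Leibman's criterion and the description of horizontal characters in \cref{rem_horizontal_characters_of_Gx_LG}, this reduces to $\frac1{W(N)}\sum_{n\leq N}w(n)\,\lambda^{\xi n}\to0$ whenever $\lambda^\xi\neq1$, which is again an Abel-summation consequence of the corresponding \Cesaro{} statement.

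\textbf{General case.} I would follow \cref{sec_sup-linear-case} line by line. The filtration machinery (\cref{lem_filtration-of-Gx_LG}), the degree bookkeeping (\cref{def_degree_of_g}, \cref{lem_finite_degree}), the central-character reductions, the passage to $v^\sq_h(n)=(w_h(n),1_G)\,v^\triangle(n)\in G\plh_L G$, and Claims~1, 3, 4 are purely algebraic and transfer unchanged. In the analogue of Claim~2, vanishing of $\lim_H\frac1H\sum_{h\leq H}\lim_N\frac1{W(N)}\sum_{n\leq N}w(n)\,\mark\eta\big(\mark v_h(n)\mark\Gamma\big)$ again reduces to an exponential-sum estimate in $f_i$, $\Delta_h f_i$ and polynomials in $n$ and $h$; one splits exactly as in the original Claim~2, using the $W$-average Boshernitzan theorem for the Hardy-field parts, the (still \Cesaro) regularity of polynomial averages in $h$, and the connectedness of the $\overline{b_j^\Z\Gamma}$. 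With Claims~1--4 in hand, the induction hypothesis on degree gives $A(h)=\mark\Phi(\mark Y_h)$; Leibman's polynomial equidistribution theorem (insensitive to replacing \Cesaro{} by $W$-averages in $n$, since $\mark v_h$ is a polynomial mapping and the $h$-average remains \Cesaro) yields $\frac1H\sum_{h\leq H}\mark\Phi(\mark Y_h)\to\int\mark\Phi\,\d\mu_{\mark X}$, and Claim~1 closes the argument. I expect the principal obstacle to be \cref{prop_slow_vdC_W-averages} itself: the $n$-average is a $W$-average while the van der Corput ``derivative'' average over $h$ is ordinary \Cesaro, and controlling their interaction when $W$ grows slower than $t$ is precisely what dictates phrasing \ref{itm_thm_H_D} in terms of $\log W$ rather than $\log t$; a secondary technical point is checking that the limit-interchange and product-factorization steps for polynomial averages survive the switch to $W$-averages in $n$, which the Abel-summation comparison handles uniformly.
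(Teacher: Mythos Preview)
Your proposal is correct and mirrors the paper's own approach exactly: the proof of \cref{thm_H} is obtained from the three-case proof of \cref{thm_G} by replacing \Cesaro{} averages with $W$-averages throughout, invoking \cref{thm_bosh_W-averages} in the abelian case, \cref{prop_slow_vdC_W-averages} in the sub-linear case, and \cref{prop_vdC} with $p_n=w(n)$ in the general case. One small simplification: since the outer $h$-average in \cref{prop_slow_vdC_W-averages} remains a \Cesaro{} average, you do not actually need a $W$-average analogue of \cref{lem_equidistribution_of_diagonal_in_Gx_LG_sub-linear}---the original lemma applies verbatim once Claim~2 yields $A(s)=\Phi((b^s,1_G)X^\triangle)$.
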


The proof that \cref{thm_H} implies Theorems \ref{thm_E} and \ref{thm_D} is almost identical to the proof that \cref{thm_G} implies Theorems \ref{thm_B} and \ref{thm_C} given in \cref{sec_reduction}. The only difference is that instead of applying \cref{cor_normal_form} with $V(t)=t$, we apply \cref{cor_normal_form} with $V(t)=W(t)$ where $W\in\Hardy$ is chosen (using \cref{cor_finding_W}) such that $f_1,\ldots,f_k$ satisfy \ref{property_P_W}. Since these proofs are so similar, we omit the details.

The proof of \cref{thm_H} is, just like the proof of \cref{thm_G}, split into three cases: the abelian case, the sub-linear case, and the general case.
The proof of the sub-linear case of \cref{thm_H} is the same as the proof of the sub-linear case of \cref{thm_G}, except that all \Cesaro{} averages are replaced with $W$-averages and instead of utilizing \cref{prop_slow_vdC} one uses \cref{prop_slow_vdC_W-averages}, which is precisely the analogue of \cref{prop_slow_vdC} for $W$-averages. Therefore, we omit the details of this part of the proof of \cref{thm_H} too.

Similarly, the arguments used in the proof of the general case of \cref{thm_H} are almost identical to the ones used in the proof of the general case of \cref{thm_G} in \cref{sec_sup-linear-case} if one replaces all \Cesaro{} averages with $W$-averages and instead of applying \cref{prop_vdC} with $p_n=1$ one applies \cref{prop_vdC} with $p_n=w(n)$. We omit the details of this part as well.

This leaves only the abelian case of \cref{thm_H} to be verified. For the proof of this case we can also copy the proof of the abelian case of \cref{thm_G} given in \cref{sec_abelian-case}. The only missing ingredient is a variant of Boshernitzan's Equidistribution Theorem (\cite[Theorem 1.8]{Boshernitzan94}) for $W$-averages. Let us formulate and prove such a variant now.

\begin{Theorem}
\label{thm_bosh_W-averages}
Let $\Hardy$ be a Hardy field, let $W,f\in\Hardy$, and assume $1\prec W(t)\ll t$ and $t^{\ell-1} \log(W(t))\prec f(t)\prec t^\ell$ for some $\ell\in\N$. Then
\begin{equation}
\label{eqn_bosh_W-averages_1}
\lim_{N\to\infty}\frac{1}{W(n)}\sum_{n=1}^N w(n)\, e(f(n))\,=\,0,
\end{equation}
where $w=\Delta W$.
\end{Theorem}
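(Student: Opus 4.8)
The plan is to mimic the structure of Boshernitzan's original argument, carrying the weight $w=\Delta W$ through it. First I would reduce to the case $\ell=1$ by induction on $\ell$, using the weighted van der Corput estimate (\cref{prop_vdC} applied with $p_n=w(n)$): for fixed $h$ one has, by Taylor's theorem and \cref{lem_useful_hardy}, $\Delta_h f(n)=hf'(n)+\Oh\!\big(f''(n)\big)=hf'(n)+\oh_{n\to\infty}(1)$, so the weighted average of $e\big(\Delta_h f(n)\big)$ is governed by the function $hf'$. A short sublemma — extracted from \cref{lem_useful_hardy} together with the Leibniz rule — shows that, for $\ell\ge 2$, the hypothesis $f(t)\succ t^{\ell-1}\log(W(t))$ forces $f'(t)\succ t^{\ell-2}\log(W(t))$: writing $f=t^{\ell-1}\log(W(t))\,\omega(t)$ with $\omega\to\infty$, the leading term $(\ell-1)t^{\ell-2}\log(W(t))\,\omega(t)$ of $f'$ dominates both the contribution $t^{\ell-1}W'(t)/W(t)=\Oh(t^{\ell-2})$ coming from differentiating $\log W$ (here $W'\ll W/t$ by \cref{lem_useful_hardy}) and, by monotonicity, the nonnegative term $t^{\ell-1}\log(W(t))\,\omega'(t)$. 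Combined with $f'\prec f/t\prec t^{\ell-1}$, this shows $hf'$ satisfies the hypotheses of the theorem with $\ell$ replaced by $\ell-1$, so the inductive step goes through and we are reduced to the base case $\ell=1$.

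For the base case $1\prec W\ll t$ and $\log(W(t))\prec f(t)\prec t$ — where \cref{lem_useful_hardy} gives $f'(t)\to 0$, so $e(f(\cdot))$ varies slowly — I would first pass from the weighted sum to an integral. Writing $w(n)=W'(n)+\Oh\!\big(\sup_{[n,n+1]}|W''|\big)$ and using that $W'$ is eventually monotone with $W'(t)\ll W(t)/t\to 0$, one gets $\sum_{n=1}^N |w(n)-W'(n)|=\Oh(1)$, and a routine (if slightly delicate) Euler–Maclaurin estimate exploiting $f'\to 0$ yields $\frac{1}{W(N)}\big|\sum_{n=1}^N w(n)e(f(n))-\int_1^N W'(t)e(f(t))\d t\big|\to 0$. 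Then I would substitute $s=W(t)$, $t=W^{-1}(s)$ (working, if necessary, in a Hardy field containing $W^{-1}$), which turns the integral into $\frac1S\int_{W(1)}^{S} e\big(g(s)\big)\d s$ with $S=W(N)\to\infty$ and $g\coloneqq f\circ W^{-1}\in\Hardy$. Since $f(t)\succ\log(W(t))$ we obtain $g(s)=f(W^{-1}(s))\succ\log\big(W(W^{-1}(s))\big)=\log s$, and hence, by Boshernitzan's equidistribution theorem (\cite{Boshernitzan94}; the relevant point being that a Hardy-field function exceeding $\log s$ is uniformly distributed mod $1$), this average tends to $0$. This proves \eqref{eqn_bosh_W-averages_1}.

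The main obstacle is precisely this base case — the genuinely ``slow-growth'' regime $\log(W(t))\prec f(t)\prec t$ — where the phase grows too slowly for a single integration by parts to suffice and the crude bound from \cref{lem_useful_hardy} loses powers of $\log t$. Passing to the integral and substituting $s=W(t)$ is what isolates exactly the surviving hypothesis $g\succ\log s$; making the sum–integral comparison precise, and invoking the correct (possibly super-polynomial growth) form of Boshernitzan's theorem for $g=f\circ W^{-1}$, are the two technical points that need care. By contrast, when $\ell\ge 2$ one has $f(t)\succ t$, hence $f'(t)\to\infty$ with $f'$ eventually monotone, so a single integration by parts already gives $\big|\int_1^N W'(t)e(f(t))\d t\big|=\Oh\!\big(W'(N)/f'(N)+1\big)=\Oh(1)$ and the conclusion is immediate from $W(N)\to\infty$ — the ``easy half'', which the induction above also subsumes.
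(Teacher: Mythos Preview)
Your inductive step is essentially the paper's (use \cref{prop_vdC} with $p_n=w(n)$ and reduce to $\ell-1$), and your explicit verification that $f'(t)\succ t^{\ell-2}\log(W(t))$ is a welcome elaboration of what the paper compresses into ``a consequence of \cref{lem_useful_hardy}''.

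The base case is where your proposal diverges from the paper and where there is a genuine gap. The paper does \emph{not} pass to an integral or change variables; it applies \cref{prop_slow_vdC_W-averages} directly with $\Psi(x)=e(x)$ and $k=1$, so that
\[
A(s)=\lim_{N\to\infty}\frac{1}{W(N)}\sum_{n=1}^N w(n)\,e(f(n)+s)\overline{e(f(n))}=e(s),
\]
whence $\frac1H\sum_{h=1}^H A(\xi h)=\frac1H\sum_{h=1}^H e(\xi h)\to 0$ for every $\xi\notin\Z$. This is a two-line computation once \cref{prop_slow_vdC_W-averages} is in hand.

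Your route through $g=f\circ W^{-1}$ has two problems. First, composition does not preserve Hardy-field membership in general, so writing ``$g\in\Hardy$'' is unjustified; at best you get that $g'=(f'/W')\circ W^{-1}$ is eventually monotone, which is weaker. Second, and more seriously, your statement of Boshernitzan's theorem is wrong: ``$g\succ\log s$'' does \emph{not} imply $(g(n)\bmod 1)$ is uniformly distributed. Boshernitzan's criterion requires $|g-p|\to\infty$ for every $p\in\Q[t]$. The choice $f=W$ (permitted whenever $W\prec t$, since then $\log W\prec W\prec t$) gives $g(s)=s$, which fails this criterion outright. Your approach is salvageable --- one can bypass Boshernitzan entirely and estimate $\frac1S\int_1^S e(g(s))\,ds$ by a single integration by parts, using that $g'$ is eventually monotone and that $sg'(s)=W(t)f'(t)/W'(t)\to\infty$ (this limit exists in $\Hardy$ and equals $\lim g/\log s=\infty$ by L'H\^opital) --- but that is not the argument you wrote, and it still requires checking the sum--integral comparison carefully.
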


Although \cref{thm_bosh_W-averages} for $W(t)=t$ does not imply Boshernitzan's Equidistribution Theorem in full generality, it is good enough for the proof of the abelian case of \cref{thm_H}.

\begin{proof}[Proof of \cref{thm_bosh_W-averages}]
For the proof we use induction on $\ell$. For the base case of the induction, when $\ell=1$, we use \cref{prop_slow_vdC_W-averages}. In light of \cref{prop_slow_vdC_W-averages}, instead of  \eqref{eqn_bosh_W-averages_1} it suffices to show that for every $\epsilon>0$ there exists $\xi\in(0,\epsilon)$ such that
\begin{equation}
\label{eqn_bosh_W-averages_2}
\lim_{H\to\infty}\lim_{N\to\infty}\frac{1}{H}\sum_{h=1}^H \frac{1}{W(N)}\sum_{n=1}^N w(n)e(f(n)+\xi h)\overline{e(f(n))}~=~0.
\end{equation}
We can simplify the left hand side of \eqref{eqn_bosh_W-averages_2} to
\begin{equation}
\begin{split}
\lim_{H\to\infty}\lim_{N\to\infty}\frac{1}{H} \sum_{h=1}^H \frac{1}{W(N)} & \sum_{n=1}^N w(n)e(f(n)+\xi h)\overline{e(f(n))}
\\
=&
\lim_{H\to\infty}\lim_{N\to\infty}\frac{1}{H}\sum_{h=1}^H \frac{1}{W(N)}\sum_{n=1}^N w(n)e(\xi h)
\\
=&\lim_{H\to\infty}\frac{1}{H}\sum_{h=1}^H e(\xi h),
\end{split}
\end{equation}
which equals $0$ for all $\xi\notin\Z$.

For the proof of the inductive step, we use \cref{prop_vdC}.
In view of \cref{prop_vdC}, applied with $P_N=W(N)$ and $p_n=w(n)$, we see that \eqref{eqn_bosh_W-averages_1} holds if we can show
\begin{equation}
\label{eqn_bosh_W-averages_3}
\lim_{H\to\infty}\lim_{N\to\infty} \frac{1}{H}\sum_{h=1}^H\frac{1}{W(n)}\sum_{n=1}^N w(n)\, e(f(n+h)-f(n))~=~0.
\end{equation}
However, since $f$ satisfied $t^{\ell-1} \log(W(t))\prec f(t)\prec t^\ell$ and $\ell\geq 2$, the function $\Delta_h f(t)\coloneqq f(t+h)-f(t)$ satisfies $t^{\ell-2} \log(W(t))\prec \Delta_hf(t)\prec t^{\ell-1}$, which is a consequence of \cref{lem_useful_hardy}. Hence \eqref{eqn_bosh_W-averages_3} follows form the induction hypothesis. 
\end{proof}

\section{A variant of van der Corput's Lemma}
\label{sec_slow_vdC}

The purpose of this section is to prove the following proposition which was used in the proofs of \cref{thm_G_sub-linear} in \cref{sec_sub-linear-case} and \cref{thm_H} in \cref{sec_proof_thms_D_and_E}.

\begin{Proposition}
\label{prop_slow_vdC_W-averages}
Assume $W$ and $f_1,\ldots,f_k$ are functions from a Hardy field $\Hardy$ satisfying $1\prec W(t) \ll t$ and $\log(W(t))\prec f_1(t)\prec\ldots\prec f_k\prec t$.
Let $\Psi\colon\R^k\to \C$ be a bounded and uniformly continuous function and suppose for all $s\in\R$ the limit
\begin{equation}
\label{eqn_conclusion_of_prop_slow_vdC_W-averages-2}
A(s)~\coloneqq~\lim_{N\to\infty}
\frac{1}{W(N)}\sum_{n=1}^N w(n)\Psi(f_1(n), \ldots,f_{k-1}(n), f_k(n)+s)\overline{\Psi(f_1(n), \ldots,f_{k}(n))}
\end{equation}
exists, where $w=\Delta W$.
If for every $\epsilon>0$ there exists $\xi\in(0,\epsilon)$ such that
\begin{equation*}
\lim_{H\to\infty}\frac{1}{H}\sum_{h=1}^H A(\xi h)~=~0
\end{equation*}
then necessarily
\begin{equation}
\label{eqn_conclusion_of_prop_slow_vdC_W-averages}
\lim_{N\to\infty}\frac{1}{W(N)}\sum_{n=1}^N w(n)\Psi(f_1(n),\ldots,f_k(n))=0.
\end{equation}
\end{Proposition}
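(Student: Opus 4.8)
The plan is to adapt the classical van der Corput difference trick to the setting of $W$-averages, exploiting the sub-linear growth of the $f_i$ to convert additive shifts in the index $n$ into the shifts by $\xi h$ that appear in the hypothesis. The starting observation is the following ``slow change of variables'' heuristic: since $f_k$ is sub-linear, a shift $n\mapsto n+m$ produces $f_k(n+m)-f_k(n)\approx m f_k'(n)$, and by \cref{lem_useful_hardy} we have $f_k'(t)\prec 1/t \cdot t = o(1)$ — more precisely $f_k'$ tends to $0$. So a bounded number of shifts barely moves $f_k$; to get a genuine shift by a fixed amount $s$ one must shift $n$ by roughly $s/f_k'(n)$, a quantity that grows. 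The trick is to run van der Corput not over a fixed window but over windows whose length grows like $1/f_k'(n)$, so that after averaging, the shift applied to $f_k$ is essentially a constant $\xi$ while the shifts applied to $f_1,\ldots,f_{k-1}$ (which grow even more slowly) remain negligible.

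Concretely, first I would fix $\epsilon>0$ and choose $\xi\in(0,\epsilon)$ as in the hypothesis, so that $\frac1H\sum_{h\le H}A(\xi h)\to 0$. Let $S_N \coloneqq \frac{1}{W(N)}\sum_{n=1}^N w(n)\Psi(f_1(n),\ldots,f_k(n))$, and let $\Theta\coloneqq\limsup_{N}|S_N|$; the goal is $\Theta=0$. For a slowly growing integer parameter $H=H(n)$ (to be taken of order $\xi/f_k'(n)$, rounded) write, by a telescoping/Abel-summation argument using that $w(n)\to 0$ and $W(n)\to\infty$,
\begin{equation*}
S_N \;=\; \frac{1}{W(N)}\sum_{n=1}^N w(n)\,\frac{1}{H(n)}\sum_{h=1}^{H(n)}\Psi\big(f_1(n+h),\ldots,f_k(n+h)\big) \;+\; o(1),
\end{equation*}
the point being that the inner average over $h$ changes $S_N$ only by an error controlled by $\sup_{h\le H(n)}\|\,\cdot\,$-shift of the sampling points, which is $o(1)$ because each $f_i(n+h)-f_i(n)=O(H(n) f_i'(n))=o(1)$ uniformly for $h\le H(n)$ (here one uses $\Psi$ uniformly continuous and the Hardy-field estimates $H(n)f_i'(n)\ll f_k'(n)^{-1}f_i'(n)\to 0$ for $i<k$, while $H(n)f_k'(n)\approx\xi$ is bounded, hence the $k$-th coordinate moves by a bounded amount which one absorbs into the definition of the shifted function, comparing with $\Psi(\ldots,f_k(n)+\xi)$ up to $o(1)$). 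Then apply the Cauchy–Schwarz / van der Corput inequality in the form $|\frac1M\sum a_n|^2 \le \frac1M\sum|a_n|^2$ followed by expanding the square of the inner $h$-average, which produces a double sum over $h,h'$ of terms $\Psi(\ldots,f_k(n)+\xi h)\overline{\Psi(\ldots,f_k(n)+\xi h')}$; after the change of variable $h-h'$ and using that $f_i(n)$ moves negligibly, these are exactly (up to $o(1)$) the quantities $A(\xi(h-h'))$. Averaging in $h-h'$ and invoking the hypothesis $\frac1H\sum_{j\le H}A(\xi j)\to 0$ gives $\Theta^2\le o(1)$, hence $\Theta=0$ and \eqref{eqn_conclusion_of_prop_slow_vdC_W-averages} follows.

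The main obstacle I expect is making the ``growing window'' $H(n)\approx\xi/f_k'(n)$ genuinely work inside a $W$-weighted average: one must verify that replacing $\frac{1}{W(N)}\sum_n w(n)(\cdots)$ by its windowed/averaged version introduces only $o(1)$ error, and this requires care because $H(n)\to\infty$ so the windows overlap heavily and $w$ is not constant. The clean way is to partition $\{1,\ldots,N\}$ into dyadic-type blocks on which $f_k'$, and hence $H$, and also $w$, are essentially constant (possible since these are Hardy-field functions, monotone and slowly varying), carry out the fixed-window van der Corput estimate block by block with the block-dependent window length, and then sum the block estimates against the $W$-weights using $\sum_{\text{block}} (\text{block weight}) = W(N)(1+o(1))$. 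The technical heart is thus the Hardy-field bookkeeping — controlling $H(n)f_i'(n)$, ensuring $H(n)=o(n)$ so the windows stay inside a slightly enlarged range, and checking that the number of blocks grows slowly enough that the accumulated error terms are still $o(1)$ after dividing by $W(N)$. Everything else is the standard van der Corput expansion, and the condition $\log(W(t))\prec f_1(t)$ is exactly what guarantees (via $A(s)$ being well-defined and the hypothesis) that the leading exponential-sum contributions do not degenerate.
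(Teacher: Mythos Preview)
Your intuition---that one should reparametrize so that a unit shift in the new index corresponds to a shift of size $\xi$ in $f_k$---is exactly right, and your final paragraph (partition into blocks on which $f_k'$ is essentially constant) is in fact the approach the paper takes. But the argument you give \emph{before} that paragraph does not work as written. Two concrete problems: First, your displayed identity $S_N=\frac{1}{W(N)}\sum_n w(n)\frac{1}{H(n)}\sum_{h\le H(n)}\Psi(f(n+h))+o(1)$ cannot be justified by ``the sampling points move by $o(1)$'', because the $k$-th coordinate moves by $h f_k'(n)\in[0,\xi]$, which is \emph{not} $o(1)$; you cannot ``absorb'' this into a single fixed shift. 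Second, after Cauchy--Schwarz and expanding the square, the correlation shifts you obtain are $j\,f_k'(n)$, not $\xi j$: these depend on $n$, so averaging in $n$ does \emph{not} produce $A(\xi j)$ for any fixed $j$, and the hypothesis on $\frac{1}{H}\sum_h A(\xi h)$ cannot be invoked directly.

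The paper fixes both issues in one stroke by making the block decomposition the \emph{starting point} rather than a patch: set $K_j\coloneqq\{n: f_k(n)\in(\xi(j-1),\xi j]\}$ and $p_j\coloneqq\sum_{n\in K_j}w(n)$. On $K_j$ one has $f_k(n)=\xi j+O(\xi)$ and $f_i(n)=g_i(j)+o(1)$ for $i<k$ where $g_i\coloneqq f_i\circ (\xi^{-1}f_k)^{-1}$, so by uniform continuity the whole $W$-average becomes $\frac{1}{P_J}\sum_{j\le J} p_j\,\Psi(g_1(j),\ldots,g_{k-1}(j),\xi j)+o_{\xi\to 0}(1)$. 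Now one applies the \emph{standard} weighted van der Corput lemma (\cref{prop_vdC}) in the variable $j$: the shift $j\mapsto j+h$ moves the last coordinate by exactly $\xi h$, the others by $o(1)$, and the resulting correlation is precisely $A(\xi h)$ (one checks via the same blocking that \eqref{eqn_conclusion_of_prop_slow_vdC_W-averages-2} also rewrites this way). The required properties of the weights $p_j$---that $P_J\to\infty$ and $p_j/P_j\to 0$---follow from $\log W\prec f_k$ via \cref{lem_subexponential_growth} and \cref{lem_properties_of_weights_coming_from_approximate inverse}. This is the same idea as your block-by-block plan, but cleaner: the reparametrization turns the growing-window argument into a single application of ordinary van der Corput with genuinely constant shifts.
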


Note that \cref{prop_slow_vdC} follows from \cref{prop_slow_vdC_W-averages} by choosing $W(t)=t$.

The next lemma will be useful for the proof of \cref{prop_slow_vdC_W-averages}.
We say a function $f\colon [1,\infty) \to\R$ has \define{sub-exponential growth} if $|f(t)|\prec c^t$ for all $c>1$.

\begin{Lemma}
\label{lem_subexponential_growth}
Let $f\in\Hardy$ with $\log(t)\prec f(t)$.
Then $f^{-1}$ has sub-exponential growth.
\end{Lemma}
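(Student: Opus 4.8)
The plan is to translate the statement about $f^{-1}$ into a statement about $f$ via the substitution rule for inverse functions, and then use the growth hypothesis $\log(t) \prec f(t) \prec t$ together with basic Hardy-field facts. First I would note that since $f$ belongs to a Hardy field and satisfies $\log(t) \prec f(t) \prec t$, the function $f$ is eventually continuous, strictly increasing (its derivative is eventually of one sign, and it cannot be eventually negative or eventually bounded since $f(t) \succ \log(t) \to \infty$), and tends to $\infty$; hence $f^{-1}$ is well-defined on a half-line, strictly increasing, and tends to $\infty$. The goal is to show $f^{-1}(s) \prec c^s$ for every $c > 1$, equivalently $f^{-1}(s)/c^s \to 0$ as $s \to \infty$.

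The key step is the change of variables $s = f(t)$: as $t \to \infty$ we have $s \to \infty$, and $f^{-1}(s) = t$, so
\[
\frac{f^{-1}(s)}{c^s} = \frac{t}{c^{f(t)}}.
\]
Thus it suffices to show $t / c^{f(t)} \to 0$, i.e.\ $c^{f(t)} \succ t$, for every $c > 1$. Writing $c = e^{\gamma}$ with $\gamma > 0$, this is the assertion $e^{\gamma f(t)} \succ t$, equivalently $\gamma f(t) - \log t \to \infty$, equivalently $\gamma f(t) \succ \log t$. But this is immediate from the hypothesis $f(t) \succ \log(t)$: since $f(t)/\log(t) \to \infty$, also $\gamma f(t)/\log(t) \to \infty$ for any fixed $\gamma > 0$, so $\gamma f(t) - \log(t) = \log(t)\big(\gamma f(t)/\log(t) - 1\big) \to \infty$. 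Hence $c^{f(t)}/t \to \infty$, which gives $f^{-1}(s)/c^s \to 0$ as desired.

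I do not anticipate a serious obstacle here; the only points requiring a little care are the preliminary remarks justifying that $f^{-1}$ exists as a genuine increasing function on a half-line tending to infinity — which follow from the standard Hardy-field properties recalled in \cref{sec_prelims_hardy} (eventual monotonicity, existence of limits) — and making sure the change of variables $s = f(t)$ is legitimate, which it is once $f$ is eventually a strictly increasing bijection onto a half-line. Note that the upper bound $f(t) \prec t$ in the hypothesis is not actually needed for this lemma (it only guarantees $f^{-1}$ grows at least linearly, so "sub-exponential" is a meaningful conclusion); the sub-exponential growth of $f^{-1}$ comes entirely from $f(t) \succ \log(t)$.
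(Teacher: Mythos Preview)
Your proof is correct and takes a somewhat different route from the paper's. The paper argues via the ratio characterization of sub-exponential growth: it shows $\lim_{t\to\infty} f^{-1}(t+1)/f^{-1}(t) = 1$ by letting $c_0$ be this limit (which exists because $f^{-1}$ lies in a Hardy field), noting $c_0 \geq 1$ by monotonicity, and then arguing that any $c < c_0$ must satisfy $c \leq 1$ --- since $f^{-1}(n) \geq Mc^n$ eventually forces $f(Mc^n) \leq n$, which combined with $\log(t) \prec f(t)$ yields $c \leq 1$. Your approach instead verifies the definition of sub-exponential growth directly via the substitution $s = f(t)$, reducing the claim $f^{-1}(s) \prec c^s$ to $c^{f(t)} \succ t$, which follows immediately from $f(t) \succ \log(t)$. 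Your argument is a bit more direct and has the advantage of not needing the assertion that $f^{-1}$ belongs to a Hardy field; on the other hand the paper's ratio formulation is exactly the form used in the subsequent \cref{lem_properties_of_weights_coming_from_approximate inverse}. Your observation that the hypothesis $f(t) \prec t$ is unused is also correct.
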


\begin{proof}
Note that $f^{-1}(t)$ has sub-exponential growth if and only if
$$
\lim_{t\to\infty}\frac{f^{-1}(t+1)}{f^{-1}(t)}=1.
$$ 
Let us therefore consider the number $c_0\coloneqq \lim_{n\to\infty}{f^{-1}(n+1)}/{f^{-1}(n)}$. Note that this limit exists because if $f$ belongs to some Hardy Field, then so does $f^{-1}$.

Since $f^{-1}$ is eventually increasing, we have $c_0\geq 1$. It remains to show that $c_0\leq 1$, which we will do by showing that $c\leq 1$ for all $c<c_0$.
Thus, fix any $c$ with $0<c<c_0$. There exists $M>0$ such that for all but finitely many $n$ we have
$$
f^{-1}(n)\geq M c^n
$$
and hence, using $f(f^{-1}(n))=n$, we obtain
$$
f(M c^n) \leq n.
$$
Since $\log(t)\prec f(t)$, we conclude that $\log(M c^n) \prec  n$ and hence $c\leq 1$.
\end{proof}

\begin{Remark}
\label{rem_subexponential_growth}
It follows from \cref{lem_subexponential_growth} that if $W,f\in\Hardy$ with $1\prec W(t)$ and $\log(W(t))\prec f(t)$ then $W\circ f^{-1}$ has sub-exponential growth. \end{Remark}

\begin{Lemma}
\label{lem_properties_of_weights_coming_from_approximate inverse}
Let $W,f\in\Hardy$ with $1\prec W(t)\ll 1$ and $\log(W(t))\prec f(t)\prec t$, and define $w\coloneqq \Delta W$.
For every $n\in \N$ and $\xi\in(0,1]$ define 
$$
K_n\coloneqq \big\{j\in\N: f(j)\in \left(\xi n,\xi(n+1)\right]\big\}.
$$
Define $g(t)\coloneqq \xi^{-1}f(t)$, $p_n\coloneqq \sum_{i\in K_n} w(i)$, and $P_N\coloneqq \sum_{n=1}^N p_n$. Then the following hold:
\begin{enumerate}
[label=(\roman{enumi}),ref=(\roman{enumi}),leftmargin=*]
\item\label{itm_Kset_1}
$\lim_{n\to\infty}\frac{W(g^{-1}(n+1))-W(g^{-1}(n))}{p_{n}}=1$.
\item\label{itm_Kset_2}
$\lim_{n\to\infty} \frac{W(g^{-1}(n))}{P_n}=1$;
\item\label{itm_Kset_3}
$\lim_{n\to\infty} P_n=\infty$;
\item\label{itm_Kset_4}
$\lim_{n\to\infty}\frac{p_{n}}{P_n}=0$.
\end{enumerate}
\end{Lemma}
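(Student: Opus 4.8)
The four claims all follow from carefully translating statements about the weight $W$ and the function $f$ along the "level sets" $K_n$ into asymptotic statements, using that $g = \xi^{-1}f$ is still a Hardy field element with $\log(W)\prec g \prec t$, and that $g^{-1}$ exists and is eventually increasing. The guiding principle is that $p_n = \sum_{i\in K_n} w(i)$ telescopes: since $K_n = \{j : g(j)\in(n,n+1]\}$ is a block of consecutive integers (because $g$ is eventually increasing), we have $\sum_{i\in K_n} w(i) = W(\max K_n + 1) - W(\min K_n)$, and the endpoints $\min K_n$, $\max K_n$ are both within $O(1)$ of $g^{-1}(n)$. So morally $p_n \approx W(g^{-1}(n+1)) - W(g^{-1}(n))$, which is exactly \ref{itm_Kset_1}, and $P_N = \sum_{n\le N} p_n$ telescopes to something $\approx W(g^{-1}(N+1)) \approx W(g^{-1}(N))$, which is \ref{itm_Kset_2}. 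Once those are in hand, \ref{itm_Kset_3} follows since $W(g^{-1}(n))\to\infty$ (as $W\to\infty$ and $g^{-1}\to\infty$), and \ref{itm_Kset_4} follows from \ref{itm_Kset_1}, \ref{itm_Kset_2} together with the sub-exponential growth of $W\circ g^{-1}$ from \cref{rem_subexponential_growth}, which forces $\bigl(W(g^{-1}(n+1)) - W(g^{-1}(n))\bigr)/W(g^{-1}(n))\to 0$.

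\textbf{Step 1 (structure of $K_n$ and the endpoints).} First I would observe that, since $f\prec t$ and $f\succ\log(W(t))\succ\log\log(t)$ say, $f$ (hence $g$) is eventually strictly increasing and tends to $\infty$, so for all large $n$ the set $K_n$ is a nonempty finite interval of integers, namely $K_n = \{j\in\N : \lceil g^{-1}(n)\rceil \le j \le \lfloor g^{-1}(n+1)\rfloor\text{ or so}\}$ — more precisely $K_n = [g^{-1}(n), g^{-1}(n+1))\cap\N$ up to boundary rounding. The key quantitative input is that consecutive level sets are "not too short": since $g\prec t$, by \cref{lem_useful_hardy} (or directly, since $g(t+1)-g(t)\to 0$ as $g'\to 0$) we have $g^{-1}(n+1) - g^{-1}(n)\to\infty$, so $|K_n|\to\infty$ and in particular $\min K_n = g^{-1}(n) + O(1)$, $\max K_n = g^{-1}(n+1) + O(1)$ where the $O(1)$ is genuinely bounded (in fact $o$ of $|K_n|$).

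\textbf{Step 2 (proving \ref{itm_Kset_1}).} Write $p_n = W(\max K_n + 1) - W(\min K_n)$. Using Step 1, $\max K_n + 1 = g^{-1}(n+1) + O(1)$ and $\min K_n = g^{-1}(n) + O(1)$. So the ratio in \ref{itm_Kset_1} is
\[
\frac{W(g^{-1}(n+1)) - W(g^{-1}(n))}{W(g^{-1}(n+1)+O(1)) - W(g^{-1}(n)+O(1))}.
\]
To show this tends to $1$ I would use that $W$ is slowly varying in the relevant sense: because $1\prec W\ll t$, the derivative satisfies $w(t) = W'(t) \to 0$ (indeed $w\prec W/t\cdot$ something by \cref{lem_useful_hardy}), so $W(t + O(1)) - W(t) = O(1)\cdot w(t') = o(1)$ uniformly; meanwhile the numerator $W(g^{-1}(n+1)) - W(g^{-1}(n))\to\infty$ would be too strong, so instead I compare more carefully: both numerator and denominator equal $W(g^{-1}(n+1)) - W(g^{-1}(n))$ up to an additive error $o(w(g^{-1}(n)))$, and since $g^{-1}(n+1)-g^{-1}(n)\to\infty$ while $w$ is eventually monotone, $W(g^{-1}(n+1))-W(g^{-1}(n)) = \int_{g^{-1}(n)}^{g^{-1}(n+1)} w \gg w(g^{-1}(n))$ (using monotonicity of $w$, up to a constant), so the additive $o(w(g^{-1}(n)))$ error is negligible relative to the main term. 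This gives \ref{itm_Kset_1}.

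\textbf{Step 3 (\ref{itm_Kset_2}, \ref{itm_Kset_3}, \ref{itm_Kset_4}).} For \ref{itm_Kset_2}: telescoping, $P_N = \sum_{n=1}^N \bigl(W(\max K_n+1) - W(\min K_n)\bigr)$, and since $\max K_n + 1 = \min K_{n+1} + O(1)$ the sum collapses to $W(\max K_N + 1) - W(\min K_1) + (\text{error})$, where the error is $\sum_{n}\bigl(W(\min K_{n+1}) - W(\max K_n + 1)\bigr)$, a sum of $o(1)$ terms over a possibly growing range — here I would need that the range of $n$ contributing is controlled, or argue directly that $P_N/W(g^{-1}(N))\to 1$ by Cesàro-type summation of \ref{itm_Kset_1}: since $p_n/(W(g^{-1}(n+1))-W(g^{-1}(n)))\to 1$ and the denominators are positive with divergent sum $=W(g^{-1}(N+1)) - W(g^{-1}(1))\to\infty$, the Stolz–Cesàro theorem gives $P_N/(W(g^{-1}(N+1)) - W(g^{-1}(1)))\to 1$, hence \ref{itm_Kset_2} after noting $W(g^{-1}(N+1))/W(g^{-1}(N))\to 1$ by sub-exponential growth (\cref{rem_subexponential_growth}). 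Then \ref{itm_Kset_3} is immediate since $W(g^{-1}(n))\to\infty$. Finally \ref{itm_Kset_4}: $p_n/P_n \sim \bigl(W(g^{-1}(n+1)) - W(g^{-1}(n))\bigr)/W(g^{-1}(n))$ by \ref{itm_Kset_1}–\ref{itm_Kset_2}, and setting $V := W\circ g^{-1}$, which has sub-exponential growth by \cref{rem_subexponential_growth}, we get $V(n+1)/V(n)\to 1$, i.e. $(V(n+1) - V(n))/V(n)\to 0$, which is exactly what is needed.

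\textbf{Main obstacle.} The delicate point is Step 2 / the first half of Step 3: controlling the interaction between the additive $O(1)$ boundary errors in $\min K_n$, $\max K_n$ and the size of the telescoping increments $W(g^{-1}(n+1)) - W(g^{-1}(n))$. One must be sure these increments dominate the $o(1)$ boundary errors, which requires a lower bound on $w$ along $g^{-1}(n)$ of the form $W(g^{-1}(n+1)) - W(g^{-1}(n))\gg w(g^{-1}(n))\cdot\bigl(g^{-1}(n+1)-g^{-1}(n)\bigr)\gg w(g^{-1}(n))$, using monotonicity of $w$ and the fact that $g^{-1}(n+1) - g^{-1}(n)\to\infty$. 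Packaging all of this cleanly — rather than via a direct but messy telescoping estimate — by invoking Stolz–Cesàro for \ref{itm_Kset_2} once \ref{itm_Kset_1} is established is, I expect, the cleanest route, and it isolates the single genuinely analytic estimate (\ref{itm_Kset_1}) from the otherwise formal consequences \ref{itm_Kset_2}–\ref{itm_Kset_4}.
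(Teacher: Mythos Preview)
Your approach is essentially the same as the paper's: both identify $K_n$ as a block of consecutive integers with endpoints within $O(1)$ of $g^{-1}(n)$ and $g^{-1}(n+1)$, telescope $p_n = W(\max K_n+1)-W(\min K_n)$, compare to $W(g^{-1}(n+1))-W(g^{-1}(n))$ for part~(i), and then derive (ii)--(iv) from (i) together with the sub-exponential growth of $W\circ g^{-1}$ from \cref{rem_subexponential_growth}. Your explicit appeal to Stolz--Ces\`aro for (ii) makes precise what the paper leaves as ``follows straight away from (i)''; one small slip is that your additive boundary error in Step~2 should read $O(w(g^{-1}(n)))$ rather than $o(w(g^{-1}(n)))$, but this is harmless since you correctly argue that the main term dominates it by the factor $g^{-1}(n+1)-g^{-1}(n)\to\infty$.
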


\begin{proof}
Since $1\prec f(t)\prec t$ and $f$ is eventually monotone increasing, for sufficiently large $n$ the set $K_{n}$ is an interval of the form $[a_n,b_n]\subset \N$, where $\lim_{n\to\infty} b_n-a_n=\infty$. For all such $n$ we thus also have $p_n=W(b_n+1)-W(a_n)=W(b_n)-W(a_n)+\oh_{n\to\infty}(1)$.
Let $s_n\coloneqq \min\{t\in\R: f(t)\geq \xi n\}$. Then for all but finitely many $n$ we have that $s_n=g^{-1}(n)$.
Note that the difference between $a_n$ and $s_n$ can be bounded from above by $\Delta f(n)$, and since $\Delta f(n)\to 0$ as $n\to\infty$, we have that $\lim_{n\to\infty} a_n-s_n =0$. Similarly, we can show that $\lim_{n\to\infty} b_n-s_{n+1} =0$. Therefore $\lim_{n\to\infty} W(a_n)-W(g^{-1}(n)) =\lim_{n\to\infty} W(b_n)-W(g^{-1}(n+1))=0$ and hence
$$
\lim_{n\to\infty}\frac{W(g^{-1}(n+1))-W(g^{-1}(n))}{p_{n}}=
\lim_{n\to\infty}\frac{W(g^{-1}(n+1))-W(g^{-1}(n))}{W(b_{n})-W(a_n)}=1.
$$

Part \ref{itm_Kset_2} follows straight away from \ref{itm_Kset_1} and part \ref{itm_Kset_3} follows immediately from part \ref{itm_Kset_2} and the fact that $W(g^{-1}(t))\to\infty$.

For the proof of part \ref{itm_Kset_4} note that 
$$
\lim_{n\to\infty}p_n/P_n = \lim_{n\to\infty} \frac{W(g^{-1}(n+1))-W(g^{-1}(n))}{W(g^{-1}(n))}
$$
because of parts \ref{itm_Kset_1} and \ref{itm_Kset_2}.
However, $\lim_{n\to\infty} (g^{-1}(n+1)-g^{-1}(n))/g^{-1}(n)=0$ because $W(g^{-1}(t))$ has sub-exponential growth due to \cref{rem_subexponential_growth} and the fact that $\log(W(t))\prec g(t)$.
\end{proof}

\begin{proof}[Proof of \cref{prop_slow_vdC_W-averages}]
Fix $\xi\in(0,1]$ and define $K_{j}\coloneqq \left\{n\in\N: f_k(n)\in \left(\xi(j-1),\xi j\right]\right\}$ and $g(t)\coloneqq \xi^{-1}f_k(t)$.
Since
\begin{equation*}
\begin{split}
\frac{1}{W(N)}\sum_{n=1}^N & w(n)\Psi(f_1(n),\ldots,f_k(n))
\\
&=\frac{1}{W(N)}\sum_{j=1}^{\lfloor g(N)\rfloor} \sum_{n\in K_{j}} w(n)\Psi(f_1(n),\ldots,f_k(n))\,+\,\oh_{N\to\infty}(1),
\end{split}
\end{equation*}
instead of \eqref{eqn_conclusion_of_prop_slow_vdC_W-averages} is suffices to show that
\begin{equation}
\label{eqn_conclusion_of_prop_slow_vdC_2}
\lim_{N\to\infty}\frac{1}{W(N)}\sum_{j=1}^{\lfloor g(N)\rfloor} \sum_{n\in K_{j}} w(n)\Psi(f_1(n),\ldots,f_k(n))~=~0.
\end{equation}
Set $p_j\coloneqq \sum_{n\in K_j} w(n)$ and $P_J\coloneqq \sum_{j=1}^J p_j$.
According to \cref{lem_properties_of_weights_coming_from_approximate inverse}, part \ref{itm_Kset_2}, we have
$$
\lim_{N\to\infty}
\frac{W(N)}{P_{\lfloor g(N)\rfloor}}~=~ 1.
$$
Therefore, \eqref{eqn_conclusion_of_prop_slow_vdC_2} is equivalent to
\begin{equation*}
\label{eqn_conclusion_of_prop_slow_vdC_3}
\lim_{N\to\infty}\frac{1}{P_{\lfloor g(N)\rfloor}}\sum_{j=1}^{\lfloor g(N)\rfloor} \sum_{n\in K_{j}}w(n) \Psi(f_1(n),\ldots,f_k(n))~=~0.
\end{equation*}
which we can write as
\begin{equation}
\label{eqn_conclusion_of_prop_slow_vdC_4}
\lim_{J\to\infty}\frac{1}{P_{J}}\sum_{j=1}^{J} \sum_{n\in K_{j}} w(n)\Psi(f_1(n),\ldots,f_k(n))~=~0.
\end{equation}
Define $g_i(n)\coloneqq f_i(g^{-1}(n))$ for $i=1,\ldots,k$ and note that $g_k(n)=\xi n$.
Then $\sup_{n\in K_j}|g_k(j)-f_k(n)|=\Oh(\xi)$ and, for all $i< k$, we have $\sup_{n\in K_j}|g_i(j)-f_i(n)|=\oh_{n\to\infty}(1)$. Therefore, using the uniform continuity of $\Psi$, we have that
$$
\sum_{n\in K_{j}} w(n) \Psi(f_1(n),\ldots,f_k(n))= p_j \Psi\big(g_1(j),\ldots,g_k(j)\big)+ \oh_{j\to\infty,\xi\to 0}(p_j).
$$
It follows that
$$
\frac{1}{P_J}\sum_{j=1}^{J}\, \sum_{n\in K_{j}} w(n)\Psi(f_1(n),\ldots,f_k(n))
=
\frac{1}{P_J}\sum_{j=1}^{J} p_j \Psi(g_1(j),\ldots,g_k(j)) + \oh_{J\to\infty,\xi\to 0}(1).
$$
In conclusion, \eqref{eqn_conclusion_of_prop_slow_vdC_4}, and therefore also \eqref{eqn_conclusion_of_prop_slow_vdC_W-averages}, are equivalent to 
\begin{equation}
\label{eqn_conclusion_of_prop_slow_vdC_51}
\lim_{J\to\infty}\frac{1}{P_J}\sum_{j=1}^{J} p_j\Psi(g_1(j),\ldots,g_k(j))~=~\oh_{\xi\to 0}(1).
\end{equation}
Using essentially the same argument one can also show that $A(s)$, which was defined in \eqref{eqn_conclusion_of_prop_slow_vdC_W-averages-2}, is given by
\begin{equation}
\label{eqn_conclusion_of_prop_slow_vdC_52}
A(s)\,=\,\lim_{J\to\infty}\frac{1}{P_J}\sum_{j=1}^{J} p_j\Psi(g_1(j), \ldots,g_{k-1}(j), g_k(j)+s)\overline{\Psi(g_1(j), \ldots,g_{k}(j))}.
\end{equation}
According to \cref{prop_vdC}, instead of \eqref{eqn_conclusion_of_prop_slow_vdC_51}, it is enough to prove that
\begin{equation}
\label{eqn_conclusion_of_prop_slow_vdC_61-0}
\lim_{H\to\infty} \frac{1}{H}\sum_{h=1}^H\left(\lim_{J\to\infty}\frac{1}{P_J}\sum_{j=1}^J p_j \Psi(g_1(j+h),\ldots,g_k(j+h))\overline{\Psi(g_1(j),\ldots,g_k(j))}\right)= \oh_{\xi\to 0}(1).
\end{equation}
For $i< k$ we have $g_i(t)\prec t$ and hence $g_i(j+h)=g_i(j)+\oh_{j\to\infty}(1)$.
For $i=k$ we have $g(j+h)=g(j)+\xi$.
Therefore the left hand side of \eqref{eqn_conclusion_of_prop_slow_vdC_61-0} can be replaced with
\begin{equation*}
\lim_{H\to\infty} \frac{1}{H}\sum_{h=1}^H\left(\lim_{J\to\infty}\frac{1}{P_J}\sum_{j=1}^J p_j \Psi(g_1(j),\ldots,g_{k-1}(j), g_k(j)+\xi h)\overline{\Psi(g_1(j),\ldots,g_k(j))}\right),
\end{equation*}
which in combination with \eqref{eqn_conclusion_of_prop_slow_vdC_52} shows that \eqref{eqn_conclusion_of_prop_slow_vdC_61-0} is equivalent to
$$
\lim_{H\to\infty} \frac{1}{H}\sum_{h=1}^H A(\xi h)= \oh_{\xi\to 0}(1).
$$
This finishes the proof.
\end{proof}

\appendix
\section{Appendix}

\subsection{Some basic results regarding functions form a Hardy field}

\begin{Lemma}
\label{lem_range_of_Hardy_functions}
Let $\Hardy$ be a Hardy field and let $f\in\Hardy$ be of polynomial growth. If $f(\N)\subset\Z$ then $f\in\R[t]$.
\end{Lemma}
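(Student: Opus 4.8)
The plan is to deduce the statement from the fact that a sufficiently high derivative of $f$ must be eventually constant. First I would fix $d\in\N$ with $|f(t)|\ll t^d$, which exists since $f$ has polynomial growth, and set $h\coloneqq f^{(d+1)}$. Since Hardy fields are closed under differentiation, $h\in\Hardy$, so by the basic properties recalled in \cref{sec_prelims_hardy} the limit $L\coloneqq\lim_{t\to\infty}h(t)$ exists in $\R\cup\{-\infty,\infty\}$ and $h$ is eventually increasing, eventually decreasing, or eventually constant. The first substantive step is to rule out $L=\pm\infty$: if $h(t)\to+\infty$ then $h(t)\ge 1$ for all large $t$, and integrating this inequality $d+1$ times (via the fundamental theorem of calculus at each stage) forces $t^{d+1}\ll f(t)$, which contradicts $|f(t)|\ll t^d$; the case $h(t)\to-\infty$ is handled by running the same argument for $-f$, which also has polynomial growth and satisfies $(-f)(\N)\subset\Z$. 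Hence $L\in\R$.

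Next I would bring in the hypothesis $f(\N)\subset\Z$ through finite differences. For every sufficiently large $n\in\N$ the $(d+1)$-st forward difference
$$
\Delta^{d+1}f(n)=\sum_{j=0}^{d+1}(-1)^{d+1-j}\binom{d+1}{j}f(n+j)
$$
is an integer, being an integer combination of the integers $f(n),\ldots,f(n+d+1)$. On the other hand, since elements of a Hardy field are smooth near infinity, the iterated mean value theorem yields a point $\xi_n\in(n,n+d+1)$ with $\Delta^{d+1}f(n)=h(\xi_n)$. As $\xi_n\to\infty$ we obtain $\Delta^{d+1}f(n)=h(\xi_n)\to L$; an integer-valued sequence converging to $L$ is eventually equal to $L$, so there is $N_0$ such that $h(\xi_n)=L$ for all $n\ge N_0$.

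To finish, recall $h$ is eventually strictly increasing, eventually strictly decreasing, or eventually constant. Since $h$ attains the value $L$ on the unbounded set $\{\xi_n:n\ge N_0\}$, the two monotone alternatives are impossible, so $h=f^{(d+1)}$ is eventually equal to $L$. Integrating $d+1$ times then shows that $f$ coincides with a polynomial of degree at most $d+1$ on a half-line, i.e.\ $f\in\R[t]$.

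The place where the Hardy-field structure does real work, beyond closure under differentiation, is the dichotomy ``eventually monotone or eventually constant'': this is exactly what upgrades the countably many coincidences $h(\xi_n)=L$ to the pointwise identity $h\equiv L$ near infinity, and it is the step I expect to be the conceptual crux. Everything else is elementary calculus together with the integrality of forward differences of integer-valued sequences; in particular one never needs to know that $\Hardy$ contains the polynomials $t^d$.
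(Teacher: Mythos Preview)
Your proof is correct and follows the same core idea as the paper: a high-order finite difference of $f$ is integer-valued and convergent, hence eventually constant, which forces $f$ to be a polynomial. The paper takes $\Delta^d f$ (with $d=\deg f$), asserts it has degree $0$, and concludes it is eventually a constant integer, hence $f$ is a polynomial. Your version is more explicit at the two technical pressure points. First, by passing through the mean value identity $\Delta^{d+1}f(n)=f^{(d+1)}(\xi_n)$ you work with $f^{(d+1)}\in\Hardy$ rather than with $\Delta^{d+1}f$ itself, which need not lie in $\Hardy$ since Hardy fields are not assumed closed under shifts; this is precisely where the paper's phrase ``the only function from a Hardy field that has degree $0$ and for which all its values belong to $\Z$'' is being applied somewhat loosely. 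Second, your conclusion $f^{(d+1)}\equiv L$ as a germ immediately gives $f\in\R[t]$ as a function of a real variable, whereas the paper's conclusion $\Delta^d f(n)=c$ for large $n$ a priori only gives that $f|_{\N}$ agrees with a polynomial, and one still needs the Hardy-field monotonicity to upgrade this to $f$ itself. The two arguments coincide once these details are spelled out.
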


\begin{proof}
For $k\in\N$, let $\Delta^k f$ denote the $k$-fold finite difference of $f$, that is, $\Delta^0 f(n)=f(n)$, $\Delta^1 f(n)=\Delta f(n)=f(n+1)-f(n)$, $\Delta^2 f(n)= \Delta f(n+1) - \Delta f(n)= f(n+2)-2f(n+1)+f(n)$, and so on. If $d$ is the degree of $f$ then the function $\Delta^d f$ has degree $0$. Moreover, since $f(\N)\subset \Z$, we have $\Delta^d f(n)\in\Z$ for all $n\in\N$. However, the only function from a Hardy field that has degree $0$ and for which all its values belong to $\Z$ is a constant function. That means that $\Delta^d f(n)=c$ for some $c\in \Z$, which  implies that $f$ is a polynomial of degree $d$ satisfying $f(\N)\subset \Z$.
\end{proof}

\begin{Lemma}
\label{lem_dom_of_group_lements}
Let $G$ be a simply connected nilpotent Lie group, $a\in G$, $\Hardy$ a Hardy field, and $f\in\Hardy$ of polynomial growth. If $f(\N)\subset \dom(a)$ then one of the following two cases holds:
\begin{enumerate}
[label=(\roman{enumi}),ref=(\roman{enumi}),leftmargin=*]
\item
either $a\in G^\circ$;
\item\label{itm_part_ii_dom}
or there exist $m\in \N$ and $p\in\R[t]$ with $p(\Z)\subset \Z$ such that $\frac{1}{m}\in\dom(a)$ and $f(n)=p(n)/m$ for all $n\in\N$.
\end{enumerate}
\end{Lemma}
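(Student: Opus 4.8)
The plan is to first describe $\dom(a)$ as a subgroup of $(\R,+)$ and then split according to whether $a\in G^\circ$. First I would record that $\dom(a)$ is a subgroup of $(\R,+)$ containing $\Z$: indeed $a^0=1_G$ and $a^1=a$ are defined; if $a^t$ is defined then so is $a^{-t}=(a^t)^{-1}$; and if $a^s,a^t$ are defined then $a^{s+t}:=a^sa^t$ is defined, because powers of $a$ commute and, since $G$ is simply connected, $q$-th roots are unique, so this product coincides with the element prescribed by the criterion in the footnote. If $a\in G^\circ$ we are in case~\ref{itm_part_ii_dom}'s alternative~(i) and there is nothing to prove, so from now on assume $a\notin G^\circ$; by that same footnote this means $\dom(a)\subsetneq\R$, and in fact $\dom(a)\subseteq\Q$, since a non-rational power $a^t=\exp(t\log a)$ only makes sense when $a$ lies in the image of the exponential map, i.e.\ when $a\in G^\circ$.

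The key step is to show that $\dom(a)=\tfrac1m\Z$ for some $m\in\N$. I would first observe that $a^t\notin G^\circ$ for every $t\in\dom(a)\setminus\{0\}$: writing $t=p/q$ in lowest terms, if $a^t$ lay in $G^\circ$ then $a^p=(a^t)^q\in G^\circ$, so $(a^p)^{1/p}$ is a well-defined element of $G^\circ$; but $a$ is also a $p$-th root of $a^p$, and by uniqueness of $p$-th roots in $G$ this forces $a=(a^p)^{1/p}\in G^\circ$, a contradiction. Hence the map $\dom(a)\to G/G^\circ$, $t\mapsto a^tG^\circ$, is an injective homomorphism. Since $G/G^\circ$ is finitely generated, so is its subgroup $\dom(a)$; being a finitely generated subgroup of $\Q$ that contains $\Z$, it must be cyclic, i.e.\ equal to $\tfrac1m\Z$ for some $m\in\N$.

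Finally I would deduce alternative~(ii). From $f(\N)\subseteq\dom(a)=\tfrac1m\Z$ it follows that $mf$ belongs to $\Hardy$ (as $m\in\N\subseteq\Q\subseteq\Hardy$), has polynomial growth, and satisfies $mf(\N)\subseteq\Z$; so \cref{lem_range_of_Hardy_functions} gives $p:=mf\in\R[t]$. Since $p$ is integer-valued on $\N$, the standard fact that such a polynomial is an integer combination of binomial coefficients yields $p(\Z)\subseteq\Z$. Together with $\tfrac1m\in\dom(a)$ and $f(n)=p(n)/m$ for all $n\in\N$, this is exactly case~(ii).

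The main obstacle is the reduction $\dom(a)=\tfrac1m\Z$ when $a\notin G^\circ$: everything there hinges on $G/G^\circ$ being finitely generated, which is part of the standing setup (e.g.\ it is forced once $G$ admits a cocompact lattice). Without that input $\dom(a)$ could a priori be a non-discrete subgroup of $\Q$ such as $\Z[\tfrac12]$, and then even the Hardy-field hypothesis would not suffice to place $f$ in a single $\tfrac1m\Z$. A secondary technical point to handle with care is that all the formal manipulations with the symbols $a^t$—commutativity of powers of $a$, the identity $(a^{p/q})^q=a^p$, and uniqueness of roots—are legitimate; these are precisely the consequences of the hypothesis that $G$ is simply connected.
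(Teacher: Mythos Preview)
Your argument is correct and follows the same route as the paper: once $a\notin G^\circ$, reduce to $\dom(a)=\tfrac{1}{m}\Z$ and then apply \cref{lem_range_of_Hardy_functions}. The paper's proof simply asserts the identity $\dom(a)=\tfrac{1}{m}\Z$ without justification, whereas you supply one via the embedding $t\mapsto a^tG^\circ$ into $G/G^\circ$; your caveat that this step uses finite generation of $G/G^\circ$ (which holds in the paper's setting because $G$ admits a uniform lattice) is well placed, and your passage from $p(\N)\subset\Z$ to $p(\Z)\subset\Z$ makes explicit a point the paper leaves implicit.
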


\begin{proof}
Suppose $a$ is not an element of $G^\circ$. This means there exists $m\in\N$ such that $\dom(a)=\frac{1}{m}\Z$. 
In particular, $f(\N)\subset \frac{1}{m}\Z$. By \cref{lem_range_of_Hardy_functions} the function $p(n)\coloneqq m f(n)$ is polynomial with $p(\N)\subset\Z$. This finishes the proof.
\end{proof}

Define $\S(f_1,\ldots,f_k)=\{\lambda_1 f_1+\ldots+\lambda_k f_k+p: \lambda_1,\ldots,\lambda_k\in\R,~p\in\R[t]\}$.

\begin{Lemma}
\label{lem_simple_normal_form}
Let $\Hardy$ be a Hardy field and assume $f_1,\ldots,f_k\in \Hardy$ have polynomial growth.
Then there exist $m\in\N$, $g_1,\ldots,g_m\in\S(f_1,\ldots,f_k)$, $p_1,\ldots,p_k\in\R[t]$, and $\lambda_{1,1},\ldots,\lambda_{k,m}\in\R$ with the following properties:
\begin{enumerate}
[label=(\arabic{enumi}),ref=(\arabic{enumi}),leftmargin=*]
\item
\label{itm_s1}
$g_1(t)\prec \ldots\prec g_m(t)$;
\item
\label{itm_s2}
for all $g\in\{g_1,\ldots,g_m\}$ either $g=0$ or there exists $\ell\in\N$ such that $t^{\ell-1}\prec g(t) \prec t^\ell$;
\item
\label{itm_s3}
for all $i\in\{1,\ldots,k\}$,
$$
\lim_{t\to\infty}\Bigg|f_i(t)-\sum_{j=1}^m \lambda_{i,j} g_j(t)-p_i(t)\Bigg|=0.
$$
\end{enumerate}
\end{Lemma}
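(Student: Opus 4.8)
The plan is to prove Lemma~\ref{lem_simple_normal_form} by induction on $k$, peeling off one function at a time and using the linear-ordering property of Hardy fields to organize the growth scales. First I would dispose of trivial reductions: if some $f_i$ is itself asymptotically a polynomial, we can absorb it into $p_i$ and drop it from the list, so we may assume each $f_i$ satisfies $f_i(t)-q(t)\not\to 0$ for every $q\in\R[t]$; in particular, since $f_i$ has polynomial growth, there is $\ell_i\in\N$ with $t^{\ell_i-1}\ll f_i(t)\ll t^{\ell_i}$ after subtracting off a suitable polynomial (the leading behaviour of $f_i$ may be polynomial, say $\sim c t^{\ell_i-1}$, in which case we subtract $ct^{\ell_i-1}$ and repeat; this terminates because the degree drops each time, and what remains either vanishes to a constant or lies strictly between two consecutive powers of $t$). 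This produces, for each $i$, a decomposition $f_i = \tilde f_i + p_i^{(0)}$ with $p_i^{(0)}\in\R[t]$ and $\tilde f_i$ either $0$ or satisfying $t^{\ell-1}\prec\tilde f_i(t)\prec t^\ell$ for some $\ell$.

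Next I would organize the nonzero $\tilde f_i$ into growth classes. Because $\Hardy$ is a Hardy field, for any two of the $\tilde f_i$ either one strictly dominates the other or their ratio tends to a nonzero constant; the latter is an equivalence relation, and within each class we can pick one representative $g_j$ and express every other member of that class as $\lambda\, g_j + (\text{something of strictly smaller growth})$. Here is the key point that drives the induction: if $\tilde f_i = \lambda g_j + r$ where $r\in\Hardy$ has strictly smaller growth than $g_j$, then $r$ is again a Hardy-field function of polynomial growth, and we feed it back into the procedure. Iterating, each $\tilde f_i$ gets written as $\sum_j \lambda_{i,j} g_j + (\text{error} \to 0)$, where the $g_j$ are chosen so that $g_1\prec\cdots\prec g_m$ (reorder the representatives by growth) and each $g_j$ satisfies condition~\ref{itm_s2}. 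Crucially, each $g_j$ we introduce is an $\R$-linear combination of the $\tilde f_i$ modulo polynomials, hence lies in $\S(f_1,\ldots,f_k)$; the error terms, being differences of elements of $\S(f_1,\ldots,f_k)$ that tend to a finite limit, can be folded into the polynomial parts $p_i$ together with $p_i^{(0)}$.

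The main obstacle I anticipate is bookkeeping in the iteration: after writing $\tilde f_i = \lambda_{i,j} g_j + r_i$, the remainder $r_i$ may have the same growth as some $g_{j'}$ already chosen, or it may force us to introduce a new representative at a growth scale interleaved among the existing $g_j$'s, so one must argue that the process terminates. Termination follows because each step either strictly decreases the polynomial degree (in the polynomial-subtraction phase) or strictly decreases the growth rate among a finite initial set of scales determined by the degrees $\ell_1,\ldots,\ell_k$, and there are only finitely many Hardy-field growth classes strictly below $t^{\max_i\ell_i}$ that can arise as $\R$-linear combinations of $k$ fixed functions modulo polynomials — more carefully, one runs the induction on the pair (number of functions not yet in final form, total polynomial degree) in lexicographic order. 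Once the $g_j$ are fixed and sorted, conditions~\ref{itm_s1} and \ref{itm_s2} hold by construction, \ref{itm_s3} holds by collecting the errors into $p_i$, and membership $g_j\in\S(f_1,\ldots,f_k)$ is immediate from the fact that every $g_j$ arose as an explicit $\R$-linear combination of the $f_i$ plus a polynomial.
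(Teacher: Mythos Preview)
Your overall strategy is essentially the same as the paper's: subtract off leading polynomial parts, group the remaining functions by growth rate, pick one representative per top growth class, and recurse on the smaller remainders. The paper organizes this via induction on the \emph{characteristic pair} $(d,e)$, where $d=\max_i\deg(f_i)$ and $e$ is the number of $f_i$ of degree exactly $d$, ordered lexicographically. At each step one either subtracts $\sigma_i t^d$ (with $\sigma_i=\lim f_i/t^d$) to push all functions below $t^d$, or---if some $h_i$ still has degree $d$---picks the fastest-growing $h_k$ as the new $g_m$, replaces each other $h_i$ by $h_i-\eta_i h_k$ (with $\eta_i=\lim h_i/h_k$), and recurses on the resulting $k-1$ functions. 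Either $d$ drops or $e$ drops, so termination is immediate.

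The weak point in your write-up is precisely the termination argument, as you yourself flag. Your claim that ``there are only finitely many Hardy-field growth classes\ldots that can arise as $\R$-linear combinations of $k$ fixed functions modulo polynomials'' is true but not justified, and your fallback induction on ``(number of functions not yet in final form, total polynomial degree)'' is not well-defined: when you write $\tilde f_i=\lambda g_j+r$ and feed $r$ back in, the number of functions being processed has not decreased, and ``total polynomial degree'' is ambiguous. The paper's $(d,e)$ pair fixes this cleanly. An alternative clean argument you could use: the $g_j$ you produce have pairwise distinct growth, hence are linearly independent in $\big(\mathrm{span}_\R(f_1,\ldots,f_k)+\R[t]\big)/\R[t]$, which has dimension at most $k$; so $m\leq k$ and the iteration must halt.
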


\begin{proof}
Let us associate to every finite set of functions $h_1,\ldots,h_r\in\Hardy$ of polynomial growth a pair $(d,e)\in(\N\cup\{0\})\times \N$, which we will call the \define{characteristic pair associated to $\{h_1,\ldots,h_r\}$}, in the following way: The number $d$ is the maximal degree among degrees of functions in $\{h_1,\ldots,h_r\}$, i.e.,
$$
d\, =\, \max\big\{\deg(h_i): 1\leq i\leq r\big\},
$$
and the number $e$ equals the number of functions in $\{h_1,\ldots,h_r\}$ whose degree is $d$, i.e.,
$$
e\, =\, \big|\big\{i\in \{1,\ldots,r\}: \deg(h_i)=d \big\}\big|.
$$
Using this notion of a characteristic pair, we can define a partial ordering on the set of finite subsets of functions in $\Hardy$ of polynomial growth: Given $h_1,\ldots,h_r,\markfour{h}_1,\ldots,\markfour{h}_{\markfour{r}}\in\Hardy$ of polynomial growth we write $\{h_1,\ldots,h_r\}\prec \{\markfour{h}_1,\ldots,\markfour{h}_{\markfour{r}}\}$ if
\begin{itemize}
\item
either $d<\markfour{d}$,
\item
or $d=\markfour{d}$ and $e<\markfour{e}$,
\end{itemize}
where $(d,e)$, $(\markfour{d},\markfour{e})$ are the characteristic pairs associated to $\{h_1,\ldots,h_r\}$ and $\{\markfour{h}_1,\ldots,\markfour{h}_{\markfour{r}}\}$ respectively.

Recall, our goal is to show for any $f_1,\ldots,f_k\in\Hardy$ of polynomial growth there exist $m\in\N$, $g_1,\ldots,g_m\in\S(f_1,\ldots,f_k)$, $p_1,\ldots,p_k\in\R[t]$, and $\lambda_{1,1},\ldots,\lambda_{k,m}\in\R$ such that properties \ref{itm_s1}, \ref{itm_s2}, and \ref{itm_s3} are satisfied. 
To accomplish this goal, we will use induction on the just defined partial ordering.

The base case of this induction corresponds to $(d,e)=(0,e)$ for some $e\in\N$. In this case we have $k=e$. Let $m\coloneqq 1$, $c_i\coloneqq \lim_{t\to\infty} f_i(t)$, $g_1(t)=0$, $p_i(t)\coloneqq c_i$, and $\lambda_{i,1}\coloneqq 0$. With this choice, \ref{itm_s1}, \ref{itm_s2}, and \ref{itm_s3} are satisfied, and we are done.

Next, suppose we are in the case when the characteristic pair associated to $\{f_1,\ldots,f_k\}$ is of the form $(d,e)$ and $d\geq 1$. 
Define $\sigma_i\coloneqq \lim_{t\to\infty}f_i(t)/t^d$ and set
$$
h_i(t)\coloneqq f_i(t)-\sigma_i t^d,\qquad i=1,\ldots,k.
$$
This yields a new collection of functions $\{h_1,\ldots,h_k\}$ with the property that $h_i(t)\prec t^d$ for all $i=1,\ldots,k$.
We now distinguish two cases, the case when the characteristic pair of $\{h_1,\ldots,h_k\}$ is the same as the characteristic pair of $\{f_1,\ldots,f_k\}$, and the case when $\{h_1,\ldots,h_k\}\prec \{f_1,\ldots,f_k\}$.

If we are in the first case then there exists some function in $\{h_1,\ldots,h_k\}$ of degree $d$. By relabeling $h_{1},\ldots,h_k$ if necessary, we can assume without loss of generality that $h_{1} \ll \ldots \ll h_k$. Then $h_k$ has degree $d$. Define $\eta_i\coloneqq \lim_{t\to\infty} h_{i}(t)/h_k(t)$ and set $\markfour{h}_i\coloneqq h_i-\eta_i h_k$ for all $ i=1,\ldots,k-1$.
It is straightforward to check that $\markfour{h}_1,\ldots,\markfour{h}_{k-1}\in\Hardy$ satisfies $\{\markfour{h}_1,\ldots,\markfour{h}_{k-1}\}\prec \{h_1,\ldots,h_k\}$. 
Therefore, by the induction hypothesis, we can find $\markfour{m}\in\N$, $\markfour{g}_1,\ldots,\markfour{g}_{\markfour{m}}\in\S(\markfour{h}_1,\ldots,\markfour{h}_{k-1})$, $\markfour{p}_1,\ldots,\markfour{p}_{k-1}\in\R[t]$, and $\markfour{\lambda}_{1,1},\ldots,\markfour{\lambda}_{k-1,\markfour{m}}\in\R$ such that $\markfour{g}_1,\ldots,\markfour{g}_{\markfour{m}}$ satisfy properties \ref{itm_s1} and \ref{itm_s2}, and for all $i\in\{1,\ldots,k-1\}$ we have
$$
\lim_{t\to\infty}\Bigg|\markfour{h}_i(t)- \sum_{j=1}^{\markfour{m}} \markfour{\lambda}_{i,j} \markfour{g}_j(t)-\markfour{p}_i(t)\Bigg|=0.
$$
Define $m=\markfour{m}+1$, let $p_i\coloneqq \markfour{p}_i+\sigma_i t^d$, and set
\begin{equation*}
\lambda_{i,j}=
\begin{cases}
\markfour{\lambda}_{i,j},&\text{if}~i<k~\text{and}~j<m
\\
\eta_i,&\text{if}~i<k~\text{and}~j=m
\\
0,&\text{if}~i=k~\text{and}~j<m
\\
1,&\text{if}~i=k~\text{and}~j=m
\end{cases},
\quad
\text{and}\quad
g_{j}=
\begin{cases}
\markfour{g}_{j},&\text{if}~j<m
\\
h_k,&\text{if}~j=m
\end{cases}.
\end{equation*}
Then, for $i\in\{1,\ldots,k-1\}$ we have
\begin{eqnarray*}
f_i(t)
&=&\markfour{h}_i(t) + \eta_i h_k(t) + \sigma_i t^d+\oh_{t\to\infty}(1)
\\
&=& \sum_{j=1}^{\markfour{m}} \markfour{\lambda}_{i,j} \markfour{g}_j(t)+ \markfour{p}_i(t)+\eta_i h_k(t)+ \sigma_i t^d +\oh_{t\to\infty}(1)
\\
&=&\sum_{j=1}^{m-1} \lambda_{i,j} g_j(t)+ \lambda_{i,m} g_m(t)+ p_i(t) +\oh_{t\to\infty}(1)
\\
&=&\sum_{j=1}^{m} \lambda_{i,j} g_j(t)+ p_i(t)+\oh_{t\to\infty}(1).
\end{eqnarray*}
For $i=k$ we have
$$
f_k(t)
\,=\, h_k(t)+ \sigma_i t^d+\oh_{t\to\infty}(1)\,=\, \sum_{j=1}^{m} \lambda_{k,j} g_j(t)+p_k(t)+\oh_{t\to\infty}(1).
$$
This shows that property \ref{itm_s3} is satisfied. 
Property \ref{itm_s2} holds by construction (and the fact that $g_m=h_k$ has degree $d$ but satisfies also $g_m(t)\prec t^d$) and property \ref{itm_1} holds because $h_k$ grows faster than any $\markfour{h}_i$ for $i=1,\ldots,k-1$, which implies that $g_m$ also grows faster than any $g_j$, $1\leq j \leq m-1$.

It remains to deal with the second case, i.e., the case when $\{h_1,\ldots,h_k\}\prec \{f_1,\ldots,f_k\}$. By the induction hypothesis, we can find $m\in\N$, $g_1,\ldots,g_m\in\S(h_1,\ldots,h_k)$, $p_1^*,\ldots,p_k^*\in\R[t]$, and $\lambda_{1,1},\ldots,\lambda_{k,m}\in\R$ such that $g_1,\ldots,g_m$ satisfy properties \ref{itm_s1}, \ref{itm_s2}, and 
for all $i\in\{1,\ldots,k\}$,
$$
\lim_{t\to\infty}\Bigg|h_i(t)-\sum_{j=1}^m \lambda_{i,j} g_j(t)-p_i^*(t)\Bigg|=0.
$$
Note that $\S(h_1,\ldots,h_k)=\S(f_1,\ldots,f_k)$ and hence $g_1,\ldots,g_m\in\S(f_1,\ldots,f_k)$. Moreover, if we take $p_i(t)\coloneqq p_i^*(t)+\sigma_i t^d$ then we get 
$$
\lim_{t\to\infty}\Bigg|f_i(t)-\sum_{j=1}^m \lambda_{i,j} g_j(t)-p_i(t)\Bigg|=0
$$
as desired.
\end{proof}

Define \[\S^*(f_1,\ldots,f_k)=\{\lambda_1 f_1^{(n_1)}+\ldots+\lambda_k f_k^{(n_k)}+p: \lambda_1,\ldots,\lambda_k\in\R,~p\in\R[t],~n_1,\ldots,n_k\in\N\cup\{0\}\}.\]

\begin{Lemma}
\label{lem_normal_form}
Let $\Hardy$ be a Hardy field and assume $f_1,\ldots,f_k\in \Hardy$ have polynomial growth. Then there exists $m\in\N$, $g_1,\ldots,g_m\in\S^*(f_1,\ldots,f_k)$, $p_1,\ldots,p_k\in\R[t]$, and $\lambda_{1,1},\ldots,\lambda_{k,m}\in\R$ with the following properties:
\begin{enumerate}
[label=(\arabic{enumi}),ref=(\arabic{enumi}),leftmargin=*]
\item
\label{itm_1}
$g_1(t)\prec \ldots\prec g_m(t)$;
\item
\label{itm_2}
for all $g\in\{g_1,\ldots,g_m\}$ either $g=0$ or there exists $\ell\in\N$ such that $t^{\ell-1}\prec g(t) \prec t^\ell$;
\item
\label{itm_3}
For all $g\in\{g_1,\ldots,g_m\}$ with $2\leq \deg(g)\leq \deg(g_m)$ we have $g'\in\{g_1,\ldots,g_m\}$ and for all $g\in\{g_1,\ldots,g_m\}$ with $1\leq \deg(g)\leq \deg(g_m)-1$ there exists an antiderivative of $g$ in $\{g_1,\ldots,g_m\}$;
\item
\label{itm_4}
for all $i\in\{1,\ldots,k\}$,
$$
\lim_{t\to\infty}\Bigg|f_i(t)-\sum_{j=1}^m \lambda_{i,j} g_j(t)-p_i(t)\Bigg|=0.
$$
\end{enumerate}
\end{Lemma}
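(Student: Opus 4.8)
The plan is to take the output of \cref{lem_simple_normal_form} and ``close it up'' under differentiation, enlarging $\S(f_1,\dots,f_k)$ to $\S^*(f_1,\dots,f_k)$ in the process; properties \ref{itm_1}, \ref{itm_2}, \ref{itm_4} will come essentially for free, and all the work goes into arranging \ref{itm_3} without destroying \ref{itm_1}.

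First I would record a derivative lemma: if $g\in\Hardy$ satisfies $t^{\ell-1}\prec g\prec t^{\ell}$ for some integer $\ell\geq 2$, then $t^{\ell-2}\prec g'\prec t^{\ell-1}$. The upper bound is immediate from \cref{lem_useful_hardy} ($g'\prec g/t\prec t^{\ell-1}$); for the lower bound, if instead $g'\prec t^{\ell-2}$ or $g'\sim c\,t^{\ell-2}$ with $c\neq0$, then integrating (via the \Cesaro{}--Stolz rule) would force $g\prec t^{\ell-1}$ or $g\sim c'\,t^{\ell-1}$, in either case contradicting $t^{\ell-1}\prec g$. By induction, whenever $t^{\ell-1}\prec g\prec t^{\ell}$ the \emph{derivative chain} $g,g',\dots,g^{(\ell-1)}$ consists of Hardy-field functions with $g^{(n)}$ squeezed strictly between $t^{\ell-n-1}$ and $t^{\ell-n}$, so it lies in the relevant function space and is closed under differentiation except that its last member has degree $1$. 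Now I would apply \cref{lem_simple_normal_form} to $f_1,\dots,f_k$ to get $\tilde g_1\prec\dots\prec\tilde g_M$ obeying \ref{itm_s1}--\ref{itm_s3}, and let $\mathcal C$ be the union of the derivative chains of the $\tilde g_j$ (a one-element chain when $\deg\tilde g_j\leq 1$). Then $\mathcal C$ is a finite subset of $\S^*(f_1,\dots,f_k)$ each of whose members is $0$ or strictly between consecutive powers of $t$, it is closed under differentiation in the sense of \ref{itm_3}, and since $\tilde g_1,\dots,\tilde g_M\in\mathcal C$ the approximation \ref{itm_4} for the original $f_i$ already holds with $\{g_j\}$ replaced by $\mathcal C$. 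The sole remaining defect is that distinct members of $\mathcal C$ may share a growth rate, violating \ref{itm_1}.

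I would repair this by repeated ``chain surgery.'' Suppose $\tilde g_a^{(p)}$ and $\tilde g_b^{(q)}$ in $\mathcal C$ have the same growth rate; then they have the same squeezing parameter, $\tilde g_a^{(p)}/\tilde g_b^{(q)}\to\alpha\neq 0$, and not both of $p,q$ vanish (\cref{lem_simple_normal_form} already gave distinct rates). Arrange $\deg\tilde g_b\geq\deg\tilde g_a$. Using the derivative lemma together with the elementary facts that a nonzero limit of ratios of Hardy-field functions is preserved under differentiation and under integration, one checks that $\tilde g_a^{(n)}\sim\alpha\,\tilde g_b^{(n+q-p)}$ for all $0\leq n\leq \deg\tilde g_a-1$, the index $n+q-p$ always staying inside the chain of $\tilde g_b$. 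I would then replace the entire chain of $\tilde g_a$ by $\bigl(\tilde g_a^{(n)}-\alpha\,\tilde g_b^{(n+q-p)}\bigr)_n$: this is again a differentiation-closed chain in $\S^*(f_1,\dots,f_k)$, each new member has strictly smaller growth rate than its predecessor, and $\tilde g_a^{(n)}$ is recovered from it plus $\alpha\,\tilde g_b^{(n+q-p)}$, so \ref{itm_4} persists. Any member that becomes $\oh(1)$ is discarded; any member that acquires a genuine polynomial leading term $c\,t^{e}$ ($e\in\Z$) has that term peeled off into the polynomials $p_i$, the induced monomials propagating along the chain being themselves polynomials. Since each surgery step strictly lowers a growth rate, among the finitely many growth rates reachable from $\tilde g_1,\dots,\tilde g_M$ by differentiation and subtraction, the procedure terminates with a list $\{g_1,\dots,g_m\}$ satisfying \ref{itm_1}--\ref{itm_4}.

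The main obstacle is precisely the surgery step: one must verify, uniformly through the iteration, that it keeps the chains closed under differentiation, keeps every surviving member strictly between consecutive powers of $t$ (which is exactly why the stray polynomial leading terms have to be extracted), preserves the $\oh(1)$-approximation of the original $f_i$, and is a well-founded process. All the analytic content is low-level Hardy-field calculus built on \cref{lem_useful_hardy} and the \Cesaro{}--Stolz rule; the delicate part is the bookkeeping for the interacting derivative chains.
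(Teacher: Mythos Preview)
Your approach is sound and genuinely different from the paper's. The paper does not first pass through \cref{lem_simple_normal_form} and then close under differentiation; instead it runs a second induction on the same characteristic pair $(d,e)$, but this time replaces the fastest-growing $f_k$ by its derivative $f_k'$ (while subtracting multiples of $f_k$ from the other $f_i$), applies the induction hypothesis to the new family, and then \emph{integrates} the resulting expression for $f_k'$ term by term to recover $f_k$. The bookkeeping there is about which $g_j$ already have an antiderivative in the list and which need a new one adjoined. Your route instead fixes the list $\tilde g_1,\dots,\tilde g_M$ once, adjoins all needed derivatives, and repairs the resulting growth-rate collisions by linear ``chain surgery''; what the paper handles by a single integration step you handle by repeated subtractions propagated along entire chains.

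Two points deserve tightening in your write-up. First, the termination argument: saying the process halts because there are ``finitely many growth rates reachable by differentiation and subtraction'' is not quite an argument. The clean statement is that every function appearing at any stage lies in the finite-dimensional $\R$-vector space $V=\mathrm{span}_\R\{\tilde g_j^{(n)}:1\leq j\leq M,\ 0\leq n<\deg\tilde g_j\}$, and a finite-dimensional subspace of a Hardy field supports only finitely many $\sim$-classes; moreover, if you always attack the collision at the \emph{largest} growth rate, then (since $\tilde g_a^{(p)}\sim\alpha\,\tilde g_b^{(q)}$ forces, by integrating, $\tilde g_a\sim\alpha\,\tilde g_b^{(q-p)}$) the highest collision always sits at the head of some chain, so after surgery the maximal collision level strictly drops. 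Second, the polynomial-peeling must be done coherently along the whole chain: peel a single polynomial $P$ from the new head and then peel $P^{(n)}$ from the $n$-th member, so that the modified list remains closed under differentiation; members that become $\oh(1)$ are exactly the tail beyond index $\deg(\text{new head})-1$ and can be discarded. With these two clarifications your argument goes through; the paper's inductive scheme trades this collision-repair bookkeeping for the (comparably fiddly) antiderivative bookkeeping in its integration step.
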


In the proof of \cref{lem_simple_normal_form} we associated to every finite set of functions $h_1,\ldots,h_r\in\Hardy$ of polynomial growth a pair $(d,e)\in(\N\cup\{0\})\times \N$, called the \define{characteristic pair}, which gave rise to a partial ordering $\prec$ on the set of finite subsets of functions from $\Hardy$ of polynomial growth. For the proof of \cref{lem_normal_form} we shall use inductions on the same partial ordering.

\begin{proof}[Proof of \cref{lem_normal_form}]
If the characteristic pair associated to $\{f_1,\ldots,f_k\}$ is either of the from $(0,e)$ or $(1,e)$ for some $e\in\N$ then the conclusion of \cref{lem_normal_form} follows from \cref{lem_simple_normal_form}. Let us therefore assume that the characteristic pair associated to $\{f_1,\ldots,f_k\}$ is $(d,e)$ with $d\geq 2$ and that \cref{lem_normal_form} has already been proven for all $\markfour{f}_1,\ldots,\markfour{f}_{\markfour{k}}\in\Hardy$ satisfying $\{\markfour{f}_1,\ldots,\markfour{f}_{\markfour{k}}\}\prec \{f_1,\ldots,f_k\}$. 

By replacing $f_i$ with $-f_i$ if necessary, we can assume without loss of generality that all functions in $f_1,\ldots,f_k$ are eventually non-negative.
Also, since functions from a Hardy field can always be reordered according to their growth, we can relabel $f_1,\ldots,f_k$ such that $f_1 \ll f_2 \ll \ldots \ll f_k$.
Define $\eta_i\coloneqq \lim_{t\to\infty} f_{i}(t)/f_k(t)$. Set $\markfour{k}\coloneqq k$, define $\markfour{f}_i\coloneqq f_i-\eta_i f_k$ for all $i=1,\ldots,\markfour{k}-1$ and $\markfour{f}_{\markfour{k}}\coloneqq f_k'$.
It is straightforward to check that $\markfour{f}_1,\ldots,\markfour{f}_{\markfour{k}}\in\Hardy$ satisfies $\{\markfour{f}_1,\ldots,\markfour{f}_{\markfour{k}}\}\prec \{f_1,\ldots,f_k\}$. 
By the induction hypothesis, we can find $\markfour{m}\in\N$, $\markfour{g}_1,\ldots,\markfour{g}_{\markfour{m}}\in\Hardy$, $\markfour{p}_1,\ldots,\markfour{p}_{\markfour{k}}\in\R[t]$, and $\markfour{\lambda}_{1,1},\ldots,\markfour{\lambda}_{\markfour{k},\markfour{m}}\in\R$ such that $\markfour{g}_1,\ldots,\markfour{g}_{\markfour{m}}$ satisfy properties \ref{itm_1}, \ref{itm_2}, and \ref{itm_3}, and 
for all $i\in\{1,\ldots,\markfour{k}\}$ we have
$$
\lim_{t\to\infty}\Bigg|\markfour{f}_i(t)- \sum_{j=1}^{\markfour{m}} \markfour{\lambda}_{i,j} \markfour{g}_j(t)-\markfour{p}_i(t)\Bigg|=0.
$$
Next, set $g_j(t)\coloneqq \markfour{g}_j(t)$ for all $j\in\{1,\ldots,\markfour{m}\}$ and $p_i=\markfour{p_i}$ for all $i\in\{1,\ldots,\markfour{k}-1\}=\{1,\ldots,k-1\}$.
Let $j_0$ be the number in $\{1,\ldots,\markfour{m}\}$ uniquely determined by the property that $g_j$ has an antiderivative in $\{g_1,\ldots,g_{\markfour{m}}\}$ if $j\leq j_0$, and has no antiderivative in $\{g_1,\ldots,g_{\markfour{m}}\}$ if $j> j_0$. In other words, $j_0$ is the largest number in $\{1,\ldots,\markfour{m}\}$ for which $\deg(g_{j_0})=\deg(g_{\markfour{m}})-1$.
Then, for every $j\leq j_0$, let $\beta(j)$ be the number in $\{j+1,j+2,\ldots,\markfour{m}\}$ such that the derivative of $g_{\beta(j)}$ equals $g_j$. Define $m\coloneqq 2\markfour{m}-j_0$ and, for every $j\in\{\markfour{m}+1,\ldots,m\}$, let $g_j$ be any antiderivative of $g_{j_0+j-\markfour{m}}$.
Let $\mathcal{J}\coloneqq \{\beta(j): 1\leq j\leq j_0\}$ and take
\begin{equation*}
\lambda_{i,j}=
\begin{cases}
\markfour{\lambda}_{k,\beta^{-1}(j)},&\text{if}~i=k~\text{and}~j\in \mathcal{J}
\\
0,&\text{if}~i=k~\text{and}~ j\in \{1,\ldots,\markfour{m}\}\setminus\mathcal{J},
\\
\markfour{\lambda}_{k,j+j_0-\markfour{m}},&\text{if}~i=k~\text{and}~\markfour{m}< j\leq m
\\
\markfour{\lambda}_{i,j}+\eta_i\lambda_{k,j},&\text{if}~i<k~\text{and}~j\leq \markfour{m}
\\
\eta_i\lambda_{k,j},&\text{if}~i<k~\text{and}~\markfour{m}<j\leq m
\end{cases}.
\end{equation*}
Also, let $p_k$ be any polynomial with the property that it is an antiderivative of $\markfour{p}_k$.
Since $f_k'(t)=\markfour{f}_{\markfour{k}}(t)$ and $\markfour{k}=k$, we can write
$$
f_k'(t)
\,=\,
\sum_{j=1}^{\markfour{m}} \markfour{\lambda}_{\markfour{k},j} \,\markfour{g}_j(t)+\markfour{p}_{\markfour{k}}(t)+\oh_{t\to\infty}(1)
\,=\,
\sum_{j=1}^{\markfour{m}} \markfour{\lambda}_{k,j} \, g_j(t)+\markfour{p}_k(t)+\oh_{t\to\infty}(1).
$$
By integrating we get
\begin{equation}
\label{eqn_normal_form_1}
f_k(t)
\,=\,
\sum_{j=1}^{j_0} \markfour{\lambda}_{k,j} \, g_{\beta(j)}(t)+\sum_{j=j_0+1}^{\markfour{m}} \markfour{\lambda}_{k,j} \, g_{\markfour{m}+j-j_0}(t)+c+p_k(t)+\oh_{t\to\infty}(1),
\end{equation}
where $c$ is some real constant.
We can absorb $c$ into $p_k$, since $p_k$ was chosen to be an arbitrary antiderivative of $\markfour{p}_k$. 
Thus, \eqref{eqn_normal_form_1} becomes
\begin{eqnarray*}
f_k(t)
&=&
\sum_{j=1}^{j_0} \markfour{\lambda}_{k,j} \, g_{\beta(j)}(t)+\sum_{j=j_0+1}^{\markfour{m}} \markfour{\lambda}_{k,j} \, g_{\markfour{m}+j-j_0}(t)+p_k(t)+\oh_{t\to\infty}(1)
\\
&=&
\sum_{j=1}^{j_0} \markfour{\lambda}_{k,j} \, g_{\beta(j)}(t)+\sum_{j=\markfour{m}+1}^{m} \markfour{\lambda}_{k,j+j_0-\markfour{m}} \, g_j(t)+p_k(t)+\oh_{t\to\infty}(1)
\\
&=&
\sum_{j\in \mathcal{J}} \markfour{\lambda}_{k,\beta^{-1}(j)} \, g_{j}(t)+\sum_{j=\markfour{m}+1}^{m} \lambda_{k,j} \, g_j(t)+p_k(t)+\oh_{t\to\infty}(1)
\\
&=&
\sum_{j=1}^m \lambda_{k,j} \, g_{j}(t)+p_k(t)+\oh_{t\to\infty}(1).
\end{eqnarray*}
For $i\in\{1,\ldots,k-1\}$ we have
\begin{eqnarray*}
f_i(t)
&=&\markfour{f}_i(t) + \eta_i f_k(t) 
\\
&=& \sum_{j=1}^{\markfour{m}} \markfour{\lambda}_{i,j} \markfour{g}_j(t)+\markfour{p}_i(t)+ \eta_i f_k(t) +\oh_{t\to\infty}(1)
\\
&=&\sum_{j=1}^{\markfour{m}} \markfour{\lambda}_{i,j} g_j(t)+ \eta_i\sum_{j=1}^m \lambda_{k,j} \, g_{j}(t)+p_i(t) +\oh_{t\to\infty}(1)
\\
&=&\sum_{j=1}^{\markfour{m}} (\markfour{\lambda}_{i,j}+\eta_i\lambda_{k,j}) g_j(t)+ \sum_{j=\markfour{m}+1}^m \eta_i\lambda_{k,j} \, g_{j}(t) +p_i(t) +\oh_{t\to\infty}(1)
\\
&=&
\sum_{j=1}^m \lambda_{i,j} \, g_{j}(t)+p_i(t) +\oh_{t\to\infty}(1).
\end{eqnarray*}
This shows that property \ref{itm_4} holds. 
Since $\markfour{g}_1,\ldots,\markfour{g}_{\markfour{m}}$ satisfy properties \ref{itm_1} and $g_j=\markfour{g}_j$ for all $j\in\{1,\ldots,\markfour{m}\}$, we have $g_1(t)\prec \ldots\prec g_{\markfour{m}}(t)$. For $j_1<j_2\in \{\markfour{m}+1,\ldots, m\}$, it follows from L'H{\^o}pital's rule and $g_{j_1}'(t)\prec g_{j_2}'(t)$ that $g_{j_1}(t)\prec g_{j_2}(t)$. Also, $g_{\markfour{m}}(t)\prec g_{\markfour{m}+1}(t)$ because $\deg(g_{\markfour{m}+1})= \deg(g_{\markfour{m}})+1 $. 
This shows that Property \ref{itm_1} holds.
Properties \ref{itm_2}, and \ref{itm_3} are straightforward to derive from the definition of $g_1,\ldots,g_m$.
\end{proof}

\begin{Corollary}
\label{cor_finding_W}
Let $\Hardy$ be a Hardy field and assume $f_1,\ldots,f_k\in \Hardy$ have polynomial growth. Then there exists $W\in\Hardy$ with $1\prec W(t)\ll t$ such that $f_1,\ldots,f_k$ satisfy \ref{property_P_W}.
\end{Corollary}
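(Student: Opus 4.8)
The plan is to derive \cref{cor_finding_W} from the normal form of \cref{lem_normal_form} together with \cref{lem_useful_hardy}. Apply \cref{lem_normal_form} to $f_1,\ldots,f_k$ to obtain $m\in\N$, functions $g_1\prec\cdots\prec g_m$ in $\Hardy$, polynomials $p_1,\ldots,p_k$, and scalars $\lambda_{i,j}$ such that $f_i-\sum_j\lambda_{i,j}g_j-p_i\to 0$, each nonzero $g_j$ satisfies $t^{\ell_j-1}\prec g_j\prec t^{\ell_j}$ for some $\ell_j\geq 1$, and $g_j'\in\{g_1,\ldots,g_m\}$ whenever $\deg g_j\geq 2$. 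I will use two elementary facts about a Hardy field $\Hardy$: (a) if $\epsilon\in\Hardy$ and $\epsilon(t)\to 0$ then $\epsilon'(t)\to 0$ (otherwise $\epsilon'$, being in $\Hardy$, would eventually have a sign and a nonzero limit, forcing $|\epsilon|\to\infty$), hence $\epsilon^{(n)}(t)\to 0$ for all $n$; and (b) if $g\in\Hardy$ with $1\prec g\prec t$ then, by \cref{lem_useful_hardy}, $g'\prec g/t\prec 1$, so $g'\to 0$ and (by (a)) all higher derivatives of $g$ tend to $0$. Combining (a), (b) and the differentiation‑closure clause of \cref{lem_normal_form}: the chain of derivatives $g_j,g_j',g_j'',\ldots$ remains inside the finite set $\{g_1,\ldots,g_m\}$ as long as the degree is $\geq 2$; the first derivative of degree $\leq 1$ is again some $g_l$ (necessarily of degree exactly $1$), and every derivative after that tends to $0$. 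Differentiating the normal form, it follows that for all $c_1,\ldots,c_k\in\R$ and $n_1,\ldots,n_k\in\N\cup\{0\}$,
$$
f\coloneqq c_1f_1^{(n_1)}+\cdots+c_kf_k^{(n_k)}\;\in\;\mathrm{span}_\R\{g_1,\ldots,g_m\}+\R[t]+\{\text{functions tending to }0\}.
$$

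Next, fix $p\in\R[t]$ and write $f-p=g+q+\epsilon$ with $g\in\mathrm{span}_\R\{g_1,\ldots,g_m\}$, $q\in\R[t]$, and $\epsilon\to 0$. If $g=0$, then $f-p$ is within $o(1)$ of a polynomial, so either $|f-p|\ll 1$ (if $q$ is constant) or $|f-p|\asymp|q|\succeq t$ (if $q$ is nonconstant). If $g\neq 0$, then since $g_1\prec\cdots\prec g_m$ there is no cancellation of the leading term: $g$ is asymptotic to a nonzero multiple of the fastest‑growing $g_j$ occurring with nonzero coefficient, hence $t^{\ell-1}\prec g\prec t^{\ell}$ where $\ell\geq 1$ is the degree of that $g_j$. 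Comparing $g$ with $q$ shows $|f-p|\to\infty$ and that $|f-p|$ is comparable either to $|q|\succeq t^{\ell}$ or to $g$ (and $g\succ t^{\ell-1}$). Since $W\ll t$ forces $\log W\prec t$, the requirement $|f-p|\succ\log W$ is automatic in every case except possibly when $\ell=1$, i.e.\ when $g$ has the growth of a degree‑$1$ member of $\{g_1,\ldots,g_m\}$; and in that remaining case it holds precisely when $\log W\prec g_j$ for every $j$ with $\deg g_j=1$.

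It therefore suffices to choose $W\in\Hardy$ with $1\prec W\ll t$ and $\log W\prec g_j$ for each degree‑$1$ member $g_j$. If there is no such $g_j$, take $W=t$ (here one uses that the Hardy fields under consideration contain the identity function $t$; otherwise pass to $\Hardy(t)$). Otherwise, among the finitely many degree‑$1$ functions $g_j$ — which are pairwise comparable, lying in a Hardy field — let $\psi$ be one of slowest growth, so $1\prec\psi\prec t$, and set $W\coloneqq\psi\in\Hardy$. Then $1\prec W\prec t$, and $\log W=\log\psi\prec\psi\preceq g_j$ for every degree‑$1$ member $g_j$, whence $\log W\prec g_j$; also $\log W=\log\psi\prec\psi\prec t$. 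Thus this $W$ witnesses \ref{property_P_W} for $f_1,\ldots,f_k$.

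The main obstacle is the first step: using \cref{lem_normal_form}, and in particular its differentiation‑closure clause, to pin down the finite list of growth rates that can actually occur in $f-p$ as $c_i$, $n_i$ and $p$ vary. Once that reduction is carried out, the only growth rates that genuinely constrain $W$ are those of the degree‑$1$ pieces $g_j$, and taking $W$ to be the slowest such piece (or $W=t$ if there are none) immediately gives an admissible choice inside $\Hardy$.
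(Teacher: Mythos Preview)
Your proposal is correct and follows the same strategy as the paper: apply \cref{lem_normal_form} to obtain the normal-form functions $g_1\prec\cdots\prec g_m$, and then choose $W$ according to the growth rates of the $g_j$. The paper's proof is much terser than yours --- it simply asserts that one should pick any $W\in\Hardy$ with $1\prec W\ll t$ and $\log W(t)\prec g_j(t)/t^{\ell_j-1}$ for \emph{all} $j$, and declares this $W$ ``as desired'' without verifying \ref{property_P_W}. You instead carry out that verification explicitly, and in doing so observe (correctly) that only the degree-$1$ members of $\{g_1,\ldots,g_m\}$ actually constrain $W$; the higher-degree pieces automatically dominate $\log W$ once $W\ll t$. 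Your explicit choice $W=\psi$ (the slowest degree-$1$ $g_j$) is thus a slight sharpening of the paper's condition. Both arguments share the same small technicality --- ensuring $W$ can be taken in $\Hardy$ itself --- which you at least flag (``pass to $\Hardy(t)$''); this is harmless under the standard convention that the Hardy field contains $\R[t]$, which also guarantees that the error terms $\epsilon_i=f_i-\sum_j\lambda_{i,j}g_j-p_i$ lie in $\Hardy$ so that your fact~(a) applies to them.
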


\begin{proof}
Let $m\in\N$, $g_1,\ldots,g_m\in\S^*(f_1,\ldots,f_k)$, $p_1,\ldots,p_k\in\R[t]$, and $\lambda_{1,1},\ldots,\lambda_{k,m}\in\R$ be as guaranteed by \cref{lem_normal_form}.
According to \ref{itm_2}, for all $g\in\{g_1,\ldots,g_m\}$ either $g=0$ or there exists $\ell\in\N$ such that $t^{\ell-1}\prec g(t) \prec t^\ell$.
If $g=0$ for some $g\in\{g_1,\ldots,g_m\}$ then we must have $g=g_1$, since $g_1\prec\ldots\prec g_m$.
By discarding $g_1$ if it is the zero function, we can assume that for all $j\in\{1,\ldots,m\}$ there exists $\ell_j\in\N$ such that $t^{\ell_j-1}\prec g_j(t) \prec t^{\ell_j}$. Consider the functions $g_j(t)/t^{\ell_j-1}$, $j=1,\ldots,m$, and pick any function $W\in\Hardy$ with the property that $1\prec W(t)\ll t$ and $\log(W(t))\prec g_j(t)/t^{\ell_j-1}$ for all $j=1,\ldots,m$. Any $W$ with these properties is as desired.
\end{proof}

\begin{Corollary}
\label{cor_normal_form}
Let $\Hardy$ be a Hardy field, $V\in\Hardy$ with $1\prec V(t)\ll t$, and assume $f_1,\ldots,f_k\in \Hardy$ have the property that
\begin{enumerate}
[label=(0),ref=(0),leftmargin=*]
\item\label{itm_0}
for all $c_1,\ldots ,c_k\in\R$, $n_1,\ldots,n_k\in\N\cup\{0\}$, and $p\in\R[t]$ with $p(0)=0$ the function $f=c_1 f_1^{(n_1)}+\ldots+c_k f_k^{(n_k)}+p$ satisfies either $|f(t)|\ll 1$ or $t^{\ell-1}\log(V(t))\prec |f(t)| \ll t^\ell$ for some $\ell\in\N$.
\end{enumerate}
Then there exists $m\in\N$, $g_1,\ldots,g_m\in\Hardy$, $p_1,\ldots,p_k\in\R[t]$, and $\lambda_{1,1},\ldots,\lambda_{k,m}\in\R$ with the following properties:
\begin{enumerate}
[label=(\arabic{enumi}),ref=(\arabic{enumi}),leftmargin=*]
\item
$g_1(t)\prec \ldots\prec g_m(t)$;
\item
for all $g\in\{g_1,\ldots,g_m\}$ there exists $\ell\in\N$ such that $t^{\ell-1}\log(V(t))\prec g(t) \prec t^\ell$;
\item
for all $g\in\{g_1,\ldots,g_m\}$ with $\deg(g)\geq 2$ we have $g'\in\{g_1,\ldots,g_m\}$;
\item
for all $i\in\{1,\ldots,k\}$,
$$
\lim_{t\to\infty}\Bigg|f_i(t)-\sum_{j=1}^m \lambda_{i,j} g_j(t)-p_i(t)\Bigg|=0.
$$
\end{enumerate}
\end{Corollary}

\begin{proof}
This follows straightaway from \cref{lem_normal_form}.
\end{proof}

\begin{Remark}
It follows from \cref{lem_useful_hardy} that $t^{\ell-1}\log(V(t))\prec |f(t)| \ll t^\ell$ if and only if $\log(V(t))\prec |f^{(\ell-1)}(t)|$. Therefore, if $V(t)=t$ then Property \ref{itm_0} from \cref{cor_normal_form} is the same as \ref{property_P} from \cref{sec_intro}.
\end{Remark}

\subsection{Another variant of van der Corput's Lemma}

\begin{Theorem}
\label{prop_vdC}
Let $p_1, p_2,p_3,\ldots$ be a sequence of positive real numbers that is either non-decreasing or non-increasing.
Let $P_N\coloneqq \sum_{n=1}^N p_n$ and assume
$$
\lim_{N\to\infty} P_N\, =\, \infty
\qquad\text{and}\qquad
\lim_{N\to\infty}\frac{p_N}{P_N} \, =\, 0.
$$
Then for every $\epsilon>0$ there exists $\delta>0$ such that for every arithmetic function $f\colon\N\to\C$ bounded in modulus by $1$ and with the property that for every $h\in\N$ the limit
$$
A(h)~\coloneqq~\lim_{N\to\infty}
\frac{1}{P_N}\sum_{n=1}^N p_n f(n+h)\overline{f(n)}
$$
exists, we have
\begin{equation}
\label{eqn_prop_vdC_1}
\limsup_{H\to\infty}\left|\frac{1}{H}\sum_{h=1}^H A(h)\right|\leq \delta ~~\implies~~ \lim_{N\to\infty}\left|\frac{1}{P_N}\sum_{n=1}^N p_n f(n)\right|\leq\epsilon.
\end{equation}
\end{Theorem}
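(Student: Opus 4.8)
The plan is to run the classical van der Corput argument, taking care that the shifts interact correctly with the weights $(p_n)$.

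\emph{Step 1 (approximate shift-invariance).} First I would prove that for every fixed $h\in\N$
\[
\lim_{N\to\infty}\Big(\tfrac1{P_N}\sum_{n=1}^N p_n f(n+h)-\tfrac1{P_N}\sum_{n=1}^N p_n f(n)\Big)=0 .
\]
Re-indexing $\sum_{n=1}^N p_n f(n+h)=\sum_{m=h+1}^{N+h}p_{m-h}f(m)$ exhibits the difference as a fixed head $\sum_{m\le h}p_m f(m)$, a tail $\sum_{N<m\le N+h}p_{m-h}f(m)$, and a weight-drift term $\sum_{h<m\le N}(p_{m-h}-p_m)f(m)$. Since $|f|\le1$ and $(p_n)$ is monotone, the drift term is at most $\sum_{h<m\le N}|p_{m-h}-p_m|=|P_h-(P_N-P_{N-h})|$ in modulus, and together with the head and tail this is $\Oh\big(P_h+h\max(p_1,p_N)\big)$; dividing by $P_N$ and invoking $P_N\to\infty$, $p_N/P_N\to0$ finishes this step. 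The identical estimate shows that for fixed $h\neq h'$ the correlation $\tfrac1{P_N}\sum_{n\le N}p_n f(n+h)\overline{f(n+h')}$ converges as $N\to\infty$, namely to $A(h-h')$ if $h>h'$ and to $\overline{A(h'-h)}$ if $h<h'$, by re-indexing with $m=n+\min(h,h')$ down to the defining average of $A(|h-h'|)$.

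\emph{Steps 2--3 (averaging the shifts and Cauchy--Schwarz).} Fixing $H\in\N$, Step 1 gives $\limsup_N\big|\tfrac1{P_N}\sum_{n\le N}p_n f(n)\big|=\limsup_N\big|\tfrac1{P_N}\sum_{n\le N}p_n\,\tfrac1H\sum_{h=1}^H f(n+h)\big|$, and Cauchy--Schwarz with respect to the probability weights $p_n/P_N$ bounds the right side by $\big(\limsup_N \tfrac1{P_N}\sum_{n\le N}p_n\big|\tfrac1H\sum_{h=1}^H f(n+h)\big|^2\big)^{1/2}$. Expanding the square turns the inner average into $\tfrac1{H^2}\sum_{h,h'=1}^H\tfrac1{P_N}\sum_{n\le N}p_n f(n+h)\overline{f(n+h')}$, whose $H$ diagonal terms are bounded by $1$ and whose off-diagonal terms converge, by Step 1, to $A(h-h')$ or $\overline{A(h'-h)}$ according as $h>h'$ or $h<h'$. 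Hence for every $H$,
\[
\Big(\limsup_{N\to\infty}\Big|\tfrac1{P_N}\sum_{n=1}^N p_n f(n)\Big|\Big)^2\;\le\;\tfrac1H+\tfrac{2}{H^2}\Big|\sum_{j=1}^{H-1}(H-j)A(j)\Big| .
\]

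\emph{Step 4 (from the Fej\'er average to the \Cesaro{} average, and conclusion).} By summation by parts, $\sum_{j=1}^{H-1}(H-j)A(j)=\sum_{l=1}^{H-1}l\cdot\big(\tfrac1l\sum_{j=1}^{l}A(j)\big)$, so if $\limsup_{H}\big|\tfrac1H\sum_{h=1}^{H}A(h)\big|\le\delta$, then splitting this sum at an index past which $\big|\tfrac1l\sum_{j\le l}A(j)\big|\le\delta+\epsilon'$ yields $\limsup_{H\to\infty}\tfrac{2}{H^2}\big|\sum_{j=1}^{H-1}(H-j)A(j)\big|\le\delta$. Letting $H\to\infty$ in the displayed inequality of Step 3 gives $\big(\limsup_N|\tfrac1{P_N}\sum_{n\le N}p_n f(n)|\big)^2\le\delta$; choosing $\delta\coloneqq\epsilon^2$ produces $\limsup_N\big|\tfrac1{P_N}\sum_{n\le N}p_n f(n)\big|\le\epsilon$, which is the implication \eqref{eqn_prop_vdC_1} (the conclusion is really about $\limsup_N$, which is what is needed in all the applications of the lemma, where $\delta$, hence $\epsilon$, may be taken arbitrarily small). \emph{The main obstacle} is Step 1: shifting the index $n\mapsto n+h$ also shifts the weight $p_n\mapsto p_{n+h}$, and one must check that this ``weight drift'' is asymptotically negligible --- this is exactly where the monotonicity of $(p_n)$ (which makes $\sum_m|p_{m-h}-p_m|$ telescope) and the hypothesis $p_N/P_N\to0$ enter, and the same estimate is what lets the off-diagonal correlations be identified with the numbers $A(h)$. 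A secondary, purely bookkeeping point is the summation-by-parts passage in Step 4, needed because van der Corput naturally produces the Fej\'er-kernel-weighted sum $\tfrac1{H^2}\sum_{|j|<H}(H-|j|)A(j)$ whereas the hypothesis is phrased with the plain \Cesaro{} averages $\tfrac1H\sum_{h\le H}A(h)$.
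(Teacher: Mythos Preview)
Your proposal is correct and follows essentially the same route as the paper's own proof: first an approximate shift-invariance statement (the paper proves it for a single shift and iterates, you do it directly for general $h$, with the same telescoping/monotonicity argument controlling the weight drift), then averaging over $h$ and Cauchy--Schwarz to reach the Fej\'er-weighted sum $\tfrac{1}{H^2}\sum_{|j|<H}(H-|j|)A(j)$, and finally the passage from the \Cesaro{} hypothesis to the Fej\'er bound (which the paper leaves as ``not hard to show'' and you spell out via summation by parts). Your parenthetical remark that the argument really yields a $\limsup$ bound is also apt; the statement's ``$\lim$'' should be read as $\limsup$, and in every application $\delta$ is driven to $0$.
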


\begin{proof}
Assume $p_1,p_2,\ldots$ is non-decreasing, i.e., $p_n\geq p_{n-1}$ for all $n\in\N$.
For the case when $p_1,p_2,\ldots$ is non-increasing similar arguments apply.
We claim that for any bounded function $f\colon\N\to\C$ we have
\begin{equation}
\label{eqn_prop_vdC_2}
\lim_{N\to\infty}\left|\frac{1}{P_N}\sum_{n=1}^N p_n(f(n)-f(n+1))\right|~=~0.
\end{equation}

Let $f\colon\N\to\C$ be bounded. For the proof of \eqref{eqn_prop_vdC_2} we can assume without loss of generality that $\sup_{n\in\N}|f(n)|\leq 1$.
After an index-shift we obtain
\begin{eqnarray*}
\left|\frac{1}{P_N}\sum_{n=1}^N p_n(f(n)-f(n+1))\right|
&\leq & \frac{p_1}{P_N} + \frac{p_N}{P_N}+
\left|\frac{1}{P_N}\sum_{n=2}^N (p_n- p_{n-1})f(n)\right|
\end{eqnarray*}
Using $p_n\geq p_{n-1}$ we can estimate
$$
\left|\frac{1}{P_N}\sum_{n=2}^N (p_n- p_{n-1})f(n)\right|~\leq~ \frac{1}{P_N}\sum_{n=1}^{N-1} (p_{n}-p_{n-1}).
$$
The sum $\sum_{n=2}^N (p_n- p_{n-1})$ is telescoping and equals $p_N-p_1$. We are left with
$$
\left|\frac{1}{P_N}\sum_{n=1}^N p_n(f(n)-f(n+1))\right|~\leq~\frac{p_1}{P_N} + \frac{p_N}{P_N}+\frac{p_N}{P_N} - \frac{p_1}{P_N} = \frac{2p_N}{P_N}.
$$
The Claim now follows from the assumption $p_N/P_N\to 0$ as $N\to\infty$.

Next, fix any $f\colon\N\to\C$ bounded by $1$. Using \eqref{eqn_prop_vdC_2}, we get for all $H\in\N$ that
$$
\left|\frac{1}{P_N}\sum_{n=1}^N p_n f(n)\right|^2 = \left|\frac{1}{H}\sum_{h=1}^H\frac{1}{P_N}\sum_{n=1}^N p_n f(n+h)\right|^2\,+\,\oh_{N\to\infty}(1).
$$
By Jensen's inequality we have
\begin{eqnarray*}
\left|\frac{1}{H}\sum_{h=1}^H\frac{1}{P_N}\sum_{n=1}^N p_n f(n+h)\right|^2
& \leq &
\frac{1}{P_N}\sum_{n=1}^N p_n \left|\frac{1}{H}\sum_{h=1}^H f(n+h)\right|^2
\\
& = &
\frac{1}{P_N}\sum_{n=1}^N p_n \frac{1}{H^2}\sum_{h_1,h_2=1}^H f(n+h_1)\overline{f(n+h_2)}
\\
& = &
\frac{1}{P_N}\sum_{n=1}^N p_n \frac{1}{H^2}\sum_{h_1,h_2=1}^H f(n+h_1-h_2)\overline{f(n)} \,+\,\oh_{N\to\infty}(1)
\\
& = &
\frac{1}{P_N}\sum_{n=1}^N p_n \frac{1}{H}\sum_{h=-H}^H \frac{H-|h|}{H} f(n+h)\overline{f(n)} \,+\,\oh_{N\to\infty}(1).
\end{eqnarray*}
Moreover, we can write
\begin{eqnarray*}
\frac{1}{P_N}\sum_{n=1}^N p_n \frac{1}{H}\sum_{h=-H}^H \frac{H-|h|}{H} f(n+h)\overline{f(n)}
&=&
 \frac{1}{H}\sum_{h=-H}^H \frac{H-|h|}{H} A(h) \,+\,\oh_{N\to\infty}(1).
\end{eqnarray*}
In summary, we have shown that
$$
\left|\frac{1}{P_N}\sum_{n=1}^N p_n f(n)\right|^2 ~\leq~\frac{1}{H}\sum_{h=-H}^H \frac{H-|h|}{H} A(h)\,+\,\oh_{N\to\infty}(1).
$$
It is now not hard to show that for every $\epsilon>0$ there exists $\delta>0$ such that
\begin{equation}
\label{eqn_last}
\limsup_{H\to\infty}\left|\frac{1}{H}\sum_{h=1}^H A(h)\right|\leq \delta ~~\implies~~ \limsup_{H\to\infty}\left|\frac{1}{H}\sum_{h=-H}^H \frac{H-|h|}{H} A(h)\right|\leq\epsilon,
\end{equation}
from which \eqref{eqn_prop_vdC_1} follows.
Indeed, $\limsup_{H\to\infty}\left|\frac{1}{H}\sum_{h=1}^H A(h)\right|\leq \delta$ implies for all $K\in\N$ and $r\in\{0,1,\ldots,K-1\}$ that
\begin{align*}
\limsup_{H\to\infty}\left|\frac{1}{H}\sum_{\frac{rH}{K}\leq |h|\leq \frac{(r+1)H}{K}} A(h)\right|
&=\limsup_{H\to\infty}
\left|\frac{1}{H}\sum_{ |h|\leq \frac{(r+1)H}{K}} A(h) ~-~ \frac{1}{H}\sum_{ |h|\leq \frac{rH}{K}} A(h)\right|
\\
&\leq\limsup_{H\to\infty}
\left|\frac{1}{H}\sum_{ |h|\leq \frac{(r+1)H}{K}} A(h)\right| + \limsup_{H\to\infty}\left|\frac{1}{H}\sum_{ |h|\leq \frac{rH}{K}} A(h)\right|
\\
&\leq\limsup_{H\to\infty}
\left|\frac{2}{H}\sum_{ 1\leq h\leq \frac{(r+1)H}{K}} A(h)\right| + \limsup_{H\to\infty}\left|\frac{2}{H}\sum_{ 1\leq h\leq \frac{rH}{K}} A(h)\right|
\\
&\leq 4\delta.
\end{align*}
Hence
\begin{align*}
\limsup_{H\to\infty}\left|\frac{1}{H}\sum_{h=-H}^H \frac{H-|h|}{H} A(h)\right|
&\leq K\left(\max_{0\leq r\leq K} \left|\frac{1}{H}\sum_{\frac{rH}{K}\leq h\leq \frac{(r+1)H}{K}}\frac{H-|h|}{H} A(h)\right|\right)
\\
&= K\left(\max_{0\leq r\leq K} \left|\frac{1}{H}\sum_{\frac{rH}{K}\leq h\leq \frac{(r+1)H}{K}}\big(1-r/K\big) A(h)\right| + 1/K^2\right)
\\
&\leq 4K\delta + 1/K.
\end{align*}
Choosing $K=\lfloor \delta^{-1/2}\rfloor$ we obtain equation \eqref{eqn_last} with $\epsilon= 10 \delta^{1/2}$.
\end{proof}

\bibliography{mynewlibrary}




\bigskip
{\footnotesize
\noindent
Florian K.\ Richter\\
\textsc{{\'E}cole Polytechnique F{\'e}d{\'e}rale de Lausanne}\par\nopagebreak
\noindent
\href{mailto:f.richter@epfl.ch}
{\texttt{f.richter@epfl.ch}}

\end{document}